\newtheorem{theorem}{Theorem}[section]
\newtheorem{lemma}[theorem]{Lemma}
\newtheorem{claim}{Claim}[theorem]
\newtheorem{question}[theorem]{Question}
\newtheorem{task}[theorem]{Task}
\theoremstyle{definition}
\newtheorem{definition}[theorem]{Definition}
\newtheorem{example}[theorem]{Example}
\newtheorem{remark}[theorem]{Remark}
\newtheorem{exercise}[theorem]{Exercise}
\tikzset{
  double arrow/.style args={#1 colored by #2 and #3}{
    -stealth,line width=#1,#2, 
    postaction={draw,-stealth,#3,line width=(#1)/3,
                shorten <=(#1)/3,shorten >=2*(#1)/3}, 
  }
}
\tikzset{
	MyPersp/.style={scale=1.1,x={(1.1cm,0cm)},y={(0.8cm,0.47cm)},
    z={(0cm,1cm)}},
	MyPoints/.style={fill=white,draw=black,thick}
		}
\title[Higher walks]{An introduction to higher walks}
\dedicatory{Dedicated to Justin Tatch Moore and Stevo Todorcevic.}
\author{Jeffrey Bergfalk}
\address{Departament de Matem\`{a}tiques i Inform\`{a}tica \\
Universitat de Barcelona \\
Gran Via de les Corts Catalanes, 585 \\ 08007 Barcelona, Catalonia}
\email{bergfalk@ub.edu}
\urladdr{www.jeffreybergfalk.com}
\subjclass[2020]{03E10; 03E05}
\keywords{walks on ordinals, coherent sequences, higher walks, $n$-coherence, Countryman order, $n$-hypertournament, sheaf cohomology}
\thanks{This work was supported by Mar\'{i}a Zambrano and Marie Sk\l odowska Curie (Project 101110452: CatT) Fellowships at the University of Barcelona}
\begin{document}

\begin{abstract}
The following is an introduction to the study of \emph{higher walks}, by which we mean a family of higher-dimensional extensions of Todorcevic's method of walks on the ordinals. After a brief review of this method, including, for example, definitions of the classical functions $\mathrm{Tr}$ and $\rho_2$ induced by a choice of $C$-sequence, we record a shortlist of desiderata for such extensions, along with $(n+1)$-dimensional functions $\mathrm{Tr}_n$ and $\rho_2^n$ (induced by a choice of \emph{higher-dimensional} $C$-sequence) which we show to satisfy the bulk of them.
Much of the interest of these \emph{higher walks functions} lies in their affinity, as in the classical $n=1$ case, for the ordinals $\omega_n$ (we show, for example, that $\rho^n_2$ determines both $n$-dimensional linear orderings and $n$-coherent families on $\omega_n$, and that higher walks define nontrivial elements of the $n^{\mathrm{th}}$ cohomology groups of $\omega_n$), and in the questions that they thereby raise both about the combinatorics of the latter and about higher-dimensional infinitary combinatorics more generally; we collect the most prominent of these questions in our conclusion. These objects are also, though, of a sufficient combinatorial richness to be of interest in their own right, as we have underscored via an extended study of the first genuine novelty among them, the function $\mathrm{Tr}_2$.
\end{abstract}

\maketitle

\setcounter{tocdepth}{1}
\tableofcontents
\section{Introduction}

Among the most consequential developments in the study of infinitary combinatorics over the past forty years have been the arrival and elaboration of Todorcevic's method of walks on the ordinals.\footnote{The publication \cite[p.\ 288]{todpairs} traces the walks apparatus to Berkeley seminar notes Todorcevic drafted in 1984: walks indeed turn forty this year.}
We will briefly review its essentials in Section \ref{subsect:classical} below; recall most immediately, though, 
\begin{enumerate}
\item that the materials of this method are a family of interrelated functions taking nondecreasing pairs $(\alpha,\beta)$ of ordinals as inputs and deriving their outputs from the finite \emph{minimal walk} (determined by some background choice of $C$-sequence) from $\beta$ down to $\alpha$, and
\item that these functions are, on the countable ordinals most particularly, so simultaneously fundamental and adaptable as to collectively figure as a kind of ``master key'' to the combinatorics of $\omega_1$.
\end{enumerate}
The subject of the present work is a sequence of $(n+1)$-dimensional extensions of this method which we term, in aggregate, \emph{higher walks}. These neatly
generalize the classical $n=1$ case of item (1) above; higher walks of order $n=2$, for example, pass ``between'' nondecreasing triples $(\alpha,\beta,\gamma)$ of ordinals, inducing a family of derived functions comprising one main theme of our account below.

How or how far item (2) generalizes alongside these forms then figures as the subject's driving question, one lent particular charge and interest
by how persuasively, as we aim to show, the classical structures referenced in (1) do extend in higher dimensions to higher cardinals $\omega_n$.
Much of the difficulty of this question stems from the obscurity of the combinatorics of the higher cardinals $\omega_n$ themselves, and among the main stakes of this subject is, in fact, the prospect of a more methodical approach to that obscurity.

The idea of higher-dimensional extensions of walks techniques to higher cardinals is, broadly speaking, an old one.
The works \cite{cubes}, \cite[\S 10]{todwalks}, and \cite{Lopez-Abad}, for example, all in their own ways file under this heading, and the relation of what we term \emph{higher walks} to their various constructions is a good question, one to which we will return in our conclusion.
Two guiding principles do, however, sharply distinguish this article's approach from those of its predecessors:
\begin{enumerate}
\item Cohomological framings of classical walks phenomena have strongly influenced our approaches to their generalizations.
\item We have pursued, in senses formalized by criteria (n.1)--(n.6) of Section \ref{subsect:generalizations} below, as systematic and total a generalization of the classical walks apparatus as possible (the surprise has been in how far this program has proven practicable).
\end{enumerate}
We have also, as a corollary of the second item, deliberately minimized our appeal to assumptions beyond the $\mathsf{ZFC}$ axioms, at least in the initial development of this subject.
These emphases reflect a broader research context which it will be useful to review before proceeding.
Informing item (1), for example, are both the recognition, dating at the latest to \cite{Talayco}, of nontrivial coherence relations as cohomological in nature and the recognition in \cite{dimords} of the classical walks apparatus as the essential content of the $n=1$ instance of Mitchell's theorem \cite[Thm.\ 36.8]{Mitchell} (see Theorem \ref{thm:Mitchell} and the surrounding discussion below).
Three works subsequent to the latter recognition even more fundamentally shape our approach.
The first, \cite{CoOI}, joint with Chris Lambie-Hanson and posted in early 2019, recorded a cohomological approach to infinitary combinatorics in which nontrivial coherence figures as merely the first in a graded family of incompactness principles exhibiting a particular affinity for the cardinals $\omega_n$ (an update and expansion of this work, joint with Lambie-Hanson and Jing Zhang, is currently in preparation \cite{BLHZ}).
In the course of a new proof of the aforementioned theorem of Mitchell's, a second work \cite{TFOA} isolated many of the structures studied herein; see again Section \ref{subsect:generalizations} for further discussion.
Drafted just prior to that work, though, was the seed of the present one, a note ``intended firstly for Justin Moore and Stevo Todorcevic, who both at this point have probably heard more than enough from me about the existence of higher walks, and less than enough about what I believe them to be'' \cite{HighWalksNote}.
That they have, in the time since, heard more of the latter has not changed how much of my thinking in these matters is ultimately addressed to them.
This is much of the logic of this paper's dedication; see our Acknowledgements section below for more of it.

One major motivation for the above sequence of works was a body of contemporaneous research on the derived limits of inverse systems indexed by the partial order $({^\omega}\omega,\leq^*)$ (see \cite{sim1}, \cite{sim2}, \cite{PHn}), research which persistently underscores how much we still have left to learn about the higher-dimensional combinatorics of the cardinals $\omega_n$.
This we mention both to suggest some of the broader potential significance of higher walks and to reaffirm that substantial higher-dimensional combinatorial principles, like the family of $\mathrm{PH}_n$ principles isolated in \cite{PHn}, for example, do still remain to be discovered.

To sum up: this article is an introduction to a subject which is still in the early stages of its development; in it, we survey the main contours of what is both known and wondered about higher walks. It is simultaneously intended as a kind of progress report and, particularly in its later, more exploratory sections, as an invitation.

\section{The plan of the paper}
\label{sect:plan}

Our guiding conception is the following: walks on ordinals are a \emph{remarkably elementary phenomenon}, one isolated and leveraged by Todorcevic (in \cite{todpairs, todwalks, todcoh}) into a far-reaching \emph{combinatorial technique}, but one so versatile as to amount to a \emph{systematic method of study of the $\mathsf{ZFC}$ combinatorics of $\omega_1$, and of much beyond}.\footnote{Kanamori aptly and relatedly writes of walks as seeming to access ``the immanent structure of the uncountable, from which a wide range of combinatorial consequences flow'' \cite[p.\ 72]{Kanamori}.}

Part of what we wish to extend, in particular, is walks' elementarity; below, accordingly, we will tend to assume no more mathematical background of readers than a basic knowledge of the ordinals.
A main apparent exception will be our periodic references to cohomology groups; here these are invoked so essentially as a convenient shorthand, though, that their appearance imposes no real additional requirements upon the reader.
Most particularly, we presume no deep knowledge of walks themselves, and will briefly review what we need of the technique in Section \ref{subsect:classical} below.
For readers interested in learning more of the classical theory, we note that Todorcevic's most informative and influential treatments of the subject include his 1987 article ``Partitioning Pairs of Countable Ordinals'' \cite{todpairs}, his 2007 book \emph{Walks on Ordinals and Their Characteristics} \cite{todwalks}, and his 2012 CRS-Fields-PIMS Prize lecture \cite{todlecture}.
Further developments and applications of walks form too large a literature to adequately survey here; we note only as a few representative post-2007 references the works \cite{RTrans, Lopez-Abad, RZTrans}. Our brief review of the theory will do no justice to the wealth of consistency results involving walks, for the simple reason that a $\mathsf{ZFC}$ emphasis seems to us, in the present context, particularly orienting and suggestive.

We record our notational conventions in Section \ref{subsect:notation}.
Section \ref{subsect:generalizations} records the defining principles of a multidimensional generalization of the classical theory which we will pursue under the name of \emph{higher walks} for the remainder of the paper.
In Section \ref{sect:higherC} we describe the fundamental materials of higher walks, namely \emph{higher $C$-sequences} and \emph{internal walks}.
In Section \ref{sect:basics} we describe degree-$n$ walks' most basic representative, a function $\mathrm{tr}_n$ which takes nondecreasing $(n+1)$-tuples of ordinals as inputs and outputs $n$-branching finite trees, recovering classical walks in its $n=1$ instance.
In Section \ref{sect:Trn}, we describe a fuller upper trace function $\mathrm{Tr}_n$ along with a rho function $\rho_2^n$ which specializes to the classical function $\rho_2^1=\rho_2$, and briefly discuss some rationales for the higher walks forms we're proposing.
In Section \ref{sect:higher_coherence}, we introduce the property of \emph{$n$-coherence} and, using a lower trace function $\mathrm{L}_n$, show the fiber maps of $\rho_2^n$ on $\omega_n$ to possess it (along with the fiber maps of $\mathrm{Tr}_n$ itself, in some looser sense, with implications for any naturally associated rho function); for $n\leq 2$ we show them to possess it on higher cardinals as well under $\square$-like assumptions on the underlying $C$-sequence.
This is the paper's lengthiest section, and readers might profitably restrict their attention to its main results at a first pass.
In Section \ref{sect:explorations} we compute the upper regions of a sample walk $\mathrm{Tr}_2(+,\alpha,\beta,\gamma)$, the better to foreground the combinatorial richness of higher-dimensional walks and some of the distinctive questions which their analyses involve, and we plot these computations in a sequence of accompanying diagrams collected, for ease of reading, in our paper's concluding appendix.
We turn in Section \ref{sect:nontriviality} to \emph{nontrivial} $n$-coherent families of functions, and describe a function $r_2^n$ deriving from $\mathrm{Tr}_n$ whose fiber maps form a nontrivial $n$-coherent family on $\omega_n$; we also discuss nontriviality scenarios for extensions of more classical rho functions.
In Section \ref{sect:further}, we record a notion of \emph{higher-dimensional linear orders} and show how higher walks induce them, via a generalization of classical constructions of a Countryman line. On this point and several others, however, what we report at the moment amounts mainly just to promising avenues for further investigation, prospects which we summarize in a short list of tasks and questions and tasks in our concluding Section \ref{sect:conclusion}.

Readers familiar with \cite{TFOA} will note some overlap of its introduction of higher walks with portions of Sections \ref{subsect:classical}, \ref{subsect:internal_walks}, \ref{sect:basics}, and \ref{sect:Trn} herein. This is to some degree unavoidable. Note, however, that these overlapping treatments carry a different inflection in the present context: here, for one thing, they are conducted with reference to a much more expansive notion of higher $C$-sequence; for another, here the stress is on all $n>0$, not just the representative case of $n=2$.
\subsection{Notational conventions}
\label{subsect:notation}
We like to think of our notations as standard; in matters pertaining most directly to walks, we have endeavored, even in higher dimensions, to follow the organizing conventions of \cite{todwalks}. Further notational conventions are as follows: we sometimes write $A^{(\kappa)}$ for the direct sum of $\kappa$ many copies of an abelian group $A$, and $\mathbb{Z}/2$ for $\mathbb{Z}/2\mathbb{Z}$. We write $\mathrm{Sym}(X)$ and $\mathrm{Alt}(X)$ for the symmetric and alternating groups on a set $X$, respectively, and write $\mathrm{sgn}$ for the \emph{sign} or \emph{signum} function identifiable with the quotient map $\mathrm{Sym}(X)\to\mathrm{Sym}(X)/\mathrm{Alt}(X)\cong (\{-1,1\},\times)$.
All groups $A$ herein are topologically discrete, so that the \emph{locally constant} functions from a space $Y$ to $A$ are precisely the continuous ones.

We denote ordinals by $\xi$ or $\eta$ or an earlier letter of the Greek alphabet, reserving $\kappa$ or $\lambda$ for cardinals. Any of these is tacitly endowed with its order topology, as is standard. For any set $X$ of ordinals, $\mathrm{acc}(X)=\{\alpha<\mathrm{sup}\,X\mid\mathrm{sup}(\alpha\cap X)=\alpha\}$; similarly for $\mathrm{lim}(X)$, but with $\alpha\leq\mathrm{sup}\,X$ instead as the range of definition. We write $\mathrm{cf}(X)$ and $\mathrm{otp}(X)$ for the cofinality and ordertype of such sets $X$, write $\mathrm{Cof}(\kappa)$ for the class of ordinals of cofinality $\kappa$, and for any $k<n<\omega$ let $S^n_k=\mathrm{Cof}(\omega_k)\cap\omega_n$.

We write $[X]^n$ for the collection of size-$n$ subsets of $X$. When $X$ is a set of ordinals, we identify the elements of $[X]^n$ with their increasing enumerations, and hence with elements of the set $X^{[n]}$ of nondecreasing $n$-tuples of $X$, writing $\vec{\alpha}=(\alpha_0,\dots,\alpha_{n-1})$ for a typical element of the latter and $\vec{\alpha}^i$ for the $(n-1)$-tuple formed by deleting the $i^{\mathrm{th}}$ coordinate of $\vec{\alpha}$. Considerations of readability have uniformly prevailed over consistency in our choices between notations $\vec{\alpha}^\frown\vec{\gamma}$, $\vec{\alpha}\vec{\gamma}$, and $(\vec{\alpha},\vec{\gamma})$ for the concatenation of two such tuples.

\section{Classical walks and principles of their generalization}
In Section \ref{subsect:classical}, we briefly review the basic contours of classical walks. This is both for the reader's convenience and to foreground what seem to us walks' most essential features, the better to motivate the project of their generalization.
We compile generalized formulations of these features into a shortlist of desiderata in Section \ref{subsect:generalizations}; it is these which it will be our paper's main business thereafter to pursue.

\subsection{Classical walks}
\label{subsect:classical}

We now describe the fundamentals of walks on the ordinals; except where otherwise indicated, this material is standard and drawn from the references cited in Section \ref{sect:plan} above.

The initiating input is a \emph{$C$-sequence}:

\begin{definition} For any ordinal $\gamma$, a \emph{$C$-sequence on $\gamma$} is a family $\mathcal{C}=\langle C_\beta\mid\beta<\gamma\rangle$ in which each $C_\beta$ is a closed cofinal subset of $\beta$. For concision, we will sometimes term closed cofinal subsets of ordinals \emph{clubs}, and 
we will sometimes call a $C$-sequence in which $\mathrm{otp}(C_\beta)=\mathrm{cf}(\beta)$ for every $\beta<\gamma$ \emph{ordertype-minimal}.
\end{definition}

The assignments $C_\beta:=\beta$, of course, define the most trivial class of examples. In contrast, more interesting examples, and ordertype-minimal $C$-sequences
in particular, will tend to involve some application of the axiom
of choice.\footnote{Since, in principle, all further essential definitions are recursive on the input of a $C$-sequence, the latter may even be thought of as some materialization or delimitation of choice (or of even stronger assumptions) within the theory.}

With respect to a fixed C-sequence $\mathcal{C}=\langle C_\beta\mid\beta<\gamma\rangle$, for any $\alpha\leq\beta<\gamma$ the \emph{upper trace of the walk from $\beta$ down to $\alpha$} is recursively defined as follows:
\begin{align}
\label{eq:Tr}
\mathrm{Tr}(\alpha,\beta)=\{\beta\}\,\cup\,\mathrm{Tr}(\alpha,\min (C_\beta\backslash\alpha)),
\end{align}
with the \emph{boundary condition} that $\mathrm{Tr}(\alpha,\alpha)=\{\alpha\}$ for all $\alpha<\gamma$.
The \emph{walk} from $\beta$ to $\alpha$ is loosely identified with its upper trace, or with the collection of \emph{steps} between successive elements thereof, which is typically pictured as a series of arcs cascading in a downwards left direction, as on the left-hand side of Figure \ref{thepifigure} below.
Since the ordinals are well-founded, this series is finite.\footnote{Hence $C$-sequences, via walks, induce finitary relations between pairs of ordinals not far in spirit from a distance function; for further development of this point, see \cite[Ch. 3]{todwalks}.}
The \emph{number of steps} function $\rho_2$ sends any $\alpha$ and $\beta$ as above to $|\mathrm{Tr}(\alpha,\beta)|-1$. When $\gamma=\omega_1$ and each $C_\beta$ in $\mathcal{C}$ is of minimal possible order-type, the $\rho_2$ \emph{fiber maps} $\varphi_\beta(\,\cdot\,):=\rho_2(\,\cdot\,,\beta):\beta\to\mathbb{Z}$ form a \emph{nontrivial coherent family of functions}. More precisely, under these assumptions \begin{align}\label{c}\varphi_\beta\big|_\alpha - \varphi_\alpha=0\hspace{.8 cm}\textnormal{modulo locally constant functions}\end{align}
for all $\alpha\leq\beta<\gamma$, but there exists no $\varphi:\xi\to\mathbb{Z}$ such that
\begin{align}\label{nt}\varphi\big|_\alpha - \varphi_\alpha=0\hspace{.8 cm}\textnormal{modulo locally constant functions}\end{align}
for all $\alpha<\gamma$ (see \cite[Cor.\ 2.7]{CoOI}, a superficial variation on \cite[Lem.\ 2.4.2, 2.4.3]{todwalks}; proof-sketches of these relations also appear in Sections \ref{subsect:classical_argument} and \ref{sect:nontriviality} below). \emph{Coherence} broadly refers to relations like the first above; \emph{nontriviality} refers to relations like the second.

Complementary to the upper trace is the \emph{lower trace of the walk from $\beta$ to $\alpha$}, which is defined as follows. Enumerate $\mathrm{Tr}(\alpha,\beta)$ in descending order as $$(\beta_0,\beta_1,\dots,\beta_{\rho_2(\alpha,\beta)}=\alpha)$$ and let $\max\,\varnothing=0$. Under the above assumptions, the sequence
$$\mathrm{L}(\alpha,\beta)=\langle\, \max(\bigcup_{i\leq j} \alpha\cap C_{\beta_i})\mid j<\rho_2(\alpha,\beta)\,\rangle$$
is well-defined, ascending towards $\alpha$ from below in tandem with $\mathrm{Tr}(\alpha,\beta)$'s descent to $\alpha$ from above. Related considerations determine the \emph{maximal weight} function $\rho_1$, defined again under the above assumptions as
$$\rho_1(\alpha,\beta)=\max_{i<\rho_2(\alpha,\beta)} |\,\alpha\cap C_{\beta_i}|.$$
Just as for $\rho_2$, when $\gamma=\omega_1$ the fiber maps $\varphi_\beta(\,\cdot\,):=\rho_1(\,\cdot\,,\beta)$ form a family of functions which is nontrivially coherent --- i.e., which follows the pattern of relations in (\ref{c}) and (\ref{nt}) above --- but in this case it is with respect to the modulus of \emph{finitely supported functions} (\emph{mod finite}, hereafter, for short; see Section \ref{subsect:classical_argument} for more on the coherence and nontriviality of $\rho_1$).

This brings us to the first of several summary points: walks engender \emph{multiple forms} of nontrivial coherence, a fact which may be formalized as follows.

\begin{theorem}
\label{thm:cohomological_framing}
For any abelian group $A$ and ordinal $\gamma$ let
\begin{itemize}
\item $\mathcal{F}_A$ denote the presheaf on $\gamma$ given by $U\mapsto\bigoplus_{U} A$, and
\item $\mathcal{A}$ denote the presheaf on $\gamma$ given by $U\mapsto\{\text{locally constant functions }f:U\to A\}$.
\end{itemize}
Then by way of the assignments
$$(\alpha,\beta)\mapsto(\varphi_\beta\big|_\alpha - \varphi_\alpha)\hspace{.8 cm}(\alpha\leq\beta<\gamma),$$
any \emph{mod finite} or \emph{mod locally constant} coherent family
$$\Phi=\langle\varphi_\beta:\beta\to A\mid\beta<\gamma\rangle$$ represents 
a cohomology class in the \v{C}ech cohomology group $\check{\mathrm{H}}^1(\gamma;\mathcal{F}_A)$ or $\check{\mathrm{H}}^1(\gamma;\mathcal{A})$, respectively; such a $\Phi$ is, moreover, nontrivial if and only if its associated class is.
In addition, for all $n> 0$,
$$\check{\mathrm{H}}^n(\gamma;\mathcal{F}_A)\cong\check{\mathrm{H}}^n(\gamma;\mathcal{A})\cong\mathrm{H}^n(\gamma;\mathcal{A}),$$ where the latter denotes the sheaf cohomology of $\gamma$ with respect to $\mathcal{A}$.
In particular, $\mathrm{H}^1(\omega_1;\mathbb{Z})\neq 0$, as witnessed by the nontrivial coherent families $\langle\varphi_\beta\mid\beta<\omega_1\rangle$ deriving, as above, either via $\rho_1$ or $\rho_2$ from any ordertype-minimal $C$-sequence on $\omega_1$.
\end{theorem}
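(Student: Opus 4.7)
The plan is to work throughout with the nested initial-segment cover $\mathcal{U}=\{U_\beta:\beta<\gamma\}$ of $\gamma$, where $U_\beta=\beta$ (so $U_\alpha\cap U_\beta=U_{\min(\alpha,\beta)}$). This makes the \v{C}ech complex $\check{C}^\bullet(\mathcal{U};\mathcal{F})$ extremely concrete: a $p$-cochain is a family of elements $\phi(\beta_0,\dots,\beta_p)\in\mathcal{F}(U_{\beta_0})$ indexed by nondecreasing $(p+1)$-tuples, and the coboundary of a 1-cochain $\phi$, evaluated on a triple $\alpha\leq\beta\leq\tau$, is $\phi(\beta,\tau)|_{U_\alpha}-\phi(\alpha,\tau)+\phi(\alpha,\beta)$. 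Given a coherent family $\Phi$, the cochain $\phi(\alpha,\beta):=\varphi_\beta|_\alpha-\varphi_\alpha$ lies in $\mathcal{F}(U_\alpha)$ by the coherence hypothesis (finite support or local constancy, in the respective cases $\mathcal{F}=\mathcal{F}_A$ or $\mathcal{F}=\mathcal{A}$), and a line of algebra shows $\delta\phi\equiv 0$.

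Next I would verify the triviality correspondence. That $[\phi]=0$ in $\check{\mathrm{H}}^1(\mathcal{U};\mathcal{F})$ means there exist $\psi_\beta\in\mathcal{F}(U_\beta)$ with $\psi_\beta|_\alpha-\psi_\alpha=\varphi_\beta|_\alpha-\varphi_\alpha$ for all $\alpha\leq\beta$; rearranging, the family $\{\varphi_\beta-\psi_\beta\}_{\beta<\gamma}$ is strictly compatible on overlaps, and since pointwise-defined $A$-valued functions glue as a sheaf, it defines a single $\varphi:\gamma\to A$ with $\varphi|_\beta-\varphi_\beta=-\psi_\beta\in\mathcal{F}(U_\beta)$ --- precisely a trivialization of $\Phi$. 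Conversely, a trivializing $\varphi$ yields the 0-cochain $\psi_\beta:=\varphi_\beta-\varphi|_\beta$ whose coboundary is $\phi$. A standard cofinality argument --- ordinals, as LOTS, are hereditarily paracompact, so every open cover refines to a subcover of the form $\mathcal{U}$ --- then identifies $\check{\mathrm{H}}^1(\mathcal{U};\mathcal{F})$ with $\check{\mathrm{H}}^1(\gamma;\mathcal{F})$.

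The threefold identification of cohomologies and the concluding nonvanishing for $\omega_1$ are essentially imported from \cite{CoOI}. For the first, one uses that the natural inclusion $\mathcal{F}_A\hookrightarrow\mathcal{A}$ has cokernel presheaf $\mathcal{Q}$ whose \v{C}ech cohomology on $\gamma$ vanishes in all degrees (sections of $\mathcal{Q}$ over initial segments can be systematically trivialized along a suitable club), yielding via the long exact sequence the isomorphism $\check{\mathrm{H}}^n(\gamma;\mathcal{F}_A)\cong\check{\mathrm{H}}^n(\gamma;\mathcal{A})$ for all $n>0$; the comparison $\check{\mathrm{H}}^n(\gamma;\mathcal{A})\cong\mathrm{H}^n(\gamma;\mathcal{A})$ is then the standard agreement of \v{C}ech and sheaf cohomology for paracompact Hausdorff spaces, applied to the actual sheaf $\mathcal{A}$. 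For the concluding nonvanishing, the nontrivial mod-finite $\rho_1$-family and mod-locally-constant $\rho_2$-family on $\omega_1$, as recalled just above the theorem, represent nontrivial classes in $\check{\mathrm{H}}^1(\omega_1;\mathcal{F}_{\mathbb{Z}})$ and $\check{\mathrm{H}}^1(\omega_1;\mathcal{A})$ respectively, both transporting via the above isomorphisms to a nontrivial class in $\mathrm{H}^1(\omega_1;\mathbb{Z})$. The principal obstacle is the cokernel-vanishing step; the rest is essentially formal.
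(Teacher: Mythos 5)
Your first two paragraphs --- the dictionary between coherent families and \v{C}ech $1$-cocycles for the cover $\mathcal{U}$ by initial segments, and between trivializations and coboundaries --- are correct, and are essentially the content of the cited \cite[Lem.\ 2.22]{CoOI}. (Note that the paper itself gives no proof of this theorem; it is presented as an amalgam of \cite[Thm.\ 2.6.3]{dimords}, \cite[Lem.\ 2.22]{CoOI}, and \cite[Thm.\ 4.1]{Wiegand}, so the real question is whether your third paragraph can substitute for that cited machinery.) As written it cannot. First, ordinals of uncountable cofinality are \emph{not} paracompact: $\omega_1$ is the textbook counterexample (countably compact and noncompact, hence not paracompact), so neither your claim that ordinals are hereditarily paracompact nor your appeal to the standard paracompact comparison of \v{C}ech and sheaf cohomology is available in precisely the cases the theorem is about. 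Relatedly, the cover by initial segments is not cofinal in the refinement ordering on open covers of $\gamma$ (it fails, for instance, to refine a cover of $\omega^2$ by order-bounded open sets), so the identification of $\check{\mathrm{H}}^n(\mathcal{U};\mathcal{F})$ with $\check{\mathrm{H}}^n(\gamma;\mathcal{F})$ is not a "standard cofinality argument." For $n=1$ these points can be patched, but for general $n$ the agreement of the $\mathcal{U}$-indexed complex with \v{C}ech and with sheaf cohomology is exactly the substantive content of Wiegand's theorem.

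Second, and more decisively, there is no inclusion $\mathcal{F}_A\hookrightarrow\mathcal{A}$: a finitely supported function taking a nonzero value at a limit point of its domain is not locally constant there, so $\mathcal{F}_A(U)\not\subseteq\mathcal{A}(U)$ (nor conversely). Your cokernel-and-long-exact-sequence strategy for $\check{\mathrm{H}}^n(\gamma;\mathcal{F}_A)\cong\check{\mathrm{H}}^n(\gamma;\mathcal{A})$ therefore cannot get started, and this is not incidental: the paper's footnote to the theorem states explicitly that this isomorphism is induced at the level of cochain complexes not by any morphism but by a cochain homotopy (this is the point of \cite[Lem.\ 2.22]{CoOI}). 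So the step you yourself single out as "the principal obstacle" is indeed the crux, but the route you propose to it is blocked at the first move, and the paracompactness shortcut you lean on for the remaining identification is unavailable on $\omega_1$. The concluding nonvanishing paragraph is fine once these identifications are in place.
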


The theorem amalgamates \cite[Thm.\ 2.6.3]{dimords}, \cite[Lem.\ 2.22]{CoOI}, and \cite[Thm.\ 4.1]{Wiegand}. The $n=1$ instance of the leftmost isomorphism in its third displayed equation should be read as: \emph{wherever there is a $\rho_2$-type function, there is a $\rho_1$-type function as well, and vice versa}; since the isomorphism holds for all $n$, this will also apply for higher-dimensional analogues of these functions, insofar as we succeed in defining them.\footnote{That this isomorphism is induced, at the level of the cochain complexes, not by an isomorphism but by a cochain homotopy underscores the sometimes singular utility of homological perspectives in this area. To be clear, by a ``$\rho_2$-type function'' above we mean an $r:[\gamma]^2\to A$ whose fiber maps nontrivially cohere modulo locally constant functions; similarly for a ``$\rho_1$-type function'' but with respect to the modulus of finitely supported functions.}


As the above might suggest, walks and nontrivial coherence relations exhibit a particular affinity for the ordinal $\omega_1$, one, indeed, which it would be difficult to overstate.
Here we should note that while we have focused, for economy of presentation, on the rho functions $\rho_1$ and $\rho_2$, there exist well-studied $\rho_0$ and $\rho_3$ functions as well; in each case, the associated family of fiber maps exhibits nontrivial coherence relations determining that most characteristic of combinatorial structures on $\omega_1$, namely, an Aronszajn tree.

\begin{theorem}
Let $\mathcal{C}$ be an ordertype-minimal $C$-sequence on $\omega_1$. Then for any $i\in\{0,1,2,3\}$,
$$T(\rho_i):=(\{\rho_i(\,\cdot\,,\beta)\big|_\alpha\,\mid\alpha\leq\beta<\omega_1\},\sqsubseteq)$$ is an Aronszajn tree.
\end{theorem}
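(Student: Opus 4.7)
The plan is to check, for each $i\in\{0,1,2,3\}$, the four defining features of an Aronszajn tree on $T(\rho_i)$: that it is a tree under $\sqsubseteq$, of height exactly $\omega_1$, with every level countable, and with no branch of length $\omega_1$. The tree and height properties are immediate from the construction, since $\sqsubseteq$ is end-extension on a collection of $\mathbb{Z}$-valued functions with countable ordinal domains, and since $\rho_i(\cdot,\alpha)|_\alpha = \rho_i(\cdot,\alpha)$ sits at level $\alpha$ for every $\alpha<\omega_1$. The absence of a cofinal branch will follow from the nontriviality of the family $\langle\rho_i(\cdot,\beta)\mid\beta<\omega_1\rangle$ already recorded above: any cofinal branch would assemble into a function $\varphi:\omega_1\to\mathbb{Z}$ for which $\varphi|_\alpha - \rho_i(\cdot,\alpha)$ vanishes modulo the relevant class (mod finite for $\rho_1$, mod locally constant for $\rho_2$, and analogously for $\rho_0$, $\rho_3$) at every $\alpha<\omega_1$, and such a $\varphi$ is precisely a trivialization of that family, contradicting Theorem \ref{thm:cohomological_framing}.

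The main technical step is to verify that each level of $T(\rho_i)$ is countable. Here the plan is to exhibit, for each $\alpha<\omega_1$, a finitary parametrization of the restrictions $\rho_i(\cdot,\beta)|_\alpha$ as $\beta$ ranges over $[\alpha,\omega_1)$. For $\rho_2$, I would exploit ordertype-minimality of $\mathcal{C}$ as follows. Enumerating $\mathrm{Tr}(\alpha,\beta)$ as $(\beta_0,\dots,\beta_k=\alpha)$, each $C_{\beta_j}$ has order type at most $\omega$, so each $C_{\beta_j}\cap\alpha$ is a finite set; let $F=\bigcup_{j<k}(C_{\beta_j}\cap\alpha)$, which is also finite. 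For $\xi\in\alpha$ above $\max F$, the walk from $\beta$ down to $\xi$ coincides with $\mathrm{Tr}(\alpha,\beta)$ until it reaches $\alpha$ and then continues as $\mathrm{Tr}(\xi,\alpha)$, giving $\rho_2(\xi,\beta) = \rho_2(\alpha,\beta) + \rho_2(\xi,\alpha)$. For $\xi\leq\max F$ the two walks diverge earlier, but the first entry point into $\alpha$ on the walk from $\beta$ down to $\xi$ must lie in the finite set $F\cup\{\alpha\}$, after which the walk is governed by that entry point and $\xi$ alone. Consequently $\rho_2(\cdot,\beta)|_\alpha$ is fully determined by $F$ together with the finite tuple of step-counts from $\beta$ to each element of $F\cup\{\alpha\}$, and this parametrization has countable image as $\beta$ varies. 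The argument for $\rho_1$ is direct from mod-finite coherence, and I would expect similar but slightly more intricate bookkeeping to handle $\rho_0$ and $\rho_3$.

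This finitary parametrization is the principal obstacle: reconciling the ostensibly coarse mod-locally-constant coherence of $\rho_2$ (and of $\rho_0$, $\rho_3$) with the countability required at each level demands a careful accounting of precisely what walk data is pinned down by the ordertype-minimality of $\mathcal{C}$. Once this accounting is in hand, the remaining Aronszajn properties of $T(\rho_i)$ assemble quickly from the coherence and nontriviality already documented.
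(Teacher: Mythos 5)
Your outline is the standard one, and the paper itself offers no proof of this theorem (it is stated as classical, with the references of Section \ref{sect:plan} standing in for an argument), so there is nothing to compare against except the literature: tree structure and height are immediate, uncountable branches are ruled out by nontriviality exactly as you say, and countable levels come from a finitary parametrization of $\rho_i(\,\cdot\,,\beta)\big|_\alpha$ over $\beta\geq\alpha$. All of that is right in outline, and the branch argument in particular is correctly routed through coherence: the union $\varphi$ of a branch satisfies $\varphi\big|_\alpha=\rho_i(\,\cdot\,,\beta_\alpha)\big|_\alpha$ exactly, and coherence then converts this to a trivialization, contradicting Theorem \ref{thm:cohomological_framing}.

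The one step that is not quite right as stated is the level-counting parametrization for $\rho_2$. The restriction $\rho_2(\,\cdot\,,\beta)\big|_\alpha$ is \emph{not} determined by the unstructured set $F=\bigcup_{j<k}(C_{\beta_j}\cap\alpha)$ together with the step-counts from $\beta$ to the elements of $F\cup\{\alpha\}$: for $\xi<\alpha$ the entry point of $\mathrm{Tr}(\xi,\beta)$ into $\alpha$ is $\min(C_{\beta_j}\setminus\xi)$ for the \emph{least} $j$ with $C_{\beta_j}\cap[\xi,\alpha)\neq\varnothing$, and this need not be $\min(F\setminus\xi)$ (e.g.\ if $C_{\beta_0}\cap\alpha=\{5\}$ and $C_{\beta_1}\cap\alpha=\{3,7\}$, then for $\xi=2$ the walk exits through $5$, not $3$). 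So two values of $\beta$ with the same $F$ and the same step-counts can yield different restrictions. The repair is immediate: parametrize instead by the integer $k=\rho_2(\alpha,\beta)$ together with the indexed sequence $\langle C_{\beta_j}\cap\alpha\mid j<k\rangle$ of finite subsets of $\alpha$, which determines $j(\xi)$, the entry point $\nu(\xi)$, and hence $\rho_2(\xi,\beta)=j(\xi)+1+\rho_2(\xi,\nu(\xi))$ outright; there are only countably many such sequences, so the level is countable. (Equivalently, use the full lower trace $\mathrm{F}(\alpha,\beta)$ of Section \ref{subsect:further_analysis}, for which the entry point \emph{is} $\min(\mathrm{F}(\alpha,\beta)\setminus\xi)$ and your parametrization becomes literally correct.) Finally, the cases $i=0,3$ are genuinely deferred rather than done in your write-up — for $\rho_0$ in particular the codomain is not a group and the nontriviality/no-branch step needs its own (standard, but different) argument — so a complete proof would still owe those details.
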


Consider next, for $i\in\{1,2\}$, the ordering $\triangleleft_i$ on the countable ordinals given by $\alpha\triangleleft_i\beta$ if and only if $\alpha\neq\beta$ and \begin{itemize}
\item $\rho_i(\,\cdot\,,\alpha)=\rho_i(\,\cdot\,,\beta)\big|_\alpha$, or
\item $\rho_i(\xi,\alpha)<\rho_i(\xi,\beta)$ at the least ordinal $\xi$ at which $\rho_i(\,\cdot\,,\alpha)$ and $\rho_i(\,\cdot\,,\beta)$ disagree.
\end{itemize}
This is the natural \emph{branch ordering} of the tree $T(\rho_i)$ --- and it is the most basic example of a \emph{Countryman line}, i.e., of an uncountable linear order $L$ whose square can be covered by countably many chains.\footnote{Similarly for $T(\rho_0)$ and, under mild further $\mathsf{ZFC}$ assumptions $\mathcal{C}$, for $T(\rho_3)$ as well; see \cite[\S 2]{todwalks}. For the case of $i=2$, see \cite[Thm.\ 4.27]{PengThesis}. Such $L$, and indeed such $T(\rho_i)$, play a critical role within the class of uncountable linear orders; see \cite{Moore_Five}.}

We could go on like this, applying walks to construct $L$-spaces \cite{LSpace}, multiple witnesses to $\omega_1\not\to[\omega_1]^2_\omega$ \cite{todpairs}, etc.: one may even, in the process, come to regard walks as ``the universal fun on $\omega_1$'', as the fun through which all other fun on $\omega_1$ factors:
\medskip

\begin{quote}
Despite its simplicity, this structure [i.e., the walks apparatus] can be used to derive virtually all known other structures that have been defined so far on $\omega_1$. \cite[p.\ 7]{todwalks}
\end{quote}
\medskip

\noindent This is \emph{the very good news} about walks and $\omega_1$. Equally provocative is \emph{the less good news} about walks and $\omega_1$, namely the fact that much of the best news about them ends there. More precisely, although $C$-sequences will determine walks and rho functions on any larger class of ordinals, without assumptions supplementary to the \textsf{ZFC} axioms, the structures they induce tend to lack punch:
\medskip

\begin{quote}
The first uncountable cardinal is the only cardinal on which the theory can be carried out without relying on additional axioms of set theory. (\cite[p.\ 7]{todwalks})
\end{quote}
\medskip

\noindent All of this accords with a view, common among set theorists, of the $\mathsf{ZFC}$ combinatorics of $\omega_1$ as much more rich and determinate than that of any larger $\kappa$. Much of the interest of higher walks stems from how, drawing on precisely this background, they trouble this view.

The better to underscore these points, we close this subsection with a representative theorem; this will also afford us occasion to recall one last main actor in the theory: the principle $\square(\kappa)$, which traces to \cite{Jensen} by way of \cite{todpairs}.

\begin{definition}
\label{def:square}
Let $\kappa$ be a regular uncountable cardinal. $\square(\kappa)$ is the assertion that there exists a $C$-sequence $\mathcal{C}=\langle C_\beta\mid\beta<\kappa\rangle$ in which:
\begin{enumerate}
\item $C_\beta\cap\alpha = C_\alpha$ for all $\beta<\kappa$ and $\alpha\in \mathrm{acc}(C_\beta)$, but
\item there exists no club $C\subseteq\kappa$ such that $C\cap\alpha = C_\alpha$ for all $\alpha\in \mathrm{acc}(C)$.
\end{enumerate}
\end{definition}

Note that $\square(\aleph_1)$ follows from the $\mathsf{ZFC}$ existence of  ordertype-minimal $C$-sequences (i.e., \emph{ladder systems}) on $\omega_1$.
Now recall from Theorem \ref{thm:cohomological_framing} that the existence of nontrivial coherent $A$-valued families on an ordinal $\gamma$ may be more concisely expressed by the equation $\mathrm{H}^1(\gamma;\mathcal{A})\neq 0$. For the \emph{P-Ideal Dichotomy}, see \cite{todPID}; here it suffices to recall that the principle follows, for example, from the Proper Forcing Axiom.

\begin{theorem}
\label{thm:square_PID}
Let $A$ be a nontrivial abelian group, and let $\gamma$ be an ordinal of cofinality $\kappa$.
\begin{itemize}
\item If $\square(\kappa)$ holds, then $\mathrm{H}^1(\gamma;\mathcal{A})\neq 0$.
\item If the P-Ideal Dichotomy holds, then $\mathrm{H}^1(\gamma;\mathcal{A})\neq 0$ if and only if $\kappa=\aleph_1$.
\end{itemize}
\end{theorem}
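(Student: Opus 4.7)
The plan is to reduce both bullets, via Theorem \ref{thm:cohomological_framing}, to the existence of a nontrivial mod locally constant coherent $A$-valued family on $\kappa$. A continuous cofinal embedding $e:\kappa\hookrightarrow\gamma$ with club image makes this reduction routine in both directions: push-forward along $e$ transports a nontrivial coherent family on $\kappa$ to one on $\gamma$, and pull-back along $e$ goes the other way, with the restriction of any trivialization of a push-forward back to $e[\kappa]$ trivializing the original family. It thus suffices to treat the case $\gamma=\kappa$.

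For the first bullet, let $\mathcal{C}=\langle C_\beta\mid\beta<\kappa\rangle$ witness $\square(\kappa)$ and perform the walks construction of Section \ref{subsect:classical} to define $\rho_2:[\kappa]^2\to\omega$. I would show that the fiber maps $\varphi_\beta=\rho_2(\,\cdot\,,\beta)$, composed with any nonzero homomorphism $\mathbb{Z}\to A$, form a nontrivial mod locally constant coherent $A$-valued family; if $A$ is torsion one instead uses an $A$-valued cocycle directly modeled on $\mathcal{C}$. Coherence exploits clause (1) of Definition \ref{def:square}: the identity $C_\beta\cap\alpha=C_\alpha$ for $\alpha\in\mathrm{acc}(C_\beta)$ forces every walk from $\beta$ down to $\xi<\alpha$ to step through $\alpha$, so $\rho_2(\xi,\beta)=\rho_2(\xi,\alpha)+\rho_2(\alpha,\beta)$ on a tail of $\xi<\alpha$, and standard walks bookkeeping upgrades this to mod locally constant equality of $\varphi_\beta|_\alpha$ with $\varphi_\alpha$ for all $\alpha\leq\beta$. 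Nontriviality follows from clause (2) by contraposition: a putative trivialization $\varphi:\kappa\to A$ would, via a pressing-down argument on the jumps of the locally constant corrections $\varphi|_\alpha-\varphi_\alpha$, produce a club $C\subseteq\kappa$ with $C\cap\alpha=C_\alpha$ for all $\alpha\in\mathrm{acc}(C)$, contradicting non-threadability.

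For the second bullet, the easy direction ($\kappa=\aleph_1$) is immediate from Theorem \ref{thm:cohomological_framing} applied to any ordertype-minimal $C$-sequence on $\omega_1$, whose existence is a ZFC fact. For the hard direction, assume PID and $\mathrm{H}^1(\kappa;\mathcal{A})\neq 0$; I must derive $\kappa=\aleph_1$. The case $\kappa=\omega$ is ruled out without PID: a cofinal $\omega$-sequence in $\kappa$ permits inductive construction of a trivialization, absorbing the locally constant error at each stage. The case $\kappa\geq\aleph_2$ is where the main obstacle lies: I would appeal to the theorem, due essentially to Todorcevic (see \cite{todPID}), that PID renders every mod locally constant coherent family on a regular $\kappa\geq\aleph_2$ trivial. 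The core idea is to associate to a candidate nontrivial coherent family $\Phi$ a P-ideal $\mathcal{I}$ of countable subsets of $\kappa$ on which $\Phi$ admits a local trivialization; PID's dichotomy applied to $\mathcal{I}$ then produces either an uncountable $\mathcal{I}$-positive set, which glues the local trivializations into a global one and contradicts nontriviality, or a decomposition of $\kappa$ into countably many $\mathcal{I}$-orthogonal pieces, which is incompatible with coherence at stationarily many levels. Making the construction of this P-ideal precise—so that both horns of PID bite—is the delicate step of the whole argument.
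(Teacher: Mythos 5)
Your overall architecture is the paper's: reduce to $\gamma=\kappa$ by the internal-walks transfer of Section \ref{subsect:internal_walks}, obtain the first bullet from the walks apparatus over a $\square(\kappa)$-sequence, and invoke Todorcevic's P-Ideal Dichotomy theorem for the second (the paper itself only cites \cite[Thm.\ 6.3.2, Lem.\ 7.1.10]{todwalks} and \cite[Thm.\ 3.16]{CoOI} at this point). Two steps of your sketch, however, would fail as written. First, clause (1) of Definition \ref{def:square} does \emph{not} force the walk from $\beta$ down to $\xi<\alpha$ to pass through $\alpha$: if $\alpha\in\mathrm{acc}(C_{\beta_i})$ for the last trace point $\beta_i$ above $\alpha$, then $C_{\beta_i}\cap\alpha=C_\alpha$ is cofinal in $\alpha$, so the walk steps from $\beta_i$ directly to $\min(C_{\beta_i}\backslash\xi)=\min(C_\alpha\backslash\xi)<\alpha$, skipping $\alpha$ entirely; coherence holds because this continuation \emph{coincides} with the walk from $\alpha$, not because of your additive identity $\rho_2(\xi,\beta)=\rho_2(\xi,\alpha)+\rho_2(\alpha,\beta)$ (which is false here already when $\mathrm{Tr}(\alpha,\beta)=\{\beta,\alpha\}$). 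This is precisely the ``subtlety'' the paper flags and works out in Section \ref{subsubsect:square}: one gets constancy of the difference only on an open interval $(\eta',\alpha)$, i.e.\ coherence modulo locally \emph{semi-}constant functions, and Lemma \ref{lem:ntc_conversion} is then needed to recover the mod locally constant statement.

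Second, postcomposing the $\rho_2$ fiber maps with a nonzero homomorphism $\mathbb{Z}\to A$ preserves coherence but not nontriviality: a trivialization of the image family need not lift back to $\mathbb{Z}$, and for $A=\mathbb{Z}/2$ your family is essentially $\rho_2\bmod 2$, whose nontriviality requires control of the \emph{parity} of $\rho_2$ on products of unbounded sets rather than mere unboundedness. The clause ``if $A$ is torsion one instead uses an $A$-valued cocycle directly modeled on $\mathcal{C}$'' is where the real work for general $A$ lives, and it is left entirely unspecified. Relatedly, the nontriviality step should run through Todorcevic's characterization of trivial $C$-sequences via the strong unboundedness of $\rho_2$ (a trivialization plus pigeonhole bounds $\rho_2$ on a product of unbounded sets), rather than a pressing-down extraction of a thread: as the paper's footnote notes, nontrivial coherence is strictly weaker than $\square(\kappa)$, so any argument that recovers a thread from a trivialization must exploit the specific family at hand and cannot be taken for granted.
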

The first item follows from \cite[Thm.\ 6.3.2, Lem.\ 7.1.10]{todwalks}, together with the argument of Subsection \ref{subsect:internal_walks}; there is a subtlety in this deduction, though, which we will address when we revisit its underlying coherence mechanism in Section \ref{sect:higher_coherence}.
The second item is also due to Todorcevic; see \cite[Thm.\ 3.16]{CoOI} for a proof.\footnote{Alert readers of Theorem \ref{thm:square_PID} might next wonder whether the existence of a nontrivial coherent family on a cardinal $\kappa$ is tantamount to the existence of a $\square(\kappa)$-sequence. It is a strictly weaker assumption; see \cite[Rmk.\ 3.41]{CoOI}.}

\subsection{Generalizations}
\label{subsect:generalizations}
Let us summarize the preceding discussion in the following six fundamental points:
\begin{enumerate}[label=(1.\arabic*)]
\item The arguments of classical walks are pairs $\alpha\leq\beta$
of ordinals.

\item Walks associate to each such pair a finite set $\mathrm{Tr}(\alpha,\beta)$ in a manner which is recursive on the input of a \emph{C-sequence} $\mathcal{C}$. 

\item \emph{Rho functions} $\rho_i(\alpha,\beta)$ record basic features of these walks.

\item Mild conditions on $\mathcal{C}$ entail \emph{coherence relations} on each of \emph{the families} $\langle\rho_i(\,\cdot\,,\beta):\beta\to A\mid \beta\in\omega_1\rangle$ \emph{of fiber maps of the rho functions}. (These conditions are ``mild'' in the sense, in particular, that they are easily arranged in any model of the $\mathsf{ZFC}$ axioms.)

\item Further mild conditions on $\mathcal{C}$ entail the \emph{nontriviality} of such families.

\item Principles like $\square(\kappa)$ furnish $C$-sequences $\mathcal{C}$ whereby the phenomena of items (1.4) and (1.5) extend to cardinals $\kappa>\omega_1$.
\end{enumerate}
As we have seen, items (1.4) and (1.5) admit the following rephrasing:
\begin{enumerate}[label=(1.\arabic*)]
\setcounter{enumi}{3}
\item For standard $\mathsf{ZFC}$ choices of $C$-sequence $\mathcal{C}$, each of the induced families $\Phi_i=\langle\rho_i(\,\cdot\,,\beta):\beta\to \mathbb{Z}\mid \beta\in\omega_1\rangle$ $(i\in\{1,2,3\})$ represents a \emph{$1$-cocycle} in the cochain complex defining the cohomology of $\omega_1$ with respect to a natural choice of presheaf $\mathcal{P}_i$.\footnote{For $i=3$, let $\mathcal{P}_i=\mathcal{F}_\mathbb{Z}$, just as for $i=1$; the case of $i=0$, from which the other rho functions may be seen as deriving, is subtler, for while $\rho_0$ doesn't itself admit so neat an algebraic framing, it plainly exhibits more abstract (nontrivial) coherence relations, generalizations of which are discussed in Section \ref{subsect:further_analysis}.}
Briefly: $[\Phi_i]\in \check{\mathrm{H}}^1(\omega_1;\mathcal{P}_i)$ --- and $[\Phi_2]\in \mathrm{H}^1(\omega_1;\mathcal{A})$ for $\mathcal{A}=\mathbb{Z}$, in particular.
\item For such $\mathcal{C}$, these cocycles are each nontrivial: $[\Phi_i]\neq 0$.
\end{enumerate}
The remainder of this paper traces a view of the foregoing as merely the $n=1$ instance of a more multidimensional walks apparatus in which the above points generalize to $n\geq 1$ as follows.
\begin{enumerate}[label=(n.\arabic*)]
\item The arguments of \emph{$n$-dimensional walks} are nondecreasing $(n+1)$-tuples $\vec{\alpha}$ of ordinals.
\item \emph{Higher walks} associate to each such argument a finite set $\mathrm{Tr}_n(\vec{\alpha})$ in a manner which is recursive on the input of a \emph{higher $C$-sequence $\mathcal{C}$}.
\item \emph{Higher rho functions} $\rho_i^n(\,\cdot\,)$ articulate elementary features of these higher-dimensional
walks via definitions generalizing those of the classical rho functions $\rho_i(\,\cdot\,)$ ($=\rho^1_i(\,\cdot\,)$).
\item For standard $\mathsf{ZFC}$ choices of higher $C$-sequence $\mathcal{C}$, the \emph{families of fiber maps} $$\Phi_i=\langle\rho_i^n(\,\cdot\,,\vec{\beta})\mid \vec{\beta}\in [\omega_n]^{n}\rangle$$
are \emph{$n$-coherent} in the way in which cohomological considerations lead us to expect.
More briefly, $[\Phi_i]\in\check{\mathrm{H}}^n(\omega_n;\mathcal{P}_i)$; such families, in other words, are naturally viewed as $n$-cocycles.
\item For such $\mathcal{C}$ and suitable $i$, these families $\Phi_i$ are \emph{nontrivial}, again in the way in which cohomological perspectives lead us to expect. Briefly: the cohomology classes $[\Phi_i]$ --- and suitable sheaf cohomology groups $\mathrm{H}^n(\omega_n;\mathcal{A})$, in particular --- are nonzero.
\item The phenomena of (n.4) and (n.5) extend to higher cardinals $\kappa$ under assumptions like $\square(\kappa)$.
\end{enumerate}
In the following sections, we will describe functions which amply satisfy items (n.1) through (n.4) and the first half of (n.6). Item (n.5) will be satisfied as well, but, as the qualifier ``suitable'' suggests, in a weaker and more tentative manner than we might ultimately hope; whether it holds in stronger senses is, broadly, this subject's most fundamental open question.
More precisely, we will show that the fiber maps of generalizations $r_2^n$ of \emph{non-classical} rho functions $r_2$ on $\omega_1$ do define nontrivial $n$-coherent families of functions witnessing that $\mathrm{H}^n(\omega_n;\mathcal{A})\neq 0$ for $A=\mathbb{Z}^{(\omega_n)}$ or $(\mathbb{Z}/2)^{(\omega_n)}$ (hereabouts, we mean \emph{rho function} in the extended sense of: \emph{any function recursively deriving from the system of walks induced by any given $C$-sequence}).
We will tend to focus nevertheless on the generalized \emph{classical} rho functions $\rho_i^n$, and on $\rho_2^2$ in particular, both for their relative transparency and for the larger questions they involve, not least of which is that of whether $\mathrm{H}^n(\omega_n;\mathbb{Z})\neq 0$ for all $n\in\omega$ is a $\mathsf{ZFC}$ theorem.

Let us record without delay the function $r_2$, along with its simpler variant $r_1$; their discussion will naturally open onto one last useful word of background.

\begin{definition}
\label{def:r2}
For any given ordinal $\delta$ and $C$-sequence $\mathcal{C}$ on $\delta$, define $r_1:[\delta]^2\to [\delta]^{<\omega}$ by
$$r_1(\alpha,\gamma)=\{\beta<\gamma\mid\text{max L}(\beta,\gamma)=\alpha\}.$$
Relatedly, define $r_2:[\delta]^2\to [\delta^2]^{<\omega}$ by
$$r_2(\alpha,\gamma)=\{(\beta,\text{min Tr}(\beta,\gamma)\backslash(\beta+1))\mid\beta<\gamma\text{ and max L}(\beta,\gamma)=\alpha\}.$$
\end{definition}

Observe that $\gamma=\bigsqcup_{\alpha<\gamma}r_1(\alpha,\gamma)$; put differently, $r_1(\,\cdot\,,\gamma)$ partitions $\gamma$ into  a $\gamma$-indexed family of finite (and possibly empty) sets. Similarly, viewing the steps $\{\beta_i,\beta_{i+1}\}$ of walks from $\gamma$ down to lower ordinals $\beta$ as the edges of a graph $G_\gamma$ on $\gamma$, observe that $r_2(\,\cdot\,,\gamma)$ partitions the edges of $G_\gamma$ into a $\gamma$-indexed family of finite (and possibly empty) paths.
Note next that the symmetric difference operation $\Delta$ endows the codomain of either $r_i$ with the structure of an abelian group.

\begin{theorem}
\label{thm:ri}
For any ordertype-minimal $C$-sequence on $\omega_1$ and $i\in\{1,2\}$, the family
$$\Phi_i=\langle r_i(\,\cdot\,,\gamma):\gamma\to[(\omega_1)^i]^{<\omega}\mid\gamma<\omega_1\rangle$$
is nontrivially coherent mod finite.
Each, in other words, witnesses the nonvanishing of $\mathrm{H}^1(\omega_1;\mathcal{A})$ for $A=([\omega_1]^{<\omega},\Delta)$ or, equivalently, for $A=(\mathbb{Z}/2)^{(\omega_1)}$.
\end{theorem}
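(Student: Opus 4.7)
My plan is to prove coherence and nontriviality separately; given these, the cohomological rephrasing follows directly from Theorem~\ref{thm:cohomological_framing} together with the canonical isomorphism $([\omega_1]^{<\omega},\Delta)\cong(\mathbb{Z}/2)^{(\omega_1)}$.

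For coherence, my tool is the classical \emph{factoring lemma} for walks on $\omega_1$: for any $\alpha\leq\gamma<\omega_1$ there is a finite set $E\subseteq\alpha$ such that $\mathrm{Tr}(\eta,\gamma)=\mathrm{Tr}(\alpha,\gamma)\cup\mathrm{Tr}(\eta,\alpha)$ for every $\eta\in\alpha\setminus E$ --- a standard $\mathsf{ZFC}$-consequence of the ordertype-minimality of $\mathcal{C}$. From this identity I read off, for each such $\eta$, the pointwise equalities $\max L(\eta,\gamma)=\max L(\eta,\alpha)$ and $\min \mathrm{Tr}(\eta,\gamma)\setminus(\eta+1)=\min \mathrm{Tr}(\eta,\alpha)\setminus(\eta+1)$, so that any discrepancy between $r_i(\xi,\gamma)$ and $r_i(\xi,\alpha)$ is confined to finitely many $\xi<\alpha$: those arising from $\eta\in E$, from the upper trace $\mathrm{Tr}(\alpha,\gamma)$ itself, or from the finitely many $\eta\in[\alpha,\gamma)$ whose lower-trace maxima happen to land below $\alpha$. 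This yields the required mod-finite coherence for both $\Phi_1$ and $\Phi_2$.

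For nontriviality I argue by contradiction. Suppose $r:\omega_1\to[\omega_1^i]^{<\omega}$ trivializes $\Phi_i$, i.e., the set $\{\xi<\alpha:r(\xi)\neq r_i(\xi,\alpha)\}$ is finite for every $\alpha<\omega_1$. Since each $r_i(\cdot,\alpha)$ partitions the ordinal $\alpha$ (for $i=1$) or the edges of the walk-graph on $\alpha$ (for $i=2$), this forces $r$ to agree with the local partition outside a finite set $G_\alpha\subseteq\alpha$; in particular, for every $\eta\in\alpha\setminus G_\alpha$ the value $\max L(\eta,\alpha)$ (together with $\min \mathrm{Tr}(\eta,\alpha)\setminus(\eta+1)$ when $i=2$) is pinned down by $r$ alone, independently of $\alpha$. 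A Fodor's-Lemma pressdown on the regressive map $\alpha\mapsto\sup G_\alpha$ yields a stationary $S\subseteq\omega_1$ along which $\sup G_\alpha$ is bounded by some fixed $\eta_0$, so that for every $\eta>\eta_0$ the function $\alpha\mapsto\max L(\eta,\alpha)$ is forced to be constant on $S\cap(\eta,\omega_1)$ --- contradicting the classical $\mathsf{ZFC}$-theorem that, under the ordertype-minimality of $\mathcal{C}$, no such constancy is possible on a stationary set.

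The main obstacle is this last non-stabilization fact; it is the essential combinatorial content of the nontriviality half and will likely be proven by the same species of pigeonhole/diagonalization that underlies the classical nontriviality of $\rho_1$ and $\rho_2$ (cf.\ \cite[Lem.\ 2.4.2--2.4.3]{todwalks}), exhibiting, inside any candidate stationary $S$, two ordinals $\alpha_1,\alpha_2>\eta$ whose walks down to $\eta$ diverge in precisely the manner that perturbs $\max L(\eta,\cdot)$ off any preordained value. Once this non-stabilization is secured, the contradiction closes, and the cohomological reformulation stated in the theorem is immediate from Theorem~\ref{thm:cohomological_framing}.
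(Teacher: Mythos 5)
Your nontriviality argument has a genuine gap at its final step, and it is precisely the step carrying the combinatorial content. You reduce everything to the claim that $\alpha\mapsto\max\mathrm{L}(\eta,\alpha)$ cannot be constant on a stationary set, calling this a ``classical $\mathsf{ZFC}$-theorem.'' For a fixed $\eta$ this is false: that function takes values below $\eta$, so by the pigeonhole principle it \emph{is} constant on a stationary subset of $(\eta,\omega_1)$. What a trivialization $r$ would actually yield is a \emph{simultaneous} prediction, for all $\eta$ at once, of $\max\mathrm{L}(\eta,\alpha)$ by the single family of finite sets $\langle r(\xi)\mid\xi<\omega_1\rangle$, and the impossibility of that is essentially the theorem being proven; deferring it to ``the same species of pigeonhole/diagonalization'' leaves the proof unfinished. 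The contradiction the paper extracts is of a sharper kind and runs through the \emph{finiteness of the values} $\varphi(\xi)$, which your sketch never engages. Concretely: if $\eta\in C_\gamma$ is non-maximal, then its successor $\beta=\min C_\gamma\setminus(\eta+1)$ satisfies $\mathrm{Tr}(\beta,\gamma)=\{\gamma,\beta\}$ and $\max\mathrm{L}(\beta,\gamma)=\eta$, so $\beta\in r_1(\eta,\gamma)$ and hence $\beta\in\varphi(\eta)$ once $\varphi$ and $r_1(\,\cdot\,,\gamma)$ agree at $\eta$; after pressing down on the agreement thresholds $\eta_\gamma$, one locates a single $\xi$ at which $\{\min C_\gamma\setminus(\xi+1)\mid\gamma\in S,\ \xi\in C_\gamma\}$ is infinite, forcing $\varphi(\xi)$ to be an infinite set. (The paper also first reduces $i=2$ to $i=1$ by projecting a putative trivialization of $\Phi_2$ onto first coordinates, so only $\Phi_1$ requires this argument.)

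Two secondary problems. First, your ``factoring lemma'' is false as stated: the identity $\mathrm{Tr}(\eta,\gamma)=\mathrm{Tr}(\alpha,\gamma)\cup\mathrm{Tr}(\eta,\alpha)$ holds for $\eta$ in the final segment $(\max\mathrm{L}(\alpha,\gamma),\alpha]$, whose complement in $\alpha$ is bounded but typically infinite; for $\eta$ below $\max\mathrm{L}(\alpha,\gamma)$ the walk from $\gamma$ need not pass through $\alpha$ at all. The correct mediating device for mod finite coherence is the finite full lower trace $\mathrm{F}(\alpha,\gamma)$ of Section \ref{subsect:further_analysis}, through which the walks from both $\gamma$ and $\alpha$ down to every $\eta\leq\alpha$ factor. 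Second, in your reduction the finite exceptional set $G_\alpha$ consists of bad \emph{arguments} $\xi$ of $r_i(\,\cdot\,,\alpha)$, not bad $\eta$'s; what agreement off $G_\alpha$ pins down for a given $\eta$ is only that $\max\mathrm{L}(\eta,\alpha)\in G_\alpha\cup\{\xi\mid\eta\in r(\xi)\}$, which is weaker than the constancy you assert (this is repairable, since $\bigcup_{\xi\in G_\alpha}r_1(\xi,\alpha)$ is finite, but it needs saying).
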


\begin{proof}
The second assertion follows from the first one by way of Theorem \ref{thm:cohomological_framing}.
The coherence of these families follows from standard arguments which we review in Section \ref{subsect:classical_argument}.
Since a trivialization of $\Phi_2$ would, via first-coordinate projections, induce a trivialization of $\Phi_1$, the proof reduces to showing that the latter is nontrivial. Thus suppose for contradiction that $\varphi:\omega_1\to [\omega_1]^{<\omega}$ trivializes $\Phi_1$. Then for every $\gamma\in S^1_0$ there exists an $\eta_\gamma<\gamma$ such that $\varphi\big|_{[\eta_\gamma,\gamma)}=r_1(\,\cdot\,,\gamma)\big|_{[\eta_\gamma,\gamma)}$; in particular, $\mathrm{min}\,C_\gamma\backslash(\eta+1)\in \varphi(\eta)$ for all $\eta\in C_\gamma\backslash \eta_\gamma$ and $\gamma\in S^1_0$. Fix a stationary set $S\subseteq S^1_0$ with $\eta_\gamma$ constantly equal to some $\beta$ for all $\gamma\in S$ and a $\xi\geq\beta$ such that the set $B:=\{\mathrm{min}\,C_\gamma\backslash(\xi+1)\mid\gamma\in S\text{ and }\xi\in C_\gamma\}$ is infinite. Since by assumption $\varphi(\xi)$ contains $B$, it is infinite --- a contradiction, as desired.\footnote{In fact the argument shows that each $\Phi_i$ is nontrivially coherent modulo locally constant functions, and nontrivial (in either sense) even with respect to functions $\varphi:\omega_1\to\mathcal{P}(\omega_1)$.}
\end{proof}
The function $r_2$ was (to the best of this author's knowledge) first perceived in \cite{TFOA}; $r_2(\alpha,\gamma)$ essentially records the supports of the function $\mathtt{f}_1(0,\gamma)\big|_{\{\alpha\}\otimes[\omega_1]^2}$ appearing in Section 6.1 therein.
That work, in turn, was a study of what in interested circles is known as \emph{Mitchell's theorem}, which we might in the present context formulate as follows.
\begin{theorem}
\label{thm:Mitchell}
For any ordinal $\delta$ and $n>0$, let $\mathrm{H}^n(\delta;\,\cdot\,)$ denote the functor $A\mapsto \mathrm{H}^n(\delta;\mathcal{A})$ from the category of abelian groups to itself. Then $\mathrm{H}^n(\delta;\,\cdot\,)$ is the zero functor for any $\delta$ of cofinality less than $\omega_n$, and $\omega_n$ is the least ordinal $\delta$ for which this functor is nonzero.
\end{theorem}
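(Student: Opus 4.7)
My plan has two halves: establish the vanishing of $\mathrm{H}^n(\delta;\mathcal{A})$ when $\mathrm{cf}(\delta)<\omega_n$ by induction on $n$, and witness nonvanishing at $\omega_n$ by appealing forward to the higher walks construction developed later in the paper.

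For the vanishing half, I would induct on $n$. The base case $n=1$ reduces, after observing that successor (compact) ordinals have vanishing $\mathrm{H}^{\geq 1}$ for any sheaf, to the case $\mathrm{cf}(\delta)=\omega$; here a Milnor $\mathrm{lim}^1$-exact sequence applied to an exhausting $\omega$-chain of successor initial segments forces $\mathrm{H}^1(\delta;\mathcal{A})=0$. For the inductive step, fix $\delta$ of cofinality $\kappa<\omega_n$ and a continuous increasing cofinal sequence $\langle\delta_\xi\mid\xi<\kappa\rangle$ with each $\mathrm{cf}(\delta_\xi)<\kappa$. Since $\Gamma(\delta;-)=\lim_{\xi<\kappa}\Gamma(\delta_\xi;-)$, applying this to an injective resolution of $\mathcal{A}$ yields a Cartan--Eilenberg / Grothendieck spectral sequence
$$E_2^{p,q}=\mathrm{lim}^p_{\xi<\kappa}\,\mathrm{H}^q(\delta_\xi;\mathcal{A})\Longrightarrow\mathrm{H}^{p+q}(\delta;\mathcal{A}).$$
The inductive hypothesis applied fiberwise kills $E_2^{p,q}$ for $q\geq n-1$; for the remaining terms with $p+q=n$ and $p\geq 2$, I reinterpret $\mathrm{lim}^p_{\xi<\kappa}$ as $\mathrm{H}^p(\kappa;\mathcal{F})$ for the sheaf $\mathcal{F}$ on $\kappa$ associated to the inverse system (a classical identification), and apply the inductive hypothesis to the base $\kappa$, since $\mathrm{cf}(\kappa)=\kappa<\omega_n$ and $p<n$.

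For the nonvanishing at $\omega_n$, I would cite forward to the construction in Section~\ref{sect:nontriviality}: higher walks yield a function $r_2^n$ whose fiber maps form a nontrivial $n$-coherent family on $\omega_n$, which by the higher-dimensional analogue of Theorem~\ref{thm:cohomological_framing} represents a nonzero class in $\mathrm{H}^n(\omega_n;\mathcal{A})$ for $A=(\mathbb{Z}/2)^{(\omega_n)}$. Combined with the vanishing half, this identifies $\omega_n$ as the least ordinal at which $\mathrm{H}^n(\delta;\,\cdot\,)$ is nonzero.

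The main obstacle is the cardinal-arithmetic bookkeeping in the inductive step: one must simultaneously ensure the fibers $\delta_\xi$ admit the hypothesis at the appropriate level \emph{and} that the base $\kappa$ admits it after the $\mathrm{lim}^p\leftrightarrow\mathrm{H}^p(\kappa;-)$ reinterpretation. A secondary subtlety is verifying that the derived-limit-to-sheaf-cohomology identification is compatible with the particular presheaf $\mathcal{A}$ in play, which is automatic for constant sheaves but requires care in general.
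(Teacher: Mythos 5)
Your nonvanishing half matches the paper: Theorem~\ref{thm:Mitchell}'s nontrivial instance is indeed proved there by forward reference to Section~\ref{sect:nontriviality}, using $r^n_2$ (Theorem~\ref{thm:nontriviality_r_2^n}) together with the dictionary of Theorem~\ref{thm:cohomological_framing}. For the vanishing half, the paper does not give an argument at all --- it attributes it to Goblot and moves on --- so you are genuinely on your own there, which is commendable but also means the bookkeeping is fully yours to get right.

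The spectral-sequence framework you propose is the natural one, and for $n\leq 2$ your two mechanisms (fiber kill via the inductive hypothesis on the $\delta_\xi$, base kill via reinterpreting $\lim^p$ as $\mathrm{H}^p(\kappa;\cdot)$) do jointly cover every $E_2^{p,q}$ with $p+q=n$. But for $n\geq 3$ and $\kappa=\mathrm{cf}(\delta)=\omega_{n-1}$ there is a genuine gap in the middle of the anti-diagonal. The fiber kill applied to $E_2^{p,q}=\lim^p_{\xi<\kappa}\mathrm{H}^q(\delta_\xi;\mathcal{A})$ needs $\mathrm{cf}(\delta_\xi)<\omega_q$, and what you actually have is $\mathrm{cf}(\delta_\xi)<\kappa=\omega_{n-1}$; this only helps when $q\geq n-1$, i.e., $p\leq 1$. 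Your base kill needs $\mathrm{H}^p(\kappa;\cdot)=0$, which the inductive hypothesis $\Phi(p)$ provides only if $\mathrm{cf}(\kappa)<\omega_p$, i.e., $\kappa\leq\omega_{p-1}$; for $\kappa=\omega_{n-1}$ this forces $p\geq n$. The phrase ``since $\mathrm{cf}(\kappa)=\kappa<\omega_n$ and $p<n$'' does not justify vanishing of $\mathrm{H}^p(\kappa;\cdot)$: you need $\kappa<\omega_p$, not $\kappa<\omega_n$. So for $n\geq 3$ the terms with $2\leq p\leq n-1$ are addressed by neither mechanism, and the first of these ($E_2^{2,1}=\lim^2_{\xi<\omega_2}\mathrm{H}^1(\delta_\xi;\mathcal{A})$ when $\delta$ has cofinality $\omega_2$) involves a system which is nonzero on the stationary set of $\xi$ of cofinality $\omega_1$. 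There are fixes (e.g.\ replacing the fiber system by its restriction to the cofinal set of successor indices $\xi$, on which $\mathrm{H}^q(\delta_\xi;\mathcal{A})$ vanishes trivially for $q>0$, reducing everything to the single term $E_2^{n,0}$), but that is a different argument from the one you wrote down. A smaller point: inducting on $n$ alone cannot touch $E_2^{0,n}=\lim_\xi\mathrm{H}^n(\delta_\xi;\mathcal{A})$, since that term invokes $\Phi(n)$ itself at the fibers $\delta_\xi$; you also need an inner well-founded induction on $\delta$.
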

This framing of the theorem combines \cite[Cor.\ 36.9]{Mitchell} and \cite[Thm.\ 4.1]{Wiegand} with the analyses and reformulations of \cite{CoOI, TFOA}.
We will sketch two proofs of its nonvanishing portion in Section \ref{sect:nontriviality} below; the fact that $\mathrm{H}^n(\delta;\,\cdot\,)$ vanishes for smaller-cofinality $\delta$  is essentially due to Goblot \cite{Goblot}.

Mitchell's theorem is, to a set theorist, provocative in at least two ways:
\begin{itemize}
\item It attests (without quite specifying) otherwise unperceived nontrivial higher-dimensional $\mathsf{ZFC}$ combinatorics on $\omega_n$.
\item Its $n=1$ case may be persuasively viewed as little other than an algebraic incarnation of the classical walks apparatus on $\omega_1$ (see \cite[\S 8.1]{TFOA}).
\end{itemize}
A main aim of \cite{TFOA}, accordingly, was some distillation of the higher-$n$ cases into more purely combinatorial forms, the better to assess the bearing of these results on the theories both of walks and of the ordinals $\omega_n$.
It soon grew clear, as we'll show below, that these forms satisfy sufficiently much of items (n.1) through (n.6) listed above to merit the name \emph{higher walks}; they are, in essence, the subject of the remainder of this paper.
In perfect analogy with the classical case, their story begins with higher $C$-sequences, and the latter is the subject of the following section.

One last point: a more complete summary of Section \ref{subsect:classical} would arguably  have included an \emph{item (1.7): walks economically induce the most fundamental combinatorial structures on $\omega_1$}, to be generalized to an item (n.7) replacing the parameter $\omega_1$ with $\omega_n$, likely restricting attention to $(n+1)$-dimensional combinatorics as well.
This seems to us both too nebulous and extravagant a point to formally pursue; portions of it may be read between the lines of items (n.1) through (n.6), though, as one further guiding prospect, and form the animating concern of Section \ref{sect:further}.
\section{Higher $C$-sequences and internal walks}
\label{sect:higherC}
An \emph{order-2 $C$-sequence on $\gamma$} is a classical $C$-sequence $\langle C_\beta\mid\beta\in\gamma\rangle$ coupled with choices of closed and cofinal $C_{\alpha\beta}\subseteq\alpha\cap C_\beta$ for each $\beta\in\gamma$ and $\alpha\in C_\beta$; higher-order $C$-sequences are simply further iterations of this idea.
We precede their formal definition in Section \ref{subsect:Higher_C} with a brief discussion of \emph{internal walks}, both to motivate such \emph{relativizations} $\langle C_{\alpha\beta}\mid\alpha\in C_\beta\rangle$ of classical $C$-systems
and to introduce a main ingredient of higher walks.

\subsection{Internal walks}
\label{subsect:internal_walks}
In this section, we describe a simple mechanism for extending the Theorem \ref{thm:cohomological_framing} result that $\mathrm{H}^1(\omega_1;\mathbb{Z})\neq 0$ to the result that $\mathrm{H}^1(\gamma;\mathbb{Z})\neq 0$ all ordinals $\gamma$ of cofinality $\aleph_1$; the method applies more generally, of course, and reduces the first item of Theorem \ref{thm:square_PID}, for example, to the case of $\gamma=\mathrm{cf}(\gamma)=\kappa$ as well.

To this end, fix an ordertype-minimal $C$-sequence on $\omega_1$ along with an increasing enumeration $\langle\eta_i\mid i\in\omega_1\rangle$ of a closed cofinal subset of an ordinal $\gamma$. For $\alpha<\beta$ in $\gamma$ define the \emph{upper trace $\mathit{Tr}^\gamma$ of the $C_\gamma$-internal walk from $\beta$ to $\alpha$} as follows: let $\eta_i=\min C_\gamma\backslash\alpha$ and $\eta_k=\min C_\gamma\backslash\beta$ and let \begin{align*} \mathrm{Tr}^\gamma(\alpha,\beta)=\{\beta\}\,\cup\,\{\eta_j\mid j\in\mathrm{Tr}(i,k)\}.\end{align*}
In particular, $\mathrm{Tr}^\gamma(\eta_i,\eta_k)$ is the image of the walk $\mathrm{Tr}(i,k)$ under the order-isomorphism $\pi:\omega_1\to C_\gamma$ mapping each $i$ to $\eta_i$. Let $\rho_2[\gamma](\alpha,\beta)=|\mathrm{Tr}^\gamma(\alpha,\beta)|-1$. Observe then that the fiber maps $$\{\rho_2[\gamma](\,\cdot\,,\beta):\beta\to\mathbb{Z}\mid\beta\in\gamma\}$$
define a nontrivial coherent family of functions modulo locally constant functions, i.e., they witness that $\mathrm{H}^1(\gamma;\mathbb{Z})\neq 0$. Observe also that if $\gamma=\omega_1=C_{\omega_1}$ then $\mathrm{Tr}^\gamma=\mathrm{Tr}$ and $\rho_2[\gamma]=\rho_2$.

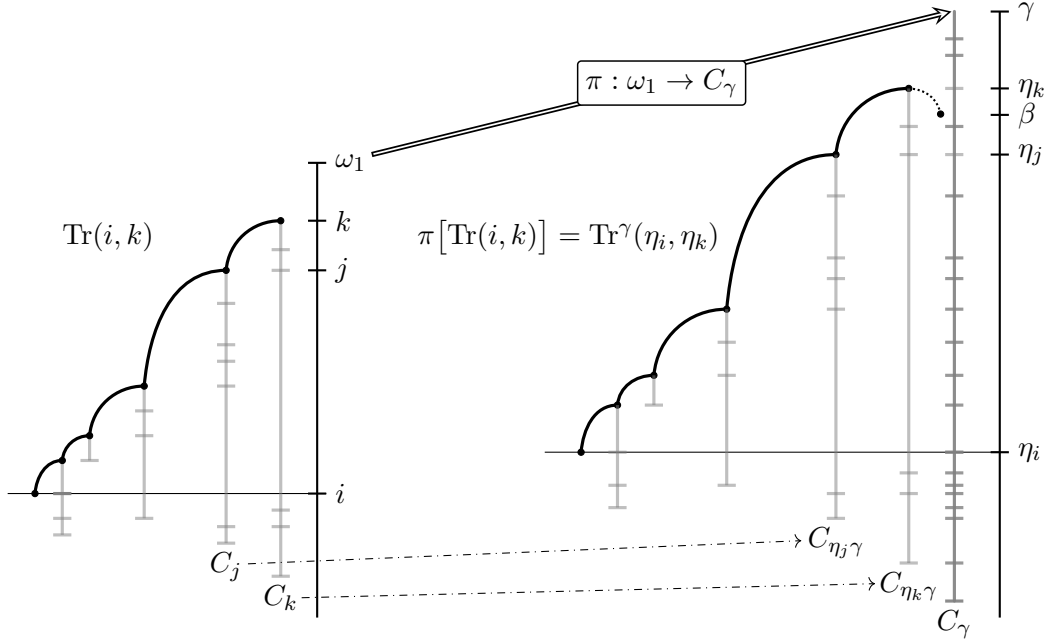
\begin{figure}
\centering
\begin{tikzpicture}[MyPersp,font=\large]
	\coordinate (A) at (-.3,0,1.5);
	\coordinate (B) at (3.2,0,1.5);
	\coordinate (C) at (3.1,0,5.5);
	\coordinate (D) at (3.1,0,0);
	
	\coordinate (E) at (5.6,0,2);
	\coordinate (F) at (10.6,0,2);
	\coordinate (G) at (10.1,0,7.33);
	\coordinate (H) at (10.1,0,0.2);	
	
	\draw (A)--(B);
	\draw[thick] (C)--(D);
	\draw (E)--(F);
	\draw[line cap=round, very thick, gray, opacity=.9] (G)--(H);	
	\draw[very thick, gray, opacity=.9] (10,0,0.2)--(10.2,0,0.2);
	\draw[thick] (10.6,0,7.33) -- (10.6,0,0);
	\draw[thick] (3,0,4.8)--(3.2,0,4.8);
	\draw[thick] (3,0,4.2)--(3.2,0,4.2);
	\draw[thick] (3,0,5.5)--(3.2,0,5.5);
	\draw[thick] (3,0,1.5)--(3.2,0,1.5);
	\draw (10.1,0,.2) node[below] {$C_\gamma$};
	\draw (2.7,0,4.8)[very thick] to[out=-180,in=85] (2.1,0,4.2);	
	\draw[very thick] (2.1,0,4.2) to[out=-180,in=85] (1.2,0,2.8);
		\draw[very thick] (1.2,0,2.8) to[out=-180,in=85] (.6,0,2.2);
		\draw[very thick] (.6,0,2.2) to[out=-180,in=85] (.3,0,1.9);
		\draw[very thick] (.3,0,1.9) to[out=-180,in=85] (0,0,1.5);
		\draw[very thick, gray, opacity=.5] (2.7,0,4.8)--(2.7,0,.5);
		\draw (2.7,0,.5) node[below] {$C_k$};
		\draw[very thick, gray, opacity=.5] (2.1,0,4.2)--(2.1,0,.9);
		\draw (2.1,0,.9) node[below] {$C_j$};
		\draw[very thick, gray, opacity=.5] (1.2,0,2.8)--(1.2,0,1.2);
		\draw[very thick, gray, opacity=.5] (.6,0,2.2)--(.6,0,1.9);
		\draw[very thick, gray, opacity=.5] (.3,0,1.9)--(.3,0,1);
		\draw (3.2,0,5.5) node[right] {$\omega_1$};
		\draw (3.2,0,4.8) node[right] {$k$};
		\draw (3.2,0,4.2) node[right] {$j$};
		\draw (3.2,0,1.5) node[right] {$i$};
		\draw (2.7,0,4.8) node[circle,draw,fill=black, scale=.25]{};
		\draw (2.1,0,4.2) node[circle,draw,fill=black, scale=.25]{};
		\draw (1.2,0,2.8) node[circle,draw,fill=black, scale=.25]{};
		\draw (.6,0,2.2) node[circle,draw,fill=black, scale=.25]{};
		\draw (.3,0,1.9) node[circle,draw,fill=black, scale=.25]{};
		\draw (0,0,1.5) node[circle,draw,fill=black, scale=.25]{};
		\draw[very thick, gray, opacity=.5] (2.6,0,4.2)--(2.8,0,4.2);
		\draw[very thick, gray, opacity=.5] (2.6,0,1.3)--(2.8,0,1.3);
		\draw[very thick, gray, opacity=.5] (2.6,0,1.1)--(2.8,0,1.1);
		\draw[very thick, gray, opacity=.5] (2.6,0,.5)--(2.8,0,.5);
		\draw[very thick, gray, opacity=.5] (2.6,0,4.45)--(2.8,0,4.45);
		\draw[very thick, gray, opacity=.5] (2,0,2.8)--(2.2,0,2.8);
		\draw[very thick, gray, opacity=.5] (2,0,3.8)--(2.2,0,3.8);
		\draw[very thick, gray, opacity=.5] (2,0,3.3)--(2.2,0,3.3);
		\draw[very thick, gray, opacity=.5] (2,0,3.1)--(2.2,0,3.1);
		\draw[very thick, gray, opacity=.5] (2,0,1.1)--(2.2,0,1.1);
		\draw[very thick, gray, opacity=.5] (2,0,.9)--(2.2,0,.9);
		\draw[very thick, gray, opacity=.5] (1.1,0,2.2)--(1.3,0,2.2);
		\draw[very thick, gray, opacity=.5] (1.1,0,2.5)--(1.3,0,2.5);
		\draw[very thick, gray, opacity=.5] (1.1,0,1.2)--(1.3,0,1.2);
		\draw[very thick, gray, opacity=.5] (.5,0,1.9)--(.7,0,1.9);
		\draw[very thick, gray, opacity=1] (.2,0,1.5)--(.4,0,1.5);
		\draw[very thick, gray, opacity=.5] (.2,0,1)--(.4,0,1);
		\draw[very thick, gray, opacity=.5] (.2,0,1.2)--(.4,0,1.2);
	\draw[very thick, gray, opacity=.8] (10,0,6.8)--(10.2,0,6.8);
	\draw[very thick, gray, opacity=.8] (10,0,7)--(10.2,0,7);
	\draw[very thick, gray, opacity=.5] (10,0,6.4)--(10.2,0,6.4);
	\draw[very thick, gray, opacity=.5] (10,0,5.6)--(10.2,0,5.6);
	\draw[very thick, gray, opacity=.5] (10,0,2)--(10.2,0,2);
	\draw[thick] (10.5,0,7.33)--(10.7,0,7.33);
	\draw[thick] (10.5,0,6.4)--(10.7,0,6.4);
	\draw[thick] (10.5,0,5.6)--(10.7,0,5.6);
	\draw[thick] (10.5,0,2)--(10.7,0,2);
	\draw[thick] (10.5,0,6.08)--(10.7,0,6.08);
	\draw (10.7,0,7.33) node[right] {$\gamma$};
		\draw (10.7,0,6.4) node[right] {$\eta_k$};
		\draw (10.7,0,5.6) node[right] {$\eta_j$};
		\draw (10.7,0,2) node[right] {$\eta_i$};
		\draw (10.7,0,6.07) node[right] {$\beta$};
	\draw (9.95,0,6.09)[densely dotted, thick] to[out=100,in=-2] (9.6,0,6.4);
	\draw (9.6,0,6.4)[very thick] to[out=-180,in=85] (8.8,0,5.6);	
	\draw[very thick] (8.8,0,5.6) to[out=-180,in=85] (7.6,0,3.73);
		\draw[very thick] (7.6,0,3.73) to[out=-180,in=85] (6.8,0,2.93);
		\draw[very thick] (6.8,0,2.93) to[out=-180,in=85] (6.4,0,2.57);
		\draw[very thick] (6.4,0,2.57) to[out=-180,in=85] (6,0,2);
				\draw (9.6,0,6.4) node[circle,draw,fill=black, scale=.25]{};
		\draw (8.8,0,5.6) node[circle,draw,fill=black, scale=.25]{};
		\draw (7.6,0,3.73) node[circle,draw,fill=black, scale=.25]{};
		\draw (6.8,0,2.93) node[circle,draw,fill=black, scale=.25]{};
		\draw (6.4,0,2.57) node[circle,draw,fill=black, scale=.25]{};
		\draw (6,0,2) node[circle,draw,fill=black, scale=.25]{};
		\draw (9.95,0,6.09) node[circle,draw,fill=black, scale=.25]{};
		\draw[very thick, gray, opacity=.5] (9.5,0,5.6)--(9.7,0,5.6);
		\draw[very thick, gray, opacity=.5] (9.5,0,5.94)--(9.7,0,5.94);
		\draw[very thick, gray, opacity=.8] (10,0,5.94)--(10.2,0,5.94);
		\draw[very thick, gray, opacity=.5] (9.6,0,6.4)--(9.6,0,.66);
		\draw (9.6,0,.66) node[below] {$C_{\eta_k\gamma}$};
		\draw[very thick, gray, opacity=.5] (9.5,0,.66)--(9.7,0,.66);
		\draw[very thick, gray, opacity=.5] (9.5,0,1.5)--(9.7,0,1.5);
		\draw[very thick, gray, opacity=.5] (9.5,0,1.75)--(9.7,0,1.75);
		\draw[very thick, gray, opacity=.5] (8.8,0,5.6)--(8.8,0,1.2);
		\draw (8.8,0,1.2) node[below] {$C_{\eta_j\gamma}$};
		\draw (.2,0,4.6) node[right] {$\mathrm{Tr}(i,k)$};
		\draw (4.1,0,4.6) node[right] {$\pi\big[\mathrm{Tr}(i,k)\big]=\mathrm{Tr}^{\gamma}(\eta_i,\eta_k)$};
		\draw[very thick, gray, opacity=.5] (8.7,0,1.2)--(8.9,0,1.2);
		\draw[very thick, gray, opacity=.5] (8.7,0,1.5)--(8.9,0,1.5);
		\draw[very thick, gray, opacity=.5] (8.7,0,3.73)--(8.9,0,3.73);
		\draw[very thick, gray, opacity=.5] (8.7,0,4.1)--(8.9,0,4.1);
		\draw[very thick, gray, opacity=.5] (8.7,0,4.35)--(8.9,0,4.35);
		\draw[very thick, gray, opacity=.5] (8.7,0,5.1)--(8.9,0,5.1);
		\draw[very thick, gray, opacity=.5] (7.6,0,3.73)--(7.6,0,1.6);
		\draw[very thick, gray, opacity=.5] (7.5,0,1.6)--(7.7,0,1.6);
		\draw[very thick, gray, opacity=.5] (7.5,0,2.93)--(7.7,0,2.93);
		\draw[very thick, gray, opacity=.5] (7.5,0,3.33)--(7.7,0,3.33);
		\draw[very thick, gray, opacity=.5] (6.8,0,2.93)--(6.8,0,2.57);
		\draw[very thick, gray, opacity=.5] (6.7,0,2.57)--(6.9,0,2.57);
		\draw[very thick, gray, opacity=.5] (6.4,0,2.57)--(6.4,0,1.33);
		\draw[very thick, gray, opacity=.5] (6.3,0,1.33)--(6.5,0,1.33);
		\draw[very thick, gray, opacity=.5] (6.3,0,2)--(6.5,0,2);
		\draw[very thick, gray, opacity=.5] (6.3,0,1.6)--(6.5,0,1.6);
				\draw[very thick, gray, opacity=.8] (10,0,.66)--(10.2,0,.66);
		\draw[very thick, gray, opacity=.8] (10,0,1.5)--(10.2,0,1.5);
		\draw[very thick, gray, opacity=.8] (10,0,1.75)--(10.2,0,1.75);
		\draw[very thick, gray, opacity=.8] (10,0,1.2)--(10.2,0,1.2);
		\draw[very thick, gray, opacity=.8] (10,0,3.73)--(10.2,0,3.73);
		\draw[very thick, gray, opacity=.8] (10,0,4.1)--(10.2,0,4.1);
		\draw[very thick, gray, opacity=.8] (10,0,4.35)--(10.2,0,4.35);
		\draw[very thick, gray, opacity=.8] (10,0,5.1)--(10.2,0,5.1);
		\draw[very thick, gray, opacity=.8] (10,0,1.6)--(10.2,0,1.6);
		\draw[very thick, gray, opacity=.8] (10,0,2.93)--(10.2,0,2.93);
		\draw[very thick, gray, opacity=.8] (10,0,3.33)--(10.2,0,3.33);
		\draw[very thick, gray, opacity=.8] (10.2,0,2.57)--(10,0,2.57);
		\draw[very thick, gray, opacity=.8] (10.2,0,1.33)--(10,0,1.33);
		\draw[very thick, gray, opacity=.8] (10,0,2)--(10.2,0,2);
	\draw[double arrow=2pt colored by black and white]
(3.7,0,5.6) -- node[midway,anchor=center,fill=white,draw=black,line width=.2mm,inner sep=3pt, rounded corners=.5mm]{$\pi:\omega_1\to C_\gamma$} (10.07,0,7.33);
\draw[->,dashdotted] (2.93,0,.24)--(9.23,0,.41);
\draw[->,dashdotted] (2.34,0,.65)--(8.42,0,.93);
\end{tikzpicture}
\caption{The order-isomorphism $\pi:\omega_1\to C_\gamma:k\mapsto\eta_k$ translating a $C$-sequence on $\omega_1$, and thereby a walk, to one on $C_\gamma$. On the left is the standard picture of a walk determined by a $C$-sequence (drawn in gray; notches depict representative elements of the associated $C_j$) on $\omega_1$; we term the walk on the right-hand side \emph{$C_\gamma$-internal}. For initial inputs $\beta\notin C_\gamma$ such walks require a first step ``up into'' $C_\gamma$; this we have depicted as well.}
\label{thepifigure}
\end{figure}

If for $\beta\in C_\gamma$ we let $C_{\beta\gamma}=\pi[C_{\pi^{-1}(\beta)}]$ then we may define the \emph{$C_\gamma$-internal walk} more directly: for $\beta\in C_\gamma$ we have
\begin{align}
\label{eq:internal_trace}
\mathrm{Tr}^\gamma(\alpha,\beta)=\{\beta\}\,\cup\,\mathrm{Tr}^\gamma(\alpha,\min(C_{\beta\gamma}\backslash\alpha)).\end{align}
To ground the recursion, let $\mathrm{Tr}^\gamma(\alpha,\min(C_\gamma\backslash\alpha))=\{\min(C_\gamma\backslash\alpha)\}$ for all $\alpha<\gamma$. (Similarly for $\beta\not\in C_\gamma$, though the notation grows cluttered.) Rather than mapping a walk on the countable ordinals to one on those of $C_\gamma$, this second framing maps the underlying $C$-sequence on the countable ordinals to the ordinals of $C_\gamma$, and walks thereon. See Figure \ref{thepifigure}. In one view, these \emph{internal walks} are the material of walks of the next higher order, as we'll see in Sections \ref{sect:basics} through \ref{sect:higher_coherence} below.

Note lastly that whether or not $C_{\beta\gamma}$ is in fact the $\pi$-image of $C_{\pi^{-1}(\beta)}$ is irrelevant to the functioning of equation \ref{eq:internal_trace}. One may, in other words, define $C_\gamma$-internal walks still more directly yet, simply by choosing cofinal subsets $C_{\beta\gamma}$ of $\beta\cap C_\gamma$ for each $\beta\in C_\gamma$; with this observation we arrive to general notion of a higher $C$-sequence.

\subsection{Higher $C$-sequences}
\label{subsect:Higher_C}
The disjoint union in the following definition merits a preliminary comment: the point is simply that although some $C_\alpha$ and $C_{\alpha\beta}$, for example, may denote identical subsets of $\alpha$, we will wish to continue to regard $C_\alpha$ and $C_{\alpha\beta}$ as distinct objects.
\begin{definition}
\label{def:higherCseqs}
For any nonzero $n\leq\omega$ and ordinal $\gamma$, an \emph{order-$n$ $C$-sequence on $\gamma$} is a family $\mathcal{C}=\bigsqcup_{i\in n}\mathcal{C}_i$ satisfying the following condition. To state it, first formally distinguish between $C_0$ and $C_\varnothing$ and let $C_0=\varnothing$, $C_\varnothing=\gamma$, and $\mathcal{C}_{-1}=\langle C_\varnothing\rangle$; the condition then is that for all 
$i\in n$, each element $C_{\alpha\vec{\beta}}$ of the family
$$\mathcal{C}_i=\langle C_{\alpha\vec{\beta}}\mid\alpha\in C_{\vec{\beta}}\textnormal{ and }\vec{\beta}\textnormal{ is among the indices of }\mathcal{C}_{i-1}\rangle$$
is a closed and cofinal subset of $\alpha\cap C_{\vec{\beta}}$.
We call the indices $\vec{\beta}$ appearing in $\mathcal{C}$ the \emph{$\mathcal{C}$-indices}, for short; note that these comprise a subset of $[\gamma]^{\leq n}$.
\end{definition}

Observe that an \emph{order-1} $C$-sequence is simply a classical one.
Conversely, classical $C$-sequences generate higher order $C$-sequences, as in the preceding subsection and the following example.

\begin{example}
\label{ex:compounding}
Let $\mathcal{D}=\langle C_\alpha\mid\alpha\in\gamma\rangle$ be a classical $C$-sequence, with order-isomorphisms $\pi_\alpha:\mathrm{otp}(C_\alpha)\to C_\alpha$ for each $\alpha\in\gamma$.
As noted, letting $\mathcal{C}_0=\mathcal{D}$ defines an order-1 $C$-sequence; just as in Section \ref{subsect:internal_walks}, letting $C_{\alpha\beta}=\pi_\beta\big[C_{\pi_\beta^{-1}(\alpha)}\big]$ for each $\beta\in\gamma$ and $\alpha\in C_\beta$ then defines a compatible $\mathcal{C}_1$ and, hence, an \emph{order-2} $C$-sequence as in Definition \ref{def:higherCseqs}.
We term this procedure a \emph{compounding of $\mathcal{D}$}; by iterating it, next letting $C_{\alpha\beta\gamma}=\pi_\gamma\big[C_{\pi_\gamma^{-1}(\alpha)\pi_\gamma^{-1}(\beta)}\big]$ for any $C_{\beta\gamma}\in\mathcal{C}_1$ and $\alpha\in C_{\beta\gamma}$, for example, one defines $C$-sequences $\mathcal{C}$ on $\gamma$ of arbitrary orders $n>0$.
Note that the $\mathcal{C}$ obtained in this fashion might only vacuously involve orders above some $j$: if, for example, the elements of $\mathcal{D}=\langle C_\alpha\mid\alpha\in\omega_1\rangle$ are all of minimal ordertype then $\mathcal{C}_1$ will contain no infinite elements, $\mathcal{C}_2$ will contain no nonempty elements, and $\mathcal{C}_i$, consequently, will be empty for every $i>2$.
We discuss notions of \emph{depth} for $C$-sequences in Section \ref{subsect:depth} below; here one natural notion of depth for classical sequences $\mathcal{D}$ suggests itself, namely the supremum (possibly $\infty$) of the indices $i$ of its nonempty compoundings $\mathcal{C}_i$.
Note that the depth, in this sense, of any ordertype-minimal $C$-sequence $\mathcal{D}$ on $\omega_n$ $(n\in\omega)$ is exactly $n+1$.
\end{example} 

The higher $C$-sequences structuring \emph{The first omega alephs} \cite{TFOA} are all instances of Example \ref{ex:compounding}, subject to two additional constraints:
\begin{enumerate}
\item each element of $\mathcal{D}$, and hence each element of each of its compoundings, is of minimal possible ordertype;
\item for any limit ordinal $\alpha$, the cofinality of the $\xi^{\mathrm{th}}$ element of $C_\alpha$ is $\mathrm{cf}(\xi)$ (as then holds for any $C_{\alpha\vec{\beta}}$ as well).
\end{enumerate}

These lend the induced higher $C$-sequences uniformities convenient for the applications appearing therein, and it is for this reason that the sequences $\mathcal{C}$ which have received the most sustained attention to date have tended to be of this form (see \cite[\S 4]{TFOA} for their fundamentals).
Such uniformities, on the other hand, are conceivably counterproductive when complexity is the aim, as with the nontrivial $n$-coherence and strong colorings, respectively, of Sections \ref{sect:nontriviality} and \ref{sect:further} below; we should therefore stress that Definition \ref{def:higherCseqs} accommodates a much wider range of possibilities.
$\square(\kappa)$ sequences, for example, generate higher order $C$-sequences possessing uniformities of structure rather different from those of Example \ref{ex:compounding}:

\begin{example}
\label{ex:highCfromsquare}
Let $\mathcal{D}$ be a $C$-sequence on a cardinal $\kappa$ satisfying item (1) of Definition \ref{def:square} and again let $\mathcal{C}_0=\mathcal{D}$.
A natural choice of $C_{\alpha\beta}\subseteq C_\beta$ for any $\alpha\in\mathrm{acc}(C_\beta)$ and $\beta<\kappa$ then presents itself, namely $C_{\alpha\beta}=C_\alpha$. Note that $C_{\alpha\beta}$ admits a canonical choice when $\alpha\in C_\beta\backslash\mathrm{acc}(C_\beta)$ as well, namely $\{\max C_\beta\cap\alpha\}$.
This approach is iterable, and in contrast with the compounding technique of Example \ref{ex:compounding}, will derive higher $C$-sequences with nontrivial $\mathcal{C}_i$ for arbitrarily large $i$ from, for example, a $\square(\omega_2)$-sequence $\mathcal{D}$.
\end{example}
Rich as these two classes of examples are, each might nevertheless be regarded as fundamentally ``one-dimensional'': each is, informationally, fully present already at the level of $\mathcal{C}_0$. Higher $C$-sequences which are not have yet to be seriously explored.

In what follows, a \emph{$C$-sequence} should always be read as at least potentially a higher order one; those which are not, we will term \emph{classical}. Similarly, we extend the epithet \emph{ordertype-minimal} to apply to any $C$-sequence in which $\mathrm{otp}(C_{\vec{\alpha}})=\mathrm{cf}(\alpha_0\cap C_{\vec{\alpha}^0})$ wherever $C_{\vec{\alpha}}$ is defined (here $C_\varnothing$ continues to equal $\gamma$, the domain of the $C$-sequence).
A \emph{compound $C$-sequence} will denote one deriving as in Example \ref{ex:compounding} from a classical one; as indicated, sequences of this sort, particularly when coupled with items (1) and (2) above, often make for good and simplifying test inputs.
Note, for example, that item (2) ensures that if $C_{\vec{\alpha}}$ is defined and $\alpha_0$ is a limit ordinal then $\mathrm{sup}(C_{\vec{\alpha}})=\alpha_0$. 
Note further that in $C$-sequences $\mathcal{C}$ of any sort, $C_{\vec{\alpha}}\subseteq C_{\vec{\beta}}$ whenever $\vec{\beta}$ is a tail of $\vec{\alpha}$ and $C_{\vec{\alpha}}$ (and hence $C_{\vec{\beta}}$ also) is defined; note lastly that $C_{\vec{\beta}}\backslash\alpha=\varnothing$ for some $\mathcal{C}$-index $\vec{\beta}$ and $\alpha\leq\beta_0$ if and only if $\beta_0=\mathrm{min}\,C_{\vec{\beta}^0}\backslash\alpha$.

\section{The basic form of higher walks}
\label{sect:basics}

Section \ref{sect:Trn} records our generalization $\mathrm{Tr}_n$ of the upper trace function $\mathrm{Tr}$.
It is natural to precede its consideration with a discussion of its simplified variant $\mathrm{tr}_n$, and this is the purpose of the present section.

Fix a $C$-sequence $\mathcal{C}$ on an ordinal $\delta$ and consider a nondecreasing $(n+1)$-tuple $\vec{\gamma}\in \delta^{[n+1]}$ for any $n>0$.
There then exists a $\mathcal{C}$-index $\tau(\vec{\gamma})$ of maximal length such that $\vec{\gamma}=\iota(\vec{\gamma})^\frown\tau(\vec{\gamma})$ for some nonempty $\iota(\vec{\gamma})=(\gamma_0,\dots,\gamma_j)$. We term this $\tau(\vec{\gamma})$ the \emph{$\mathcal{C}$-maximal proper tail} of $\vec{\gamma}$; these sequences, together with the associated value $\mathrm{min}(C_{\tau(\vec{\gamma})}\backslash\gamma_j)$ (if defined), are the basic ingredients of $\mathrm{tr}_n$.

\begin{definition}
\label{def:tr_n}
With respect to an order-$n$ $C$-sequence $\mathcal{C}$ on an ordinal $\delta$, for any $n>0$ the function $\mathrm{tr}_n$ is recursively defined as follows. For any $\vec{\gamma}\in \delta^{[n+1]}$, let $\iota(\vec{\gamma})$ and $\tau(\vec{\gamma})$ be as above; more precisely, let $\tau(\vec{\gamma})$ be the $\mathcal{C}$-maximal proper tail of $\vec{\gamma}$ and let $j=n-|\tau(\vec{\gamma})|$. If $C_{\tau(\vec{\gamma})}\backslash\gamma_j\neq\varnothing$ then
\begin{align*}
\mathrm{tr}_n(\vec{\gamma})=\{\mathrm{min}(C_{\tau(\vec{\gamma})}\backslash\gamma_j)\}\sqcup\bigsqcup_{i\in\{1,\dots,n+1\}\backslash\{j+1\}}\mathrm{tr}_n\big((\iota(\vec{\gamma}),\mathrm{min}(C_{\tau(\vec{\gamma})}\backslash\gamma_j),\tau(\vec{\gamma}))^i\big).
\end{align*}
Grounding the recursion is the boundary condition that $\mathrm{tr}_n(\vec{\gamma})=\varnothing$ if $C_{\tau(\vec{\gamma})}\backslash\gamma_j=\varnothing$. $\mathrm{tr}_n(\vec{\gamma})$ collects a family of ordinals which we term its \emph{outputs}; similarly, we call the $(n+1)$-tuples appearing in the course of its expansion its \emph{inputs}.
\end{definition}
As in Section \ref{subsect:Higher_C}, the disjoint union addresses the possibility of output repetition, a potential issue in our second, but not our first, example.
\begin{example}
\label{ex:tr1}
When $n=1$, so that $\vec{\gamma}=(\gamma_0,\gamma_1)$, there is only one possibility for $\tau(\vec{\gamma})$, namely $(\gamma_1)$. Hence for $\alpha<\beta$, Definition \ref{def:tr_n} specializes to
$$\mathrm{tr}_1(\alpha,\beta)=\{\mathrm{min}(C_\beta\backslash\alpha)\}\sqcup\mathrm{tr}_1(\alpha,\mathrm{min}(C_\beta\backslash\alpha)),$$
a sequence almost identical to $\mathrm{Tr}(\alpha,\beta)$ (compare equation \ref{eq:Tr}); more precisely,
$$\mathrm{tr}_1(\alpha,\beta)=\mathrm{Tr}(\alpha,\beta)\backslash\{\beta\}$$
for any $\alpha\leq\beta$. Absolute identity could, of course, be easily arranged, via revision of the boundary condition and of the leading output of $\mathrm{tr}_n(\vec{\gamma})$ to $\{\gamma_{j+1}\}$.
We've opted instead for outputs that tend, for higher $n$, to be more informative, but should stress that we regard such modifications as ultimately superficial,\footnote{A more mathematical rendering of this view is the observation that the families of $\rho_2$-fiber maps associating to any such modification are all cohomologous.} in the sense that any of these variations carries a roughly equal title to the name of \emph{higher walks}. We return to this matter of alternative formulations in Section \ref{subsect:alternative} below; at the risk of repetition, what counts in any of them is that they satisfy as much of (n.1) through (n.6) as possible, and that they do so with some of the versatility and grace of classical walks.
\end{example}

\begin{example}
The $n=2$ instance of Definition \ref{def:tr_n} unpacks as follows: for all $\alpha\leq\beta\leq\gamma<\delta$,
\begin{align*}
\mathrm{tr}_2(\alpha,\beta,\gamma)=
\begin{cases} 
      \{\min(C_\gamma\backslash\beta)\}\, \sqcup\, \mathrm{tr}_2(\alpha,\beta,\text{min}(C_\gamma\backslash\beta)) & \\ \hspace{.3 cm} \sqcup\: \mathrm{tr}_2(\alpha,\text{min}(C_\gamma\backslash\beta),\gamma) & \text{if }\beta\not\in C_\gamma \\
      & \\
\{\min(C_{\beta\gamma}\backslash\alpha)\} \,\sqcup\, \mathrm{tr}_2(\alpha,\text{min}(C_{\beta\gamma}\backslash\alpha),\beta) & \\ \hspace{.3 cm} \sqcup\: \mathrm{tr}_2(\alpha,\text{min}(C_{\beta\gamma}\backslash\alpha),\gamma) & \text{if }\beta\in C_\gamma, \\
   \end{cases}
\end{align*}
together with the following boundary conditions:
\begin{itemize}
\item if $\beta=\gamma$ then $\mathrm{tr}_2(\alpha,\beta,\gamma)=\varnothing$;
\item if $\beta\in C_\gamma$ and $C_{\beta\gamma}\backslash\alpha=\varnothing$ then $\mathrm{tr}_2(\alpha,\beta,\gamma)=\varnothing$.
\end{itemize}
Observe that any $\mathrm{tr}_2(\alpha,\beta,\gamma)$ is naturally viewed as a binary tree, one in fact generalizing the $1$-branching tree (i.e., the walk) associated to $\mathrm{tr}_1(\alpha,\beta)$ or $\mathrm{Tr}(\alpha,\beta)$. Depicted in Figure \ref{asampletr2} are the first two levels of the tree associated to $\mathrm{tr}_2(\alpha,\beta,\gamma)$ under generic assumptions on $\alpha$, $\beta$, and $\gamma$. The nodes of this tree are labeled with two sorts of data: as with the classical $\mathrm{Tr}$, the recursively defined function $\mathrm{tr}_2$ records an ordinal (appearing in the lower half of a node) then proceeds to new inputs (appearing in the top halves of successor nodes); each of these is displayed in Figure \ref{asampletr2} above. This is because unlike in the classical case, the collection of ordinals output and the collection of tuples input in the course of a higher-dimensional walk are no longer informationally equivalent; while the former may be better suited to combinatorial applications, deductions \emph{per se} may require the fuller data of the latter.

We now describe a few fundamental features of the function $\mathrm{tr}_2$, framing our discussion in terms of Figure \ref{asampletr2}. Observe that in the passage from any node to one directly below, exactly one element of the associated coordinate-triple is replaced. Observe also that this element is never the least one, $\alpha$. Descending along the rightmost branch, for example, it is the second coordinate that is always changing; observe that its pattern $\beta$, $\beta_\varnothing$, $\beta_1,\dots$ is that of the $C_\gamma$-internal walk from $\beta$ down to $\alpha$ (see Section \ref{subsect:internal_walks}). Along the leftmost branch, on the other hand, it is the third coordinate that is in motion; its pattern is visibly that of the classical walk from $\gamma$ down to $\beta$. Here it is natural to term the walks associated to leftwards paths through the tree \emph{external}. Any $\mathrm{tr}_2(\alpha,\beta,\gamma)$ may then be regarded as a structured family either of internal walks or of external walks, insofar as any binary tree is, as a set, simply the union either of its rightwards or leftwards branches.

Here a word of clarification is in order. While rightwards paths through $\mathrm{tr}_2(\alpha,\beta,\gamma)$ correspond precisely to internal walks, leftwards paths may properly contain classical walks in the following way: let $\beta'=\min (\mathrm{Tr}(\beta,\gamma)\backslash\{\beta\})$. If $\alpha'=\min(C_{\beta\beta'}\backslash\alpha)$ is defined then in the node below and to the left of $(\alpha,\beta,\beta')$ is $(\alpha,\alpha',\beta)$. The leftwards path out of this node will then describe a classical walk from $\beta$ down to $\alpha'$, possibly again initiating a further walk out of $\alpha'$ upon arrival, and so on. External walks correspond in this way to iterated descending chains of classical walks (``walks of walks'').
\end{example}

\begin{figure}
\centering
\begin{tikzpicture}[MyPersp]

\node[rectangle split,rectangle split parts=2,draw=black,line width=.2mm,inner sep=3pt, rounded corners=.5mm,text centered](A) at (4,0,7) {$(\alpha,\beta,\gamma)$ \nodepart{second} $\beta_{\varnothing}:=\min(C_\gamma\backslash\beta)$};

\node[rectangle split,rectangle split parts=2,draw=black,line width=.2mm,inner sep=3pt, rounded corners=.5mm,text centered](B) at (1,0,5) {$(\alpha,\beta,\beta_{\varnothing})$ \nodepart{second} $\beta_0:=\min(C_{\beta_{\varnothing}}\backslash\beta)$};

\node[rectangle split,rectangle split parts=2,draw=black,line width=.2mm,inner sep=3pt, rounded corners=.5mm,text centered](C) at (7,0,5) {$(\alpha,\beta_{\varnothing},\gamma)$ \nodepart{second} $\beta_1:=\min(C_{\beta_{\varnothing}\gamma}\backslash\alpha)$};

\node[rectangle split,rectangle split parts=2,draw=black,line width=.2mm,inner sep=3pt, rounded corners=.5mm,text centered](D) at (-.5,0,3) {$(\alpha,\beta,\beta_{0})$ \nodepart{second} $\beta_{00}$};

\node[rectangle split,rectangle split parts=2,draw=black,line width=.2mm,inner sep=3pt, rounded corners=.5mm,text centered](E) at (2.5,0,3) {$(\alpha,\beta_0,\beta_{\varnothing})$ \nodepart{second} $\beta_{01}$};

\node[rectangle split,rectangle split parts=2,draw=black,line width=.2mm,inner sep=3pt, rounded corners=.5mm,text centered](F) at (5.5,0,3) {$(\alpha,\beta_1,\beta_{\varnothing})$ \nodepart{second} $\beta_{10}$};

\node[rectangle split,rectangle split parts=2,draw=black,line width=.2mm,inner sep=3pt, rounded corners=.5mm,text centered](G) at (8.5,0,3) {$(\alpha,\beta_1,\gamma)$ \nodepart{second} $\beta_{11}$};

\draw (A) -- (B);
\draw (A) -- (C);
\draw (B) -- (D);
\draw (B) -- (E);
\draw (C) -- (F);
\draw (C) -- (G);
\end{tikzpicture}
\caption{First steps of $\mathrm{tr}_2(\alpha,\beta,\gamma)$. Associated to any $\mathrm{tr}_2$-input such as $(\alpha,\beta,\gamma)$ are an ordinal output, like $\beta_{\varnothing}$, and two further $\mathrm{tr}_2$-inputs, like $(\alpha,\beta,\beta_{\varnothing})$ and $(\alpha,\beta_{\varnothing},\gamma)$. Shaping the above diagram are the assumptions (made somewhat at random, simply for concreteness) that $\beta<\gamma$ and the outputs $\beta_\varnothing$, $\beta_0$, and $\beta_1$ defined as above are each meaningful and not equal to $\beta$. Lower outputs $\beta_\sigma$ may well correspond to undefined expressions, in which case they should be regarded as the empty set, marking the end of a branch.}
\label{asampletr2}
\end{figure}
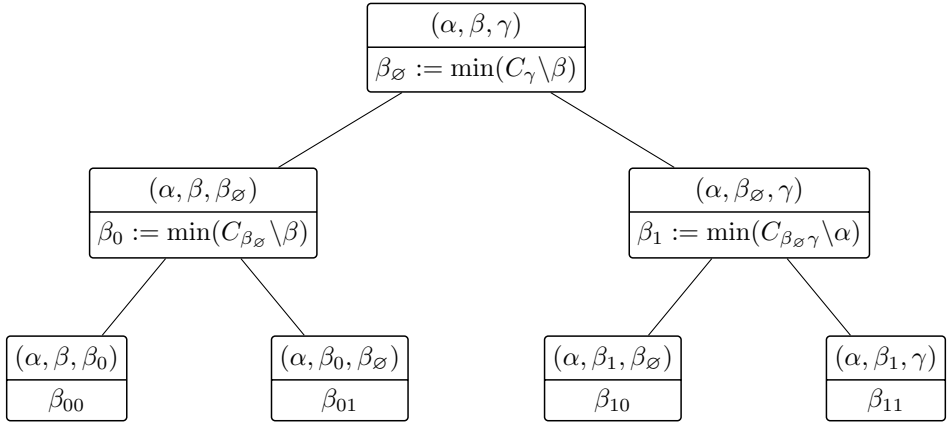
This view of $\mathrm{tr}_2$ as a binary tree generalizes: each $\mathrm{tr}_n$ sends $(n+1)$-tuples of ordinals to $n$-branching trees, and we will see in a moment that these trees are all finite. Just as above, ``hyperplanes'' through these trees (i.e., subtrees of smaller branching number determined by some fixed rule of descent) correspond to lower-order walks-structures and their relativizations. To give something of the flavor of these generalizations, the first step of such a $\mathrm{tr}_3(\alpha,\beta,\gamma,\delta)$ with respect to an order-3 $C$-sequence on $\varepsilon$ will depend on whether $\gamma\in C_\delta$ and, if so, on whether $\beta\in C_{\gamma\delta}$ and, if so, on the value, if defined, of $\min(C_{\beta\gamma\delta}\backslash\alpha)$. Relatedly, the $1$-branching walks comprising such higher $\mathrm{tr}_n$ will, in contrast to those of $\mathrm{tr}_2$, exhibit \emph{varying degrees of} internality.

\begin{theorem}\label{thm:finite} With respect to any $n>0$ and order-$n$ $C$-sequence on an ordinal $\delta$, the set $\mathrm{tr}_n(\vec{\gamma})$ is finite for any $\vec{\gamma}\in\delta^{[n+1]}$.
\end{theorem}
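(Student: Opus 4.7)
The plan is to prove finiteness by well-founded induction on the lexicographic triple $\mu(\vec{\gamma}) = (\gamma_n,\, \gamma_{|\iota|},\, |\tau(\vec{\gamma})|)$, where $\gamma_{|\iota|} = \gamma_{j+1}$ denotes the first coordinate of the $\mathcal{C}$-maximal proper tail $\tau$ (so $j = |\iota|-1$). Since each component of $\mu$ takes values in a well-ordered set, so does $\mu$; once one checks that every sub-call strictly decreases $\mu$, the finiteness of $\mathrm{tr}_n(\vec{\gamma})$ follows immediately, as the recursive definition exhibits $\mathrm{tr}_n(\vec{\gamma})$ as a finite union of the singleton $\{b\}$ and the $n$ values of $\mathrm{tr}_n$ on its sub-calls.

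I would verify the decrease of $\mu$ by case analysis on the deleted position $i \in \{1,\dots,n+1\}\setminus\{j+1\}$. When $i = n+1$ (deleting $\gamma_n$), the new top coordinate is bounded by $\gamma_{n-1}$, which is strictly below $\gamma_n$ except in a boundary case that already triggers $\mathrm{tr}_n(\vec{\gamma}) = \varnothing$; the first entry of $\mu$ strictly decreases. When $i \leq j$ (deleting a position inside $\iota$), $\gamma_n$ is preserved; because $b \in C_\tau$ by construction, the length-$(|\tau|+1)$ suffix $b^{\frown}\tau$ of the new tuple is itself a $\mathcal{C}$-index, so the new maximal proper tail has length at least $|\tau|+1$ and first coordinate at most $b < \gamma_{j+1}$, forcing the second entry of $\mu$ to strictly decrease. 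Finally, when $i \in \{j+2,\dots,n\}$ (deleting an interior element $\gamma_{j+m}$ of $\tau$), a subcase analysis on $m$ and on whether $b \in C_{\tau^{(m)}}$, leveraging the general inclusion $C_{\vec{\alpha}} \subseteq C_{\vec{\beta}}$ whenever $\vec{\beta}$ is a tail of $\vec{\alpha}$, aims to show that either the second entry of $\mu$ strictly decreases (favorable subcases, e.g.\ $m=1$, where $C_\tau \subseteq C_{\tau^{(1)}}$
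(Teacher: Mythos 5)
Your strategy --- a single well-founded rank that strictly decreases at every sub-call, so that finiteness follows immediately from $n$-fold branching --- is genuinely different from the paper's, which instead supposes an infinite branch exists and derives a contradiction by extracting a coordinate that strictly decreases along an infinite subsequence. Your cases $i=n+1$ and $i\leq j$ are fine, and so is the sub-case $m\in\{j+1,j+2\}$ of the remaining case. But the proposal breaks off mid-sentence exactly where the difficulty lives, and the rank $\mu$ as defined does not survive it. Concretely: suppose you delete an interior element $\gamma_m$ of $\tau(\vec{\gamma})$ with $m\geq j+3$ (first possible when $n\geq 4$). The top coordinate $\gamma_n$ is untouched, so everything hinges on the head of the new maximal proper tail. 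The longest suffix of the new tuple that the axioms guarantee to be a $\mathcal{C}$-index is $(\gamma_{m-1},\gamma_{m+1},\dots,\gamma_n)$: one has $\gamma_{m-1}\in C_{(\gamma_m,\dots,\gamma_n)}\subseteq C_{(\gamma_{m+1},\dots,\gamma_n)}$ since $(\gamma_{m-1},\gamma_m,\dots,\gamma_n)$ is a tail of $\tau(\vec{\gamma})$. Extending one step further would require $\gamma_{m-2}\in C_{(\gamma_{m-1},\gamma_{m+1},\dots,\gamma_n)}$, whereas all you know is $\gamma_{m-2}\in C_{(\gamma_{m-1},\gamma_m,\gamma_{m+1},\dots,\gamma_n)}$; the inclusion $C_{\vec{\alpha}}\subseteq C_{\vec{\beta}}$ holds only when $\vec{\beta}$ is a \emph{tail} of $\vec{\alpha}$, and $(\gamma_{m-1},\gamma_{m+1},\dots,\gamma_n)$ is not a tail of $(\gamma_{m-1},\gamma_m,\dots,\gamma_n)$. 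For a generic order-$n$ $C$-sequence the new maximal proper tail is therefore exactly $(\gamma_{m-1},\gamma_{m+1},\dots,\gamma_n)$, its boundary condition is generically not triggered, and its first coordinate is $\gamma_{m-1}\geq\gamma_{j+2}>\gamma_{j+1}$. So the second entry of $\mu$ strictly \emph{increases} while the first is unchanged: $\mu$ is not a rank for this recursion. Reordering the three components does not help either, since the tail length increases in the $i=j$ case.

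This is precisely the phenomenon the paper flags as making the theorem nontrivial ("steps within higher walks can raise inputs' coordinate entries"), and it is why its proof does not attempt a one-step rank: its Claim 2 only shows that the coordinates strictly between the positions $j_k$ and $i_k$ decrease while those outside are fixed, and this window moves from step to step, with the new $j_{k+1}$ able to jump up to $i_k-2$. The paper instead assumes an infinite branch, shows (Claim 2.1) that a "type $j_k+1<i_k$" step recurs within every window of $n$ steps, fixes the eventually-maximal value $i$ of $i_k$ realized infinitely often, and concludes that $\beta^k_{i-1}$ strictly decreases along an infinite subsequence. To repair your argument you would need a rank sensitive to the whole tuple rather than to the head of the current tail --- and the paper's extraction argument is, in effect, the substitute for such a rank.
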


\begin{proof}
The argument's basic idea is simple: since any such $\mathrm{tr}_n(\vec{\gamma})$ is finitely branching, if it were infinite then it would have an infinite branch --- but this would entail, just as in the classical $n=1$ case, a strictly decreasing infinite sequence of ordinals, a contradiction.
Rigorously arguing this requires some care, however, because steps within higher walks can raise inputs' coordinate entries, as we'll discuss in greater detail below.
This prospect manifests in the first rightwards step of Figure \ref{asampletr2}, for example; to better gauge its extent, readers may find it edifying (and perhaps sufficient, in a first reading) to check the $n\leq 2$ cases of the basic idea by hand.

For more general $n$, see again Definition \ref{def:tr_n} and note that, in the view of $\mathrm{tr}_n(\vec{\gamma})$ as a tree of nodes labeled with the associated inputs $\vec{\beta}$, the immediate successors of any $\vec{\beta}$ are all of two sorts: $(\iota(\vec{\beta}),\mathrm{min}(C_{\tau(\vec{\beta})}\backslash\beta_j),\tau(\vec{\beta}))^i$ for some $i\leq j$, and for some $i>j+1$; note also that the second of these is the only possibility when $j=0$.
Hence any sequence of steps below $\vec{\beta}$ is the product of a sequence of choices $i_k$ of such $i$. Alongside this is a sequence of $j_k$ recording the $j$ in the above expression at step $k$.
These data determine a sequence of $\vec{\beta}^k=(\beta^k_0,\dots,\beta^k_n)$ in which $\vec{\beta}^0=\vec{\beta}$ and $\vec{\beta}^{k+1}$ is the successor of $\vec{\beta}^k$ associated to $i_k$; we will assume some such sequence infinite --- in other words, indexed by $k\in\omega$ --- and arrive to a contradiction by the following series of claims.
\begin{claim}
\label{clm:1}
If $i\leq j$ and $\mathrm{tr}_n((\iota(\vec{\beta}),\mathrm{min}(C_{\tau(\vec{\beta})}\backslash\beta_j),\tau(\vec{\beta}))^i)\neq\varnothing$ then $i=j$. Moreover, for any $k\in\omega$,
\begin{itemize}
\item if $j_k=i_k$ then $j_{k+1}<i_k$, while
\item if $j_k+1<i_k$ then $j_{k+1}<i_k-1$.
\end{itemize}
\end{claim}
\begin{proof}
If $i<j$ then $\tau((\iota(\vec{\beta}),\mathrm{min}(C_{\tau(\vec{\beta})}\backslash\beta_j),\tau(\vec{\beta}))^i)=(\mathrm{min}(C_{\tau(\vec{\beta})}\backslash\beta_j),\tau(\vec{\beta}))$, and $$C_{(\mathrm{min}(C_{\tau(\vec{\beta})}\backslash\beta_j),\tau(\vec{\beta}))}\backslash\beta_j$$
by definition equals $\varnothing$. This shows the first assertion. Similarly, if $i=j$ then $(\mathrm{min}(C_{\tau(\vec{\beta})}\backslash\beta_j),\tau(\vec{\beta}))$ forms a tail of $\tau((\iota(\vec{\beta}),\mathrm{min}(C_{\tau(\vec{\beta})}\backslash\beta_j),\tau(\vec{\beta}))^i)$; this implies the second assertion, and identical reasoning establishes the third.
\end{proof}
Observe that if $j_k=i_k$ then $\beta^{k+1}_i=\beta^k_i$ for all $i\neq i_k$, while $\beta^{k+1}_{i_k}>\beta^k_{i_k}$; in plainer English, the passage from $\vec{\beta}^k$ to $\vec{\beta}^{k+1}$ holds all coordinates constant except one, which it raises. It's this sort of conversion which renders our theorem's argument nontrivial.
If, on the other hand, $j_k+1<i_k$ then the passage from $\vec{\beta}^k$ to $\vec{\beta}^{k+1}$ strictly lowers an interval of coordinates while holding the others constant. More precisely:
\begin{claim}
\label{clm:2}
If $j_k< i< i_k$ then $\beta^{k+1}_i<\beta^k_i$; moreover, if there exists such an $i$ then $\beta^{k+1}_\ell=\beta^k_\ell$ for all $\ell$ outside the interval $(j_k,i_k)$.
\end{claim}
\begin{proof}
Since $\tau(\vec{\beta}^k)\in [\delta]^{\leq n}$ and the effect of omitting coordinate $i_k>j_k+1$ from $$(\iota(\vec{\beta}^k),\mathrm{min}(C_{\tau(\vec{\beta}^k)}\backslash\beta_{j_k}),\tau(\vec{\beta}^k))$$ is to replace
\begin{itemize}
\item $\beta^k_{j_k+1}$ with $\mathrm{min}(C_{\tau(\vec{\beta}^k)}\backslash\beta^k_{j_k})$, and
\item $\beta^k_i$ with $\beta^k_{i-1}$ for each $i$ strictly between $j_k+1$ and $i_k$,
\end{itemize}
the claim holds.
\end{proof}

\begin{claim}
\label{clm:2.1}
For any $\ell\in\omega$ there exists a $k\in [\ell,\ell+n)$ such that $j_k+1<i_k$.
\end{claim}
\begin{proof}
Suppose otherwise and swiftly attain, via Claim \ref{clm:1}, a contradiction: we must have $j_k=i_k$ for all $k\in [\ell,\ell+n)$, but this entails a length-$n$ strictly decreasing sequence of $j_k\in n$, hence $j_{\ell+n-1}=0\neq i_{\ell+n-1}$.
\end{proof}

By Claim \ref{clm:2.1}, there exists a maximal $i\leq n$ for which $j_k+1<i_k=i$ for all $k$ in some infinite $B\subseteq\omega$.
Thus there exists an $m\in B$ such that $i_k\leq i$ for all $k\geq m$.
It then follows from Claim \ref{clm:1} that $j_k<i-1$ for all $k\geq m$, and hence from Claim \ref{clm:2} and the analysis immediately preceding it that $\langle\beta^k_{i-1}\mid k\in B\backslash m\rangle$ is a strictly decreasing sequence of ordinals. This is our desired contradiction.
\end{proof}

Observe in conclusion that the functions $\mathrm{tr}_n$ satisfy the criteria (n.1) and (n.2) of Section \ref{subsect:generalizations}.

\section{$\mathrm{Tr}_n$ and its most basic associated functions}
\label{sect:Trn}

We turn now to our proposed generalizations of the upper trace function $\mathrm{Tr}$.
These $\mathrm{Tr}_n$ enrich the functions $\mathrm{tr}_n$ with the data of \emph{signs}, recorded as powers of $-1$, as follows:
\begin{definition}
\label{def:Tr_n}
With respect to an order-$n$ $C$-sequence $\mathcal{C}$ on an ordinal $\delta$, for any $n>0$ the \emph{order-$n$ upper trace function} $\mathrm{Tr}_n$ is recursively defined by letting
\begin{align*}
\label{eq:Tr_n}
\mathrm{Tr}_n((-1)^k,\vec{\gamma}) = & \{((-1)^{j+k},\mathrm{min}(C_{\tau(\vec{\gamma}))}\backslash\gamma_j)\}\,\sqcup \\ & \bigsqcup_{i\in\{1,\dots,n+1\}\backslash\{j+1\}}\mathrm{Tr}_n\big((-1)^{i+j+k},(\iota(\vec{\gamma}),\mathrm{min}(C_{\tau(\vec{\gamma})}\backslash\gamma_j),\tau(\vec{\gamma}))^i\big).
\end{align*}
for any $\vec{\gamma}\in \delta^{[n+1]}$, where $j$, $\tau(\vec{\gamma})$, and $\iota(\vec{\gamma})$ are all as in Definition \ref{def:tr_n}.
Grounding the recursion is the boundary condition that $\mathrm{Tr}_n((-1)^k,\vec{\gamma})=\varnothing$ if $C_{\tau(\vec{\gamma})}\backslash\gamma_j=\varnothing$, just as before.
\end{definition}
When convenient, we will record these signs as a simple $+$ or $-$, and we will regard the outputs of $\mathrm{Tr}_n$ as signed ordinals. The utility of these signs isn't immediately obvious, not least because they're unvarying, and hence superfluous, when $n=1$:
$$\mathrm{Tr}_1(\pm,\alpha,\beta)=\{\pm\,\text{min}(C_\beta\backslash\alpha)\}\,\sqcup\,\mathrm{Tr}_1(\pm,\alpha,\text{min}(C_\beta\backslash\alpha)).$$
Put differently, $\mathrm{Tr}_1$ is informationally equivalent to the classical function $\mathrm{Tr}$ (modulo the superficial issue of $\{\beta\}$, discussed in Example \ref{ex:tr1}), as we would hope.

This brings us to a first useful framing: just as subdivision of $n$-simplices is a reasonable heuristic for the iterative processes of $\mathrm{tr}_n$ (i.e., new $(n+1)$-tuples are formed from a new ordinal conjoined with elements of the boundary of the old one, viewed as an abstract simplex), geometric notions of orientation form a natural heuristic for the signs arising in $\mathrm{Tr}_n$; however, as in multivariable calculus or geometry, for example, these signs or orientations only assume their proper significance in settings of more than two coordinates.
It is for this reason we defer a motivational discussion of signs (to which we will repeatedly return) until our experience of higher-dimensional walks is a little richer.

\begin{example}
\label{ex:Tr2}
Consider next the case of $n=2$:
\[ 
\mathrm{Tr}_2(\pm,\alpha,\beta,\gamma)=
\begin{cases} 
      \{\mp\,\text{min}(C_\gamma\backslash\beta)\} \sqcup\, \mathrm{Tr}_2(\pm,\alpha,\beta,\text{min}(C_\gamma\backslash\beta)) & \\ \hspace{.1 cm} \sqcup\, \mathrm{Tr}_2(\pm,\alpha,\text{min}(C_\gamma\backslash\beta),\gamma) & \text{if }\beta\not\in C_\gamma \\
      & \\
\{\pm\,\text{min}(C_{\beta\gamma}\backslash\alpha)\} \sqcup\, \mathrm{Tr}_2(\mp,\alpha,\text{min}(C_{\beta\gamma}\backslash\alpha),\beta) & \\ \hspace{.1 cm} \sqcup\, \mathrm{Tr}_2(\pm,\alpha,\text{min}(C_{\beta\gamma}\backslash\alpha),\gamma) & \text{if }\beta\in C_\gamma, \\
   \end{cases}
\]
with boundary conditions just as for $\mathrm{tr}_2$. 

Signs in this case exhibit useful and interesting organizing effects, as they do within higher $\mathrm{Tr}_n$ more generally. For the duration of the down-and-leftwards movement in $\mathrm{tr}_2$ that we identified above with a classical walk, for example, inputs' and outputs' signs are both constant, until the last step. If here a further walk is initiated, signs flip, remaining constant for its duration, and so on. Similarly, inputs' signs along any rightwards, internal walk of $\mathrm{Tr}_2$ are constant on the branch's full length; outputs' signs are constant after a possible first step ``up into'' the internal walk (see again Figure \ref{thepifigure}, caption). Hence one may speak not only of the signs of nodes, but of the signs of eventually-rightwards branches as well; in fact the latter will form a main focus of Section \ref{subsubsect:pairings} below.
\end{example}
These observations should begin to suggest the number of interesting \emph{characteristics}, ``statistics,'' or rho functions which higher walks admit. One of the simplest of these, a generalization of the function $\rho_2$, was introduced in \cite{dimords}: for nondecreasing $(n+1)$-tuples of ordinals $\vec{\alpha}$, define the function $\rho_2^n(\vec{\alpha})$ as the ``charge'' of $\mathrm{Tr}_n(+,\vec{\alpha})$, i.e., as the ``pluses minus the minuses'':
\begin{align*}
\rho_2^n(\vec{\alpha})= & \textit{ the number of positive signed ordinals in }\mathrm{Tr}_n(+,\vec{\alpha})\\ & - \textit{the number of negative signed ordinals in }\mathrm{Tr}_n(+,\vec{\alpha}).\end{align*}
Observe in particular that $$\rho_2^1(\alpha,\beta)=|\mathrm{Tr}_1(+,\alpha,\beta)|=|\mathrm{Tr}(\alpha,\beta)|-1=\rho_2(\alpha,\beta)$$ for all $\alpha\leq\beta$.

Other extensions of $\rho_2$ suggest themselves, of course.
We might view $\rho_2(\alpha,\beta)$ as recording the tree-type of the $1$-branching tree $\mathrm{Tr}(\alpha,\beta)$, for example; from this perspective, its natural generalization is
\begin{align*}
\rho_{2,\mathrm{t}}^n(\vec{\alpha})= & \textit{ the tree-type of }\mathrm{Tr}_n(+,\vec{\alpha}),\end{align*}
and this indeed is a significant enough function that we accord it its own symbol $\rho_{2,\mathrm{t}}^n$.
More precisely, define $\rho_{2,\mathrm{t}}^n(\vec{\alpha})$ as the subset of $n^{<\omega}$ naturally indexing $\mathrm{Tr}_n(\pm,\vec{\alpha})$, as in the $n=2$ case of Figure \ref{asampletr2} (wherein $\{\varnothing,0,1\}\subseteq\rho_{2,\mathrm{t}}^2(\alpha,\beta,\gamma)\subseteq 2^{<\omega}$).
Note that this function encompasses the most naive generalization of $\rho_2$, namely the function $\vec{\alpha}\mapsto |\rho_{2,\mathrm{t}}^n(\vec{\alpha})|$, as well.
Among these possibilities, however, as we will see in Section \ref{sect:higher_coherence}, $\rho_2^n$ exhibits the strongest $n$-coherence relations, i.e., the best \emph{algebraic reflections of how the fiber maps of $\mathrm{Tr}_n$ interrelate}, and it is for this reason that it has our primary attention.

Observe in conclusion that the functions $\mathrm{Tr}_n$ satisfy criteria (n.1) and (n.2) of Section \ref{subsect:generalizations} because $\mathrm{tr}_n$ does; with the functions $\rho_2^n$ and $\rho_{2,\mathrm{t}}^n$ (we note further rho functions in Section \ref{sect:higher_coherence} below) we have now satisfied item (n.3) as well.

\subsection{Alternative formulations and views}
\label{subsect:alternative}

Whatever virtues the functions $\mathrm{Tr}_n$, $\rho_2^n$, and so on, may prove to have, the question will at some level remain of why it's these formulations and this approach to higher walks that we're pursuing, and not some other.
Let us record a few answers before continuing.

First, we should reiterate that a range of natural variations on our definitions would amount, for our purposes, to more or less the same thing: other choices of boundary conditions or output ordinals, for example, may in some circumstances make better sense than our own, without fundamentally altering our analysis.
Most particularly, the resulting functions will continue to satisfy suitable formulations of the criteria (n.1) through (n.6) of Section \ref{subsect:generalizations}. 
Still, it's conceivable that an apparatus substantially different from ours will as well, returning us to the question of what distinction the forms $\mathrm{tr}_n$ or $\mathrm{Tr}_n$ (or any of a family of minor variations on them)  ultimately carry.

Two main answers suggest themselves.
We alluded to the first of these above, but can be much more precise.
The refinement in \cite{TFOA} of Mitchell's original proof \cite{Mitchell} of what we've recast as Theorem \ref{thm:Mitchell} above may be summarized as follows: \emph{canonically associated to any order-$n$ ordertype-minimal $C$-sequence on $\omega_n$ is a witness to the nonvanishing of the functor $\mathrm{H}^n(\omega_n;\,\cdot\,)$}.\footnote{More precisely: any ordinal $\xi$ possesses a canonical image $\mathbf{\Delta}(\xi)$ in $\mathsf{Ab}^{\xi^{\mathrm{op}}}$, which in turn admits a canonical projective resolution with terms $\mathbf{P}_n(\xi)$; a key component of Mitchell's recognition was that the nonvanishing of the functor $\mathrm{lim}^{n+1}:\mathsf{Ab}^{\omega_n^{\mathrm{op}}}\to\mathsf{Ab}$ possesses a canonical witness, namely $\mathrm{lim}^{n+1}\,\mathbf{P}_{n+1}(\omega_n)$. This recognition was refined in \cite{TFOA} as follows: any ordertype-minimal $C$-sequence on $\omega_n$ induces a canonical basis for $\mathbf{d}_{n+1}\mathbf{P}_{n+1}(\omega_n)$, and hence a section $\mathbf{s}$ of $\mathbf{d}_{n+1}:\mathbf{P}_{n+1}(\omega_n)\to\mathbf{d}_{n+1}\mathbf{P}_{n+1}(\omega_n)$ which does not extend to $\mathbf{P}_n(\omega_n)$; canonically deriving from this $\mathbf{s}$ is the function $\mathtt{f}_n$ which, suitably construed, witnesses the nonvanishing of both $\mathrm{lim}^{n+1}\,\mathbf{P}_{n+1}(\omega_n)$ and $\mathrm{H}^n(\omega_n;\mathcal{A})$ for $A=\mathbb{Z}^{(\omega_n)}$ (see \cite[\S2 -- \S7]{TFOA} for details).}
The data of this witness, which goes by the name $\mathtt{f}_n$ in \cite{TFOA}, is fully captured by a pair of \emph{finite-output} functions, namely $\mathrm{Tr}_n$ together with a lower trace function $\mathrm{L}_n$ introduced in Section \ref{sect:higher_coherence} below.
These functions derive canonically from the functions $\mathtt{f}_n$ and, modulo the superficial modifications addressed above, correspond in the $n=1$ case to the functions $\mathrm{Tr}$ and $\mathrm{L}$ of the classical theory.\footnote{See the first paragraphs of \cite[\S 8.2 and \S 8.3]{TFOA} for more. Note, though, that the the $\mathrm{Tr}_n$ of Definition \ref{def:Tr_n} signs its outputs $(-1)^{j+k}$ rather than $(-1)^{j+k+1}$ and hence corresponds, at the level of outputs, to what in \cite{TFOA} would be denoted $-\mathrm{Tr}_n$ and $-\mathtt{f}_n$ (although the inputs of these two definitions of $\mathrm{Tr}_n$ continue to align). Again the modification is superficial and merely simplifying.}
To sum all this up yet further: in algebraic senses which can be made quite precise, the fundamental forms of what we're proposing as higher walks describe simultaneously canonical and significant invariants of the choice of underlying $C$-sequence.

These considerations touch on a second answer as well. A witness to the nonvanishing of $\mathrm{H}^n(\omega_n;\,\cdot\,)$ is, in one framing, simply a height-$\omega_n$ nontrivial $n$-coherent family of functions. We'll define these terms more precisely in Sections \ref{sect:higher_coherence} and \ref{sect:nontriviality}, but the case of $n=1$ is already familiar: using $\rho_1$, in particular, we have seen that the existence of mod finite nontrivial coherent families on $\omega_1$ is a $\mathsf{ZFC}$ theorem.
Let us try proving this theorem without the aid of walks.
Armed with the observation that any mod finite coherent family $\Phi=\langle\varphi_\beta:\beta\to\mathbb{Z}\mid\beta<\omega_1\rangle$ of \emph{finite-to-one} functions is nontrivial, we might attempt to recursively construct such a $\Phi$; the most natural first attempt is probably the following. At any successor step $\beta=\alpha+1$, let
\begin{equation*}
\varphi_\beta(\xi)=
    \begin{cases}
        0 & \text{if } \xi=\alpha ,\\
        \varphi_\alpha(\xi) & \text{if } \xi <\alpha .
    \end{cases}
\end{equation*}
At any limit step $\beta$, fix an increasing enumeration $\langle\beta_i\mid i\in\omega\rangle$ of a cofinal $C_\beta\subseteq\beta$, and let
\begin{equation*}
\varphi_\beta(\xi)=
    \begin{cases}
        i(\xi) & \text{if } \varphi_{\beta_{i(\xi)}}(\xi) < i(\xi) , \\
        \varphi_{\beta_{i(\xi)}}(\xi) & \text{otherwise,}
    \end{cases}
\end{equation*}
where $i(\xi)=\mathrm{min}\,\{i\in\omega\mid\beta_i>\xi\}$.
This works, as the reader may verify; note also that we've constructed an ordertype-minimal $C$-sequence $\mathcal{C}$ on $\omega_1$ along the way as well.

It is instructive to now compute $\varphi_\beta(\xi)$ for an arbitrary $\xi<\beta<\omega_1$.
If $\beta$ is a successor, e.g., if $\beta=\alpha+1$ for some $\alpha>\xi$ then $\varphi_\beta(\xi)=\varphi_\alpha(\xi)$. If $\alpha$ also is a successor well above $\xi$ then this reduction will repeat, descending through successor ordinals down to either $\xi+1$ (in which case $\varphi_\beta(\xi)=0$) or to a limit ordinal $\eta>\xi$.
In the latter case, $\varphi_\beta(\xi)=\varphi_\eta(\xi)=\max\,\{i(\xi),\varphi_{\eta_{i(\xi)}}(\xi)\}$, where $i(\xi)$ is computed with reference, of course, to $C_\eta$. Reasoning in this fashion, it is not difficult to compute that $\varphi_\beta(\xi)$ is exactly $\rho_1(\xi+1,\beta)$, defined with respect to $\mathcal{C}$; the more fundamental observation is that the steps of this computation amount to a classical walk from $\beta$ to $\xi+1$. And this observation and example suggest, in turn, a more overarching perspective that \emph{if ordinals are the underlying structures of transfinite inductions, then walks are how those inductions unwind.}

This brings us to a second distinction of the higher walks forms we're proposing.
In Section \ref{sect:nontriviality} we'll construct height-$\omega_n$ nontrivial $n$-coherent families of functions $\varphi_{\vec{\beta}}$  in two ways: one will employ higher walks, and the other, as above, will be by transfinite recursion.
The working out, in the latter case, of an arbitrary evaluation $\varphi_{\vec{\beta}}(\xi)$ will have a particular form, and much as above, this form will be recognizably that of what we're calling \emph{higher walks}.

\section{Coherence}
\label{sect:higher_coherence}

For concreteness, we'll focus for most of this section on the modulus of locally constant functions; other forms of $n$-coherence are defined, of course, simply by replacing the modulus in equation \ref{ncoh} below. We'll touch as well on how to turn families $n$-coherent with respect to one modulus into families $n$-coherent with respect to another in Lemma \ref{lem:ntc_conversion}.
\begin{definition}
\label{def:n_coh}
Let $A$ be an abelian group. For any $n>0$, a family of functions $$\Phi=\langle\varphi_{\vec{\alpha}}:\alpha_0\rightarrow A\mid\vec{\alpha}\in [\varepsilon]^{n}\rangle$$ is \emph{$n$-coherent} if 
		\begin{align}
			\label{ncoh}\sum_{i=0}^{n} (\text{-}1)^i\varphi_{\vec{\alpha}^i}= 0\hspace{.8 cm} \textnormal{modulo locally constant functions}
		\end{align}
		for all $\vec{\alpha}\in [\varepsilon]^{n+1}$ (implicit in this equation is the restriction of the function $\varphi_{\vec{\alpha}^0}$ to the domain of the other functions in the sum, i.e., to $\alpha_0$).
		\end{definition}
		
		As indicated, this is a cocycle condition; see \cite[\S 2]{CoOI}.\footnote{A more studiously cohomological perspective might prefer indices in $\varepsilon^{[n]}$ in Definition \ref{def:n_coh}. The additional indices $\vec{\gamma}$ in $\varepsilon^{[n]}\backslash [\varepsilon]^n$ are, however, essentially immaterial to $n$-coherence questions on the ordinals (any $[\varepsilon]^n$-indexed $n$-coherent family extends to an $\varepsilon^{[n]}$-indexed one simply by letting $\varphi_{\vec{\gamma}}$ equal the zero function for all such $\vec{\gamma}$, for example), and ignoring them can streamline arguments, as in Section \ref{subsect:higher_coherence} below.}
Observe that the $n=1$ instance of $n$-coherence coincides with the classical coherence relation
$$\varphi_\beta\big|_\alpha-\varphi_\alpha=0\hspace{.4 cm} \textnormal{modulo locally constant functions}\hspace{1 cm}\textnormal{for all }\alpha\leq\beta<\varepsilon$$
of equation \ref{c} above. As noted, the family $\langle\varphi_\beta\mid\beta\in\varepsilon\rangle$ of fiber maps $\varphi_\beta=\rho_2(\,\cdot\,,\beta)=\rho_2^1(\,\cdot\,,\beta)$ satisfies this condition when $\varepsilon=\omega_1$ and $\mathcal{C}$ is an ordertype-minimal $C$-sequence on $\varepsilon$. In fact the following is true, and the present section will largely be dedicated to its proof.
By way of this theorem, higher walks satisfy the generalization criterion (n.4) together with its corresponding portion of (n.6).\footnote{Readers might object that the theorem only rigorously establishes (n.4) to ``extend to higher cardinals $\kappa$ under assumptions like $\square(\kappa)$'' for $n\leq 2$. This is true, but while the details of the $n>2$ extensions (which fall in any case beyond the scope of this introduction, and whose precise elaboration figures among our conclusion's list of most immediate next tasks) have yet to be fully sorted out, that the principles of the $n\leq 2$ extensions described in Section \ref{subsect:higher_coherence} will more generally apply seems nevertheless clear.}
\begin{theorem}
\label{thm:n-coherence}
Fix $n>0$. Under either of the following assumptions on an order-$n$ $C$-sequence $\mathcal{C}$ on an ordinal $\varepsilon$, the associated family
\begin{align*}
\Phi^n_2(\mathcal{C}):=\langle\rho_2^n(\,\cdot\,,\vec{\beta}):\beta_0\to\mathbb{Z}\mid\vec{\beta}\in [\varepsilon]^n\rangle
\end{align*}
is $n$-coherent modulo locally constant functions.
\begin{enumerate}
\item $\varepsilon=\omega_n$ and $\mathcal{C}$ is ordertype-minimal.
\item $\mathrm{otp}(C_{\vec{\gamma}})\leq\omega$ for any length-$n$ $\mathcal{C}$-index $\vec{\gamma}$.
\end{enumerate}
In addition, under the following assumption, if $n\leq 2$ then $\Phi^n_2(\mathcal{C})$ is $n$-coherent modulo bounded functions; more particularly, for any $\vec{\gamma}\in [\varepsilon]^{n+1}$,
$$\sum_{i=0}^n(-1)^i\rho_2^n(\,\cdot\,,\vec{\gamma}^i)$$
is eventually constant below any limit ordinal $\alpha\leq\gamma_0$.
\begin{enumerate}
\setcounter{enumi}{2}
\item If $\vec{\gamma}\in[\varepsilon]^n$ is a $\mathcal{C}$-index and $\alpha\in\mathrm{lim}(C_{\vec{\gamma}})$ then $\alpha\cap C_{\vec{\gamma}}=C_\alpha$.
\end{enumerate}
\end{theorem}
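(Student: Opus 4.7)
The plan is to fix $\vec{\gamma} \in [\varepsilon]^{n+1}$ and analyze the signed multisets $\mathrm{Tr}_n(+, \xi, \vec{\gamma}^i)$ for $\xi < \gamma_0$ and $i \in \{0, \ldots, n\}$, showing that their weighted union $\bigsqcup_i (-1)^i \mathrm{Tr}_n(+, \xi, \vec{\gamma}^i)$ has all but a $\xi$-independent residue cancel pointwise, once $\xi$ lies above an appropriate threshold. The cocycle sum $\sum_i (-1)^i \rho_2^n(\xi, \vec{\gamma}^i)$ then equals that residue on the ``good'' region of $\xi$, which is the desired constancy. The three hypotheses (1)--(3) enter solely in controlling the structure of the complement of this good region.

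I would establish the cancellation by an induction that descends through the recursion of Definition~\ref{def:Tr_n}. At each level, the recursion expands each $\mathrm{Tr}_n(+, \xi, \vec{\gamma}^i)$ into an outgoing signed ordinal together with $n$ further recursive calls on $(n+1)$-tuples built by inserting a stepping-stone $\mathrm{min}(C_{\tau} \setminus \gamma_j)$ into $(\xi, \vec{\gamma}^i)$ and then deleting a coordinate. Provided the tails $\tau(\xi, \vec{\gamma}^i)$ and positions $j_i$ are uniformly restrictions of a common larger tail of $(\xi, \vec{\gamma})$ --- which is the defining content of the good region --- these recursive calls can be re-indexed as pairs of deletions from the common parent $(n+2)$-tuple, with a common inserted stepping-stone. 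The signs $(-1)^{i+j+k}$ prescribed by Definition~\ref{def:Tr_n} are then precisely calibrated so that each descendant tuple appears twice with opposite sign. This is the simplicial identity $\partial \circ \partial = 0$ incarnated in the recursion; it propagates downward to yield full internal cancellation, leaving only a top-layer residue depending on $\vec{\gamma}$ alone.

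Under hypothesis (1), on $\omega_n$ with $\mathcal{C}$ ordertype-minimal, the good region is locally constant in the sense that its thresholds depend on a finite set of stepping-stones $\mathrm{min}(C_{\vec{\beta}} \setminus \gamma_j)$ whose own $\xi$-dependence is governed by clubs $C_{\vec{\beta}}$ of strictly smaller cofinality; a standard unwinding then shows the residue is a locally constant function of $\xi$. Under (2), every length-$n$ club is of ordertype $\omega$, so the top-level stabilization occurs on a cofinal $\omega$-sequence in $\gamma_0$ and local constancy is immediate. Under (3), the coherence $C_{\vec{\gamma}} \cap \alpha = C_\alpha$ for $\alpha \in \mathrm{lim}(C_{\vec{\gamma}})$ means the top-level stepping-stones of the recursion agree with the canonical $C_\alpha$-data of the accumulation point $\alpha$ past some bounded threshold below $\alpha$; for $n \leq 2$ the recursion invokes at most two levels of nested $C$-data, each directly controlled by (3), yielding the eventual constancy of the cocycle sum below $\alpha$. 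For $n \geq 3$ one would require analogous coherence on deeper fibers $\mathcal{C}_i$ with $i \geq 2$, which (3) does not supply as stated.

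The principal obstacle is the combinatorial bookkeeping in the descent: the recursion in Definition~\ref{def:Tr_n} treats coordinates below and above position $j+1$ asymmetrically, and the $\mathcal{C}$-maximal proper tails $\tau(\xi, \vec{\gamma}^i)$ generally differ across the $i$'s, coinciding only as restrictions of a common larger tail. Establishing the bijection between the descendants of distinct $i$'s as face-pairs of a common parent tuple, and verifying that the prescribed signs invert appropriately in each case of where $\xi$ falls relative to the various $C_{\vec{\beta}}$'s, is the heart of the argument; once executed, the cancellation and its propagation down the recursion are essentially formal, as the sign convention of $\mathrm{Tr}_n$ was designed to accomplish.
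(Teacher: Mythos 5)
There is a genuine gap at the heart of your cancellation argument. You claim that, on the ``good region,'' the first-step expansions of the terms $\mathrm{Tr}_n(+,\xi,\vec{\gamma}^i)$ can be re-indexed as pairs of deletions from the common parent $(\xi,\vec{\gamma})$ \emph{with a common inserted stepping-stone}, so that the signs force each descendant tuple to appear twice with opposite sign, and that this propagates down the recursion. But the stepping-stones $\mathrm{min}(C_{\tau(\vec{\gamma}^i)}\backslash\gamma_{j_i})$ are computed relative to \emph{different} clubs for different $i$ (already for $n=2$: $\mathrm{min}(C_\delta\backslash\gamma)$, $\mathrm{min}(C_\delta\backslash\beta)$, $\mathrm{min}(C_\gamma\backslash\beta)$ are generically distinct), and the $\mathcal{C}$-maximal proper tails of the various $\vec{\gamma}^i$ are not restrictions of a common tail; requiring this uniformity as ``the defining content of the good region'' would leave that region essentially empty, whereas the actual good region (the interval above $\mathrm{max}$ of the lower traces, where the trees end-extend) imposes no such compatibility. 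Intermediate descendants simply do not cancel level by level: the opposite-signed partner of a given input can appear at a remote location and depth in a different $\mathrm{Tr}_n$-term, only after an exhaustive case analysis (this is precisely the content of the paper's Figures \ref{fig:pairs1}--\ref{fig:pairs4}).

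What the sign calibration and the simplicial identity $\partial\circ\partial=0$ actually give you --- and this is the paper's Lemma \ref{lem:cyclic} --- is only that the \emph{formal boundary sum} $\partial_n$ of the collection is conserved under expansion, i.e.\ that the collection of \emph{terminal} (boundary-condition) inputs is cyclic. Converting ``cyclic'' into ``partitioned into opposite-signed identical pairs'' is a separate, non-formal step (the paper's Lemma \ref{lem:pairing}): it uses the rigidity of terminal inputs (their lower coordinates are forced to equal values of the form $\mathrm{min}(C_{\tau}\backslash\alpha)$) together with a minimal-subcycle and lexicographic-maximality argument. Your sketch is missing this step entirely, and without it the residue computation does not go through. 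The rest of your outline is sound in spirit --- the end-extension/lower-trace control of the region above $\alpha$, the reduction of item (1) to item (2) (which the paper does via ordertype arithmetic rather than your cofinality unwinding), and your correct diagnosis of why item (3) is restricted to $n\leq 2$ --- but the pairing lemma is the theorem's essential content and must be supplied.
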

Observe that each of the three items generalizes the classical conditions of $\rho_2$-coherence on $\omega_1$, as we will review in Section \ref{subsect:classical_argument}; item 3 is more precisely, of course, a generalization of the (coherence half of the) classical $\square(\kappa)$-based generalization of the walks apparatus beyond $\omega_1$ alluded to in Theorem \ref{thm:square_PID} above.
In fact, (1) is a special case of (2), by the lemma just below, and an $(n-1)$-fold compounding of an ordertype-minimal classical $C$-sequence on $\omega_n$ (see Example \ref{ex:compounding}) is, in turn, a special case of (1). In particular, for any $n>0$, the existence of $C$-sequences on $\omega_n$ satisfying conditions 1 or 2 follows easily from the $\mathsf{ZFC}$ axioms.
\begin{lemma}
\label{lem:item2to1}
If $\mathcal{C}$ is an ordertype-minimal $C$-sequence on $\lambda^{+n}$ then $\mathrm{otp}(C_{\vec{\gamma}})\leq\lambda$ for all length-$n$ $\mathcal{C}$-indices $\vec{\gamma}$.
\end{lemma}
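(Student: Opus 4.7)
The plan is to strengthen the statement to an inductive claim and prove, by induction on $k\in\{1,\dots,n\}$, that $\mathrm{otp}(C_{\vec{\gamma}})\leq\lambda^{+(n-k)}$ for every length-$k$ $\mathcal{C}$-index $\vec{\gamma}$; the lemma is then the case $k=n$.

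The base case $k=1$ is immediate from ordertype-minimality: for any $\gamma_0<\lambda^{+n}$,
$$\mathrm{otp}(C_{\gamma_0})=\mathrm{cf}(\gamma_0)\leq|\gamma_0|\leq\lambda^{+(n-1)}.$$

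For the inductive step, I fix a length-$(k+1)$ $\mathcal{C}$-index $\vec{\gamma}$, so that $\vec{\gamma}^0$ is a length-$k$ $\mathcal{C}$-index with $\gamma_0\in C_{\vec{\gamma}^0}$, and ordertype-minimality gives $\mathrm{otp}(C_{\vec{\gamma}})=\mathrm{cf}(\gamma_0\cap C_{\vec{\gamma}^0})$. I split into two cases. If $\gamma_0\notin\mathrm{acc}(C_{\vec{\gamma}^0})$, then closedness of $C_{\vec{\gamma}^0}$ forces $\gamma_0\cap C_{\vec{\gamma}^0}$ to be empty or to have a maximum, so its cofinality is at most $1$. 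If instead $\gamma_0\in\mathrm{acc}(C_{\vec{\gamma}^0})$, then $\gamma_0$ is the $\xi^{\mathrm{th}}$ element of $C_{\vec{\gamma}^0}$ for some limit ordinal $\xi$, and the standard observation that the $\xi^{\mathrm{th}}$ element of a closed set of ordinals has cofinality $\mathrm{cf}(\xi)$ identifies $\mathrm{cf}(\gamma_0\cap C_{\vec{\gamma}^0})$ with $\mathrm{cf}(\xi)$. The inductive hypothesis supplies $\xi<\mathrm{otp}(C_{\vec{\gamma}^0})\leq\lambda^{+(n-k)}$, whence $|\xi|\leq\lambda^{+(n-k-1)}$ and so $\mathrm{cf}(\xi)\leq\lambda^{+(n-k-1)}$, as desired.

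I don't anticipate any real obstacle. The one step that genuinely does the work is the cofinality identification $\mathrm{cf}(\gamma_0\cap C_{\vec{\gamma}^0})=\mathrm{cf}(\xi)$: it is what translates the ordertype bound on $C_{\vec{\gamma}^0}$ into a one-level reduction of the cardinal exponent, so that $n$ iterations carry us from $\lambda^{+n}$ down to $\lambda$. The remainder is routine ordinal bookkeeping.
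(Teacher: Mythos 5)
Your proof is correct and follows exactly the route of the paper's own (very terse) argument: the paper simply notes that $\mathrm{otp}(C_{\gamma_{n-1}})\leq\lambda^{+(n-1)}$ and that this bound ``descends the length of $\vec{\gamma}$,'' which is precisely the induction you carry out, with the cofinality identification $\mathrm{cf}(\gamma_0\cap C_{\vec{\gamma}^0})=\mathrm{cf}(\xi)$ supplying the one-step drop in the cardinal exponent that the paper leaves implicit. Nothing is missing; you have just made the paper's descent explicit.
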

\begin{proof}
By assumption, $\mathrm{otp}(C_{\gamma_{n-1}})\leq\lambda^{+(n-1)}$, a relation which by the same assumption descends the length of $\vec{\gamma}$ to imply that $\mathrm{otp}(C_{\vec{\gamma}})\leq\lambda$.
\end{proof}
A weak converse to the lemma is that if $\mathrm{otp}(C_{\vec{\gamma}})\leq\lambda$ for all length-$n$ $\mathcal{C}$-indices $\vec{\gamma}$, then $\mathcal{C}$, if of order at least $n$, is a $C$-sequence on some $\varepsilon\leq\lambda^{+n}$. Note that conditions 2 or 3 of Theorem \ref{thm:n-coherence} may be vacuously satisfied, condition 2 when $\mathcal{C}_{n-1}=\varnothing$, for example, as in the circumstances discussed in Example \ref{ex:compounding}; the theorem will in those cases continue to apply.
Observe lastly that
 any $\mathcal{C}$ as described in Example \ref{ex:highCfromsquare} will meet condition 3 of Theorem \ref{thm:n-coherence}.

The most basic nonclassical instance of Theorem \ref{thm:n-coherence} is that of item 1 when $n=2$; it then unpacks as the assertion that \emph{for all $\beta<\gamma<\delta<\omega_2$ the function $f_{\beta\gamma\delta}:\beta\to\mathbb{Z}$ given by}
\begin{align*}\xi\,\mapsto\,\rho_2^2(\xi,\gamma,\delta)-\rho_2^2(\xi,\beta,\delta)+\rho_2^2(\xi,\gamma,\delta)\end{align*}
\emph{is locally constant}. At the heart of this fact, in one view, are the $C_\alpha$-relativizations $(\alpha\in\text{Cof}(\omega_1))$ of its $n=1$ analogue that \emph{for all $\beta<\gamma<\omega_1$ the function $f_{\beta\gamma}:\beta\to\mathbb{Z}$ given by}
\begin{align*} \xi\,\mapsto\,\rho_2(\xi,\gamma)-\rho_2(\xi,\beta)\end{align*}
\emph{is locally constant}. Hence we should begin by recalling why the latter assertion is so.
\subsection{Classical coherence}
\label{subsect:classical_argument}
As this subsection functions almost entirely as a review, readers with a solid background in walks might consider proceeding directly to Section \ref{subsect:higher_coherence}.
\subsubsection{Coherence on $\omega_1$}
\label{subsubsect:omega_1}
Fix an ordertype-minimal $C$-sequence $\mathcal{C}$ on $\omega_1$ and suppose towards contradiction that for some $\beta<\gamma<\omega_1$ the function $f_{\beta\gamma}$ just above is not locally constant.
Then there exists a limit $\alpha\leq\beta$ and cofinal $C\subseteq\alpha$ such that
$$\rho_2(\xi,\gamma)-\rho_2(\xi,\beta)\neq\rho_2(\alpha,\gamma)-\rho_2(\alpha,\beta)$$
for any $\xi\in C$.
Observe, though, that
$$\eta:=\mathrm{max}\big(\mathrm{L}(\alpha,\gamma)\cup\mathrm{L}(\alpha,\beta)\big)<\alpha,$$
since for all $\gamma_k\in\mathrm{Tr}(\alpha,\gamma)\backslash\{\alpha\}$ and $\beta_j\in\mathrm{Tr}(\alpha,\beta)\backslash\{\alpha\}$ both $C_{\gamma_k}\cap\alpha$ and $C_{\beta_j}\cap\alpha$ are, by our ordertype assumptions, finite, and therefore bounded below $\alpha$. Moreover,
\begin{align*}
\mathrm{Tr}(\xi,\gamma) & = \mathrm{Tr}(\alpha,\gamma)\cup \mathrm{Tr}(\xi,\alpha),\text{ and} \\
\mathrm{Tr}(\xi,\beta) & = \mathrm{Tr}(\alpha,\beta)\cup \mathrm{Tr}(\xi,\alpha)
\end{align*}
for any $\xi\in (\eta,\alpha]$, for the reason that, by arrangement, no chances arise within the course of the walks from $\gamma$ and $\beta$ down to $\alpha$ to step any nearer to such a $\xi$. It follows that
\begin{align*}
\rho_2(\xi,\gamma)-\rho_2(\xi,\beta) = \rho_2(\alpha,\gamma)+\rho_2(\xi,\alpha)-\rho_2(\alpha,\beta)-\rho_2(\xi,\alpha) = \rho_2(\alpha,\gamma)-\rho_2(\alpha,\beta)
\end{align*}
for all such $\xi$, contradicting our assumptions.

The above sequence of recognitions is among the most fundamental in the theory of classical walks. Essentially the same arguments show, for example, that the fiber maps of $\rho_1$ are, under the same assumptions on $\mathcal{C}$, both coherent and finite-to-one and hence nontrivial; see \cite[\S 2, and particularly Remark\ 2.7]{LSpace} for a concise account.
Here in a nutshell is also the core theme of ``pass[ing walks] through'' specified or ``interesting places''; see \cite[\S 6]{todpairs}.
As we'll see shortly, the key motifs above --- control of the upper trace by the lower trace; end extensions, with attendant cancellations --- each do extend via higher walks to settings above $\omega_1$.
Let us briefly recall before proceeding, though, the ways that they more classically both do and do not.
\subsubsection{Coherence on internal walks}
\label{subsubsect:internal}
Consider next the more general setting of $C_\delta$-internal walks, where $\delta$ is any ordinal of cofinality $\omega_1$ and $\pi:\omega_1\to C_\delta$ is an order-isomorphism as in Section \ref{subsect:internal_walks}. Suppose again for contradiction that for some $\beta<\gamma<\delta$ there exists a limit $\alpha\leq\beta$ and cofinal $C\subseteq\alpha$ such that
$$\rho_2[\delta](\xi,\gamma)-\rho_2[\delta](\xi,\beta)\neq\rho_2[\delta](\alpha,\gamma)-\rho_2[\delta](\alpha,\beta)$$
for any $\xi\in C$. There are now two ways in which our assumption can break down:

\underline{Case 1}: $\alpha\in C_\delta$. In this case, the classical argument applies within $C_\delta$. In particular, let $\eta=\pi(\mathrm{max}(\mathrm{L}(\alpha',\gamma')\cup\mathrm{L}(\alpha',\beta'))),$
where $\alpha'$, $\beta'$, and $\gamma'$ are the $\pi$-preimages of $\alpha$, $\mathrm{min}(C_\delta\backslash\beta)$, and $\mathrm{min}(C_\delta\backslash\gamma)$, respectively; the contradiction is then just as before.

\underline{Case 2}: $\alpha\notin C_\delta$. Let $\eta=\mathrm{max}(C_\delta\cap\alpha)$ and observe that $\mathrm{Tr}^\delta(\,\cdot\,,\gamma)$ and $\mathrm{Tr}^\delta(\,\cdot\,,\beta)$ are each constant on the interval $(\eta,\alpha]$, again contradicting our assumption about $C$.

An identical argument applies to the $C_\delta$-internal walk induced by the elements $\{C_{\gamma\delta}\mid\gamma\in C_\delta\}$ of an order-$2$ ordertype-minimal $C$-sequence $\mathcal{C}$ on any $\varepsilon$ containing $\delta$, and under the same assumptions on $\mathcal{C}$ this is a main mechanism of, for example, the $2$-coherence of $\rho^2_2$ on $\varepsilon=\omega_2$, as we'll see in Section \ref{subsect:higher_coherence}.
\subsubsection{Above $\omega_1$ with half of a square}
\label{subsubsect:square}
In contrast to the above, settings of cofinality greater than $\omega_1$ will involve clubs $C_\beta$ which, unavoidably, contain limit ordinals $\alpha$, with the consequence that the key relation $\mathrm{max}\,\mathrm{L}(\alpha,\gamma)<\alpha$ in Section \ref{subsubsect:omega_1} will no longer so generally hold.
The two main classical strategies for conserving coherence in these contexts are as follows. In the first, we relax our modulus to one for which an argument only at the cofinality-$\kappa$ points below $\kappa^+$ suffices (as when $\kappa=\omega$); see Section \ref{subsubssect:ctblvar} below.
In the second, we let the $C$-sequence itself carry those uniformities which we formerly derived from the passage through to $\alpha$, and hence to $C_\alpha$, as in Section \ref{subsubsect:omega_1}; more precisely, we fix a classical $C$-sequence $\mathcal{C}$ on an ordinal $\varepsilon$ which satisfies the condition 1 of Definition \ref{def:square} that
$$C_\beta\cap\alpha = C_\alpha\text{ for all }\beta<\varepsilon\text{ and }\alpha\in \mathrm{acc}(C_\beta).$$
Let us show the associated $\rho_2$ fiber maps coherent modulo bounded functions, and in fact of eventually constant difference below any limit ordinal $\alpha$.
Fix $\beta,\gamma\geq\alpha$ as before. If
$\mathrm{max}(\mathrm{L}(\alpha,\gamma)\cup\mathrm{L}(\alpha,\beta))<\alpha$
then we may reason just as before.
So suppose instead that $\mathrm{max}\,\mathrm{L}(\alpha,\gamma)=\mathrm{max}\,\mathrm{L}(\alpha,\beta)=\alpha$; let then $\gamma'=\mathrm{min}\,\mathrm{Tr}(\alpha,\gamma)\backslash\{\alpha\}$ and $\beta'=\mathrm{min}\,\mathrm{Tr}(\alpha,\gamma)\backslash\{\alpha\}$ and observe that $\eta':=\mathrm{max}(\mathrm{L}(\gamma',\gamma)\cup\mathrm{L}(\beta',\beta))<\alpha$ and that for any $\xi\in(\eta',\alpha]$, the descending sequences $\mathrm{Tr}(\xi,\gamma)$ and $\mathrm{Tr}(\xi,\beta)$ end-extend $\mathrm{Tr}(\gamma',\gamma)$ and $\mathrm{Tr}(\beta',\beta)$, respectively. Moreover, our assumptions ensure that those end-extensions $\mathrm{Tr}(\xi,\gamma')\backslash\{\gamma'\}$ and $\mathrm{Tr}(\xi,\beta')\backslash\{\beta'\}$ are identical; in consequence, $f_{\beta\gamma}\big|_{(\eta',\alpha]}$ is constant, just as before.
Only one possibility remains: that represented (without loss of generality) by the scenario $\mathrm{max}\,\mathrm{L}(\alpha,\gamma)=\alpha$ while $\mathrm{max}\,\mathrm{L}(\alpha,\beta)<\alpha$. For $\gamma',\beta'$ as above, let $\eta'=\mathrm{max}(\mathrm{L}(\alpha,\gamma)\cup\mathrm{L}(\alpha,\beta))$, and observe that for any $\xi\in(\eta',\alpha)$, $\mathrm{Tr}(\xi,\gamma)$ steps from $\gamma'$ to $\mathrm{min}\,C_{\gamma'}\backslash\xi=\mathrm{min}\,C_\alpha\backslash\xi$, while $\mathrm{Tr}(\xi,\beta)$ steps from $\beta'$ first to $\alpha$, then to $\mathrm{min}\,C_\alpha\backslash\xi$, but that the two again walk identically thereafter.
Note that this implies $f_{\beta\gamma}$ constant on $(\eta',\alpha)$, but not on $(\eta',\alpha]$, but that this is sufficient for our claim.

This discontinuity at $\alpha$ is an utterly minor point, though, by the following lemma. For any abelian group $A$, call a function $f:\beta\to A$ \emph{locally semi-constant} if for all limit $\alpha$ there exists an $\eta<\alpha$ with $f\big|_{(\eta,\alpha)}$ constant.
\begin{lemma}
\label{lem:ntc_conversion}
There exists a nontrivial $n$-coherent family of functions \emph{mod locally semi-constant}
\begin{align*}
\Phi=\langle\varphi_{\vec{\beta}}:\beta_0\to A\mid\vec{\beta}\in [\varepsilon]^n\rangle
\end{align*}
if and only if there exists a nontrivial $n$-coherent family of functions \emph{mod locally constant}
\begin{align*}
\hat{\Phi}=\langle\hat{\varphi}_{\vec{\beta}}:\beta_0\to A\mid\vec{\beta}\in [\varepsilon]^n\rangle.
\end{align*}
More precisely, the quotient of the group of $n$-coherent such families by the trivial such families with respect to the first modulus is canonically isomorphic to the quotient of the group of $n$-coherent such families by the trivial such families with respect to the second.
\end{lemma}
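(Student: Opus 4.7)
The plan is to realize the canonical isomorphism by explicit constructions on representatives, via ``limit-point regularization''. The inclusion of presheaves $\mathcal{A} \hookrightarrow \mathcal{S}$ (locally constant into locally semi-constant $A$-valued functions) is clearly a map of cochain complexes carrying LC-trivial families to LSC-trivial ones, and hence induces a well-defined homomorphism between the two quotient groups. I would show that this is an isomorphism by exhibiting, for each representative, an explicit ``regularized'' counterpart in the other setting.

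For surjectivity, suppose $\Phi = \langle\varphi_{\vec\beta}\rangle$ is $n$-coherent mod LSC, so $u := \delta\Phi$ has each $u_{\vec\alpha}\in\mathrm{LSC}$. At each limit $\xi$, define the \emph{limit discrepancy} $d_{\vec\alpha}(\xi):=u_{\vec\alpha}(\xi) - \tilde u_{\vec\alpha}(\xi)$, where $\tilde u_{\vec\alpha}(\xi)$ is the eventual value of $u_{\vec\alpha}$ on some $(\eta,\xi)$. Since $\delta u = \delta\delta\Phi = 0$ and since eventual values commute with finite signed sums, one verifies $\delta\tilde u=0$, hence $\delta d(\xi)=0$. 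For each fixed $\xi$, the cochain $d(\xi)$ thus lives in the simplicial cochain complex on the totally ordered vertex set $\varepsilon\setminus(\xi+1)$, which is acyclic above degree $0$ (cone over any basepoint). Taking the standard contracting homotopy, set $e_{\vec\beta}(\xi):=d_{(\xi+1,\vec\beta)}(\xi)$, which by the cone identity satisfies $\delta e(\xi)=d(\xi)$. Defining $\hat\varphi_{\vec\beta}(\xi) := \varphi_{\vec\beta}(\xi) - e_{\vec\beta}(\xi)$ at limit $\xi$ and $\varphi_{\vec\beta}(\xi)$ otherwise yields $\hat\Phi$ with $\delta\hat\Phi$ locally constant. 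Injectivity is dual: given an LC-coherent $\Phi$ with LSC-trivialization $\Psi$, apply the same limit-discrepancy construction to $\varphi - \delta\Psi$ and use the one-dimension-lower simplicial acyclicity to find an LC-trivialization $\hat\Psi$.

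The main obstacle is verifying that $\Phi$ and $\hat\Phi$ actually represent the same LSC-cohomology class --- that is, the modification family $\langle e_{\vec\beta}\rangle$ must itself be LSC-trivial. This is where the choice of contracting homotopy matters: with $e_{\vec\beta}(\xi) = d_{(\xi+1,\vec\beta)}(\xi)$, the dependence on $\xi$ is explicitly controlled by $u$, and one can iterate the coning construction one degree down to produce a candidate $(n-1)$-cochain $\Theta$, solving $e - \delta\Theta \in \mathrm{LSC}$ by the same acyclicity but now \emph{uniformly} in $\xi$. The verification --- that the resulting $\Theta$'s LSC-coboundary really does cancel $e$ modulo LSC across all indices, with the boundary case at degree $0$ handled by a free choice --- is the technical heart of the proof and the step that would need the most careful bookkeeping.
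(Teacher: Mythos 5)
Your construction of $\hat{\Phi}$ is, in substance, exactly the paper's intended argument: the paper sketches the $n=1$ ``device'' of splitting the eventual-value cocycle $\theta_{\beta\gamma}(\alpha)$ at each limit $\alpha$ as $\theta_\gamma(\alpha)-\theta_\beta(\alpha)$ and resetting $\hat{\varphi}_\beta(\alpha):=\theta_\beta(\alpha)$, and asks the reader to generalize it; your limit-discrepancy cocycle $d(\xi)$ and its cone contraction $e_{\vec{\beta}}(\xi)=d_{(\xi+1,\vec{\beta})}(\xi)$ are precisely that generalization, carried out in arbitrary degree. Since the paper explicitly leaves the rigorous proof to the reader, you have already gone further than the source in the surjectivity direction, and your verification that $\delta\hat{\Phi}$ is locally constant is sound.

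The step you defer, however, is not bookkeeping: it is the entire content of the ``canonically isomorphic'' claim, and the route you sketch for it does not work as stated. For fixed $\xi$ the cochain $e(\xi)$ is a primitive of $d(\xi)$, not a cocycle, so ``the same acyclicity'' yields nothing; and if you literally iterate the cone with a shifted apex, say $\Theta_{\vec{\gamma}}(\xi)=d_{(\xi+1,\xi+2,\vec{\gamma})}(\xi)$, the cocycle identity for $d(\xi)$ gives $e_{\vec{\beta}}(\xi)-(\delta\Theta)_{\vec{\beta}}(\xi)=d_{(\xi+2,\vec{\beta})}(\xi)$, a function of exactly the same form as $e_{\vec{\beta}}$ and no closer to being locally semi-constant (note that $e_{\vec{\beta}}$, though supported on limit ordinals, can oscillate cofinally below a limit of limits, since the index $(\xi+1,\vec{\beta})$ moves with $\xi$; so $e$ cannot be absorbed into the modulus). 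Without showing $e=\Phi-\hat{\Phi}$ trivial mod locally semi-constant you get neither surjectivity nor the ``if and only if,'' and your injectivity paragraph inherits the same problem. What is actually needed is to apply the regularization simultaneously in degree $n-1$ to a putative trivialization and check that it commutes with $\delta$ up to the admissible errors --- that is, the cochain-homotopy argument of \cite[Lem.\ 2.22]{CoOI}, which is the first of the two routes the paper points to and the one that closes this gap.
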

Here we've anticipated a bit: see Definition \ref{def:n-triv} for the meaning of \emph{nontriviality} in higher-dimensional contexts. Moreover, we leave the lemma's rigorous proof to the interested reader. One approach is essentially that of \cite[Lem.\ 2.22]{CoOI}. A more intuitive approach generalizes the following device to the cases $n>1$. Let $\Phi=\langle\varphi_\beta:\beta\to A\mid\beta\in  \varepsilon\rangle$ be, mod locally semi-constant, nontrivially coherent, and define the functions $\hat{\varphi}_\beta$ on successors $\alpha+1$ by $\hat{\varphi}_\beta(\alpha+1)=\varphi_\beta(\alpha)$. For any limit $\alpha$ and $\gamma\geq\beta>\alpha$ let $\theta_{\beta\gamma}(\alpha)$ denote the eventually constant value of $\varphi_\gamma-\varphi_\beta$ below $\alpha$. Observe that $\theta_{\gamma\delta}(\alpha)-\theta_{\beta\delta}(\alpha)+\theta_{\beta\gamma}(\alpha)=0$ for all $\varepsilon>\delta\geq\gamma\geq\beta>\alpha$, and hence that there exists a family of $\theta_\beta(\alpha)\in A$ such that $\theta_\gamma(\alpha)-\theta_\beta(\alpha)=0$ for all $\gamma\geq\beta>\alpha$. Conclude the construction by letting $\hat{\varphi}_\beta(\alpha)=\theta_\beta(\alpha)$.

The lemma affords us a simplifying reading of Theorem \ref{thm:n-coherence}, namely that under suitable $C$-sequence assumptions, the mod locally constant $n$-coherence of $\rho^n_2$ on $\omega_n$ extends (for $n\leq 2$, and conjecturally for all $n$) to higher cardinals as well.
\subsection{Higher coherence}
\label{subsect:higher_coherence}
Recall that we have defined $\rho_{2,\mathrm{t}}^n(\vec{\gamma})$ for any $n>0$ as the subset of $n^{<\omega}$ naturally indexing $\mathrm{Tr}_n(+,\vec{\gamma})$.
Its elements $\sigma$ also comprise the indices of a lower-trace function $\mathrm{L}_n$ which, just as in the classical case, mirrors the upper trace $\mathrm{Tr}_n$ from below,\footnote{Todorcevic at times has offered the image of reflection in the surface of a pond which we should imagine, in the present context, to lie at the level of $\gamma_0$.} ascending as it descends.
It will be convenient to introduce the notations $\mathrm{Tr}_n(\pm,\vec{\gamma})(\sigma)$ and $\mathrm{L}_n(\vec{\gamma})(\sigma)$, for $\sigma\in\rho_{2,\mathrm{t}}^n(\vec{\gamma})$, to denote the $\sigma^{\mathrm{th}}$ element of a given $\mathrm{Tr}_n$ and $\mathrm{L}_n$, respectively.
These will specify outputs; it will be useful to record the $\mathrm{Tr}_n$ input appearing at stage $\sigma$ as well (note that the terminal inputs of $\mathrm{Tr}_n(\pm,\vec{\gamma})$ are unindexed by $\rho_{2,\mathrm{t}}^n(\vec{\gamma})$; these form a central concern in Lemmas \ref{lem:cyclic} and \ref{lem:pairing} below). The cases of most immediate interest derive from expressions like $\mathrm{Tr}_n(\pm,\beta,\vec{\gamma})$, and since in this expression's tree of inputs only the $n$ ordinal coordinates following $\beta$ are in motion, we will write $\vec{\gamma}^\sigma$ to refer just to that portion of the $\sigma^{\mathrm{th}}$ input of $\mathrm{Tr}_n(\pm,\beta,\vec{\gamma})$ (so that $\vec{\gamma}^\varnothing=\vec{\gamma}$, for example).
\begin{definition}
\label{def:Ln}
With respect to an order-$n$ $C$-sequence $\mathcal{C}$ on $\varepsilon$, the \emph{order-$n$ lower trace function} $\mathrm{L}_n$ is defined by letting
$$\mathrm{L}_n(\beta,\vec{\gamma})(\sigma)=\max_{j\leq |\sigma|}(\sup (\beta\cap C_{\vec{\gamma}^{\sigma\restriction j}}))$$
for any $\vec{\gamma}\in\varepsilon^{[n]}$ and $\beta\leq\gamma_0$ and $\sigma\in\rho_{2,\mathrm{t}}^n(\beta,\vec{\gamma})$, under the conventions that $\sup\varnothing=0$ and that $C_{\vec{\alpha}}=\varnothing$ if $\vec{\alpha}$ is not a $\mathcal{C}$-index.
\end{definition}
Observe that the $n=1$ instance is as it should be, namely, it is exactly the function $\mathrm{L}$.
Lemmas \ref{lem:max_L<limits} and \ref{lem:end-extension} below generalize the classical case as we would hope as well.
\begin{lemma}
\label{lem:max_L<limits}
Suppose that $n>0$ and $\mathcal{C}$ is an order-$n$ $C$-sequence on an ordinal $\varepsilon$ for which $\mathrm{otp}(C_{\vec{\gamma}})\leq\omega$ for any length-$n$ $\mathcal{C}$-index $\vec{\gamma}$.
Then $$\mathrm{max}\:\mathrm{L}_n(\beta,\vec{\gamma})<\beta$$ for any $\vec{\gamma}\in \varepsilon^{[n]}$ and nonzero $\beta\leq\gamma_0$.
\end{lemma}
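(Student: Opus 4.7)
My plan is to reduce the inequality to two observations: a boundary-condition analysis of the walk, and a direct ordertype pigeonhole.

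First I would establish the following auxiliary claim: for any $\sigma\in\rho_{2,\mathrm{t}}^{n}(\beta,\vec{\gamma})$ and any $j\leq|\sigma|$, if $\vec{\gamma}^{\sigma\restriction j}$ happens to be a $\mathcal{C}$-index, then its first coordinate $\alpha_{0}$ strictly exceeds $\beta$. The proof would be by contradiction: an equality $\alpha_{0}=\beta$ means the input to $\mathrm{Tr}_{n}$ at node $\sigma\restriction j$ reads $(\beta,\beta,\alpha_{1},\dots,\alpha_{n-1})$, making its $\mathcal{C}$-maximal proper tail the length-$n$ suffix $\vec{\gamma}^{\sigma\restriction j}$ itself. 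In Definition~\ref{def:Tr_n}'s notation this gives $j_{\mathrm{def}}=0$ and $\gamma_{j_{\mathrm{def}}}=\beta$, so the recursion calls for $\min(C_{\vec{\gamma}^{\sigma\restriction j}}\setminus\beta)$; since $C_{\vec{\gamma}^{\sigma\restriction j}}\subseteq\alpha_{0}=\beta$ this set is empty, the boundary clause triggers, and $\sigma\restriction j$ produces no output --- contradicting its membership in $\rho_{2,\mathrm{t}}^{n}(\beta,\vec{\gamma})$, which is forced by $\sigma$'s membership together with the observation that only non-boundary nodes can have descendants in the walk tree.

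Next, I would use the claim to bound $\sup(\beta\cap C_{\vec{\gamma}^{\sigma\restriction j}})$ for each such $\sigma$ and $j$. If $\vec{\gamma}^{\sigma\restriction j}$ is not a $\mathcal{C}$-index, the convention of Definition~\ref{def:Ln} makes $C_{\vec{\gamma}^{\sigma\restriction j}}$ empty and the supremum $0$. Otherwise $\alpha_{0}>\beta$ by the claim, and the hypothesis supplies $\mathrm{otp}(C_{\vec{\gamma}^{\sigma\restriction j}})\leq\omega$; the conclusion then follows by a short case split. When that ordertype is finite, $C_{\vec{\gamma}^{\sigma\restriction j}}\cap\beta$ is immediately a finite subset of $\beta$. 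When it is $\omega$, closedness of $C_{\vec{\gamma}^{\sigma\restriction j}}$ rules out $\sup C_{\vec{\gamma}^{\sigma\restriction j}}$ lying strictly below $\alpha_{0}$ (any such smaller sup would be a limit point of an $\omega$-sequence in $\alpha_{0}\cap C_{(\vec{\gamma}^{\sigma\restriction j})^{0}}$ missed by $C_{\vec{\gamma}^{\sigma\restriction j}}$), so $\sup C_{\vec{\gamma}^{\sigma\restriction j}}=\alpha_{0}>\beta$ and $C_{\vec{\gamma}^{\sigma\restriction j}}\cap\beta$ is a proper initial segment of an $\omega$-sequence, hence finite. Either way the supremum of the resulting finite subset of $\beta$ lies strictly below $\beta$.

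Finally, I would take the maximum over the finite set $\rho_{2,\mathrm{t}}^{n}(\beta,\vec{\gamma})$ (finite by Theorem~\ref{thm:finite}) and $j\leq|\sigma|$, concluding $\max\mathrm{L}_{n}(\beta,\vec{\gamma})<\beta$. The one place I expect the argument actually to require care is in the ordertype-$\omega$ branch of the second step: extracting from the closedness clause of Definition~\ref{def:higherCseqs} that a closed ordertype-$\omega$ member of $\mathcal{C}$ is necessarily cofinal in the first coordinate of its index. Once the ambient topology is read off correctly, though, this is immediate from the fact that an $\omega$-sequence with sup $<\alpha_{0}$ would have a limit point in $\alpha_{0}\cap C_{(\vec{\gamma}^{\sigma\restriction j})^{0}}$ not belonging to it.
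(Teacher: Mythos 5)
Your proof is correct and follows essentially the same route as the paper's: both reduce the claim to bounding each individual term $\sup(\beta\cap C_{\vec{\gamma}^{\sigma\restriction j}})$ strictly below $\beta$, observing that only the nodes where $\vec{\gamma}^{\sigma\restriction j}$ is a length-$n$ $\mathcal{C}$-index contribute and that non-terminality together with the ordertype hypothesis forces $\beta\cap C_{\vec{\gamma}^{\sigma\restriction j}}$ to be finite (the paper organizes this as an induction on $\sigma$ rather than a maximum over the finite tree, which is cosmetic). One simplification worth noting: the closedness/cofinality detour in your ordertype-$\omega$ branch is avoidable, since the non-terminality of $\sigma\restriction j$ already yields $C_{\vec{\gamma}^{\sigma\restriction j}}\backslash\beta\neq\varnothing$ directly, whence $\beta\cap C_{\vec{\gamma}^{\sigma\restriction j}}$ is a proper initial segment of a set of ordertype at most $\omega$ and hence finite.
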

\begin{proof}
It suffices to show that $\mathrm{L}_n(\beta,\vec{\gamma})(\sigma)$ is less than $\beta$ for any $\sigma\in\rho_{2,\mathrm{t}}^n(\beta,\vec{\gamma})$; this we show by induction on $\sigma$.
Note first that if it is defined then $\mathrm{L}_n(\beta,\vec{\gamma})(\varnothing)=0$ unless $\tau(\beta,\vec{\gamma})=\vec{\gamma}$ and $\beta<\gamma_0$, in which case $\mathrm{L}_n(\beta,\vec{\gamma})(\varnothing)=\mathrm{sup}\:\beta\cap C_{\vec{\gamma}}$; our assumption then completes the argument for $\sigma=\varnothing$.
Suppose next that we are at stage $\sigma$ and we have shown our claim at $\sigma\restriction|\sigma|-1$.
If $\tau(\beta,\vec{\gamma}^\sigma)\neq\vec{\gamma}^\sigma$ then $\mathrm{L}_n(\beta,\vec{\gamma})(\sigma)=\mathrm{L}_n(\beta,\vec{\gamma})(\sigma\restriction|\sigma|-1)$, while if $\tau(\beta,\vec{\gamma}^\sigma)=\vec{\gamma}^\sigma$ then $$\mathrm{L}_n(\beta,\vec{\gamma})(\sigma)=\mathrm{max}\big(\mathrm{L}_n(\beta,\vec{\gamma})(\sigma\restriction|\sigma|-1),\mathrm{sup}\,(\beta\cap C_{\vec{\gamma}^\sigma})\big)$$
and $\beta<\gamma_0$ (since $\sigma\in\rho_{2,\mathrm{t}}^n(\beta,\vec{\gamma})$), hence by our lemma's premise, $\mathrm{sup}\,(\beta\cap C_{\vec{\gamma}^\sigma})<\beta$, completing the proof.
\end{proof}
\begin{lemma}
\label{lem:end-extension}
For all $n>0$ and $\vec{\gamma}\in\varepsilon^{[n]}$ and $\beta\leq\gamma_0$ and $\alpha\in(\mathrm{max}\:\mathrm{L}_n(\beta,\vec{\gamma}),\beta]$, the tree $\mathrm{Tr}_n(\pm,\alpha,\vec{\gamma})$ end-extends the tree $\mathrm{Tr}_n(\pm,\beta,\vec{\gamma})$.
\end{lemma}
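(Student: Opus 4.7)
The plan is to prove, by induction on nodes $\sigma$ of the tree $\rho_{2,\mathrm{t}}^n(\beta, \vec{\gamma})$, the stronger statement that $\mathrm{Tr}_n(\pm, \beta, \vec{\gamma})$ embeds as an initial subtree of $\mathrm{Tr}_n(\pm, \alpha, \vec{\gamma})$ with identical tails and identical signed outputs at every such node. The inductive invariants are: (i) $\sigma \in \rho_{2,\mathrm{t}}^n(\alpha, \vec{\gamma})$; (ii) the input labeling node $\sigma$ in $\alpha$'s tree is $(\alpha, \vec{\gamma}^\sigma)$, sharing the tail $\vec{\gamma}^\sigma$ with $\beta$'s tree; and (iii) the signed-ordinal output at $\sigma$ coincides in the two trees.

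The pivotal reduction is that the hypothesis $\alpha > \max \mathrm{L}_n(\beta, \vec{\gamma})$, unpacked through Definition \ref{def:Ln}, gives $\sup(\beta \cap C_{\vec{\gamma}^{\sigma \restriction k}}) < \alpha$ for every $\sigma \in \rho_{2,\mathrm{t}}^n(\beta, \vec{\gamma})$ and every $k \leq |\sigma|$; equivalently, $C_{\vec{\gamma}^{\sigma \restriction k}} \cap [\alpha, \beta) = \varnothing$, so that $\min(C_{\vec{\gamma}^{\sigma \restriction k}} \backslash \alpha) = \min(C_{\vec{\gamma}^{\sigma \restriction k}} \backslash \beta)$ whenever the right side is defined. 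For the inductive step at such a $\sigma$, I would note that the $\mathcal{C}$-maximal proper tail $\tau$ and the index $j = n - |\tau|$ of the input $(\beta, \vec{\gamma}^\sigma)$ depend only on the tail $\vec{\gamma}^\sigma$, so they are identical in the two trees. A case split on $|\tau|$ then completes the step: if $|\tau| < n$, then $\gamma_j$ lies in $\vec{\gamma}^\sigma$ and both the output $\min(C_\tau \backslash \gamma_j)$ and its boundary criterion $C_\tau \backslash \gamma_j = \varnothing$ are independent of the first coordinate, so they agree automatically; if $|\tau| = n$ (so $\tau = \vec{\gamma}^\sigma$ and $\gamma_j$ is the first coordinate), the pivotal reduction gives equality of outputs, while the non-termination of $\beta$'s recursion at $\sigma$ yields $C_{\vec{\gamma}^\sigma} \backslash \beta \neq \varnothing$ and a fortiori $C_{\vec{\gamma}^\sigma} \backslash \alpha \neq \varnothing$, so $\alpha$'s recursion likewise does not terminate. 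Signs propagate correctly because, per Definition \ref{def:Tr_n}, they depend only on the indices $i, j, k$, all of which $\alpha$'s tree inherits from $\beta$'s; and since the children inputs $(\iota, \min(\ldots), \tau)^i$ are formed by insertion and deletion at positions other than $0$, the children's tails agree across the two trees.

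Having thus exhibited $\mathrm{Tr}_n(\pm, \beta, \vec{\gamma})$ as a subtree of $\mathrm{Tr}_n(\pm, \alpha, \vec{\gamma})$, the end-extension claim amounts to showing that any additional nodes of the latter arise only below leaves of the former. The case analysis pinpoints where: at a child input $(\beta, \vec{\gamma}^{\sigma^\frown i})$ that terminates in $\beta$'s tree under the case $|\tau| = n$ --- that is, where the walk has reached $\beta$ as its second coordinate and so $C_{\vec{\gamma}^{\sigma^\frown i}} \subseteq \beta$ --- the corresponding child $(\alpha, \vec{\gamma}^{\sigma^\frown i})$ in $\alpha$'s tree can still have $C_{\vec{\gamma}^{\sigma^\frown i}} \backslash \alpha \neq \varnothing$, inaugurating a fresh descent toward $\alpha$. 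All additional structure of $\mathrm{Tr}_n(\pm, \alpha, \vec{\gamma})$ is rooted at such ``$\beta$-terminal'' positions and so hangs literally off the ends of $\beta$'s tree, which is exactly the meaning of end-extension. The main obstacle I anticipate is notational: keeping the indices $\sigma, j, k, i$, the signs, and the insertion--deletion of $(\iota, \min(\ldots), \tau)^i$ carefully aligned across the two trees while verifying, case by case, that the bound on $\max \mathrm{L}_n(\beta, \vec{\gamma})$ really does force outputs to coincide.
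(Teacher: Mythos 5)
Your proof is correct and follows essentially the same route as the paper's: induction on the index $\sigma$, splitting on whether $\tau(\beta,\vec{\gamma}^\sigma)=\vec{\gamma}^\sigma$, with the bound $\max\mathrm{L}_n(\beta,\vec{\gamma})<\alpha$ forcing $C_{\vec{\gamma}^\sigma}\cap[\alpha,\beta)=\varnothing$ so that the two recursions step identically, and with the genuinely new nodes of $\mathrm{Tr}_n(\pm,\alpha,\vec{\gamma})$ appearing only at inputs that were terminal for $\beta$. Your write-up merely makes explicit the bookkeeping (tails, signs, and the insertion--deletion indices) that the paper's proof leaves implicit.
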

The trees in question, of course, are those of the outputs of $\mathrm{Tr}_n$; at the level of the inputs, the lemma is asserting that an initial portion of $\mathrm{Tr}_n(\pm,\alpha,\vec{\gamma})$ is identical to $\mathrm{Tr}_n(\pm,\beta,\vec{\gamma})$, only with $\alpha$ everywhere replacing $\beta$ in the first ordinal coordinate.
\begin{proof}
Observe first that, for any $\sigma\in\rho_{2,\mathrm{t}}^n(\beta,\vec{\gamma})$, if $\tau(\beta,\vec{\gamma}^\sigma)\neq\vec{\gamma}^\sigma$ then the difference between $\alpha$ and $\beta$ is immaterial to the step $\sigma$ of the expansion of $\mathrm{Tr}_n(\pm,\beta,\vec{\gamma})$.
If, on the other hand, $\tau(\beta,\vec{\gamma}^\sigma)=\vec{\gamma}^\sigma$, then for any $\alpha$ in the interval $(\mathrm{max}\:\mathrm{L}_n(\beta,\vec{\gamma}),\beta]$, if $\mathrm{Tr}_n(\pm,\beta,\vec{\gamma})(\sigma')=\mathrm{Tr}_n(\pm,\alpha,\vec{\gamma})(\sigma')$ for all proper initial segments $\sigma'$ of $\sigma$, then $\mathrm{Tr}_n(\pm,\beta,\vec{\gamma})(\sigma)=\mathrm{Tr}_n(\pm,\alpha,\vec{\gamma})(\sigma)$ by a reasoning identical to that of the classical case, namely because $\mathrm{L}_n(\beta,\vec{\gamma})(\sigma)$ certifies that there is no ordinal between $\alpha$ and $\beta$ for $\mathrm{Tr}_n(\pm,\beta,\vec{\gamma})$ at stage $\sigma$ to step to.
The lemma thus follows by induction; note that no particular assumptions on the underlying $C$-sequence were required.
\end{proof}
Recall now the function $f_{\beta\gamma\delta}:\beta\to\mathbb{Z}$ of this section's introduction, given for any $\beta<\gamma<\delta$ in $\omega_2$ by
\begin{equation*}
\xi\mapsto\rho_2^2(\xi,\gamma,\delta)-\rho_2^2(\xi,\beta,\delta)+\rho_2^2(\xi,\beta,\gamma),
\end{equation*}
defined with respect to an order-$2$ ordertype-minimal $C$-sequence on $\omega_2$. We claim, as an instance of Theorem \ref{thm:n-coherence}, that $f_{\beta\gamma\delta}$ is locally constant; let us see how far the above lemmas take us in the direction of our claim.
Suppose, as classically, that our claim is false, so that there exists a limit $\alpha\leq\beta$ and a cofinal subset $C\subseteq\alpha$ such that $f_{\beta\gamma\delta}(\xi)\neq f_{\beta\gamma\delta}(\alpha)$ for all $\xi\in C$.
Let
\begin{align}
\label{eq:max_L_ns}
\eta=\mathrm{max}\,(\mathrm{L}_2(\alpha,\gamma,\delta)\cup\mathrm{L}_2(\alpha,\beta,\delta)\cup\mathrm{L}_2(\alpha,\beta,\gamma)).
\end{align}
Observe that $\eta<\alpha$ by Lemma \ref{lem:max_L<limits}, and that by Lemma \ref{lem:end-extension}, the portion of
\begin{equation}
\label{eq:Tr_2_array}
\mathrm{Tr}_2(+,\xi,\gamma,\delta)\sqcup\mathrm{Tr}_2(-,\xi,\beta,\delta)\sqcup
\mathrm{Tr}_2(+,\xi,\beta,\gamma)
\end{equation}
greater than or equal to $\alpha$ is unchanging as $\xi$ ranges within the interval $(\eta,\alpha]$.
We will attain a contradiction as desired if for all $\xi\in (\eta,\alpha)$ the numbers of positive signed ordinals and negative signed ordinals appearing below $\alpha$ in expression \ref{eq:Tr_2_array} are equal.
This indeed holds, and follows from the stronger fact that for every $\xi\in(\eta,\alpha)$ the signed ordinals $\zeta\in [\xi,\alpha)$ appearing in expression \ref{eq:Tr_2_array} arise in opposite-signed pairs $\{+\zeta,-\zeta\}$.

This fact will follow from the $n=2$ instances of Lemmas \ref{lem:cyclic} and \ref{lem:pairing} below.
These lemmas will necessitate some further definitions and notations, which we will ground in the $n=2$ case as we proceed.
First, with respect to any $n>0$ and order-$n$ $C$-sequence $\mathcal{C}$ on an ordinal $\varepsilon$, call any $\mathrm{Tr}_n(x)$ with $x$ a signed tuple $\vec{\gamma}\in\varepsilon^{[n+1]}$ a \emph{$\mathrm{Tr}_n$-term}.
As above, we will speak of the \emph{expansion} of $\mathrm{Tr}_n$-terms, which transpires via \emph{steps} of two sorts; these consist in the replacement of further $\mathrm{Tr}_n$-terms deriving from $\mathrm{Tr}_n(x)$ via repeated applications of Definition \ref{def:Tr_n} with either (1) an output together with yet further $\mathrm{Tr}_n$-terms, or (2) the empty set. We call steps of the former type \emph{nondegenerate}.
Those cuing the latter replacement are, of course, the terms in the expansion of $\mathrm{Tr}_n(x)$ satisfying the boundary conditions of Definition \ref{def:Tr_n}, and we therefore denote their collection by $\mathtt{b}\mathrm{Tr}_n(x)$; here and between  pairs of brackets $\{$ and $\}$ below, we allow for the possibility that a collection may include multiple copies of a given object.
Next, for any $n>1$, let $\{\lfloor\vec{\gamma}\rfloor\mid \vec{\gamma}\in\varepsilon^{[n]}\}$ denote the set of generators of the free abelian group on $\varepsilon^{[n]}$; the following then determines a well-defined map from the collection of finite disjoint unions of $\mathrm{Tr}_n$-terms to $\bigoplus_{\varepsilon^{[n-1]}}\mathbb{Z}$ (we should perhaps stress that this map takes as arguments \emph{formal objects}, i.e., well-formed expressions made from ``$\mathrm{Tr}_n(x)$'' and ``$\sqcup$'', where $x$ is a signed nondecreasing $(n+1)$-tuple of ordinals below $\varepsilon$):
\begin{align*}
\partial_n\mathrm{Tr}_n((-1)^k,\xi,\vec{\gamma}) & =\sum_{i=0}^{n-1}(-1)^{i+k}\lfloor\vec{\gamma}^i\rfloor\,,\text{ and} \\
\partial_n(\mathrm{Tr}_n(x)\,\sqcup\,\mathrm{Tr}_n(y)) & =\partial_n\mathrm{Tr}_n(x)\,+\,\partial_n\mathrm{Tr}_n(y).
\end{align*}
For example: $\partial_2\mathrm{Tr}_2(+,\xi,\beta,\gamma) =\lfloor\gamma\rfloor-\lfloor\beta\rfloor$ and $\partial_2\mathrm{Tr}_2(-,\xi,\beta,\gamma)=\lfloor\beta\rfloor-\lfloor\gamma\rfloor$.

\begin{definition} A collection of $\mathrm{Tr}_n$-terms $\{\mathrm{Tr_n}(x_0),\dots,\mathrm{Tr}_n(x_k)\}$ is \emph{cyclic} if $$\partial_n(\mathrm{Tr}_n(x_0)\,\sqcup\,\dots\,\sqcup\,\mathrm{Tr}_n(x_k))=0.$$
\end{definition}

\begin{example}\label{ex:cyclic2} The collection $\{\mathrm{Tr}_2(+,\xi,\gamma,\delta),\mathrm{Tr}_2(-,\xi,\beta,\delta),\mathrm{Tr}_2(+,\xi,\beta,\gamma)\}$ corresponding to expression (\ref{eq:Tr_2_array}) above is cyclic.
\end{example}

\begin{lemma}\label{lem:cyclic} For any $n>1$, if a collection $\mathcal{T}$ of $\mathrm{Tr}_n$-terms is cyclic, so too is the collection formed from $\mathcal{T}$ by replacing one of its terms $\mathrm{Tr}_n(x)$ by the $n$ terms deriving from $\mathrm{Tr}_n(x)$ in a nondegenerate step. In consequence, if $\mathcal{T}$ is cyclic, then so too is
\begin{align}\bigsqcup_{\mathrm{Tr}_n(x)\in\mathcal{T}}\mathtt{b}\mathrm{(Tr}_n(x))
\end{align}
\end{lemma}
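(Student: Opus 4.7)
The plan is to reduce the first assertion to a single identity: for any $\mathrm{Tr}_n$-term $\mathrm{Tr}_n((-1)^k,\vec{\alpha})$ admitting a nondegenerate step, $\partial_n$ sends that term and the disjoint union of the $n$ $\mathrm{Tr}_n$-terms produced by the step to the same element of $\bigoplus_{\varepsilon^{[n-1]}}\mathbb{Z}$ (the accompanying ordinal output $\mu$, not being a $\mathrm{Tr}_n$-term, contributes nothing to $\partial_n$). Because $\partial_n$ is additive over disjoint unions by definition, this is exactly what is needed: substituting a term for its single-step expansion preserves the vanishing of $\partial_n$ on the ambient collection. The consequence then follows by iteration, since by Theorem~\ref{thm:finite} the full expansion tree of each $\mathrm{Tr}_n$-term is finite, so only finitely many single-step substitutions are required to reach $\bigsqcup_{\mathrm{Tr}_n(x)\in\mathcal{T}}\mathtt{b}(\mathrm{Tr}_n(x))$.

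For the identity itself, I would introduce the additive map $t$ sending $((-1)^k,\vec{\alpha})$ to $(-1)^k\lfloor(\alpha_1,\dots,\alpha_n)\rfloor$ (with $\sqcup$ mapping to $+$), so that $\partial_n=\partial\circ t$ with $\partial$ the standard simplicial boundary on $\bigoplus_{\varepsilon^{[\cdot]}}\mathbb{Z}$. Writing $\Delta=(\iota(\vec{\alpha}),\mu,\tau(\vec{\alpha}))$ for the $(n+2)$-tuple obtained by inserting $\mu=\min(C_{\tau(\vec{\alpha})}\setminus\alpha_j)$ just after coordinate $j$ of $\vec{\alpha}$, and $T=(\delta_1,\dots,\delta_{n+1})$ for its tail, a routine sign-count -- using the reindexing $p=i-1$ and the observation that $T^{j}=(\alpha_1,\dots,\alpha_n)$ -- establishes the identity
\[
t(\mathrm{Tr}_n((-1)^k,\vec{\alpha}))-t(\text{expansion})=(-1)^{j+k}\,\partial\lfloor T\rfloor.
\]
Applying $\partial$ to both sides then annihilates the right-hand side by the simplicial identity $\partial^{2}=0$, which yields the desired $\partial_n(\text{original})=\partial_n(\text{expansion})$.

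The substantive content of the argument is thus essentially $\partial^{2}=0$, and no hypothesis on the underlying $C$-sequence enters -- a reflection of the cohomological reading of $\mathrm{Tr}_n$, in which a nondegenerate expansion step amounts to a simplicial subdivision whose effect on the cochain $t$ is by a coboundary, hence invisible to $\partial_n=\partial\circ t$. The anticipated obstacle is accordingly purely notational: aligning the zero- versus one-indexed deletion conventions of Definition~\ref{def:Tr_n}, pinning the exact position $j+1$ of $\mu$ inside $\Delta$, executing the reindexing that sends each $\Delta^{i}_{\mathrm{tail}}$ to $T^{i-1}$, and tracking the $(-1)^{i+j+k}$ signs that Definition~\ref{def:Tr_n} attaches to the recursive arguments.
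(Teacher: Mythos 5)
Your proposal is correct and follows essentially the same route as the paper: where you factor $\partial_n = \partial\circ t$ and express the discrepancy between the original term and its one-step expansion as $(-1)^{j+k}\partial\lfloor T\rfloor$ (so that $\partial^2=0$ kills it), the paper adjoins the missing $i=j+1$ face, observes it is the original term with reversed sign, and notes that $\partial_n$ of the completed alternating sum over all faces of $\Delta$ vanishes by the same simplicial identity. The iteration-to-boundary step via Theorem~\ref{thm:finite} is likewise identical.
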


\begin{proof}
It should be clear that the second assertion follows from iterated applications of the first.
Reference Definition \ref{def:Tr_n} for the first, which follows from the fact that $\partial_n(\mathrm{Tr}_n((-1)^k,\iota(\vec{\gamma}),\tau(\vec{\gamma}))$ equals
$$\partial_n\Bigg(\bigsqcup_{i\in\{1,\dots,n+1\}\backslash\{j+1\}}\mathrm{Tr}_n\big((-1)^{i+j+k},(\iota(\vec{\gamma}),\mathrm{min}(C_{\tau(\vec{\gamma})}\backslash\gamma_j),\tau(\vec{\gamma}))^i\big)\Bigg).$$
To see this, observe that the disjoint union of $\mathrm{Tr}_n((-1)^{k+1},\iota(\vec{\gamma}),\tau(\vec{\gamma}))$ with the argument of the displayed term above may be written as
$$\bigsqcup_{i\in\{1,\dots,n+1\}}\mathrm{Tr}_n\big((-1)^{i+j+k},(\iota(\vec{\gamma}),\mathrm{min}(C_{\tau(\vec{\gamma})}\backslash\gamma_j),\tau(\vec{\gamma}))^i\big),$$
and that $\partial_n$ of this term equals zero, by the usual simplicial arithmetic whereby the composition of two boundary maps equals zero.
\end{proof}
\begin{example}
As noted, $\partial_2\mathrm{Tr}_2(+,\xi,\beta,\gamma)=\lfloor\gamma\rfloor-\lfloor\beta\rfloor$. If $\beta\notin C_\gamma$ and $\eta=\min C_\gamma\backslash\beta$ then $\mathrm{Tr}_2(+,\xi,\beta,\gamma)$ steps to $\mathrm{Tr}_2(+,\xi,\beta,\eta)\,\sqcup\,\mathrm{Tr}_2(+,\xi,\eta,\gamma)$, with $\partial_2$-image $\lfloor\eta\rfloor-\lfloor\beta\rfloor+\lfloor\gamma\rfloor-\lfloor\eta\rfloor=\lfloor\gamma\rfloor-\lfloor\beta\rfloor$. If $\beta\in C_\gamma$ and $\eta=\min C_{\beta\gamma}\backslash\xi$ then the $\partial_2$-image of the step is again $\lfloor\eta\rfloor-\lfloor\beta\rfloor+\lfloor\gamma\rfloor-\lfloor\eta\rfloor=\lfloor\gamma\rfloor-\lfloor\beta\rfloor$; in all cases the overall sum is unchanged.
\end{example}
This property of ``conserving cyclicity'' is among the most cogent of justifications for the sign-rules determining $\mathrm{Tr}_n$.

Continuing with our $n=2$ example, what does it mean --- in particular for our argument that $\rho_2^2$ is $2$-coherent, i.e., that $f_{\beta\gamma\delta}$ is locally constant --- that
\begin{equation}
\label{eq:bTr_2_array}
\mathtt{b}\mathrm{Tr}_2(+,\alpha,\gamma,\delta)\sqcup\mathtt{b}\mathrm{Tr}_2(-,\alpha,\beta,\delta)\sqcup
\mathtt{b}\mathrm{Tr}_2(+,\alpha,\beta,\gamma)
\end{equation}
is cyclic? Since the inputs $(\pm,\alpha,\beta',\gamma')$ therein all satisfy the boundary conditions, they are all of one of two forms: (i) $\beta'=\gamma'$, or (ii) $\beta'=\mathrm{min}\,C_{\gamma'}\backslash\alpha$.
Note in the first case that $\mathrm{Tr}_2(\pm,\alpha,\beta',\gamma')$ cannot have derived from any higher $\mathrm{Tr}_2$-term in 
\begin{equation}
\label{eq:Tr_2_alpha_array}
\mathrm{Tr}_2(+,\alpha,\gamma,\delta)\sqcup\mathrm{Tr}_2(-,\alpha,\beta,\delta)\sqcup
\mathrm{Tr}_2(+,\alpha,\beta,\gamma),
\end{equation}
hence two ordinals among $\{\beta,\gamma,\delta\}$ must equal $\beta'$ and $\gamma'$ and are thus themselves equal; in this case, however, $f_{\beta\gamma\delta}$ is constantly $0$ (alternately, simply observe that $(\beta,\gamma,\delta)\notin [\omega_2]^3$).
Without loss of generality, we may therefore suppose all inputs to be of the second form.
Fix now some $\mathrm{Tr}_2(+,\alpha,\beta',\gamma')$ in (\ref{eq:bTr_2_array}) (the argument is identical for $\mathrm{Tr}_2(-,\alpha,\beta',\gamma')$) and a minimal cyclic subcollection $c$ of (\ref{eq:bTr_2_array}) containing $\mathrm{Tr}_2(+,\alpha,\beta',\gamma')$, and let $\gamma''$ denote the highest ordinal appearing among the inputs of $c$. As $c$ is cyclic, $\gamma''$  must appear in at least two opposite-signed $\mathrm{Tr}_2$-terms; under our assumptions, however, these terms' ordinal inputs are fully determined: each is of the form $\mathrm{Tr}_2(\pm,\alpha,\mathrm{min}\,C_{\gamma''}\backslash\alpha,\gamma'')$.
The removal of such a pair would therefore yield a proper subcycle of $c$ containing $\mathrm{Tr}_2(+,\alpha,\beta',\gamma')$ --- a contradiction --- unless $\mathrm{min}\,C_{\gamma''}\backslash\alpha=\beta'$ and $\gamma''=\gamma'$; it follows that $c=\mathrm{Tr}_2(+,\alpha,\beta',\gamma')\sqcup\mathrm{Tr}_2(-,\alpha,\beta',\gamma')$. Iterate this argument to partition the expression (\ref{eq:bTr_2_array}) into a collection of pairs of opposite-signed repeated $\mathrm{Tr}_2$-terms.

Consider now the implications for $f_{\beta\gamma\delta}$. We have seen that for $\eta$ as in equation \ref{eq:max_L_ns}, the portion of (\ref{eq:Tr_2_array}) greater than or equal to $\alpha$ is unchanging as $\xi$ ranges within the interval $(\eta,\alpha]$. We have also observed that the inputs of (\ref{eq:Tr_2_array}) whose last two ordinals are greater than or equal to $\alpha$  are exactly those of (\ref{eq:Tr_2_alpha_array}) with the first ordinal coordinates all replaced with $\xi$. This holds in particular for the inputs appearing in (\ref{eq:bTr_2_array}), and since these fully determine (\ref{eq:Tr_2_array}) below $\alpha$ and occur, by the above reasoning, in matched canceling pairs, $f_{\beta\gamma\delta}$ is constant on the interval $(\eta,\alpha]$ and is, more generally, locally constant, as desired.

By the following lemma together with the others of this section, this reasoning holds for all $n\geq 1$, and this essentially establishes the first two items of Theorem \ref{thm:n-coherence}.
Two last points might be underscored in the $n=2$ case, though, before proceeding:
\begin{enumerate}
\item Any $\mathrm{Tr}_2(+,\alpha,\beta',\gamma')$ in (\ref{eq:bTr_2_array}) of type (ii) attests to a $C_{\gamma'}$-internal walk within (\ref{eq:Tr_2_alpha_array}), and the pairing of such $\mathrm{Tr}_2$-terms induces a pairing of opposite-signed $C_{\gamma'}$-internal walks within (\ref{eq:Tr_2_alpha_array}) which, although beginning from possibly different inputs, will agree on the passage through $\alpha$ to any $\xi$ above $\eta$, by the logic of Subsection \ref{subsubsect:internal}.
In this view, the mechanism of the $2$-coherence of $\rho^2_2$ is that of the $1$-coherence of $\rho^1_2$ manifesting along these paired internal walks above the relevant lower trace values, and above $\eta$ in particular.
\item Although the general principles of the pairings we're describing in this section's lemmas aren't ultimately very complicated, their working out in practice can be quite subtle, as Subsection \ref{subsubsect:pairings} below will suggest.
\end{enumerate}

\begin{lemma}
\label{lem:pairing}
Fix $n>0$ and an ordinal $\varepsilon$ and an order-$n$ $C$-sequence on $\varepsilon$.
For any $\vec{\gamma}\in[\varepsilon]^{n+1}$ and $\alpha\leq\gamma_0$, the collection of arguments appearing in
\begin{align}
\label{eq:lemma_pairing}
\bigsqcup_{i\leq n}\mathtt{b}\mathrm{Tr}_n((-1)^i,\alpha,\vec{\gamma}^i)
\end{align}
admits a partition into pairs $\{+\vec{\zeta},-\vec{\zeta}\}$ of opposite-signed $(n+1)$-tuples.
\end{lemma}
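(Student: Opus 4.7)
The plan is threefold: first, to establish the cyclicity of (\ref{eq:lemma_pairing}); second, to reduce the desired pairing to the statement that every minimal cyclic subcollection consists of two opposite-signed terms with identical input; and third, to verify this minimality-to-pairing implication by a generalization of the $n=2$ argument already sketched above. For the cyclicity of the initial collection $\{\mathrm{Tr}_n((-1)^i,\alpha,\vec{\gamma}^i)\mid 0\le i\le n\}$, I compute
\[\sum_{i=0}^{n}\partial_n\mathrm{Tr}_n((-1)^i,\alpha,\vec{\gamma}^i)=\sum_{i=0}^{n}\sum_{k=0}^{n-1}(-1)^{i+k}\lfloor(\vec{\gamma}^i)^k\rfloor,\]
in which each generator $\lfloor\vec{\mu}\rfloor$, with $\vec{\mu}$ obtained from $\vec{\gamma}$ by deleting two coordinates at positions $a<b$, appears twice with opposing signs $(-1)^{a+b-1}$ and $(-1)^{a+b}$, so the sum vanishes. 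Iterated application of Lemma \ref{lem:cyclic} through the (finite, by Theorem \ref{thm:finite}) sequence of expansion steps then propagates cyclicity down to (\ref{eq:lemma_pairing}).

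Given this cyclicity, the desired pairing can be extracted one pair at a time: it suffices to show that any minimal cyclic subcollection $c$ of (\ref{eq:lemma_pairing}) consists of exactly two members, namely $\mathrm{Tr}_n(+,\alpha,\vec{\zeta})$ and $\mathrm{Tr}_n(-,\alpha,\vec{\zeta})$ for a common boundary input $(\alpha,\vec{\zeta})$. Subtracting such a pair leaves a cyclic complement, and iterating produces the desired partition of (\ref{eq:lemma_pairing}) into opposite-signed pairs.

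To prove the minimality claim, I mimic the $n=2$ argument. Let $\eta^\ast$ denote the largest ordinal appearing in any coordinate of any tuple of $c$; by nondecreasingness, $\eta^\ast$ occupies a final run of coordinates in every tuple in which it appears. Among those tuples, pick one $\vec{\eta}$ whose $\eta^\ast$-run starts at the largest index $j_\ast$, and consider the generator $\lfloor\vec{\eta}^{j_\ast-1}\rfloor$ obtained by deleting the coordinate just below the $\eta^\ast$-run. Cyclicity forces its contribution to vanish, and combined with the boundary condition --- which forces $\zeta_{j+1}=\min C_{\tau^0}\backslash\zeta_j$ for the coordinate immediately below the $\mathcal{C}$-maximal tail $\tau(\alpha,\vec{\zeta})$ --- this rigidifies any partner tuple in $c$ contributing to $\lfloor\vec{\eta}^{j_\ast-1}\rfloor$ to share the full input $\vec{\eta}$. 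Removing the resulting pair $\{\mathrm{Tr}_n(+,\alpha,\vec{\eta}),\mathrm{Tr}_n(-,\alpha,\vec{\eta})\}$ yields a proper cyclic subcollection of $c$, contradicting minimality unless $c$ itself is that pair.

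The main obstacle is the combinatorial case analysis in this last step. For $n=2$, knowing the top coordinate of a nondegenerate boundary tuple pins down the tuple outright, rendering the pairing immediate. For $n\ge 3$, the boundary condition rigidifies only one coordinate of a tuple given its $\mathcal{C}$-maximal tail, so one must descend through the possible tail lengths, tracking carefully which boundary tuples can contribute to each candidate $\partial_n$-generator and selecting a secondary extremum beyond just $\eta^\ast$ --- for instance, the position at which $\eta^\ast$ first appears, together with the structure of $\tau$ below it. I expect an induction on $n-|\tau(\vec{\eta})|$ to suffice, but writing out this bookkeeping rigorously will form the technical heart of the proof.
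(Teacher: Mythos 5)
Your overall architecture coincides with the paper's: establish cyclicity of (\ref{eq:lemma_pairing}) via the simplicial computation $\partial_n\big(\bigsqcup_i\mathrm{Tr}_n((-1)^i,\alpha,\vec{\gamma}^i)\big)=0$ and Lemma \ref{lem:cyclic}, then reduce the pairing to the claim that every minimal cyclic subcollection $c$ is a single opposite-signed pair with identical input. Those two stages are correct and essentially verbatim what the paper does. The problem is the third stage, which you yourself flag as unfinished: for $n\geq 3$ you do not actually prove that your chosen generator forces the partner tuple to coincide with $\vec{\eta}$, and you defer the ``bookkeeping'' to an unspecified induction. Since this rigidification step is the entire content of the lemma beyond the $n=2$ case already treated in the text, the proposal as written has a genuine gap precisely at its technical heart.

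Moreover, your extremal choice is pointed in a direction that makes the gap hard to close. You delete the coordinate of $\vec{\eta}$ sitting just below the run of the maximal ordinal $\eta^\ast$; but the boundary condition rigidifies a \emph{different} coordinate, namely the least element of the $\mathcal{C}$-maximal tail $\tau(\alpha,\vec{\zeta})$ (recall $C_{\tau}\backslash\zeta_j=\varnothing$ iff $\zeta_{j+1}=\min C_{\tau^0}\backslash\zeta_j$), and there is no reason these positions agree, so the deleted coordinate of your generator is not in general recoverable from the generator plus the boundary condition. The paper's proof resolves exactly this by a sharper extremal choice: take $\vec{\xi}$ to be maximal in the \emph{right-to-left lexicographic} order among the inputs of $c$, and consider the generator $\lfloor((\alpha,\vec{\xi})^{|\iota(\alpha,\vec{\xi})|})^0\rfloor$ obtained by deleting $\alpha$ and the least element of $\tau(\alpha,\vec{\xi})$. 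Any partner term $(\pm,\alpha,\vec{\xi}')$ producing this generator then either has $\tau(\alpha,\vec{\xi}')=\tau(\alpha,\vec{\xi})$ --- whereupon the boundary condition reconstructs the deleted coordinate and gives $\vec{\xi}'=\vec{\xi}$ --- or has $\tau(\alpha,\vec{\xi}')=\tau(\alpha,\vec{\xi})^0$, which would require a coordinate of $\vec{\xi}'$ to exceed the corresponding coordinate of $\vec{\xi}$, contradicting lexicographic maximality. No induction on tail lengths is needed; replacing your extremum with this one closes the gap in a few lines.
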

\begin{proof}
The $n=1$ case is clear, and we have argued the $n=2$ case just above. For $n>2$, proceed as above: pick a $\mathrm{Tr}_n(+,\alpha,\vec{\beta})$ in (\ref{eq:lemma_pairing}) and a smallest subscycle $c$ of (\ref{eq:lemma_pairing}) containing $\mathrm{Tr}_n(+,\alpha,\vec{\beta})$.
Let $\triangleleft$ denote the right-to-left lexicographic order on $\varepsilon^{[n]}$, so that $\vec{\beta}'\triangleleft\vec{\beta}''$ if and only if $\beta_i'<\beta_i''$ for some $i$ with $\beta_j'=\beta_j''$ for all $j>i$, and let $\vec{\xi}$ be $\triangleleft$-maximal among those $\vec{\beta}'$ appearing among the arguments $(\pm,\alpha,\vec{\beta}')$ in $c$.
Observe that $$\lfloor((\alpha,\vec{\xi})^{|\iota(\alpha,\vec{\xi})|})^0\rfloor$$
(the superscripts record the removal of $\alpha$ and the least element of $\tau(\alpha,\vec{\xi})$) is a summand of $\partial_n\mathrm{Tr}_n(+,\alpha,\vec{\xi})$, and hence must be a summand of $\partial_n$ of some other $\mathrm{Tr}_n(+,\alpha,\vec{\xi}')$ in $c$. This, however, implies that $\tau(\alpha,\vec{\xi}')=\tau(\alpha,\vec{\xi}')$, and hence that $(\alpha,\vec{\xi})=(\alpha,\vec{\xi}')$: in the only alternative, $\tau(\alpha,\vec{\xi}')=\tau(\alpha,\vec{\xi})^0$ --- but $(\alpha,\vec{\xi}')$ could only then satisfy the boundary condition if its $|\iota(\alpha,\vec{\xi}')|^{\mathrm{th}}$ coordinate were greater than that of $(\alpha,\vec{\xi})$, violating our assumptions about $\vec{\xi}$.
Only one conclusion is now possible, namely that $c$ equals $\mathrm{Tr}_n(+,\alpha,\vec{\beta})\sqcup\mathrm{Tr}_n(-,\alpha,\vec{\beta})$.
\end{proof}
Observe that as the $\vec{\gamma}$ of the lemma's premise ranges through most of $\varepsilon^{[n+1]}$, our proof will continue to apply. We restricted to $[\varepsilon]^{n+1}$ simply to exclude from consideration those $\mathrm{Tr}_n(\pm,\alpha,\vec{\gamma})$ which are themselves cyclic, as in the $\mathrm{Tr}_2(\pm,\alpha,\gamma',\gamma')$ example above. This is a minor enough issue that we leave its details to the interested reader; the main point is that such $\mathrm{Tr}_n(\pm,\xi,\vec{\gamma})$ will not themselves vary as $\xi$ ranges below $\alpha$.

Let us now turn to the proof of Theorem \ref{thm:n-coherence}.
\begin{proof}[Proof of Theorem \ref{thm:n-coherence}]
First note that the theorem's $n=1$ case is classical, so that we may restrict our attention to $n>1$.
By Lemma \ref{lem:item2to1}, the theorem's item 1 follows from its item 2. For the latter, the sequence of lemmas \ref{lem:max_L<limits}, \ref{lem:end-extension}, \ref{lem:cyclic}, and \ref{lem:pairing} applies exactly as in the $n=2$ case; more precisely, this sequence shows that for any $\vec{\gamma}\in [\varepsilon]^{n+1}$ the analogue $f_{\vec{\gamma}}$ of the function $f_{\beta\gamma\delta}$ is locally constant, implying the $n$-coherence of $\Phi^n_2(\mathcal{C})$, as desired.

Item 3 of the theorem builds on this reasoning, but will require a little more work.
Having addressed the case of $n=1$ in Section \ref{subsubsect:square}, let us focus on the case of $n=2$: it will suffice to show for an arbitrary $(\beta,\gamma,\delta)\in [\varepsilon]^3$ and limit $\alpha\leq\beta$ that, under our assumptions, the function $f_{\beta\gamma\delta}$ is constant on some nontrivial interval $(\eta',\alpha)$.
Again consider the expansion of (\ref{eq:Tr_2_alpha_array}) and observe that Lemma \ref{lem:pairing} continues to apply, so that the expression (\ref{eq:again}) below again decomposes into opposite-signed matching pairs. 
Observe that if the $\eta$ of (\ref{eq:max_L_ns}) is less than $\alpha$, then we may argue just as before.
More generally, consider for $\vec{\gamma}=(\beta,\gamma,\delta)$ and each $i\leq 2$ the tree
$$T_i:=\{\sigma\in\rho_{2,\mathrm{t}}^2(\alpha,\vec{\gamma}^i)\mid\mathrm{L}_2(\alpha,\vec{\gamma}^i)(\sigma)<\alpha\},$$
and note that each $\rho^2_{2,\mathrm{t}}$ decomposes into the downwards-closed subtree $T_i$ together with the union $U_i$ of those $\sigma$ for which $\mathrm{L}_2(\alpha,\vec{\gamma}^i)(\sigma)=\alpha$.
Note also that 
\begin{align}
\label{eq:eta}
\eta:=\max_{i\leq 2} \max\{\mathrm{L}_2(\alpha,\vec{\gamma}^i)(\sigma)\mid\sigma\in T_i\}<\alpha
\end{align}
and that the structure of the $T_i$-indexed portion of $\mathrm{Tr}_2((-1)^i,\xi,\vec{\gamma}^i)$ is stable as $\xi$ ranges within the interval $(\eta,\alpha]$; only the first coordinates of its inputs, in particular, are in motion. Put differently, the only arguments in
\begin{align}
\label{eq:again}
\mathtt{b}\mathrm{Tr}_2(+,\alpha,\gamma,\delta)\sqcup\mathtt{b}\mathrm{Tr}_2(-,\alpha,\beta,\delta)\sqcup
\mathtt{b}\mathrm{Tr}_2(+,\alpha,\beta,\gamma)
\end{align}
which can possibly complicate our desired conclusion are those descending from inputs indexed by some minimal $\sigma\in\bigcup_{i\leq 2}U_i$.
Write $X$ for the collection of such inputs $(\pm,\alpha,\beta'',\gamma'')$ and note that their expansion is always as on the left-hand side of Figure \ref{fig:halfsquarecones}, with $\alpha$ appearing in the second coordinate of exactly two boundary inputs $(\mp,\alpha,\alpha,\beta')$, $(\pm,\alpha,\alpha,\gamma'')$, with a possible chain of boundary inputs intermediating between them (corresponding to the steps of $\mathrm{Tr}(\beta',\beta'')\subseteq\mathrm{Tr}(\alpha,\beta'')$).

A further observation, on which the caption of Figure \ref{fig:halfsquarecones} expands, is in order. There exists an $\eta'<\alpha$ such that the portion of the cone of $(\pm,\alpha,\beta'',\gamma'')$ which is bounded by $(\pm,\alpha,\alpha,\beta')$ and $(\pm,\alpha,\alpha,\gamma'')$ is stable as $\xi$ ranges within $(\eta',\alpha]$; again, nothing but the first ordinal coordinate of that stretch of the cone (colored in black, in Figure \ref{fig:halfsquarecones}) is in motion. In fact, $\mathrm{L}(\beta',\beta'')$ is such an $\eta'$ (if $\beta'=\beta''$ then our claim here is trivially true).
Fix an $\eta^*<\alpha$ above both the $\eta$ of equation \ref{eq:eta} and any $\eta'$ associated as above to some $(\pm,\alpha,\beta'',\gamma'')\in X$.

\vspace{1 cm}

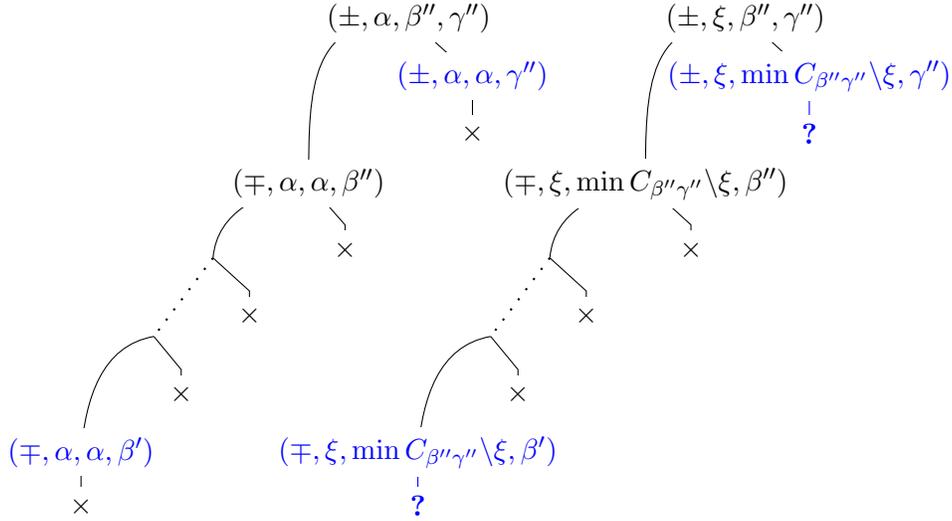
\begin{figure}[h]
\centering
\begin{tikzpicture}[MyPersp,font=\large,fill=white]
	\coordinate (A) at (.5,0,.65);
	\coordinate (A1) at (.5,0,0);
	\coordinate (B) at (1.3,0,2.05);
	\coordinate (B1) at (1.6,0,1.65);
	\coordinate (B2) at (1.6,0,1.35);
	\coordinate (C) at (1.95,0,3);
	\coordinate (C1) at (2.35,0,2.6);
	\coordinate (C2) at (2.35,0,2.3);
	\coordinate (D) at (3,0,3.9);
	\coordinate (D1) at (3.4,0,3.4);
	\coordinate (D2) at (3.4,0,3.1);
	\coordinate (F) at (4.1,0,5.9);
	\coordinate (F1) at (4.8,0,5.2);
	\coordinate (F2) at (4.8,0,4.5);
	
	\coordinate (A') at (4.2,0,.65);
	\coordinate (A1') at (4.2,0,0);
	\coordinate (B') at (5,0,2.05);
	\coordinate (B1') at (5.3,0,1.65);
	\coordinate (B2') at (5.3,0,1.35);
	\coordinate (C') at (5.65,0,3);
	\coordinate (C1') at (6.05,0,2.6);
	\coordinate (C2') at (6.05,0,2.3);
	\coordinate (D') at (6.7,0,3.9);
	\coordinate (D1') at (7.2,0,3.4);
	\coordinate (D2') at (7.2,0,3.1);
	\coordinate (F') at (7.8,0,5.9);
	\coordinate (F1') at (8.5,0,5.2);
	\coordinate (F2') at (8.5,0,4.5);	

\draw (B)--(B1)--(B2);
\draw (C)--(C1)--(C2);
\draw (B2) node[rectangle,fill] {$\times$};
\draw (C2) node[rectangle,fill] {$\times$};
\draw (D)--(D1)--(D2);
\draw (A)--(A1);
\draw (A1) node[rectangle,fill] {$\times$};
\draw (D2) node[rectangle,fill] {$\times$};
\draw (F)--(F1)--(F2);
\draw (F2) node[rectangle,fill] {$\times$};
\draw (F) to[out=180,in=90,distance=1.2cm] (D);
 \draw (D) to[out=-170,in=85] (C);
 \draw[thick, loosely dotted] (C)--(B);
 \draw (B) to[out=-170,in=85] (A);
	\draw (F) node[rectangle,fill] {$(\pm,\alpha,\beta'',\gamma'')$};
	\draw (D) node[rectangle,fill] {$(\mp,\alpha,\alpha,\beta'')$};
	\draw (A) node[rectangle,fill] {{\color{blue} $(\mp,\alpha,\alpha,\beta')$}};
	\draw (F1) node[rectangle,fill] {{\color{blue} $(\pm,\alpha,\alpha,\gamma'')$}};

\draw (B')--(B1')--(B2');
\draw (C')--(C1')--(C2');
\draw (B2') node[rectangle,fill] {$\times$};
\draw (C2') node[rectangle,fill] {$\times$};
\draw (D')--(D1')--(D2');
\draw[blue] (A')--(A1');
\draw (A1') node[rectangle,fill] {$\times$};
\draw (D2') node[rectangle,fill] {$\times$};
\draw (F')--(F1');
\draw[blue] (F1')--(F2');
\draw (F2') node[rectangle,fill] {{\color{blue}\textbf{?}}};
	\draw (F1') node[rectangle,fill] {{\color{blue}$(\pm,\xi,\mathrm{min}\,C_{\beta''\gamma''}\backslash\xi,\gamma'')$}};
 \draw (F') to[out=180,in=90,distance=1.2cm] (D');
 \draw (D') to[out=-170,in=85] (C');
 \draw[thick, loosely dotted] (C')--(B');
 \draw (B') to[out=-170,in=85] (A');
	\draw (A1') node[rectangle,fill] {{\color{blue}\textbf{?}}};
	\draw (A') node[rectangle,fill] {{\color{blue} $(\mp,\xi,\mathrm{min}\,C_{\beta''\gamma''}\backslash\xi,\beta')$}};
	\draw (D') node[rectangle,fill] {$(\mp,\xi,\mathrm{min}\,C_{\beta''\gamma''}\backslash\xi,\beta'')$};
	\draw (F') node[rectangle,fill] {$(\pm,\xi,\beta'',\gamma'')$};
	\end{tikzpicture}
\caption{The (visually downwards, but upwards, in the natural tree-ordering) cone of inputs below a $(\pm,\alpha,\beta'',\gamma'')$ indexed by some minimal $\sigma\in U_i$, together with its image when the initial ordinal coordinate $\alpha$ is replaced by $\xi$. Marked with descent to an ``$\times$'' are terminal inputs, those branching off from the external walk from $\beta''$ to $\alpha$ predominating among them: the rightwards successor of $(\mp,\alpha,\alpha,\beta'')$, for example, is the terminal input $(\mp,\alpha,\mathrm{min}\,C_{\beta''}\backslash\alpha,\beta'')$. Observe that if $\mathrm{min}\,C_{\beta''}\backslash\alpha\neq\alpha$ then $\mathrm{sup}(C_{\beta''}\cap\alpha)<\alpha$ and hence $(\pm,\xi,\mathrm{min}\,C_{\beta''}\backslash\alpha,\beta'')$ is terminal for any $\xi\in (\mathrm{sup}(C_{\beta''}\cap\alpha),\alpha)$ as well, and similarly for the length of the external walk (recall our convention that $\sup\varnothing=0$).
Hence for high enough $\xi<\alpha$, only the right-hand cone's further expansion at the question marks is in question, and this is why it is what is marked in blue --- the inputs $(\pm,\alpha,\alpha,\gamma'')$ and $(\pm,\alpha,\alpha,\beta')$ and their analogues on the right --- that holds, in the general analysis, our main attention.}
\label{fig:halfsquarecones}
\end{figure}

\newpage
The key observations now are
\begin{enumerate}
\item that as $\xi$ ranges within $(\eta^*,\alpha)$, an initial portion of (\ref{eq:Tr_2_array}) is structurally identical to (\ref{eq:Tr_2_alpha_array}) and, consequently,
\item that our task reduces to showing that the portion of (\ref{eq:Tr_2_array}) descending from inputs associated to boundary inputs of the form $(\pm,\alpha,\alpha,\beta^*)$ in (\ref{eq:Tr_2_alpha_array}) is, from a $\rho^2_2$ perspective, stable as $\xi$ ranges within $(\eta^*,\alpha)$.
\end{enumerate}

Let us therefore consider the possibilities:
\begin{itemize}
\item $(\pm,\alpha,\alpha,\beta^*)$ descends from no index $\sigma\in\bigcup_{i\leq 2}U_i$. In this case, the corresponding input in (\ref{eq:Tr_2_array}) will be $(\pm,\xi,\alpha,\beta^*)$, with $\alpha\in C_{\beta^*}$, and there are two sub-possibilities.
Either $\alpha\neq\mathrm{sup}\,C_{\beta^*}\backslash\alpha$ and $(\pm,\xi,\alpha,\beta^*)$ is terminal for sufficiently high $\xi$, or $\alpha=\mathrm{sup}\,C_{\alpha\beta^*}$ and the input expands to the terminal input $(\mp,\xi,\mathrm{min}\,C_\alpha\backslash\xi,\alpha)$ together with $(\pm,\xi,\mathrm{min}\,C_\alpha\backslash\xi,\beta^*)$.\item $(\pm,\alpha,\alpha,\beta^*)$ descends from some such $\sigma$-indexed $(\pm,\alpha,\beta'',\gamma'')$ (to be clear, though, here we are assuming no particular relation between these inputs' signs). In this case, the corresponding input in (\ref{eq:Tr_2_array}) will be
$$(\pm,\xi,\mathrm{min}\,C_{\beta''\gamma''}\backslash\xi,\beta^*)=(\pm,\xi,\mathrm{min}\,C_{\alpha}\backslash\xi,\beta^*),$$ and there are again two sub-possibilities.
If $\alpha\neq\mathrm{sup}\,C_{\beta^*}\backslash\alpha$ then for sufficiently high $\xi$ this input expands as the two terminal inputs $(\pm,\xi,\alpha,\beta^*)$ and $(\pm,\xi,\mathrm{min}\,C_\alpha\backslash\xi,\alpha)$.
If, on the other hand, $\alpha=\mathrm{sup}\,C_{\alpha\beta^*}$, then the only non-terminal inputs descending from the corresponding input in (\ref{eq:Tr_2_array}) descend from $(\pm,\xi,\mathrm{min}\,C_{\alpha}\backslash\xi,\beta^*)$, just as in the previous item. 
\end{itemize}
These observations together imply that there exists an $\eta''<\alpha$ such that the opposite-signed pairings of such $(\pm,\alpha,\alpha,\beta^*)$ within  (\ref{eq:bTr_2_array}) induce opposite-signed matchings of the descendants of the corresponding inputs in (\ref{eq:Tr_2_array}) whenever $\xi\in (\eta'',\alpha)$ --- with only, for each such pairing, the possible exception of a single extra ``step'' (which is independent of the choice of such $\xi$) needed to bring such descendants into alignment. This concludes the proof.
\end{proof}
Due to these potential extra steps, the continuity of the function $f_{\beta\gamma\delta}$ at $\alpha$ will depend on how uniformly $\alpha$ falls among the accumulation points of the relevant clubs; note, though, that this slight nuisance is, by the discussion of Section \ref{subsubsect:square}, as literal a generalization of the classical case as one could demand.
\subsection{Further analysis}
\label{subsect:further_analysis}
A sharper approach to the coherence phenomena described in Section \ref{subsubsect:omega_1} is given by the \emph{full lower trace function $\mathrm{F}$}, which associates to any $\beta\leq\gamma<\omega_1$ a finite set $\mathrm{F}(\beta,\gamma)$ such that
\begin{align}
\label{eq:Fac}
\mathrm{Tr}(\alpha,\gamma) & = \mathrm{Tr}(\mathrm{min}\,\mathrm{F}(\beta,\gamma)\backslash\alpha,\gamma)\cup\mathrm{Tr}(\alpha,\mathrm{min}\,\mathrm{F}(\beta,\gamma)\backslash\alpha),\text{ and} \\
\label{eq:Fab}
\mathrm{Tr}(\alpha,\beta) & = \mathrm{Tr}(\mathrm{min}\,\mathrm{F}(\beta,\gamma)\backslash\alpha,\beta)\cup\mathrm{Tr}(\alpha,\mathrm{min}\,\mathrm{F}(\beta,\gamma)\backslash\alpha)
\end{align}
for any $\alpha\leq\beta$.
To see that such a finite set exists, let $$\mathrm{F}'(\beta,\gamma)=\{\xi<\beta\mid\{\xi\}=\mathrm{Tr}(\xi,\gamma)\cap\mathrm{Tr}(\xi,\beta)\},$$
assume this set to be infinite, and derive a contradiction just as in Section \ref{subsubsect:omega_1}; alternatively, adopt the recursive definition and approach of \cite[\S 2.1]{todwalks} (the former definition is that of \cite{Hudson}).
The coherence of the rho functions is so essentially immediate from (\ref{eq:Fac}, \ref{eq:Fab}) and the finitude of $\mathrm{F}(\beta,\gamma)$, or, more colloquially, from the existence of finite sets mediating between the fiber maps of $\mathrm{Tr}$, that the latter might reasonably be viewed as the more abstract or ``master'' coherence principle of the walks function $\mathrm{Tr}$ itself.
Further significance accrues to the function $\mathrm{F}$ from its centrality to the argument, for example, that $T(\rho_0)$ is Countryman (see again \cite[\S 2.1]{todwalks} or \cite{Hudson}).

As Section \ref{subsect:higher_coherence} should suggest, a similar logic will apply in higher dimensions.
And while a precise formulation of \emph{higher full lower trace functions} $\mathrm{F}_n$ seems more cumbersome  than the present context would merit, they do in principle exist, and warrant further study.\footnote{Another cheap variant of $\mathrm{F}$ is $\mathrm{F}''(\beta,\gamma)=\bigcup_{\xi<\beta}\mathrm{Tr}(\xi,\beta)\triangle\mathrm{Tr}(\xi,\gamma)$, and it is immediate from our arguments above that the higher-dimensional variants of this function will also output finite sets. Optimally, higher $\mathrm{F}_n$ will do more than this: they will also satisfy higher subadditivity relations on the pattern of \cite[Lem.\ 2.1.8]{todwalks}.}
Let us approximate them in the meantime with the observation that the argument of items 1 and 2 of Theorem \ref{thm:n-coherence} was at least as much about the function $\mathrm{Tr}_n$ as the function $\rho_2^n$, sufficiently so to imply suitably formulated coherence relations on other natural $(n+1)$-dimensional generalizations of classical rho functions $\rho_i$. 
Here we will largely leave explorations of the latter to the interested reader.
Observe, for example, that $\mathrm{Tr}_n(+,\vec{\gamma})$ with respect to an ordertype-minimal order-$n$ $C$-sequence on $\kappa^+$ is readily coded by a labeling of the nodes of $\rho_{2,\mathrm{t}}^n(\vec{\gamma})$ with elements of $\kappa$, suggesting natural framings of both $\rho_0^n$ and $\rho_1^n$.
Even under conditions 1 or 2 of Theorem \ref{thm:n-coherence}, this most naive generalization of $\rho_1$, even when suitably algebraicized, will not in general be $n$-coherent in the sense of Definition \ref{def:n_coh} (in particular, it will not be $n$-coherent mod finite).
A more careful framing of $\rho_1^n$, however, one keeping track of the internal branches in which maximal weights appear, will be.





Both for later use and to see the kinds of implications we're describing in action, let us now record the higher-dimensional variants $r_2^n$ of the function $r_2$ introduced in Definition \ref{def:r2}; their $n$-coherence on $\omega_n$, as we will see, follows almost immediately from the recognitions of Section \ref{subsect:higher_coherence} (the same will hold for higher-dimensional variants of $r_1$, but as their nontriviality is less clear, we will ignore them in what follows).
For this purpose it will be useful to revise the conventions introduced just before Definition \ref{def:Ln} to let $(\beta,\vec{\gamma})^\sigma$ to refer the full (unsigned) $\sigma^{\mathrm{th}}$ input of $\mathrm{Tr}_n(\pm,\beta,\vec{\gamma})$.

A natural first attempt to define $r_2^n(\alpha,\vec{\gamma})$ is as $\{(\beta,\vec{\gamma})^\sigma\mid\mathrm{L}_n(\beta,\vec{\gamma})(\sigma)=\alpha\}$.
Note that the $n=1$ case indeed coincides with that of Definition \ref{def:r2}.
For higher $n$, however, the possibility of repetitions together with the more unavoidably additive character of $n$-coherence suggest the following algebraic variation.
\begin{definition}
\label{def:r2n}
For any $n>0$ and ordinal $\varepsilon$, define the function $r_2^n:[\varepsilon]^{n+1}\to\bigoplus_{\varepsilon^{[n+1]}}\mathbb{Z}$ as follows:
$$r_2^n(\alpha,\vec{\gamma})=\sum_{(\beta,\sigma)\in s(\alpha,\vec{\gamma})}(-1)^{i(\beta,\sigma)}\lfloor(\beta,\vec{\gamma})^\sigma\rfloor,$$
where
$$s(\alpha,\vec{\gamma})=\{(\beta,\sigma)\in (\alpha,\gamma_0)\times n^{<\omega}\mid \sigma\in\rho_{2,\mathrm{t}}^n(\beta,\vec{\gamma})\text{ and }\mathrm{L}_n(\beta,\vec{\gamma})(\sigma)=\alpha\}$$
and $i(\beta,\sigma)$ records the parity of $(\beta,\vec{\gamma})^\sigma$ in $\mathrm{Tr}_n(+,\beta,\vec{\gamma})$.
\end{definition}
\begin{lemma}
\label{lem:r^n_2_coherence}
Under the $C$-sequence conditions 1 or 2 of Theorem \ref{thm:n-coherence}, the $r_2^n$-fiber maps
$$\langle r_2^n(\,\cdot\,,\vec{\gamma}):\gamma_0\to\bigoplus_{\omega_n^{[n+1]}}\mathbb{Z}\mid\vec{\gamma}\in[\omega_n]^n\rangle$$
form a mod finite $n$-coherent family of functions.
\end{lemma}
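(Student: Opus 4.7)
The plan is to extend to higher dimensions the classical argument establishing the mod-finite coherence of $r_2$ (the $n=1$ case of Theorem \ref{thm:ri}), leveraging the cyclicity and pairing machinery of Lemmas \ref{lem:cyclic} and \ref{lem:pairing}. Fix $\vec{\gamma} \in [\omega_n]^{n+1}$ and let
\[
g(\alpha) = \sum_{i=0}^n (-1)^i\, r_2^n(\alpha, \vec{\gamma}^i)
\]
for $\alpha < \gamma_0$; the goal is that $g$ has finite support. Unwinding Definition \ref{def:r2n}, $g(\alpha)$ is a signed sum of generators $\lfloor (\beta, \vec{\gamma}^i)^\sigma \rfloor$ indexed by quadruples $(i, \beta, \sigma)$ with $\beta \in (\alpha, (\vec{\gamma}^i)_0)$, $\sigma \in \rho_{2,\mathrm{t}}^n(\beta, \vec{\gamma}^i)$, and $\mathrm{L}_n(\beta, \vec{\gamma}^i)(\sigma) = \alpha$; the coefficient is $(-1)^{i + i(\beta,\sigma)}$. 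Crucially, since $(\beta, \vec{\gamma}^i)^\sigma$ has first coordinate $\beta$, any cancellation must occur within a single $\beta$-layer.

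For each $\beta \in (\alpha, \gamma_0)$---the range on which every $i \leq n$ contributes---I would apply Lemma \ref{lem:cyclic} to the collection $\bigsqcup_{i \leq n} \mathrm{Tr}_n((-1)^i, \beta, \vec{\gamma}^i)$, which is cyclic by a direct $\partial_n$ computation (cf.\ Example \ref{ex:cyclic2}). Propagating cyclicity through every stage of expansion and adapting the $\triangleleft$-maximality trick from the proof of Lemma \ref{lem:pairing} to the \emph{non-terminal} inputs registering the lower trace value $\alpha$, I expect to produce an opposite-signed pairing of these inputs within each $\beta$-layer, yielding cancellation on this main interval. The residual interval $\beta \in [\gamma_0, \gamma_1)$ (in which only $i=0$ contributes) must be handled separately: for $\alpha$ in a cofinal tail below $\gamma_0$ the walks $\mathrm{Tr}_n(+, \beta, \vec{\gamma}^0)$ stabilize above $\alpha$ by Lemma \ref{lem:end-extension}, and via classical-style end-extension reasoning the $\beta$-indexed contributions reduce to a bounded-support error term.

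The exceptional finite set $F \subseteq \gamma_0$ outside of which $g(\alpha) = 0$ will consist of those $\alpha$ at which the within-$\beta$ pairing degenerates or the residual-interval argument breaks down; by the finiteness of each $\mathrm{Tr}_n$ (Theorem \ref{thm:finite}) applied to the initial inputs $(\gamma_0, \vec{\gamma}^i)$, only finitely many such $\alpha$ arise. The hard part will be the sign-bookkeeping: verifying that the combined sign $(-1)^{i + i(\beta,\sigma)}$ matches the opposite-signed pairing produced by the cyclicity machinery. The $(-1)^{i(\beta,\sigma)}$ factors in Definition \ref{def:r2n} were chosen precisely so that the sign discipline of Definition \ref{def:Tr_n} propagates correctly, but confirming this requires careful tracking through the recursion. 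A secondary subtlety is the residual $\beta$-interval, whose treatment should reduce to the $n=1$-style end-extension considerations of Section \ref{subsubsect:omega_1}.
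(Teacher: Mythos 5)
Your overall architecture---decompose the alternating sum $g(\alpha)$ into $\beta$-layers (correct, since each generator $\lfloor(\beta,\vec{\gamma}^i)^\sigma\rfloor$ carries $\beta$ as its first coordinate), cancel within each layer, and treat the residual interval $\beta\in[\gamma_0,\gamma_1)$ separately---is in the right spirit, and you are right that the residual layers, where only $i=0$ contributes, need their own treatment. But the cancellation mechanism you propose for the main layers is not the one that works. The collection of \emph{non-terminal} inputs of $\bigsqcup_{i\le n}\mathrm{Tr}_n((-1)^i,\beta,\vec{\gamma}^i)$ registering a fixed lower-trace value is not in general cyclic, and the $\triangleleft$-maximality argument in the proof of Lemma \ref{lem:pairing} leans on the constrained form of \emph{boundary} inputs; it has no purchase on the slice of the tree cut out by a lower-trace condition. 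Indeed your claimed pairing cannot hold at every $\alpha$: it would make $\Phi(r_2^n)$ exactly, not merely mod finite, coherent, which already fails for $n=1$ (there the inputs of a single $\mathrm{Tr}_1(\pm,\beta,\gamma')$ all carry the same sign, so cancellation is necessarily a cross-tree matching produced by end-extension, not by any maximality argument). The mechanism that does work is the one underlying the paper's proof: Lemma \ref{lem:max_L<limits} at a reference point $\alpha$ gives $\eta:=\max_i\max\mathrm{L}_n(\alpha,\vec{\gamma}^i)<\alpha$; for $\xi\in(\eta,\alpha)$ and $\beta\in(\xi,\alpha]$, Lemma \ref{lem:end-extension} confines the inputs with lower-trace value $\xi$ to the subtrees of $\mathrm{Tr}_n(\pm,\beta,\vec{\gamma}^i)$ hanging below the terminal inputs of $\mathrm{Tr}_n(\pm,\alpha,\vec{\gamma}^i)$; and the pairing of those terminal inputs furnished by Lemma \ref{lem:pairing} induces identical, opposite-signed subtrees, whence the cancellation. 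You invoke Lemma \ref{lem:end-extension} only on the residual interval; it is equally essential on the main one.

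The second gap is your finiteness claim for the exceptional set. Running the argument at the single reference point $\gamma_0$ (``the finiteness of each $\mathrm{Tr}_n$ applied to the initial inputs $(\gamma_0,\vec{\gamma}^i)$'') controls $g$ only on the interval $(\eta,\gamma_0)$ for $\eta=\max_i\max\mathrm{L}_n(\gamma_0,\vec{\gamma}^i)$: this shows the support of $g$ is \emph{bounded} by $\eta$, not finite, and $[0,\eta]$ is typically infinite. To obtain finite support one must localize at every limit point of the putative support, which is exactly why the paper argues by contradiction: an infinite support yields an increasing sequence $\langle\alpha_j\rangle$ with limit $\alpha\le\gamma_0$, Lemma \ref{lem:max_L<limits} is applied \emph{at that $\alpha$} to produce $\eta<\alpha$, and the vanishing of $g$ on $(\eta,\alpha)$ contradicts $g(\alpha_j)\ne 0$ for large $j$. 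With that restructuring, and with end-extension replacing the maximality trick on the main layers (and showing, for the layers $\beta\ge\alpha$, that no lower-trace value lands in $(\eta,\alpha)$ at all), your plan goes through.
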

\begin{proof}
Suppose not. Then there exist a $\vec{\delta}\in [\omega_n]^{n+1}$ and increasing $\langle\alpha_j\mid j\in\omega\rangle$ with supremum $\alpha\leq\delta_0$ such that
\begin{align}
\label{eq:r2nsum}
\sum_{i=0}^n(-1)^i r_2^n(\alpha_j,\vec{\delta}^i)\neq 0\end{align}
for all $j\in\omega$. By Lemma \ref{lem:max_L<limits},
$$\eta:=\max_{i\leq n}\,(\mathrm{max}\,\mathrm{L}_n(\alpha,\vec{\delta}^i))<\alpha,$$
which, by Lemma \ref{lem:pairing}, implies that for any $\eta<\xi<\beta\leq\alpha$, those $(\beta,\vec{\delta}^i)^\sigma$ with $\mathrm{L}_n(\beta,\vec{\delta}^i)(\sigma)=\xi$ appearing among the arguments of $$\bigsqcup_{i\leq n}\mathrm{Tr}_n((-1)^n,\beta,\vec{\delta}^i)$$
arise in opposite-signed pairs. It follows that the expression in (\ref{eq:r2nsum}) equals zero for all $\alpha_j>\eta$, a contradiction.
\end{proof}
Composing $r^n_2$ with a quotient of its codomain by $\bigoplus_{\omega_n^{[n+1]}}2\mathbb{Z}$
yields a free-$\mathbb{Z}/2$-module-valued function, one readily identified with a set-valued function by identifications as in Theorem \ref{thm:ri}, and our $r^n_2$ analysis here and in Section \ref{sect:nontriviality} will apply to this function as well (in particular, under the same assumptions, its fiber maps will also form a nontrivial $n$-coherent family on $\omega_n$).
Passages to $\mathbb{Z}/2$ like this can often, by paring back the algebraic paraphernalia of higher walks, render their combinatorial essentials plainer; as here, for example, they tend to allow one to bypass the issue of signs.
If we have declined to take such an approach earlier, though, it is because we regard signs themselves as combinatorially significant information: note that one cannot define the function $\rho_2^n$ without them.
\subsubsection{Countable variation and larger moduli}
\label{subsubssect:ctblvar}
One somewhat surprising upshot of the analysis of Section \ref{subsect:higher_coherence} is the following: when defined with respect to an order-$n$ $\mathcal{C}$-sequence as in item 2 of Theorem \ref{thm:n-coherence}, the upper trace fiber maps $\alpha\mapsto\mathrm{Tr}_n(+,\alpha,\vec{\gamma})$ (and hence those of the associated rho functions as well) are all \emph{countably varying}.
This follows from the reasoning behind Lemmas \ref{lem:max_L<limits} and \ref{lem:end-extension}; more precisely, fix a cofinality-$\omega_1$ $\beta\leq\gamma_0$ and apply the first line of the proof of Lemma \ref{lem:end-extension} together with the observation that $\mathrm{sup}(C_{\vec{\gamma}^\sigma}\cap\beta)<\beta$ for any other $\vec{\gamma}^\sigma$ arising in $\mathrm{Tr}_n(+,\beta,\vec{\gamma})$. Conclude that $\mathrm{Tr}_n(\,\cdot\,,\vec{\gamma})$ is eventually constant below $\beta$.

In the course, in other words, of generalizing various other features of classical walks on $\omega_1$, a main instance of the function $\mathrm{Tr}_n$ on $\omega_n$ generalizes the countable variation of their fiber maps as well.
Unlike those of the former, however, when $n>1$ fiber maps of the latter will have uncountable domains, with the consequence that they are constant on long (i.e., uncountable) intervals within them. From a cohomological perspective, such countable variation witnesses to $\mathrm{H}^n(\omega_n;\mathcal{A})\neq 0$, or countable support witnesses to $\check{\mathrm{H}}^n(\omega_n;\mathcal{F}_A)\neq 0$, are to be expected; fuller explanation here would take us too far afield, but the issue receives its due attention in \cite{BLHZ}.
The more immediate point, though, is simply that this is one of several features lending the combinatorics of higher $\mathrm{Tr}_n$ a distinctive flavor, one in which pigeonhole conditions like \emph{finite-to-one}-ness, for example, seem to play less decisive roles than than classically.\footnote{For another relevant sense in which \emph{finite-to-one} arguments break down above $\omega_1$, see \cite[Thm.\ 3.7]{Konig}.}

A not unrelated point is the following: while Section \ref{subsubsect:square} hinted at how $\mathsf{ZFC}$ nontrivial coherence arguments lift to higher cardinals under expanded assumptions, Section \ref{subsect:higher_coherence} showed how those arguments lift to higher cardinals via expansions of dimension.
The focus in either instance was on \emph{mod locally constant} coherence, for the reason that this modulus, being essentially finitary (mod finite coherence is a special case), carries the strongest implications.
As noted above, though, a third approach to extending walks techniques above $\omega_1$ is by relaxing the modulus; an ordertype-minimal classical $C$-sequence on $\omega_2$, for example, induces both \emph{modulo countable differences} and \emph{modulo countably varying functions} nontrivial coherent families on $\omega_2$, for the reason that at cofinality-$\omega_1$ points below $\omega_2$, the classical arguments of Section \ref{subsubsect:omega_1} will continue to apply.
Note that suitably adapted versions of Lemmas \ref{lem:max_L<limits} and \ref{lem:end-extension} then extend these arguments to $\mathrm{Tr}_2$ on $\omega_3$, for example, and more broadly imply the following generalization of Theorem \ref{thm:n-coherence}'s item 1:
\begin{theorem}
For any $n\geq k>0$, if $\mathcal{C}$ is an order-$k$ ordertype-minimal $C$-sequence on $\omega_n$ then the fiber maps of the associated function $\rho_2^k:[\omega_n]^{k+1}\to\mathbb{Z}$ form a $k$-coherent family of functions with respect to the modulus of functions varying less than $\omega_{n-k}$ many times.
\end{theorem}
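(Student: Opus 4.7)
The plan is to extend the coherence proof of Items 1 and 2 of Theorem \ref{thm:n-coherence} to the case where the walks' dimension $k$ is strictly less than the ambient cardinal's index $n$. The same three-step structure applies (ordertype bound on clubs; control of the lower trace; cyclic pairing of boundary inputs), only with the threshold for local-constancy-from-below on the cocycle raised from cofinality $\omega$ to cofinality $\omega_{n-k}$. The case $k=n$ recovers Item 1.

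First, since $\omega_n = \omega_{n-k}^{+k}$, applying Lemma \ref{lem:item2to1} with $\lambda = \omega_{n-k}$ yields $\mathrm{otp}(C_{\vec{\tau}}) \leq \omega_{n-k}$ for every length-$k$ $\mathcal{C}$-index $\vec{\tau}$. I would then establish the following adaptation of Lemma \ref{lem:max_L<limits}: for any $\vec{\gamma} \in [\omega_n]^k$ and any $\beta \leq \gamma_0$ of cofinality at least $\omega_{n-k}$, $\max \mathrm{L}_k(\beta,\vec{\gamma}) < \beta$. The argument is exactly that of Lemma \ref{lem:max_L<limits}, with its step $\sup(\beta \cap C_{\vec{\tau}}) < \beta$ justified here by the observation that any proper initial segment $C_{\vec{\tau}} \cap \beta$ has ordertype strictly less than $\omega_{n-k}$, and so cannot be cofinal in a $\beta$ whose cofinality is the regular cardinal $\omega_{n-k}$ or higher. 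Lemma \ref{lem:end-extension}, which imposes no condition on $\mathcal{C}$ at all, then applies verbatim.

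With these two ingredients in hand, the proof of Items 1 and 2 of Theorem \ref{thm:n-coherence} transplants without change, restricted to limit points of cofinality at least $\omega_{n-k}$. For any such $\alpha \leq \gamma_0$ and any $\vec{\gamma} \in [\omega_n]^{k+1}$, I would set $\eta = \max_{i \leq k} \max \mathrm{L}_k(\alpha,\vec{\gamma}^i)$; the adapted lemma gives $\eta < \alpha$, and by Lemma \ref{lem:end-extension} the portion of $\mathrm{Tr}_k(\pm,\xi,\vec{\gamma}^i)$ at or above $\alpha$ is stable as $\xi$ ranges in $(\eta,\alpha]$. Lemmas \ref{lem:cyclic} and \ref{lem:pairing} then arrange the boundary inputs in $\mathtt{b}\mathrm{Tr}_k((-1)^i,\alpha,\vec{\gamma}^i)$ lying below $\alpha$ into opposite-signed pairs whose signed-ordinal contributions to $\sum_{i=0}^k (-1)^i \rho_2^k(\xi,\vec{\gamma}^i)$ cancel, showing that this cocycle is constant on $(\eta,\alpha]$.

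The main conceptual subtlety, and the reason the cofinality threshold is precisely $\omega_{n-k}$, is that although $\mathrm{Tr}_k$ may at various stages consult clubs $C_{\vec{\tau}'}$ indexed by shorter $\mathcal{C}$-indices (whose ordertypes can run as high as $\omega_{n-1}$), only length-$k$ clubs contribute to $\mathrm{L}_k$ or figure in the end-extension mechanism: the proof of Lemma \ref{lem:end-extension} itself observes that steps consulting shorter-length tails do not depend on the first coordinate at all, and $\mathrm{L}_k$ by convention assigns the empty club to any tuple that is not a $\mathcal{C}$-index. This is what lets the bound $\omega_{n-k}$, rather than some larger cardinal governed by the shorter clubs, be the controlling one in the conclusion.
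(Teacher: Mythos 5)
Your proof is correct and is precisely the adaptation the paper has in mind: the paper leaves this theorem to the reader after sketching exactly this route (ordertype bound $\omega_{n-k}$ on length-$k$ clubs via Lemma \ref{lem:item2to1}, the resulting bound $\max\mathrm{L}_k(\alpha,\vec{\gamma})<\alpha$ at points of cofinality at least $\omega_{n-k}$, then Lemmas \ref{lem:end-extension}, \ref{lem:cyclic}, and \ref{lem:pairing} verbatim). Your closing observation --- that only length-$k$ $\mathcal{C}$-indices contribute to $\mathrm{L}_k$ and to the end-extension mechanism, so the shorter clubs' larger ordertypes are harmless --- is exactly the point that makes the adaptation work.
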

The proof is a straightforward enough adaptation of this section's arguments that it is left to the interested reader.
\subsubsection{Pairings in practice}
\label{subsubsect:pairings}

Lemma \ref{lem:pairing}, and more particularly the remark numbered (1) just before it, described an opposite-signed pairing of internal walks within the triple
\begin{equation*}
\mathrm{Tr}_2(+,\alpha,\gamma,\delta)\sqcup\mathrm{Tr}_2(-,\alpha,\beta,\delta)\sqcup
\mathrm{Tr}_2(+,\alpha,\beta,\gamma)
\end{equation*}
of expression \ref{eq:Tr_2_alpha_array}. It is instructive to consider the mechanics of this pairing more closely. Such a pairing of $C_\delta$-internal walks plainly appears at the only place it can --- at the outset of the expansion of this triple --- and is denoted with dotted blue arrows in Figure \ref{fig:pairs1}.
Less obvious is where a negative-signed input with $\gamma$ as last coordinate will appear in the expansion of (\ref{eq:Tr_2_alpha_array}) to pair with $(+,\alpha,\beta,\gamma)$, and it is to this question that we now turn our attention.

The natural first place to seek such an input is in the expansion of $\mathrm{Tr}_2(+,\alpha,\gamma,\delta)$ --- and indeed, letting $\mathrm{Tr}(\gamma,\delta)=\{\delta_0,\dots,\delta_k\}$ (in descending enumeration), if $C_{\gamma\delta_{k-1}}\backslash\alpha\neq\varnothing$ then letting $\alpha'=\min(C_{\gamma\delta_{k-1}}\backslash\alpha)$, the input $(-,\alpha,\alpha',\gamma)$ does appear within $\mathrm{Tr}_2(+,\alpha,\gamma,\delta)$ at the end of the external walk from $\delta$ down to $\gamma$, as desired. See again Figure \ref{fig:pairs1}.

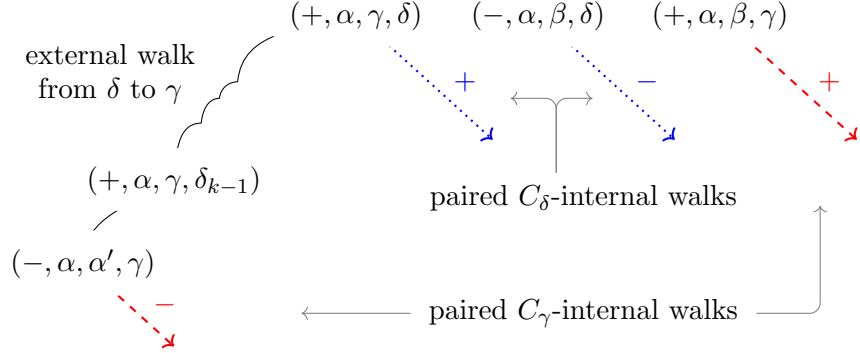
\begin{figure}[h]
\centering
\begin{tikzpicture}[MyPersp,font=\large,fill=white]
	\coordinate (A) at (4,0,5);
	\coordinate (B) at (6,0,5);
	\coordinate (C) at (8,0,5);

	\coordinate (A') at (5.5,0,3.5);
	\coordinate (B') at (7.5,0,3.5);
	\coordinate (C') at (9.5,0,3.5);
	
	\coordinate (D) at (2,0,3);
	\coordinate (E) at (1,0,2);	
	\coordinate (F) at (2,0,1);	
	
	\draw[blue, thick, dotted, ->] (A)--(A');
	\draw[blue, thick, dotted, ->] (B)--(B');
	\draw[red, thick, dashed, ->] (C)--(C');	
	
	\draw (D) to[out=-150,in=90] (E);
	\draw[red, thick, dashed, ->] (E)--(F);
		\draw (A) to[out=-165,in=85] (2.7,0,4.2);
		\draw (2.7,0,4.2) to[out=-180,in=85] (2.5,0,4);
				\draw (2.5,0,4) to[out=-180,in=85] (2.3,0,3.7);
				\draw (2.3,0,3.7) to[out=-180,in=85] (2,0,3);
	\draw (A) node[ellipse,fill] {$(+,\alpha,\gamma,\delta)$};
	\draw (B) node[ellipse,fill] {$(-,\alpha,\beta,\delta)$};
	\draw (C) node[ellipse,fill] {$(+,\alpha,\beta,\gamma)$};
	\draw (D) node[ellipse,fill] {$(+,\alpha,\gamma,\delta_{k-1})$};
	\draw (E) node[ellipse,fill] {$(-,\alpha,\alpha',\gamma)$};
	\draw (1.3,0,4.3) node[align=center] {external walk \\
	 from $\delta$ to $\gamma$};
	 \draw (6.5,0,2.8) node[align=center] {paired $C_\delta$-internal walks};
	 \draw (1.9,0,1.5) node[align=center] {\color{red} $-$};
	 	 \draw (9.2,0,4.2) node[align=center] {\color{red} $+$};
	 	 	 \draw (5.2,0,4.2) node[align=center] {\color{blue} $+$};
	 	 	 	 \draw (7.2,0,4.2) node[align=center] {\color{blue} $-$};
	 \draw[gray,->] (4.6,0,1.4)--(3.4,0,1.4);
	 	 \draw[gray, rounded corners, ->] (8.4,0,1.4)--(9.1,0,1.4)--(9.1,0,2.7);
	 	 \draw[gray, rounded corners, ->] (6.2,0,3.1)--(6.2,0,4)--(5.7,0,4);
	 	 \draw[gray, rounded corners, ->] (6.2,0,3.1)--(6.2,0,4)--(6.6,0,4);
	 	 \draw (6.5,0,1.4) node[align=center] {paired $C_\gamma$-internal walks};
	\end{tikzpicture}
	\caption{The negative-signed $C_\gamma$-internal walk pairing with that deriving from $(+,\alpha,\beta,\gamma)$ when $C_{\gamma\delta_{k-1}}\backslash\alpha\neq\varnothing$: a mapping of relevant inputs.}
	\label{fig:pairs1}
	\end{figure}

If on the other hand $C_{\gamma\delta_{k-1}}\backslash\alpha=\varnothing$ (or in other words if $C_{\delta_{k-1}}\cap [\alpha,\gamma)=\varnothing$) then $(+,\alpha,\gamma,\delta_{k-1})$ is a terminal node. We mark such nodes with a descending line to an ``$\times$''. If in this case $C_{\delta_{k-2}}\cap [\alpha,\gamma)\neq\varnothing$ then, letting $\alpha''=\min(C_{\delta_{k-1}\delta_{k-2}}\backslash\alpha)$, observe that the input $(-,\alpha,\alpha'',\gamma)$ must appear within $\mathrm{Tr}_2(+,\alpha,\gamma,\delta)$ at the last branch off of the classical walk from $\delta$ down to $\gamma$, as desired. See Figure \ref{fig:pairs2}.

\begin{figure}[h]
\centering
\begin{tikzpicture}[MyPersp,font=\large,fill=white]
	\coordinate (A) at (6,0,7.5);
	\coordinate (B) at (4,0,5.5);
	\coordinate (C) at (2,0,3.5);

	\coordinate (A') at (6,0,3.5);
	\coordinate (B') at (5,0,2.5);
	\coordinate (C') at (4,0,1.5);
	
	\coordinate (D) at (5,0,.5);

	\draw[red, thick, dashed, ->] (C')--(D);	
	\draw (B)--(A');
	\draw (B) to[out=-170,in=80] (C);
	\draw (A) to[out=-165,in=85] (4.5,0,6.5);
	\draw (4.5,0,6.5) to[out=-175,in=85] (4.25,0,6.25);
				\draw (4.25,0,6.25) to[out=-165,in=85] (B);
				\draw (A') to[out=-180,in=85] (B');
				\draw (B') to[out=-180,in=85] (C');
				\draw (C)--(2,0,2.8);
	\draw (A) node[rectangle,fill] {$(+,\alpha,\gamma,\delta)$};
	\draw (B) node[rectangle,fill] {$(+,\alpha,\gamma,\delta_{k-2})$};
	\draw (C) node[rectangle,fill] {$(+,\alpha,\gamma,\delta_{k-1})$};
	\draw (A') node[rectangle,fill] {$(+,\alpha,\delta_{k-1},\delta_{k-2})$};
	\draw (B') node[rectangle,fill] {$(-,\alpha,\alpha'',\delta_{k-1})$};
	\draw (C') node[rectangle,fill] {$(-,\alpha,\alpha'',\gamma)$};
	\draw (2,0,2.8) node[rectangle,fill] {$\times$};
	\draw (4.85,0,1) node[align=center] {\color{red} $-$};
	\draw (2.4,0,6.7) node[align=center] {external walk \\ from $\delta$ to $\gamma$};
	\end{tikzpicture}
\caption{Marked again with a dashed red arrow is the $C_\gamma$-internal walk pairing with that deriving from $(+,\alpha,\beta,\gamma)$ when $C_{\gamma\delta_{k-1}}\backslash\alpha=\varnothing$ but $C_{\delta_{k-2}}\cap [\alpha,\gamma)\neq\varnothing$.}
\label{fig:pairs2}
\end{figure}

Similarly, if $C_{\delta_{k-1}}\cap [\alpha,\gamma)=C_{\delta_{k-2}}\cap [\alpha,\gamma)=\varnothing$ then suppose $C_{\delta_j}\cap [\alpha,\gamma)\neq\varnothing$ for some least $\delta_j\in\mathrm{Tr}(\gamma,\delta)\backslash\{\gamma\}$. Then again $\gamma$ appears as the last coordinate of a negative-signed input on the last branch off of the classical walk from $\delta$ down to $\gamma$, as desired. See Figure \ref{fig:pairs3}.

\begin{figure}[h]
\centering
\begin{tikzpicture}[MyPersp,font=\large,fill=white]
	\coordinate (A) at (.5,0,3.5);
	\coordinate (B) at (2,0,5);
	\coordinate (C) at (3.3,0,6.3);
	\coordinate (D) at (4.2,0,6.9);
	\coordinate (E) at (5.3,0,8);
	\coordinate (F) at (6.5,0,9);
	\coordinate (F') at (7.1,0,9.6);
	\coordinate (G') at (7.5,0,9.9);
	\coordinate (G) at (8.5,0,10.5);
	\coordinate (H) at (9,0,7);
	\coordinate (I) at (8.5,0,6);
	\coordinate (J) at (7.6,0,5);
	\coordinate (J') at (7,0,4.2);
	\coordinate (K) at (6.4,0,3.5);
	\coordinate (L) at (5.5,0,2.5);
	\coordinate (L') at (6.8,0,1.35);
	\coordinate (M) at (4.5,0,1.5);
	\coordinate (N) at (5.5,0,.5);
	\coordinate (O) at (3.7,0,3.5);

	\draw (F)--(H);
	\draw (B)--(O);
	\draw[thick, loosely dotted] (J')--(K);
	\draw (A)--(.5,0,2.8);
	\draw (O)--(3.7,0,2.8);
	\draw (.5,0,2.8) node[rectangle,fill] {$\times$};
	\draw (3.7,0,2.8) node[rectangle,fill] {$\times$};
		\draw (C)--(4.3,0,5.3)--(4.3,0,4.8);
	\draw (D)--(5.2,0,5.9)--(5.2,0,5.4);
	\draw (4.3,0,4.8) node[rectangle,fill] {$\times$};
		\draw (K)--(7.4,0,2.5)--(7.4,0,2);
	\draw (7.4,0,2) node[rectangle,fill] {$\times$};
		\draw (J')--(7.8,0,3.4)--(7.8,0,2.9);
	\draw (7.8,0,2.9) node[rectangle,fill] {$\times$};
	\draw (J)--(8.3,0,4.3)--(8.3,0,3.8);
	\draw (8.3,0,3.8) node[rectangle,fill] {$\times$};
	\draw (5.2,0,5.4) node[rectangle,fill] {$\times$};
	\draw (E)--(6.3,0,7)--(6.3,0,6.5);
	\draw (6.3,0,6.5) node[rectangle,fill] {$\times$};
	\draw (B) to[out=-180,in=85] (A);
	\draw (C) to[out=-180,in=85] (B);
	\draw (F') to[out=-180,in=85] (F);
	\draw (G) to[out=-180,in=85] (G');
	\draw (E) to[out=-170,in=85] (D);
	\draw (F) to[out=-170,in=85] (E);
	\draw (J) to[out=-170,in=85] (J');
	\draw (H) to[out=-180,in=85] (I);
	\draw[thick, loosely dotted] (D)--(C);
	\draw[thick, loosely dotted] (G')--(F');
	\draw (I) to[out=-170,in=85] (J);
	\draw (K) to[out=-180,in=85] (L);
	\draw (L) to[out=-180,in=85] (M);
	\draw[red, thick, dashed, ->] (M)--(5.5,0,.5);	
	\draw (G) node[rectangle,fill] {$(+,\alpha,\gamma,\delta)$};
	\draw (F) node[rectangle,fill] {$(+,\alpha,\gamma,\delta_j)$};
	\draw (H) node[rectangle,fill] {$(+,\alpha,\delta_{j+1},\delta_j)$};
	\draw (I) node[rectangle,fill] {$(-,\alpha,\alpha''',\delta_{j+1})$};
\draw (L)--(L')--(6.8,0,.65);	
	\draw (L) node[rectangle,fill] {$(-,\alpha,\alpha''',\delta_{k-1})$};
		\draw (L') node[rectangle,fill] {$(-,\alpha,\gamma,\delta_{k-1})$};
		\draw (6.8,0,.65) node[rectangle,fill] {$\times$};
	\draw (M) node[rectangle,fill] {$(-,\alpha,\alpha''',\gamma)$};
	\draw (B) node[rectangle,fill] {$(+,\alpha,\gamma,\delta_{k-2})$};
	\draw (A) node[rectangle,fill] {$(+,\alpha,\gamma,\delta_{k-1})$};
	\draw (O) node[rectangle,fill] {$(+,\alpha,\delta_{k-1},\delta_{k-2})$};
	\draw (5.35,0,1) node[align=center] {\color{red} $-$};
	\draw (2.5,0,8.8) node[align=center] {external walk \\ from $\delta$ to $\gamma$};
	\end{tikzpicture}
\caption{The $C_\gamma$-internal walk pairing with that deriving from $(+,\alpha,\beta,\gamma)$ when $C_{\delta_{k-1}}\cap [\alpha,\gamma)=C_{\delta_{k-2}}\cap [\alpha,\gamma)=\varnothing$ and $j=\max\{i\mid i<k\text{ and } C_{\delta_i}\cap[\alpha,\gamma)\neq\varnothing\}$. Here $\alpha'''=\min(C_{\delta_{j+1}\delta_j}\backslash\alpha)$. Marked with descent to an ``$\times$'' are some of those nodes ensured by our assumptions to be terminal. Observe the appearance, in the lower left and lower right corners, of opposite-signed instances of $(\alpha,\gamma,\delta_{k-1})$.}
\label{fig:pairs3}
\end{figure}
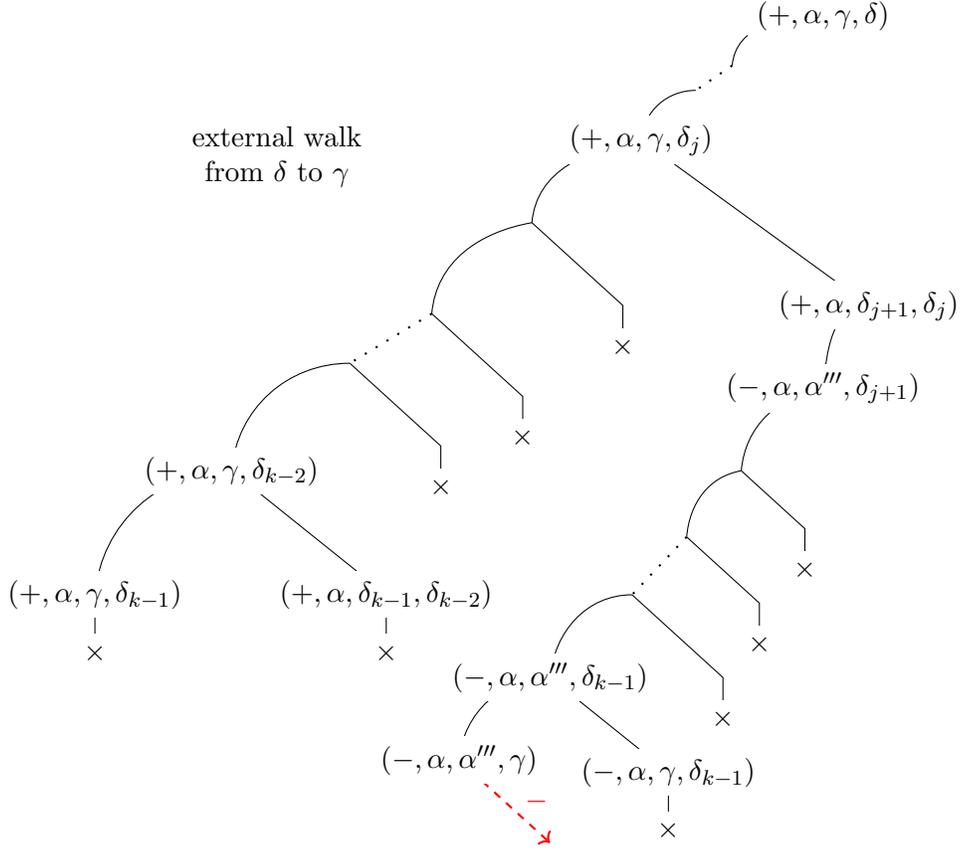

Finally, consider the possibility that $C_{\delta_j}\cap [\alpha,\gamma)=\varnothing$ for all $\delta_j\in\mathrm{Tr}(\gamma,\delta)\backslash\{\gamma\}$. In this case, $\gamma$ will fail to appear as as a last input-coordinate anywhere within the expansion of $\mathrm{Tr}_2(+,\alpha,\gamma,\delta)$. It is certain instead to appear, though, in the expansion of $\mathrm{Tr}_2(-,\alpha,\beta,\delta)$, as desired. See Figure \ref{fig:pairs4}.

\begin{figure}[h]
\centering
\begin{tikzpicture}[MyPersp,font=\large,fill=white]
	\coordinate (A) at (.5,0,2.5);
	\coordinate (A1) at (.5,0,1.85);
	\coordinate (B) at (1.35,0,3.45);
	\coordinate (B1) at (1.75,0,3.05);
	\coordinate (B2) at (1.75,0,2.75);
	\coordinate (C) at (1.85,0,3.8);
	\coordinate (C1) at (2.25,0,3.4);
	\coordinate (C2) at (2.25,0,3.1);
	\coordinate (D) at (2.8,0,4.6);
	\coordinate (D1) at (3.2,0,4.2);
	\coordinate (D2) at (3.2,0,3.9);
	\coordinate (E) at (3.2,0,5.1);
	\coordinate (E1) at (3.6,0,4.7);
	\coordinate (E2) at (3.6,0,4.4);
	\coordinate (F) at (4.1,0,5.9);
	\coordinate (F1) at (4.6,0,5.4);
	\coordinate (F2) at (4.6,0,5);
	
	\coordinate (A') at (4.5,0,2.5);
	\coordinate (A1') at (3.5,0,1.5);
	\coordinate (A2') at (4.5,0,.5);
	\coordinate (B') at (5.35,0,3.45);
	\coordinate (B1') at (5.75,0,3.05);
	\coordinate (B2') at (5.75,0,2.75);
	\coordinate (C') at (5.85,0,3.8);
	\coordinate (C1') at (6.25,0,3.4);
	\coordinate (C2') at (6.25,0,3.1);
	\coordinate (D') at (6.8,0,4.6);
	\coordinate (D1') at (7.2,0,4.2);
	\coordinate (D2') at (7.2,0,3.9);
	\coordinate (E') at (7.2,0,5.1);
	\coordinate (E1') at (7.6,0,4.7);
	\coordinate (E2') at (7.6,0,4.4);
	\coordinate (F') at (8.1,0,5.9);
	\coordinate (F1') at (8.6,0,5.4);
	\coordinate (F2') at (8.6,0,5);	

\draw (B)--(B1)--(B2);
\draw (C)--(C1)--(C2);
\draw (B2) node[rectangle,fill] {$\times$};
\draw (C2) node[rectangle,fill] {$\times$};
\draw (D)--(D1)--(D2);
\draw (E)--(E1)--(E2);
\draw (A)--(A1);
\draw (A1) node[rectangle,fill] {$\times$};
\draw (D2) node[rectangle,fill] {$\times$};
\draw (E2) node[rectangle,fill] {$\times$};
\draw (F)--(F1)--(F2);
\draw (F2) node[rectangle,fill] {$\times$};
 \draw (F) to[out=-160,in=80] (E);
 \draw (D) to[out=-175,in=85] (C) to[out=-180,in=85] (B) to[out=-180,in=85] (A);
 \draw[thick, loosely dotted] (E)--(D);
	\draw (A) node[rectangle,fill] {$(+,\alpha,\gamma,\delta_{k-1})$};
	\draw (F) node[rectangle,fill] {$(+,\alpha,\gamma,\delta)$};

\draw (B')--(B1')--(B2');
\draw (C')--(C1')--(C2');
\draw (B2') node[rectangle,fill] {$\times$};
\draw (C2') node[rectangle,fill] {$\times$};
\draw (D')--(D1')--(D2');
\draw (E')--(E1')--(E2');
\draw (A') to[out=-170,in=85] (A1');
\draw (A1') node[rectangle,fill] {$\times$};
\draw (D2') node[rectangle,fill] {$\times$};
\draw (E2') node[rectangle,fill] {$\times$};
\draw (F')--(F1')--(F2');
\draw (F2') node[rectangle,fill] {$\times$};
 \draw (F') to[out=-160,in=80] (E');
 \draw (D') to[out=-175,in=85] (C') to[out=-180,in=85] (B') to[out=-180,in=85] (A');
 \draw (A')--(5.5,0,1.5)--(5.5,0,.85);
 \draw (5.5,0,.85) node[rectangle,fill] {$\times$};
 \draw (5.5,0,1.5) node[rectangle,fill] {$(-,\alpha,\gamma,\delta_{k-1})$};
 \draw[thick, loosely dotted] (E')--(D');
	\draw (A') node[rectangle,fill] {$(-,\alpha,\beta,\delta_{k-1})$};
	\draw[red, thick, dashed, ->] (A1')--(A2');
	\draw (A1') node[rectangle,fill] {$(-,\alpha,\beta,\gamma)$};
	\draw (F') node[rectangle,fill] {$(-,\alpha,\beta,\delta)$};
		\draw (4.35,0,1.05) node[align=center] {\color{red} $-$};
	\end{tikzpicture}
\caption{The $C_\gamma$-internal walk pairing with that deriving from $(+,\alpha,\beta,\gamma)$ when $C_{\delta_j}\cap [\alpha,\gamma)=\varnothing$ for all $\delta_j\in\mathrm{Tr}(\gamma,\delta)\backslash\{\gamma\}$. In this case the external walk from $\delta$ to $\beta$ must first copy the walk from $\delta$ to $\gamma$; at the end of this copy, $\gamma$ will appear in the last coordinate position, as desired. There, in fact, the initial input $(\alpha,\beta,\gamma)$ of (\ref{eq:Tr_2_alpha_array}) reappears with opposite sign, so in this case, the elements of (\ref{eq:Tr_2_alpha_array}) completely cancel one another out.}
\label{fig:pairs4}
\end{figure}
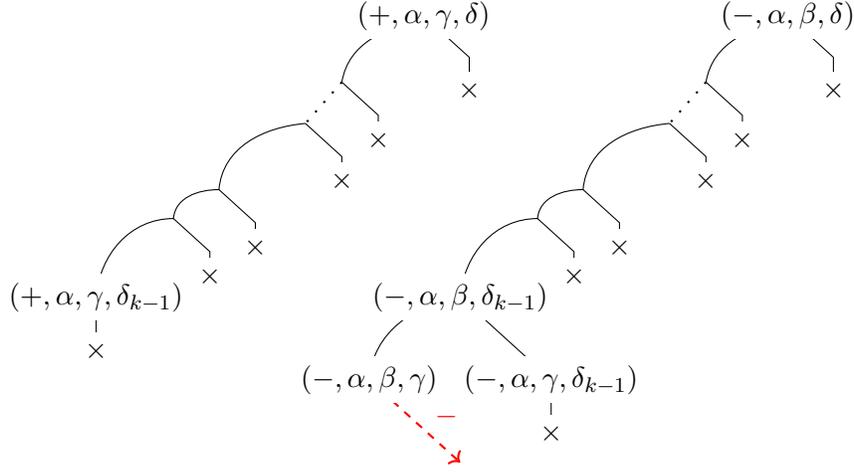

This kind of ``combinatorial peekaboo'' or exhaustion of hypotheses can grow even more elaborate deeper within the expansion of (\ref{eq:Tr_2_alpha_array}) --- but we know by Lemma \ref{lem:pairing} that it must always end happily somewhere. We have included an account of the most basic case for the further light it sheds on the general character of higher walks.

\section{A sample higher walk}
\label{sect:explorations}
This section extends the impulse of the previous subsection, which was, in essence, to expand a generic $\mathrm{Tr}_2(+,\alpha,\beta,\gamma)$ just to see what we find.
Much of what we do we record in Figures \ref{fig:app1}, \ref{fig:app2}, \ref{fig:app3}, and \ref{fig:app4}, which, to preserve the flow of text, we have collected in this paper's appendix; these in turn tend to register as a \emph{visual} argument that we are, with higher walks, dealing with something nontrivial indeed. The complexity of $\mathrm{Tr}_2$ derives broadly from two phenomena without precedent in the classical case: \emph{repetitions of outputs}, and \emph{depth}. Each calls for further investigation; our aim in the present section is simply to convey something of their significance and extent. 
\subsection{Repetitions}
\label{subsect:sample}
It's instructive to begin by adopting any of a number of simplifying assumptions. See Exercise \ref{ex:rho2unbd} below for some of the most radical such assumptions: that $\alpha\leq\beta\leq\gamma<\omega$, or that $\gamma=C_\gamma=\omega$ or $\omega_1$, for example.
Condition 2 of Section \ref{subsect:Higher_C}, more mildly, will ensure that any initial step ``into'' a $C_\xi$-internal walk (any output whose index ends in $0$, in other words) is to a successor ordinal, while assuming $\mathcal{C}$ ordertype-minimal will ensure in any case that the next internal step (corresponding to an index ending in $01$), if defined, is to the next highest element of $C_\xi$. For such $\mathcal{C}$, these are the only sorts of $C_\xi$-internal steps possible when $\mathrm{cf}(\xi)\leq\omega$, so that, in particular, the internal walks of $\mathrm{Tr}_2(+,\alpha,\beta,\gamma)$ induced by an ordertype-minimal $C$-sequence on $\omega_1$ all just stepwise descend through the relevant ``ladders'' $C_\xi$ down to $\mathrm{min}\,C_\xi\backslash\alpha$. The only effect of raising $\alpha$, in consequence, is to truncate $\mathrm{Tr}_2(+,\alpha,\beta,\gamma)$; put differently, each $\mathrm{Tr}_2(+,\alpha,\beta,\gamma)$ in this setting is simply an initial subtree of $\mathrm{Tr}_2(+,0,\beta,\gamma)$, lending $\mathrm{Tr}_2$ on the countable ordinals a somewhat $2$-dimensional character.
For this reason, we tend to think of Figures \ref{fig:app1}, \ref{fig:app2}, \ref{fig:app3}, and \ref{fig:app4} as together recording a representative expansion in, more interestingly, the $\omega_2$ setting, but they may be taken as such for any other uncountable cardinal $\kappa$ as well.
These figures' basic structure is as follows.

Figures \ref{fig:app1} and \ref{fig:app2} (which together comprise a single figure) record a $\mathrm{Tr}_2(+,\alpha,\beta,\gamma)$ expansion which is patterned on that of Figure \ref{asampletr2}, but more extensive, and this time includes the data of signs. In Figures \ref{fig:app3} and \ref{fig:app4}, only the outputs of that expansion are directly depicted, labeled by their binary string indices and arrayed within what, visibly, is a chandelier system of external walks. (Space necessitates a choice of emphasis: charts according internal walks greater primacy are, in principle, of roughly equal relevance, but are left to the motivated reader.) Some deliberate color-codings hopefully heighten the coordination of these figures, as explained in their captions.
Despite our external emphasis, for example, internal walks are visible both as the warm-colored rightwards branches of Figures \ref{fig:app1} and \ref{fig:app2} and as the sequences of indices descending any of the warm-colored clubs engaged in the course of the external walks of Figure \ref{fig:app3} (the $0001$, $00011$, $000111$ descending $C_{\beta_{000}}$, for example).
As this observation underscores, the organizing substructures of $\mathrm{Tr}_n(+,\alpha,\beta,\gamma)$ are legible at the level of indices: internal walks manifest as lengthening tails of $1$s ($\sigma^\frown 1$, $\sigma^\frown 11$, $\sigma^\frown 111$ for some $\sigma\in\rho^2_{2,\mathrm{t}}$), while external walks manifest as lengthening tails of $0$s.
Alternations of $0$s and $1$s within the indices of $\mathrm{Tr}_n(+,\alpha,\beta,\gamma)$ forms a deep subject, and is that, roughly, of the following subsection.

Readers may benefit from tracking a few steps in parallel through Figures \ref{fig:app1}, \ref{fig:app2}, \ref{fig:app3}, and \ref{fig:app4}; to aid in this, we record a few elementary observations:
\begin{enumerate}
\item The expansion of $\mathrm{Tr}_2(+,\alpha,\beta,\gamma)$ at any given input $(\pm,\alpha,\beta',\gamma')$ is a product of assumptions about $\mathcal{C}$; we are spared the unpleasantness of recording these separately, though, by the fact that the relevant assumptions may always be read off from the node
$$\frac{(\pm,\alpha,\beta',\gamma')}{\pm\beta''}$$
itself. If the signs above and below the bar agree, for example, then $\beta'\in C_{\gamma'}$ and $\beta''=\mathrm{min}\,C_{\beta'\gamma'}\backslash\alpha$.
See Figure \ref{fig:app2}'s caption for the other possibilities, for the tracking of which the equation of Example \ref{ex:Tr2} may prove handy.
\item As noted in Section \ref{sect:Trn}, both inputs' and outputs' signs are constant for the portions of leftwards branches of $\mathrm{Tr}_2(+,\alpha,\beta,\gamma)$ naturally identifying with classical walks; these constant signs are not, however, the same. Those appearing alongside these walks in Figures \ref{fig:app3} and \ref{fig:app4} are the outputs' stable signs.
\item By and large, and visibly within Figures \ref{fig:app3} and \ref{fig:app4}, the ordinals $\beta_\sigma$ descend as their indices $\sigma$ lengthen, with one critical sort of exception. Consider, for instance, the input $(+,\alpha,\beta_0,\beta_{\varnothing})$ at position $01$ in Figure \ref{fig:app2} and its output ordinal $\beta_{01}=\mathrm{min}\,C_{\beta_0\beta_\varnothing}\backslash\alpha$.
Its successor input in the leftwards direction is $(-,\alpha,\beta_{01},\beta_0)$, initiating a classical walk from $\beta_0$ down to $\beta_{01}$ which, after a step, materializes among outputs first with $\beta_{010}$; see Figure \ref{fig:app4}. That $\beta_{010}>\beta_{01}$ is a first observation. That $\mathrm{Tr}(\beta_{01},\beta_0)$ begins as the opposite-signed tail $\mathrm{Tr}(\beta,\beta_0)$ of $\mathrm{Tr}(\beta,\gamma)$ on the left-hand side of Figure \ref{fig:app4} does, departing from it only upon reaching a $\beta'$ with $C_{\beta'}\cap [\beta_{01},\beta)\neq\varnothing$ is a second one.
That we have described a basic building block of $\mathrm{Tr}_2(+,\alpha,\beta,\gamma)$ is a third observation, in the sense that within $\mathrm{Tr}_2(+,\alpha,\beta,\gamma)$, steps of internal walks give rise to external walks between their endpoints, which induce internal walks on the clubs thereby arising, and so on: Figures \ref{fig:app3} and \ref{fig:app4} embody a view of $\mathrm{Tr}_2(+,\alpha,\beta,\gamma)$ as consisting of nothing other than this rhythm.
\item Those elements of clubs falling below $\alpha$ in Figures \ref{fig:app3} and \ref{fig:app4} represent the material, or potential material, of $\mathrm{L}_2(\alpha,\beta,\gamma)$.
\end{enumerate}
What we wish above all to highlight here, though, is the issue of repetitions, and of canceling terms, walks, and even cones, within a given $\mathrm{Tr}_n(+,\alpha,\beta,\gamma)$.
This possibility was noted in Subsection \ref{subsubsect:pairings} and again in item 3 above, but it is a much more pervasive phenomenon than either might have led us to expect. Consider, for example, the $+$-signed walk from $\beta_{00001}$ down to $\beta_{000011}$ near the middle of Figure \ref{fig:app4}.
Following the horizontal guidelines to the right, we find its $-$-signed copy in the walk from $\beta_{01000}$ down to $\beta_{010001}$.
There is a reason for this: both of the earlier external walks corresponding to $\mathrm{Tr}(\beta_{01},\beta_0)$ and $\mathrm{Tr}(\beta,\beta_0)$ and discussed in item 3 above passed through $\beta_{000}=\beta_{0100}$, thereby initiating $C_{\beta_{000}}=C_{\beta_{0100}}$-internal walks down to $\alpha$ which eventually coincide, and the repetition in question is an artifact of this coincidence.

Note in contrast that the walk from $\beta_{0000}$ down to $\beta_{00001}$ has no opposite-signed counterpart, essentially for the reason that $\mathrm{max}(C_{\beta_{000}}\cap\beta)\geq\mathrm{max}(C_{\beta_\varnothing}\cap\beta)$; by this same observation, the line (dashed in Figures \ref{fig:app3} and \ref{fig:app4}) separating the canceling and non-canceling portions of $\mathrm{Tr}_2(+,\alpha,\beta,\gamma)$ is highly sensitive to the movements of $\mathrm{L}(\beta,\gamma)$.

The most immediate point, though, is this: the prospect of cancellations renders it significantly harder to reason about values like $\rho^n_2(\vec{\alpha})$ from initial portions of $\mathrm{Tr}_n(+,\vec{\alpha})$ when $n>1$ than when (as classically) $n=1$. In particular, it's much trickier to bound higher $\rho^n_2$ from below, for example, and this renders nontriviality arguments that much more elusive.
Although it's in consequence tempting and even probably at times productive to disregard these cancellations, they are so fundamental, as we have seen, to the phenomenon of $n$-coherence that there seems ultimately to be no alternative to simply understanding them better.
\subsection{Depth}
\label{subsect:depth}
Figure \ref{fig:app4} suggests a second way of regarding higher walks as trees, since $\mathrm{Tr}_2(+,\alpha,\beta,\gamma)$ is, therein, visibly a tree of classical walks.
The root of this tree is $\mathrm{Tr}(\beta,\gamma)$; its next level consists of the walks between the relevant points of $\bigcup_{\xi\in\mathrm{Tr}(\beta,\gamma)\backslash\{\beta\}}C_\xi$, and so on, and these zeroth, first, and second levels of the tree which we will denote by $T_\mathrm{w}(\alpha,\beta,\gamma)$ are colored blue, cyan, and green, respectively, within Figure \ref{fig:app4}.
It is the height of $T_\mathrm{w}(\alpha,\beta,\gamma)$ that we mean when we speak of \emph{depth}.

As noted, this structure is largely encoded by the elements $\sigma$ of $\rho^2_{2,\mathrm{t}}(\alpha,\beta,\gamma)$: if the string $01$ occurs $n$ times within $\sigma$, then $\beta_\sigma$ resides within a node of height at most $n$ within $T_\mathrm{w}(\alpha,\beta,\gamma)$.
This coding is imperfect, though, for the reason that classical walks can give rise to further classical walks without the mediation of properly internal steps, as in the sequence $\mathrm{Tr}(\beta,\gamma)\to\mathrm{Tr}(\beta_{000000},\beta)$ (see Figure \ref{fig:app4}), in the course of which no $1$ appears.
A surer measure of the height in question is given by sign changes:
\begin{definition}
Fix an order-$2$ $C$-sequence $\mathcal{C}$ on an ordinal $\varepsilon$. For any $\alpha\leq\beta\leq\gamma<\varepsilon$, define $s_{\alpha\beta\gamma}:\rho^2_{2,\mathrm{t}}(\alpha,\beta,\gamma)\to\{-1,1\}$ by
$$\sigma\mapsto\text{ the sign of }\mathrm{Tr}_2(+,\alpha,\beta,\gamma)$$
and define $o_{\alpha\beta\gamma}:\rho^2_{2,\mathrm{t}}(\alpha,\beta,\gamma)\to\omega$ by
$$\sigma\mapsto |\{k<|\sigma|\mid s_{\alpha\beta\gamma}(\sigma\restriction k)\neq s_{\alpha\beta\gamma}(\sigma\restriction k+1)\}|$$
and define the \emph{depth} $d(\alpha,\beta,\gamma)$ of $\mathrm{Tr}_2(+,\alpha,\beta,\gamma)$ by
$$d(\alpha,\beta,\gamma)=\mathrm{max}\{o_{\alpha\beta\gamma}(\sigma)\mid\sigma\in\rho^2_{2,\mathrm{t}}(\alpha,\beta,\gamma)\}.$$
\end{definition}
Depth in this formulation is clearly a notion of \emph{oscillation}, and it is a bound, by the above remarks, on the oscillation of $0$s and $1$s within the indices of $\mathrm{Tr}_2(+,\alpha,\beta,\gamma)$ as well.
But it is plainly also a notion of descent, in the sense of item 3 above, and of diameter: it is the maximal length of a ``walk'' through $\mathrm{Tr}_2(+,\alpha,\beta,\gamma)$ whose nodes are its classical walks.

The combinatorics at play in this function are fascinating, even already under such restrictive assumptions as $0=\alpha<\beta<\gamma<\omega_1$, and much more so on the domain $\omega_2$, and no small part of their charm lies in how they refresh our experience of classical walks objects.
For the moment, though, let us restrict our remarks to the following: depth represents one of the most conspicuous non-classical phenomena or ``axes'' arising in the higher trace functions $\mathrm{Tr}_n$, and is almost certainly a key to their further development.
Put differently, it represents their combinatorial dimension most alien to the case of $n=1$, and the dimension, consequently, in which one should expect to derive their most novel effects.
The first concrete questions in these directions concern the \emph{unboundedness} of the function $d$, questions recorded in our conclusion and bearing, in turn, on the questions of nontriviality organizing both Section \ref{sect:nontriviality} and \ref{subsect:strongcolorings} below.
\section{Nontriviality}
\label{sect:nontriviality}

As partially indicated, this section is arguably the domain of the most pressing, interesting, and consequential questions about higher walks.
The issue is not, though, as might once have been suspected, that nontrivial coherence phenomena, or even nontrivial coherence phenomena deriving from higher walks, do not follow from the $\mathsf{ZFC}$ axioms on cardinals $\kappa>\omega_1$.
The issue is rather that, for every uncountable $\kappa<\aleph_\omega$, they do:
the aforementioned questions are those that ensue about the extent and combinatorial mainsprings of these phenomena, and it is as much these questions as any particular theorem which we hope to communicate below.

Put differently, an analysis like that of Section \ref{sect:higher_coherence} would be a sterile exercise if the $n$-coherent families exhibited therein were all \emph{trivially} so --- as, for example, are the $\rho_2^n$-fiber maps with respect to the $(n-1)$-fold compounding of the trivial $C$-sequence $\langle C_\beta:=\beta\mid\beta<\varepsilon\rangle$ on any ordinal $\varepsilon$.
Below, in Section \ref{subsect:nontrivncoh}, we will show that one of them, namely the family of $r_2^n$-fiber maps with respect to an ordertype-minimal $C$-sequence on $\omega_n$, is \emph{not} trivial, in the mod finite sense of the following definition.
\begin{definition}
\label{def:n-triv}
Let $A$ be an abelian group. For any $n>1$, a family of functions $$\Phi=\langle\varphi_{\vec{\alpha}}:\alpha_0\rightarrow A\mid\vec{\alpha}\in [\varepsilon]^{n}\rangle$$ is \emph{trivial} (or \emph{$n$-trivial}, where the emphasis is useful) \emph{mod locally constant} if 
		\begin{align}
			\label{eq:ntriv}\sum_{i=0}^{n-1} (\text{-}1)^i\psi_{\vec{\alpha}^i}= \varphi_{\vec{\alpha}}\hspace{.8 cm} \textnormal{modulo locally constant functions}
		\end{align}
		for all $\vec{\alpha}\in [\varepsilon]^{n}$  (as before, implicit in this equation is the restriction of the function $\psi_{\vec{\alpha}^0}$ to the domain of the other functions in the sum).
Similarly for the \emph{mod finite} modulus of finitely supported functions.	
		For $n=1$ and either modulus, the triviality condition is as described in Section \ref{subsect:classical}.
		\end{definition}
This is, of course, a coboundary condition, the one complementary to the cocycle condition of $n$-coherence; observe in particular that with respect to either of the above moduli, any $n$-trivial family is $n$-coherent.
The general question is whether the converse may, on a given $\varepsilon$, fail to hold, a question equivalent, as we have hinted, to that of whether $\mathrm{H}^n(\varepsilon;\mathcal{A})\neq 0$.
Note that such a failure is an instance of \emph{incompactness}, for the reason that all properly initial segments of a nontrivial $n$-coherent family \emph{are} trivial, as readers new to this material are encouraged to verify.

In a second subsection, we turn our attention to the functions $\rho^n_2$, both for their intrinsic interest and as a way of framing some of our most fundamental questions.
The nontriviality arguments of Section \ref{subsect:nontrivncoh} are somewhat coarse, and seem to require large codomains to apply; if, in contrast, small-codomain functions like $\rho_1$ and $\rho_2$ attain nontrivial coherence on $\omega_1$, it's because deeper principles of nontriviality --- the ``Hausdorff condition'' of finite-to-one-ness and a Ramsey-theoretic unboundedness property, respectively --- are operative within them.
It is on the elaboration of higher-dimensional and higher-cardinal analogues of such principles that the fullest further development of the theory of higher walks seems largely to depend.
\subsection{Nontrivial $n$-coherent families}
\label{subsect:nontrivncoh}
We turn now to the first of two related constructions of nontrivial $n$-coherent families on $\omega_n$.
\begin{theorem}
\label{thm:nontriviality_r_2^n}
For any $n>0$ and ordertype-minimal $C$-sequence $\mathcal{C}$ on $\omega_n$, the family
$$\Phi(r_2^n):=\langle r_2^n(\,\cdot\,,\vec{\gamma}):\gamma_0\to\bigoplus_{\omega_n^{[n+1]}}\mathbb{Z}\mid\vec{\gamma}\in[\omega_n]^n\rangle$$
of $r_2^n$-fiber maps forms a mod finite nontrivial $n$-coherent family of functions.
\end{theorem}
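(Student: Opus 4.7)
The coherence of $\Phi(r_2^n)$ is already established in Lemma \ref{lem:r^n_2_coherence}, so the content of this theorem is the nontriviality. The plan is to derive this from the nontriviality, established in \cite{TFOA}, of the canonical $n$-cocycle $\mathtt{f}_n$ associated there to an ordertype-minimal higher $C$-sequence on $\omega_n$. As the text following Definition \ref{def:r2} records, $r_2(\alpha,\gamma)$ in the $n=1$ case essentially records the support of $\mathtt{f}_1(0,\gamma)\big|_{\{\alpha\}\otimes[\omega_1]^2}$; I would first verify that this relationship generalizes, i.e., that $r_2^n(\alpha,\vec{\gamma})$ agrees, up to a canonical sign convention, with the formal sum of basis vectors indexed by the support of a suitable restriction of $\mathtt{f}_n(0,\vec{\gamma})$. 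The required term-by-term verification is guided by Section \ref{subsect:alternative}: both objects derive canonically from the same underlying $C$-sequence via the recursion structure of $\mathrm{Tr}_n$ and $\mathrm{L}_n$.

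Given this correspondence, the nontriviality of $\Phi(r_2^n)$ reduces to the observation that any mod-finite trivialization $\Psi = \langle \psi_{\vec{\beta}} \mid \vec{\beta}\in[\omega_n]^{n-1}\rangle$ would canonically lift to a coboundary witness trivializing $\mathtt{f}_n$ as a $\mathbb{Z}^{(\omega_n)}$-valued $n$-cocycle, contradicting the nonvanishing of the class $[\mathtt{f}_n]\in \mathrm{H}^n(\omega_n;\mathcal{A})$ (for $A=\mathbb{Z}^{(\omega_n)}$) demonstrated in \cite{TFOA}. The main obstacle in this route is bookkeeping: the translation between the combinatorial data of $r_2^n$ (finite signed tuples) and the algebraic data of $\mathtt{f}_n$ (a specific $\mathbb{Z}^{(\omega_n)}$-valued cochain) requires careful tracking of signs and a verification that the mod-finite modulus on $\Phi(r_2^n)$ corresponds faithfully to the standard triviality condition for the $\mathtt{f}_n$-cocycle.

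A more self-contained alternative proceeds by induction on $n$, with base case $n=1$ supplied by Theorem \ref{thm:ri}. For the inductive step, fix $\gamma^*\in S^n_{n-1}$ so that ordertype-minimality furnishes an order-isomorphism $\pi:\omega_{n-1}\to C_{\gamma^*}$ pulling $\mathcal{C}$ back to an order-$(n{-}1)$ ordertype-minimal $C$-sequence $\mathcal{C}'$ on $\omega_{n-1}$. A structural lemma shows
\begin{align*}
r_2^n\bigl(\pi(\alpha'),\pi(\vec{\beta}'),\gamma^*\bigr)\;=\;\pi_*\bigl(r_2^{n-1}(\alpha',\vec{\beta}')\bigr)\qquad\text{mod finite},
\end{align*}
since with $\gamma^*$ as top coordinate every step of $\mathrm{Tr}_n$ stays within $C_{\gamma^*}$ and is isomorphic, via $\pi^{-1}$, to the corresponding step of $\mathrm{Tr}_{n-1}$ in $\mathcal{C}'$. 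Restricting $\Psi$ to tuples of the form $(\pi(\vec{\beta}''),\gamma^*)$ and unpacking the triviality identity at $(\pi(\vec{\beta}'),\gamma^*)$ then yields a purported $(n{-}1)$-trivialization of $\Phi(r_2^{n-1})$ modulo a residual term $(-1)^n \psi_{\pi(\vec{\beta}')}(\pi(\alpha'))$ arising from dropping the $\gamma^*$-coordinate; the hardest step is the absorption of this residual, which is naturally indexed by $[\omega_{n-1}]^{n-1}$ rather than the $[\omega_{n-1}]^{n-2}$ expected of a trivialization, and for which one may wish to exploit variation of $\gamma^*$ across a stationary subset of $S^n_{n-1}$ together with a further pressing-down argument in the style of Theorem \ref{thm:ri}'s proof.
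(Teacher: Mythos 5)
Your second, ``self-contained'' route is essentially the paper's proof: induction on $n$ with base case Theorem \ref{thm:ri}, restriction to the $C_\delta$-internal copy of the lower-dimensional family for $\delta$ of cofinality $\omega_{n-1}$, and disposal of the residual term $(-1)^n\psi_{\vec{\gamma}}$ by pressing down over a stationary set of such $\delta$. One point needs care: your structural lemma is not literally true as stated, since the expansion of $\mathrm{Tr}_n(\pm,\cdot,\vec{\beta},\gamma^*)$ does not stay among tuples carrying $\gamma^*$ as last coordinate; the paper instead composes $r_2^{n}(\,\cdot\,,\vec{\beta},\gamma^*)$ with the codomain projection $\pi^{\gamma^*}$ onto the generators $\lfloor(\cdot,\gamma^*)\rfloor$, and it is exactly this projection that turns ``the same $(\alpha',\vec{\gamma}')$ witnesses a nonzero residual for stationarily many $\gamma^*$'' into the contradiction that $\psi_{\vec{\gamma}'}(\alpha')$ has infinite support. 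Your first route, via the nontriviality of $\mathtt{f}_n$ in \cite{TFOA}, is plausible but is not the paper's argument and, as you note, would require a substantial unperformed translation.
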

The following function will facilitate the proof:
for any $n>0$ and $\delta<\omega_n$ let
$$\pi^\delta_n:\bigoplus_{[\omega_n]^{n+1}}\mathbb{Z}\to\bigoplus_{[\delta]^n}\mathbb{Z}$$
be the homomorphism taking $\lfloor(\vec{\gamma},\delta)\rfloor$ to $\lfloor\vec{\gamma}\rfloor$ and all other generators to $0$.
\begin{proof}
For the $n$-coherence of such families, see Lemma \ref{lem:r^n_2_coherence}.
We will argue their nontriviality by induction on $n$.
The base case of $n=1$ was established, in essence, in Theorem \ref{thm:ri}.
The key to step $n+1$ of the induction is the following claim.
\begin{claim}
For every $\delta\in S^{n+1}_n$, the family
$$\Phi(r_2^{n+1})^\delta:=\langle\pi^\delta_{n+1}\circ r_2^{n+1}(\,\cdot\,,\vec{\gamma},\delta)\big|_{C_\delta\cap\gamma_0}\mid\vec{\gamma}\in[C_\delta]^{n}\rangle$$
is nontrivially $n$-coherent.
\end{claim}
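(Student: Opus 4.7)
The plan is to reduce the claim to the inductive hypothesis at level $n$ by exhibiting $\Phi(r_2^{n+1})^\delta$ as an isomorphic copy of a family $\Phi(r_2^n)$ on $\omega_n$, computed with respect to an appropriately pulled-back ordertype-minimal order-$n$ $C$-sequence $\mathcal{D}$ on $\omega_n$. Since $\delta \in S^{n+1}_n$ has cofinality $\omega_n$ and $\mathcal{C}$ is ordertype-minimal, we have $\mathrm{otp}(C_\delta)=\omega_n$; fix the unique order-isomorphism $\pi:\omega_n \to C_\delta$.

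First I would verify that the pullback along $\pi$ of the ``tails through $\delta$'' portion of $\mathcal{C}$ defines an ordertype-minimal order-$n$ $C$-sequence $\mathcal{D}$ on $\omega_n$. Concretely, for any $\mathcal{C}$-index of the form $(\vec{\alpha},\delta)$ with $\vec{\alpha}\in[C_\delta]^{\leq n}$, declare $D_{\pi^{-1}(\vec{\alpha})}:=\pi^{-1}(C_{\vec{\alpha}\delta})$. The closedness and cofinality required by Definition \ref{def:higherCseqs} are immediate from those of $\mathcal{C}$, and ordertype-minimality of $\mathcal{D}$ follows from that of $\mathcal{C}$ together with the fact that $C_\delta$ is closed cofinal in $\delta$, so that $\mathrm{cf}(\alpha \cap C_\delta)=\mathrm{cf}(\alpha)$ for every $\alpha \in C_\delta$.

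Next I would establish the crucial compatibility: for any $\vec{\gamma}\in[C_\delta]^n$ and $\beta \in C_\delta \cap \gamma_0$, the inputs of $\mathrm{Tr}_{n+1}(+,\beta,\vec{\gamma},\delta)$ whose last coordinate remains $\delta$ form a subtree which, stripped of that last coordinate and pulled back through $\pi$, reproduces precisely the expansion of $\mathrm{Tr}_n(\pm,\pi^{-1}(\beta),\pi^{-1}(\vec{\gamma}))$ computed under $\mathcal{D}$ (up to a global sign tracked by the parities $i(\beta,\sigma)$). This follows by inspection of Definition \ref{def:Tr_n}: the step-choices $i \in \{1,\dots,n+1\} \setminus\{j+1\}$ preserving the last coordinate $\delta$ are in bijection with the step-choices of the $n$-dimensional walk, while the replacements $\min(C_{\tau(\vec{\gamma}')} \setminus \gamma'_j)$ for tails $\tau$ containing $\delta$ translate under $\pi^{-1}$ to the analogous replacements in $\mathcal{D}$. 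Consequently, $\pi^\delta_{n+1}\circ r_2^{n+1}(\,\cdot\,,\vec{\gamma},\delta)\big|_{C_\delta \cap \gamma_0}$, read through $\pi$, coincides (up to global sign) with $r_2^n(\,\cdot\,,\pi^{-1}(\vec{\gamma}))$ computed under $\mathcal{D}$. The $n$-coherence of $\Phi(r_2^{n+1})^\delta$ is then inherited from Lemma \ref{lem:r^n_2_coherence} applied to $\mathcal{D}$, and any mod finite trivialization of $\Phi(r_2^{n+1})^\delta$ pulls back under $\pi$ to a mod finite trivialization of $\langle r_2^n(\,\cdot\,,\vec{\eta})\mid \vec{\eta}\in[\omega_n]^n\rangle$ for $\mathcal{D}$, contradicting the inductive hypothesis.

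The main obstacle is in managing the inputs $(\beta',\vec{\zeta},\delta)$ appearing in the expansion whose $\mathcal{C}$-maximal proper tail is strictly shorter than $(\vec{\zeta},\delta)$ — cases in which a step ``above'' $\vec{\zeta}$ may temporarily involve inputs not of the form $(\,\cdot\,,\delta)$. Here one uses that such departures either terminate at a boundary condition (contributing nothing after $\pi^\delta_{n+1}$), or rapidly re-enter the $\delta$-preserving subtree at an input matching exactly the corresponding step of the $n$-dimensional $\mathcal{D}$-walk on the $\pi$-image side. Making this matching precise, and verifying that the parities $i(\beta,\sigma)$ transfer uniformly between the two walks, is a routine but bookkeeping-heavy induction on the expansion of $\mathrm{Tr}_{n+1}(+,\beta,\vec{\gamma},\delta)$, crucially powered by the ordertype-minimality of $\mathcal{D}$.
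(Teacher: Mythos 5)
Your proposal is correct and takes essentially the same route as the paper, whose entire proof of the claim is the single observation that, by ordertype-minimality of $\mathcal{C}$, the family $\Phi(r_2^{n+1})^\delta$ is simply a $C_\delta$-internal version of $\Phi(r_2^n)$ and hence the claim amounts to the induction hypothesis. Your pulled-back sequence $\mathcal{D}$, the identification of the $\delta$-retaining subtree of $\mathrm{Tr}_{n+1}(+,\beta,\vec{\gamma},\delta)$ with the expansion of $\mathrm{Tr}_n$ under $\mathcal{D}$, and the transfer of trivializations are exactly the details the paper leaves implicit.
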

\begin{proof}[Proof of claim]
Since, by our assumptions on $\mathcal{C}$, this family is simply a $C_\delta$-internal version of $\Phi(r_2^n)$, the claim amounts to our induction hypothesis.
\end{proof}
Now suppose for contradiction that a family $$\Psi=\langle\psi_{\vec{\gamma}}:\gamma_0\to\bigoplus_{[\omega_n]^{n+1}}\mathbb{Z}\mid\vec{\gamma}\in [\omega_{n+1}]^n\rangle$$ trivialized $\Phi(r^{n+1}_2)$.
By our claim, for all $\delta\in S^{n+1}_n$ there exists an $f_1(\delta)=\vec{\gamma}\in [\delta]^{n-1}$ and $f_0(\delta)=\alpha<\gamma_0$ with $\pi_n^\delta\circ\psi_{\vec{\gamma}}(\alpha)\neq 0$.
By the Pressing Down Lemma, $(f_0,f_1)$ is constantly $(\alpha',\vec{\gamma}')$ on some stationary $S\subseteq S_n^{n+1}$ --- but this implies that $\psi_{\vec{\gamma}'}(\alpha')\notin\bigoplus_{[\omega_n]^{n+1}}\mathbb{Z}$, a contradiction, as desired.
\end{proof}
\begin{remark}
More careful argument will show that $\Phi(r_2^n)$ fails even to admit a $\big(\prod_{[\omega_n]^{n+1}}\mathbb{Z}\big)$-valued mod finite trivialization.
Observe also that Theorem \ref{thm:nontriviality_r_2^n} could, like its base case, equally well have been framed and argued for the $r^n_2$ variant taking values in $\bigoplus_{[\omega_n]^{n+1}}\mathbb{Z}/2$; this is clear since its argument so entirely flowed from its base case.
\end{remark}
The principle at work here is even plainer in our second example of a nontrivial $2$-coherent family
$$\Phi=\langle\varphi_{\alpha\beta}:\alpha\to\mathbb{Z}^{(\omega_2)}\mid\alpha<\beta<\omega_2\rangle,$$
the construction of which will reconnect us with the discussion of Section \ref{subsect:alternative}.
Much as for the nontrivial $1$-coherent family described therein, choices of minimal ordertype clubs $C_\gamma\subseteq\gamma$ for each limit $\gamma$ in $\omega_2$ will be instrumental in our construction. Here, though,  choices of minimal ordertype clubs $C_{\beta\gamma}\subseteq\beta\cap C_\gamma$ for each $\beta\in\mathrm{acc}(C_\gamma)$ will be needed as well; in aggregate, in other words, our construction will invoke some choice of order-$2$ ordertype-minimal $C$-sequence $\mathcal{C}$ on $\omega_2$.
Below, the terms \emph{$2$-coherent} and \emph{nontrivial} may be read as referencing either of our main moduli, so long as this is done consistently.

Again much as in Section \ref{subsect:alternative}, we will build up our $\Phi$ in stages $\gamma<\omega_2$; the premise at each of them is that the portion $\Phi\restriction\gamma=\langle\varphi_{\alpha\beta}\mid\alpha<\beta<\gamma\rangle$ has been constructed.
We define a $2$-coherent extension $\Phi\restriction\gamma+1$ of $\Phi\restriction\gamma$ by first defining suitable $\varphi_{\beta\gamma}$ for $\beta\in C_\gamma$, then extending these definitions to all $\beta\in\gamma$; by taking one extra step at stages $\gamma\in\mathrm{Cof}(\omega_1)$ we will ensure that $\Phi$ is nontrivial as well.
The indexing ordinals $\gamma$, after all, satisfy one of three properties:
\begin{enumerate}
\item $\gamma=\beta+1$ for some $\beta$. In this case, let
\[
\varphi_{\alpha\gamma}=
    \begin{cases}
      0 & \text{if } \alpha=\beta,\\
        \varphi_{\alpha\beta} & \text{if } \alpha<\beta.
    \end{cases}
\]
\item $\langle\gamma_i\mid i\in\omega\rangle$ is an increasing enumeration of $C_\gamma$. In this case, let $\gamma_{-1}=0$ and define the functions $\langle\varphi_{\gamma_i\gamma}\mid i\in\omega\rangle$ by induction on $i$ as follows: for any $\xi<\gamma_i$ let
\[
\varphi_{\gamma_i\gamma}(\xi)=
    \begin{cases}
      \varphi_{\gamma_{i-1}\gamma}(\xi)-\varphi_{\gamma_{i-1}\gamma_i}(\xi) & \text{if } \xi<\gamma_{i-1},\\
        0 & \text{if } \gamma_{i-1}\leq\xi<\gamma_i.
    \end{cases}
\]
These assignments $2$-cohere with those of previous stages by construction; similarly for the natural extension of these assignments to all other $\varphi_{\beta\gamma}$ given by 
$$\varphi_{\beta\gamma}(\xi)=\varphi_{\beta\gamma_{i(\beta)}}(\xi)+\varphi_{\gamma_{i(\beta)}\gamma}(\xi).$$
\item $\langle\gamma_i\mid i\in\omega_1\rangle$ is an increasing enumeration of $C_\gamma$.
Exactly as above, definitions for $\langle\varphi_{\beta\gamma}\mid\beta\in\gamma\backslash C_\gamma\rangle$ will canonically derive from definitions for $\langle\varphi_{\beta\gamma}\mid \beta\in C_\gamma\rangle$, hence we may focus our attention on the latter. These, in turn, we define in two steps: first, we define $\langle\varphi'_{\beta\gamma}\mid \beta\in C_\gamma\rangle$ by induction on the index of $\beta=\gamma_j$ to 2-cohere with the previous stages' functions; we then modify these assignments to $\langle\varphi_{\beta\gamma}\mid \beta\in C_\gamma\rangle$ in a manner which, cumulatively, will imply $\Phi$ nontrivial.

Again let $\gamma_{-1}=0$ and for successor $j$ (including $0$) let
\[
\varphi'_{\gamma_j\gamma}(\xi)=
    \begin{cases}
      \varphi'_{\gamma_{j-1}\gamma}(\xi)-\varphi_{\gamma_{j-1}\gamma_j}(\xi) & \text{if } \xi<\gamma_{j-1},\\
        0 & \text{if } \gamma_{j-1}\leq\xi<\gamma_j.
    \end{cases}
\]
For limit $j$, fix increasing enumeration $\langle\beta_i\mid i\in\omega\rangle$ of $C_{\gamma_j\gamma}$ and let
$$\varphi'_{\gamma_j\gamma}(\xi)=\varphi'_{\beta_{i(\xi)}\gamma}(\xi)-\varphi_{\beta_{i(\xi)}\gamma_j}(\xi)$$
for any $\xi<\gamma_j$.
Observe that our procedure will imply that $\mathrm{supp}(\varphi'_{\gamma_j\gamma})\subseteq\gamma$ for all $j\in\omega_1$.

To conclude the construction, fix, for example via the mechanism of Section \ref{subsect:internal_walks} or more directly via the sequence $\langle C_{\beta\gamma}\mid\beta\in C_\gamma\rangle$, a nontrivial coherent family $\Upsilon=\langle\upsilon_\beta:\beta\to\mathbb{Z}\mid\beta\in C_\gamma\rangle$, and let
$$\varphi_{\beta\gamma}(\xi)=\varphi'_{\beta\gamma}(\xi)+\upsilon_\beta(\xi)\cdot\lfloor\gamma\rfloor$$
for each $\xi$ less than $\beta\in C_\gamma$. This, together with the aforementioned extension to all other $\beta\in\gamma$, completes the cofinality-$\omega_1$ step (and hence the construction); observe that $2$-coherence is conserved.
\end{enumerate}
Suppose now for contradiction that
$$\Psi=\langle\psi_\beta:\beta\to\mathbb{Z}^{(\omega_2)}\mid\beta\in\omega_2\rangle$$
trivializes $\Phi$, and let
$$f(\gamma)=\sup_{(\alpha,\beta)\in [\gamma]^2}\mathrm{supp}(\psi_\beta(\alpha))$$
for all $\gamma\in\omega_2$.
Since $f$ is continuous and increasing with unbounded range, $C_f:=\{\gamma\mid f(\gamma)=\gamma\}$ is club in $\omega_2$; for any $\gamma\in S^2_1\cap C_f$, though, this implies that $$\langle\pi^\gamma\circ\varphi_{\beta\gamma}:\beta\to\mathbb{Z}\mid\beta\in\gamma\rangle$$
is trivial, where $\pi^\gamma:\mathbb{Z}^{(\omega_2)}\to\mathbb{Z}$ denotes the projection onto the $\gamma^{\mathrm{th}}$ coordinate. This is the contradiction desired.

The above is about as elementary a recursive $\mathsf{ZFC}$ construction of a nontrivial $2$-coherent family as may be imagined, and its generalization to higher $n$ is essentially straightforward.
We have recorded it to underscore two points.

For the first, let us pause to compute an arbitrary $\varphi_{\beta\delta}(\xi)$ from the construction above. Perhaps $\delta=\gamma+1$, for example, so that $\varphi_{\beta\delta}(\xi)=\varphi_{\beta\gamma}(\xi)$. Perhaps $\beta$ is then a successor element of $C_\gamma$, so that, letting $\beta'=\mathrm{max}\,C_\gamma\cap\beta$, we have that $\varphi_{\beta\delta}(\xi)=\varphi_{\beta'\gamma}(\xi)-\varphi_{\beta'\beta}(\xi)$. Or perhaps $\beta\notin C_\gamma$ instead, so that if $\gamma'=\mathrm{min}\,C_\gamma\backslash\beta$ then $\varphi_{\beta\delta}(\xi)=\varphi_{\gamma'\gamma}(\xi)+\varphi_{\beta\gamma'}(\xi)$. And so on: as the reader will already have perceived, the working out of $\varphi_{\beta\delta}(\xi)$ follows exactly the pattern of that of $\mathrm{Tr}_n(+,\xi,\beta,\delta)$, even at the level of signs; we thus arrive as in the $n=1$ case to the perspective that \emph{higher walks are how the most fundamental recursive constructions of higher nontrivial coherent families unwind.}\footnote{Of course, minor and less uniform variations on the above construction exist; their unwindings give rise to minor but possibly interesting variations on our proposed higher walks' forms.}

The second point is that the two nontriviality arguments above (which are effectively the same argument) each need a size-$\aleph_n$ codomain to work.
This is as much an empirical statement as a mathematical one: no one so far has succeeded \emph{in \textsf{ZFC} alone} in replicating these effects with smaller codomains; $\mathbb{Z}$-valued nontrivial $n$-coherent families do, on the other hand, exist in every $\mathsf{ZFC}$ model so far assayed for them, a point we return to in our conclusion. What this observation most immediately implies for our purposes is that a smaller-codomain $n$-coherent family may only be nontrivial by way of some other principle. Let us survey, briefly, some possibilities for $\rho_2^n$.
\subsection{Prospects for $\rho^n_2$}
\label{subsect:rho^n_2}
For simplicity (and since our discussion will be so essentially heuristic in nature), we will focus on the representative case of $\rho^2_2$.
As a sanity check, let us begin with the observation that $\rho_2^2$ is unbounded in both the positive and negative directions; these points are sufficiently simple and instructive that we record them as an exercise.
\begin{exercise}
\label{ex:rho2unbd}
Show that with respect to any ordertype-minimal $C$-sequence $\mathcal{C}$ on $\varepsilon\geq\omega$,
$$\rho^2_2(\alpha,\beta,\gamma)=1-\rho_2(\beta,\gamma)$$
for all $\alpha\leq\beta\leq\gamma<\omega$. Note also that for any infinite cardinal $\kappa<\varepsilon$, there is a canonical choice for $C_\kappa$, namely $\kappa$ itself, and, in turn, for each $C_{\beta\kappa}$, namely $C_\beta$. Show that under such choices,
$$\rho^2_2(\alpha,\beta,\kappa)=\rho_2(\alpha,\beta)$$
for all $\alpha\leq\beta<\kappa$, and conclude that under these assumptions, the range of $\rho^2_2$ on the domain $(\omega+1)^{[3]}$ is precisely $\mathbb{Z}$.\footnote{Observe further that $\rho_2$'s unboundedness carries implications, as here and in Theorem \ref{thm:rho2} below, for the unboundedness of $\rho^2_{2,\mathrm{t}}$, in the sense that the unboundedness of the postcomposition of the latter by any enumeration (i.e., map to $\omega$) of the set of finite binary trees will generally follow.}
\end{exercise}
In particular, under the above assumptions on $\kappa=\omega_1$,
\begin{align}
\label{eq:rho22_on_omega_1}
\langle\rho^2_2(\,\cdot\,,\beta,\omega_1):\beta\to\mathbb{Z}\mid\beta\in\omega_1\rangle=\langle -\rho_2(\,\cdot\,,\beta):\beta\to\mathbb{Z}\mid\beta\in\omega_1\rangle
\end{align}
is a nontrivial coherent family (mod locally constant); yet more particularly, by the $2$-coherence of $\rho^2_2$, the function $\rho^2_2(\,\cdot\,,\alpha,\beta)$
is locally constant for any countable ordinals $\alpha\leq\beta$.
This property will not, in general, persist for larger $\alpha\leq\beta$, but the above observations do suggest a useful view of $\rho^2_2$ under the assumption that $\mathcal{C}$ is an ordertype-minimal $C$-sequence on $\omega_2$, a view holding even more forcefully if $\mathcal{C}$ is also a compounding of a classical $C$-sequence.
Namely, for each $\gamma\in S^2_1$,
\begin{align*}
\langle\rho^2_2(\,\cdot\,,\beta,\gamma)\big|_{\beta\cap C_\gamma}\mid\beta\in C_\gamma\rangle
\end{align*}
is a \emph{distorted image} of a nontrivial coherent family like (\ref{eq:rho22_on_omega_1}), with the distortions stemming from the inputs $(-,\xi,\beta_{\sigma^\frown 1},\beta_\sigma)$ $(\sigma\in\rho^2_{2,\mathrm{t}}(\xi,\beta,\gamma)\cap 1^{<\omega})$ and $(-,\xi,\beta_{\varnothing},\beta)$, which in the $\gamma=\omega_1$ case would have been terminal, since in that case $\mathrm{min}\,C_{\beta_\sigma\gamma}\backslash\xi=\mathrm{min}\,C_{\beta_\sigma}\backslash\xi=\beta_{\sigma^\frown 1}$ and $\mathrm{min}\,C_{\beta\gamma}\backslash\xi=\mathrm{min}\,C_{\beta}\backslash\xi=\beta_{\varepsilon}$. More colloquially, what these distortions reflect, and what $\rho^2_2$ may be regarded as at some level recording, is the differences between the external and $C_\gamma$-internal steps from such $\beta_\sigma$ down to $\xi$, and the potential magnitude of such differences may be thought of as growing larger as $\gamma$ moves farther, in $S^2_1$, away from $\omega_1$. The much more to be said in these directions would move us beyond the terrain of an introduction; suffice it to say that the non-$1$-triviality of $\rho_2[\gamma]$ and its proximity to $\rho^2_2$ along $C_\gamma$ for each $\gamma\in S^2_1$ are plausibly significant factors in the non-$2$-triviality of $\rho^2_2$.

Here, then, is a good place to recall the classical argument for the nontriviality, mod locally constant, of (\ref{eq:rho22_on_omega_1}).
In fact the original argument is of a stronger statement, namely that no $\psi:\omega_1\to\mathbb{Z}$ differs from each $\rho_2(\,\cdot\,,\beta)$ $(\beta\in\omega_1)$ by a \emph{bounded} function: one supposes for contradiction that there exist such a $\psi$ and a $k:\omega_1\to\mathbb{N}$ such that
\begin{align}
|\psi(\alpha)-\rho_2(\alpha,\beta)|\leq k(\beta)
\end{align}
for all $\alpha<\beta<\omega_1$. By the pigeonhole principle, there then exist uncountable $A,B\subseteq\omega_1$ such that $\psi$ and $k$ are constant on $A$ and $B$ respectively. By the following theorem (a weak instance of \cite[Lem.\ 2.3.4]{todwalks}), $\rho_2$ is strongly unbounded on any such $A\times B$, and this supplies the desired contradiction.
\begin{theorem}
\label{thm:rho2}
For any $\{(\alpha_i,\beta_i)\,|\,i\in\omega_1\}\subset [\omega_1]^2$ satisfying $\beta_i<\alpha_j$ for all $i<j$ in $\omega_1$ and $\ell\in\mathbb{N}$ there exists a cofinal $E\subseteq\omega_1$ such that $\rho_2(\beta_i,\gamma_j)>\ell$ for any $i<j$ in $E$.
\end{theorem}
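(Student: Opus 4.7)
My plan is to adapt Todorcevic's classical proof (\cite[Lem.~2.3.4]{todwalks}) by induction on $\ell$. The base case $\ell=0$ is immediate from the hypothesis $\beta_i<\alpha_j$ for $i<j$, which gives $\rho_2(\beta_i,\alpha_j)\ge 1$, so that $E=\omega_1$ works.

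For the inductive step, I would construct $E=\{j_k:k<\omega_1\}$ by transfinite recursion: having chosen $\{j_m:m<k\}$ for some $k<\omega_1$, pick $j_k$ above their supremum with $\rho_2(\beta_{j_m},\alpha_{j_k})>\ell$ for all $m<k$. If the construction fails at some stage $\lambda<\omega_1$, then for every candidate $j$ above $\sup_{m<\lambda}j_m$ there exists $m(j)<\lambda$ with $\rho_2(\beta_{j_{m(j)}},\alpha_j)\le\ell$; pigeonhole over the countable set $\lambda$ produces a fixed $m^*<\lambda$ and an uncountable $J\subseteq\omega_1$ such that $\rho_2(\delta,\alpha_j)\le\ell$ for all $j\in J$, where $\delta:=\beta_{j_{m^*}}$.

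From here, the strategy is iterated pressing-down on the walks $\mathrm{Tr}(\delta,\alpha_j)$, $j\in J$. After thinning $J$ to a stationary subset $S$, the successive coordinates $\gamma_1^{(\alpha_j)}:=\min(C_{\alpha_j}\setminus\delta)<\alpha_j$ are regressive; applying Fodor stabilizes them (together with subsequent walk-coordinates, which once $\gamma_1$ is fixed are determined) across an uncountable $S'\subseteq S$, concentrating all walks along a common tail from some $\gamma_1^*$ down to $\delta$. Parallel pressing-down on the lower trace values $\max\mathrm{L}(\delta,\alpha_j)$ --- each bounded strictly below $\delta$ by the ordertype-minimality of $\mathcal{C}$ and the classical $n=1$ case of Lemma~\ref{lem:max_L<limits} --- produces a uniform $\eta^*<\delta$ and allows one to invoke the end-extension principle of Section~\ref{subsubsect:omega_1}.

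The main obstacle is to close the argument: simply stabilizing the walks' tails does not immediately force $\{\alpha_j:j\in S'\}$ to be countable, since configurations admitting $\gamma_1^*\in C_{\alpha_j}$ with $\min(C_{\alpha_j}\setminus\delta)=\gamma_1^*$ can in principle be uncountable. The technical heart of Todorcevic's original argument overcomes this by further iterating pressing-down on the \emph{full lower trace} data (Subsection~\ref{subsect:further_analysis}), invoking the finite-to-one behavior of classical walks on $\omega_1$ --- a $\rho_1$-flavored property --- to extract the final contradiction. My proposal is to carry out this last iterated step in full, following \cite[\S 2.3]{todwalks}.
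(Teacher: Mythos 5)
First, note that the paper does not actually prove this statement: it is invoked as ``a weak instance of \cite[Lem.\ 2.3.4]{todwalks}'' and the proof is delegated entirely to Todorcevic. So the relevant comparison is between your sketch and that classical argument, and there your proposal has a genuine structural gap, one you partly acknowledge but whose severity you underestimate. The greedy recursion cannot be salvaged by deriving a contradiction from its failure, because the configuration you extract --- a single $\delta=\beta_{j_{m^*}}$ and an uncountable $J$ with $\rho_2(\delta,\alpha_j)\leq\ell$ for all $j\in J$ --- is simply not contradictory. Already for $\ell=1$, the set $\{\gamma<\omega_1\mid\rho_2(\delta,\gamma)=1\}=\{\gamma\mid\delta\in C_\gamma\}$ can be uncountable for an ordertype-minimal $C$-sequence (nothing prevents a fixed $\delta$ from lying on uncountably many ladders), and your own pressing-down analysis merely reconstructs such a configuration (uncountably many $\alpha_j$ with $\min(C_{\alpha_j}\setminus\delta)=\gamma_1^*$ and a common tail, hence constant $\rho_2(\delta,\alpha_j)$) without refuting it. The closing appeal to ``finite-to-one behavior'' conflates $\rho_1$ with $\rho_2$: the $\rho_2$ fiber maps are unbounded but emphatically not finite-to-one, and no finite-to-one property of $\rho_1$ bounds the cardinality of $\{\gamma\mid\rho_2(\delta,\gamma)\leq\ell\}$. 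Note also that your ``induction on $\ell$'' never actually invokes its inductive hypothesis, which is a sign that the architecture is wrong.

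The mechanism that does work, and that drives \cite[Lem.\ 2.3.4]{todwalks}, is the additivity identity $\rho_2(\alpha,\beta)=\rho_2(\delta,\beta)+\rho_2(\alpha,\delta)$ valid for all $\alpha\in(\max\mathrm{L}(\delta,\beta),\delta)$ --- exactly the end-extension phenomenon of Section \ref{subsubsect:omega_1} --- combined with the separation hypothesis $\beta_i<\alpha_j$. One fixes a suitable $\delta$ (via a countable elementary submodel or a club of closure points of the family), notes that $\rho_2(\delta,\beta)\geq 1$ for every $\beta>\delta$, and so reduces $\rho_2(\beta_i,\alpha_j)>\ell$ to $\rho_2(\beta_i,\delta)>\ell-1$ for the pairs whose lower coordinates lie in the window $(\max\mathrm{L}(\delta,\alpha_j),\delta)$; it is here, below the reflecting $\delta$, that the inductive hypothesis (reflected by elementarity to uncountably many such $\delta$) is applied. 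If you want a complete proof rather than a citation, that is the step to carry out; the greedy-recursion-plus-Fodor scaffolding should be discarded.
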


What would such an argument look like for $\rho_2^2$? One would suppose for contradiction that there exist a $\Psi=\{\psi_{\alpha}:\alpha\to \mathbb{Z}\mid \alpha\in \omega_2\}$ and $k:[\omega_2]^2\to\mathbb{N}$ such that 
\begin{align*}
|\psi_\gamma(\alpha)-\psi_\beta(\alpha)-\rho^2_2(\alpha,\beta,\gamma)|\leq k(\beta,\gamma)
\end{align*}
for all $\alpha<\beta<\gamma<\omega_2$. One would again then hope to
\begin{enumerate}
\item bound the function $(\alpha,\beta,\gamma)\mapsto |\psi_\gamma(\alpha)-\psi_\beta(\alpha)|+k(\beta,\gamma)$ on some large set $X$;
\item show that $\rho_2^2$ exhibits strongly unbounded behavior on any such $X$.
\end{enumerate}
Hopes for something along the lines of item 2, perhaps a $\rho^2_2$-analogue of Theorem \ref{thm:rho2} for families $\{(\alpha_i,\beta_i,\gamma_i)\mid i\in\omega_1\}$, do not seem unreasonable;\footnote{See \cite[Def.\ 1.2]{Knaster} for the general $2$-dimensional template for this sort of behavior, as well as \cite[\S 9]{todwalks}.} that the theorem's classical argument applies the $n=1$ instances of Lemmas \ref{lem:max_L<limits} and \ref{lem:end-extension}, for example, is promising.
Interestingly, in such a scenario, $\rho^2_2$ would already exhibit strong unboundedness on the domain $[\omega_1]^3$; we should underscore, though, that $\langle\rho^2_2(\,\cdot\,,\beta,\gamma)\mid (\beta,\gamma)\in [\delta]^2\rangle$ \emph{is} trivial (as any $2$-coherent family is) for any $\delta<\omega_2$, so that the distinction of $\delta=\omega_2$ in this case would lie entirely in how it induces item 1.
That item replaces the pigeonhole principle of the $n=1$ case with, loosely speaking, a Ramsey-type principle; the latter being harder to come by in $\mathsf{ZFC}$, it is natural, at least at a first pass, to pursue these questions under any additional additional assumptions that seem useful: the nontrivial coherence of $\rho^2_2$ under any of them would already be quite interesting.
Let us note that strong variants of Theorem \ref{thm:rho2} do extend to higher cardinals $\kappa$ under mild nontriviality conditions on the underlying $C$-sequence (see \cite[Thm.\ 6.3.2]{todwalks}); the utility of $\square(\kappa)$ in this context is in maintaining the coherence of $\rho_2$ alongside them.
Nontriviality conditions on higher $C$-sequences carrying similar implications for $\rho_2^n$, perhaps to be paired, in higher $\square$-like combinations, with the coherence conditions of Theorem \ref{thm:n-coherence}'s item 3, would of course be of enormous interest as well.

However, since this all remains, at present, speculative, one might reasonably worry that $\rho^2_2$ loses too much of the data of $\mathrm{Tr}_2$ to retain its nontriviality (which we may think of Theorem \ref{thm:nontriviality_r_2^n} as registering).
Similarly for any of the other small-codomain higher rho functions (to which the above discussion should be read as also broadly applying); here the maximal worry is that none of those \emph{of countable codomain} is nontrivial on $\omega_2$ under suitable $\mathsf{ZFC}$ assumptions.
This, too, is a scenario.
As noted in our conclusion, however, it seems at least as peculiar and combinatorially intricate a scenario as any other which we have entertained above.
Nor would it absolve us --- given both the known extent of the condition $\mathrm{H}^2(\omega_2;\mathbb{Z})\neq 0$ and the combinatorial richness of higher walks --- of what are the driving questions in any of these scenarios, namely the interrelationships between these phenomena and the existence of higher-dimensional combinatorial principles distinctive to $\omega_2$, or to $\omega_n$ more generally.
We touch briefly on related approaches to the latter in the following section.
\section{Higher-dimensional linear orders and colorings}
\label{sect:further}
In this section, we consider higher variants of two main areas of the classical theory only partially subsumed by the theme of nontrivial coherence: \emph{strong colorings}, and \emph{uncountable linear orders}.
\subsection{Strong colorings}
\label{subsect:strongcolorings}
The subject of strong colorings is too large to either entirely neglect or seriously explore here; we therefore confine our remarks to the portion of it connecting to themes already raised.
Most conspicuous among these is \emph{nontriviality}: intuitively, both strong colorings and nontrivial families of functions resist simplification, and their means of doing so can often overlap.
Among the first and most striking applications of the walks machinery, for example, was the provision of witnesses to the negative partition relation
\begin{align}
\label{eq:rainbow1}
\omega_1\not\to[\omega_1]_\omega^2,
\end{align}
to be read as \emph{there exists a function $f:[\omega_1]^2\to\omega$ such that for any uncountable $X\subset\omega_1$, the $f$-image of $[X]^2$ is all of $\omega$}.
Note that $\rho_2$, by Theorem \ref{thm:rho2}, already provides us with a function whose restriction to any such $[X]^2$ is \emph{unbounded} in $\omega$; the stronger property (\ref{eq:rainbow1}) is witnessed, on the other hand, by functions under no obligation to \emph{cohere}.
These witnesses more precisely derive from further functions which, without quite being rho functions, themselves derive from classical walks. These are the \emph{oscillation} function $\mathrm{osc}(\,\cdot\,,\,\cdot\,)$ and \emph{square bracket operation} $[\,\cdot\,\cdot\,]$ of \cite[\S 2.1 and \S 8]{todwalks} and \cite[\S 5 and \S 8]{todwalks}, respectively, and it is natural to ponder their higher-dimensional analogues as well. The former is probably more accurately described as a family of functions (see also \cite{LSpace}), and on this front we content ourselves, for now, with observing both that \emph{depth} is, as noted, an oscillation function and that the higher walks landscape is, by way of signs, branchings, repetitions, and so on, even richer in oscillatory phenomena than the classical one, and that these very much do call for further study.

The latter function points us to some subtler issues.
Recall first of all that $[\,\cdot\,\cdot\,]$ \emph{does} extend in \textsf{ZFC} to higher cardinals $\kappa=\omega_n$ (and beyond; see \cite[Cor.\ 8.2.14]{todwalks}), with the consequence that $\kappa\not\to [\kappa]^2_\kappa$. Observe more concretely that extending the square bracket's most distinctive feature --- \emph{for any $X\in [\kappa]^\kappa$, the $[\,\cdot\,\cdot\,]$-image of $[X]^2$ contains a club subset of $\kappa$} --- to dimensions $n>2$ is a trivial exercise: just continue to apply $[\,\cdot\,\cdot\,]$ to the first two coordinates of any $n$-tuple. What remains intriguing as $\kappa$ rises above $\omega_1$, though, is the behavior of colorings on smaller subsets of $\kappa$.
Most notably, Todorcevic applied combinations of the above functions in \cite{cubes} to show that
\begin{align}
\label{eq:rainbow2}
\omega_2\not\to[\omega_1]_\omega^3
\end{align}
(see also \cite{FeldmanRinot} for recent refinements and applications of this relation). This is an optimal $\mathsf{ZFC}$ result in the sense, for example, that the superscript $3$ cannot be reduced within it, since under the continuum hypothesis $\omega_2\to(\omega_1)^2_\omega$, by Erd\"{o}s-Rado \cite{ErdosRado}.
Nor can the number of colors be raised within the $\mathsf{ZFC}$ framework since, as Todorcevic has shown, $\omega_2\not\to [\omega_1]^3_{\omega_1}$ is equivalent to the negation of Chang's Conjecture.
The status of the latter, in fact, bears so intimately on combinatorics at this level as to form an unavoidable consideration in their analysis (see \cite[\S 9]{todwalks}).
Nevertheless, for a program of study of higher-dimensional $\mathrm{ZFC}$ combinatorics on the cardinals $\omega_n>\omega_1$, (\ref{eq:rainbow2}) is a reassuring and even inspiring result.
More particularly, its parameters so nicely align with those of higher walks as to underscore the question of their relation. Do higher walks furnish more uniform witnesses to (\ref{eq:rainbow2}) or its higher analogues $\omega_n\not\to[\omega_1]^{n+1}_\omega$? We record this question in our conclusion.
\subsection{Countryman objects and higher-dimensional linear orders}
Central to the classical theory of walks, as we have seen, are the \emph{Countryman lines} deriving from the natural branch-orderings of the trees $T(\rho_i)$.
The question of their analogues for higher walks faces us most immediately with the problem of what a \emph{higher-dimensional linear order} (or, less oxymoronically, \emph{higher-dimensional total order}) might be.\footnote{For Cantor's own studies of ``$n$-dimensional order types'' see \cite[pp.\ 157-158]{Dauben}.}
Facile answers like \emph{a plane} or \emph{a product of linear orders} aren't much help; in the analogy we're pursuing, an $n$-dimensional such structure
\begin{enumerate}
\item should derive from relations among the fiber maps $\rho^n_i(\,\cdot\,,\vec{\gamma})$ $(\vec{\gamma}\in [\omega_n]^n)$ for rho functions $\rho_i^n$, and
\item should admit some finite product which decomposes into less than $\omega_n$ such structures.
\end{enumerate}
Here we describe a framework which rather neatly fulfills item 1 and warrants further study in its own right; item 2 then ranks high on a shortlist of most immediate and intriguing next questions. More briefly, we will show that higher walks \emph{do} induce what we propose to call \emph{higher-dimensional linear orders}, and leave the questions of their classification and nature for later.

What, after all, is a \emph{linear order}? It is a directed complete graph (i.e., a complete graph whose edges are all arrows, or, more succinctly, a \emph{tournament}) whose restriction to any 3 vertices never takes the following form:

\begin{figure}[h]
\centering
\begin{tikzpicture}
[MyPersp]
	\coordinate (A) at (4,0,1);
	\coordinate (B) at (5,0,2.73);
	\coordinate (C) at (6,0,1);
	\draw[thick, -Latex] (A)--(B);
	\draw[thick, -Latex] (B)--(C);
	\draw[thick, -Latex] (C)--(A);
\end{tikzpicture}
\caption{A cycle.}
\label{fig:cyc3}
\end{figure}

Alternatively, a linear order is a directed complete graph whose restriction to any 3 vertices takes the following form:

\begin{figure}[h]
\centering
\begin{tikzpicture}
[MyPersp]
	\coordinate (A) at (4,0,1);
	\coordinate (B) at (5,0,2.73);
	\coordinate (C) at (6,0,1);
	\draw[thick, -Latex] (A)--(B);
	\draw[thick, -Latex] (B)--(C);
	\draw[thick, -Latex] (A)--(C);
\end{tikzpicture}
\caption{Transitivity.}
\label{fig:noncyc3}
\end{figure}
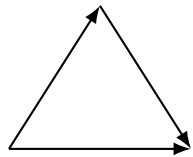

Of course, we identify such forms with their images under reflections and hence rotations: up to isomorphism of relational structures, these are the only such forms that can arise. Observe that the form of Figure \ref{fig:noncyc3} may be characterized as that induced by a numbering of the vertices, unlike that of Figure \ref{fig:cyc3}, while the property that \emph{opposite ends of arrows (heads and tails) meet at each vertex} characterizes Figure \ref{fig:cyc3}.

This readily generalizes: our proposed definition of a \emph{2-dimensional linear order} is \emph{an oriented complete 3-uniform hypergraph whose restriction to any 4 vertices is never cyclic}. Here we'll record orientatations of faces (i.e.,\ ``edges'' of $3$-uniform hypergraphs) as clockwise or counterclockwise arrows. We'll view a complete 3-uniform hypergraph on 4 vertices as from above the apex of a tetrahedron, and record the orientation of the base as an outer loop (to avoid clutter). The cycle in question then admits representation as in Figure \ref{fig:cyc4}.

\begin{figure}[h]
\centering
\begin{tikzpicture}
[MyPersp]
	\coordinate (A) at (3,0,2);
	\coordinate (B) at (7,0,2);
	\coordinate (C) at (5,0,5.46);
	\coordinate (D) at (5,0,3.15);
	\coordinate (E) at (4.85,0,5.69);
	\coordinate (F) at (5.15,0,5.69);
	\coordinate (G) at (2.8,0,1.88);
	\coordinate (H) at (7.2,0,1.88);
	\draw[thick] (A)--(B);
	\draw[thick] (B)--(C);
	\draw[thick] (A)--(C);
	\draw[thick] (A)--(D);
	\draw[thick] (B)--(D);
	\draw[thick] (C)--(D);
	\draw (5,0,2.5) node[scale=2.2] {$\circlearrowright$};
	\draw (4.4,0,3.55) node[scale=2.2] {$\circlearrowright$};
	\draw (5.6,0,3.55) node[scale=2.2] {$\circlearrowright$};
	\draw[thin, ->] (E) to[out=-175,in=120, distance=1.1cm] (G) to[out=-60,in=-120, distance=1.1cm] (H) to[out=60,in=-5, distance=1.1cm] (F);
\end{tikzpicture}
\caption{A degree-2 cycle: $\textbf{H}_4$, in the terminology of \cite{Cherlin_2021}.}
\label{fig:cyc4}
\end{figure}
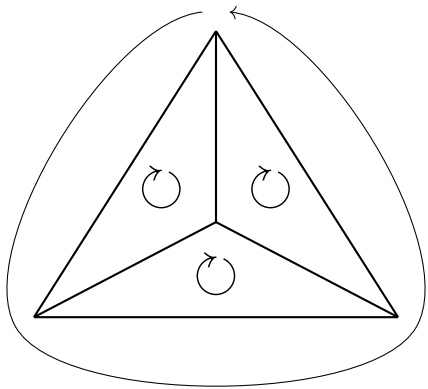

Observe in this figure that any two faces meet in an edge and that their orientations along that edge are opposite, in analogy with the characterization of Figure \ref{fig:cyc3} above.
Figures \ref{fig:cyc3} and \ref{fig:cyc4} are also exactly the \emph{homogeneous} oriented complete 2- and 3-uniform hypergraphs on 3 and 4 vertices, respectively, meaning that any isomorphism between their finite substructures (precise definitions are given below) extends to an automorphism of the whole.
Figure \ref{fig:cyc4} is also the only orientation of the complete 3-uniform hypergraph on 4 vertices which is not induced by an orientation of the tetrahedron's 6 edges. (These orientations induce a clockwise or counterclockwise orientation of a face simply by ``majority rule''; that induced in Figure \ref{fig:noncyc3}, for example, is clockwise.) Hence much as in the case of linear orders, we could equivalently define a \emph{2-dimensional linear order} as \emph{an oriented complete 3-uniform hypergraph whose restriction to any 4 vertices is induced by a orientation of its edges}.  For brevity, we shorten the term to \emph{2-order} below.

These, as it happens, form a fascinating subject; the preceding discussion is essentially that of the 2019 note \cite{HighWalksNote}, which capped several further observations with the remark: \emph{I am unaware of any source that even begins to study these objects [\dots] and find it deeply surprising that there may in fact be none.} I was accordingly delighted and relieved by the arXiv appearance thereafter of Cherlin-Hubi\v{c}ka-Kone\v{c}n\'{y}-Ne\v{s}et\v{r}il's ``Ramsey expansions of $3$-hypertournaments'' \cite{Cherlin_2021}, attesting their roughly contemporaneous study of, most centrally, what we've termed $2$-orders just above.
Those notes approach the subject from the perspective of structural Ramsey theory and are recommended to the reader; to quote from their most relevant passages:
\begin{quote}
In this abstract, an \emph{$n$-hypertournament} is a structure \textbf{A} in a language with a single $n$-ary relation $R$ such that for every set $S\subseteq A$ with $|S|=n$ it holds that the automorphism group of the substructure induced on $S$ by \textbf{A} is precisely $\mathrm{Alt}(S)$, the alternating group on $S$. This in particular means that exactly half of $n$-tuples of elements of $S$ with no repeated occurrences are in $R^{\textbf{A}}$. For $n=2$ we get standard tournaments, for $n=3$ this corresponds to picking one of the two possible cyclic orientations on every triple of vertices.
\end{quote}
In particular, \emph{3-hypertournament} is a handier name for what we've been calling an \emph{oriented complete 3-uniform hypergraph}, and we will employ it below. There are, up to isomorphism, exactly three $3$-hypertournaments on $4$ vertices, namely the cycle of Figure \ref{fig:cyc4}, which Cherlin et al.\ term $\textbf{H}_4$, and the two others of  Figure \ref{fig:ncyc4}, which they term $\textbf{C}_4$ and $\textbf{O}_4$, respectively.
We note in passing that the class of linear orders has a natural image within the class of $3$-hypertournaments whose restrictions to four vertices are all of type $\textbf{C}_4$, for the reason that any ordering of four vertices induces $\textbf{C}_4$ on the $3$-uniform hypergraph which they span.
Note also that, just as the edges between the pairs among any three points in general position within an oriented line will inherit orientations as in Figure \ref{fig:noncyc3} (and never as in Figure \ref{fig:cyc3}), the faces spanned by the triples among any four points in general position within an oriented plane will inherit orientations as in one of the $3$-hypertournaments in Figure \ref{fig:ncyc4} (and never as in Figure \ref{fig:cyc4}).
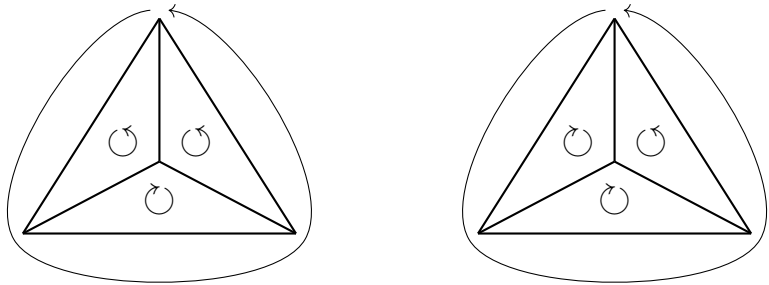
\begin{figure}[h]
\centering
\begin{tikzpicture}
[MyPersp]
	\coordinate (A) at (1.5,0,2);
	\coordinate (B) at (4.5,0,2);
	\coordinate (C) at (6.5,0,2);
	\coordinate (D) at (9.5,0,2);
	\coordinate (E) at (3,0,4.6);
	\coordinate (F) at (8,0,4.6);
	\coordinate (G) at (3,0,2.87);
	\coordinate (H) at (8,0,2.87);
	\coordinate (E0) at (2.9,0,4.7);
	\coordinate (A') at (1.4,0,1.93);
	\coordinate (B') at (4.6,0,1.93);
	\coordinate (E1) at (3.1,0,4.7);
	\coordinate (F0) at (7.9,0,4.7);
	\coordinate (C') at (6.4,0,1.93);
	\coordinate (D') at (9.6,0,1.93);
	\coordinate (F1) at (8.1,0,4.7);

	\draw[thick] (A)--(B);
	\draw[thick] (C)--(D);
	\draw[thick] (A)--(E);
	\draw[thick] (E)--(B);
	\draw[thick] (C)--(F);
	\draw[thick] (F)--(D);
	\draw[thick] (A)--(G);
	\draw[thick] (B)--(G);
	\draw[thick] (E)--(G);
	\draw[thick] (C)--(H);
	\draw[thick] (D)--(H);
	\draw[thick] (F)--(H);
	\draw (3,0,2.4) node[scale=1.6] {$\circlearrowright$};
	\draw (8,0,2.4) node[scale=1.6] {$\circlearrowright$};
	\draw (2.6,0,3.1) node[scale=1.6] {$\circlearrowleft$};
	\draw (3.4,0,3.1) node[scale=1.6] {$\circlearrowleft$};
	\draw (7.6,0,3.1) node[scale=1.6] {$\circlearrowright$};
	\draw (8.4,0,3.1) node[scale=1.6] {$\circlearrowleft$};
	\draw[thin, ->] (E0) to[out=-175,in=120, distance=.8cm] (A') to[out=-60,in=-120, distance=.8cm] (B') to[out=60,in=-5, distance=.8cm] (E1);
	\draw[thin, ->] (F0) to[out=-175,in=120, distance=.8cm] (C') to[out=-60,in=-120, distance=.8cm] (D') to[out=60,in=-5, distance=.8cm] (F1);
\end{tikzpicture}
\caption{The non-cyclic 3-hypertournaments on 4 vertices, dubbed $\textbf{C}_4$ and $\textbf{O}_4$, respectively, in \cite{Cherlin_2021}.}
\label{fig:ncyc4}
\end{figure}

The animating observation for \cite{Cherlin_2021} is that well-understood Ramsey expansions exist for all but one of the four $4$-constrained classes of finite $3$-hypertournaments forming strong amalgamation classes, but that the class for which they do not seems to pose novel challenges for structural Ramsey theory. This class, whose more ``spatial'' aspect we briefly suggested above, is precisely that of the $2$-orders, or of what Cherlin et al.\ term the \emph{$\mathbf{H}_4$-free $3$-hypertournaments}:
\begin{quote}
Note that in some sense, this generalizes the class of finite linear orders: As $\mathrm{Aut}(\textbf{H}_4)=\mathrm{Alt}(4)$, one can define $\textbf{H}_n$ to be the $(n-1)$-hypertournament on $n$ points such that $\mathrm{Aut}(\textbf{H}_n)=\mathrm{Alt}(n)$.
For $n=3$, we get that $\textbf{H}_3$ is the oriented cycle on $3$ vertices and the class of all finite linear orders contains precisely those tournaments which omit $\textbf{H}_3$.
\end{quote}
It is this that we will take as our definition of a higher-dimensional linear order: for any $n>0$, an \emph{$n$-order} is simply an $\textbf{H}_{n+2}$-free $(n+1)$-hypertournament.

Consider now the binary relation $\triangleleft_i$ on $\omega_1$ induced by a function $\rho_i:[\omega_1]^2\to\mathbb{Z}$ as follows: for any $\beta,\gamma\in\omega_1$, if
$$\Delta_i(\beta,\gamma):=\min\{\xi<\beta\cap\gamma\mid \rho_i(\xi,\gamma)-\rho_i(\xi,\beta)\neq 0\}$$
is undefined then $\beta\triangleleft_i\gamma$ if and only if $\beta<\gamma$; otherwise $\beta\triangleleft_i\gamma$ if and only if
$$\rho_i(\Delta_i(\beta,\gamma),\gamma)-\rho_i(\Delta_i(\beta,\gamma),\beta)>0.$$
That this defines a linear order follows from the fact that
\begin{align}
\label{eq:nocycle}
& \rho_i(\Delta_i(\beta,\gamma),\gamma)-\rho_i(\Delta_i(\beta,\gamma),\beta) >0 \\
\nonumber \text{and } & \rho_i(\Delta_i(\alpha,\gamma),\alpha)-\rho_i(\Delta_i(\alpha,\gamma),\gamma)  >0 \\
\nonumber \text{together imply that } & \rho_i(\Delta_i(\alpha,\beta),\alpha)-\rho_i(\Delta_i(\alpha,\beta),\beta)  >0
\end{align}
for any $\alpha,\beta,\gamma\in\omega_1$.
As we have seen, under suitable $C$-sequence assumptions, this is, for each $i\in 4$, a very special linear order indeed.

Consider next, for any $n>0$ and ordinal $\varepsilon$, the $(n+1)$-ary relation $\triangleleft_i^n$ on $\varepsilon$ induced by a function $\rho^n_i:[\varepsilon]^{n+1}\to\mathbb{Z}$ (our reasoning will apply wherever the codomain is a linearly ordered abelian group), which is defined as follows. First, ``symmetrize'' $\rho^n_i$ to a function $\hat{\rho}^n_i$ for which $$\hat{\rho}^n_i(\xi,\sigma\cdot\vec{\beta})=\mathrm{sgn}(\sigma)\cdot\rho^n_i(\xi,\vec{\beta})$$
for any $(\xi,\vec{\beta})\in [\varepsilon]^{n+1}$ and $\sigma\in\mathrm{Sym}(n)$.
Next, for any $\vec{\gamma}\in [\varepsilon]^{n+1}$, if
\begin{align}
\label{eq:Delta^n_i}\Delta^n_i(\vec{\gamma}):=\min\{\xi<\gamma_0\mid \sum_{j=0}^n(-1)^j\rho^n_i(\xi,\vec{\gamma}^j)\neq 0\}\end{align}
is undefined then declare $\triangleleft^n_i(\sigma\cdot\vec{\gamma})$ to hold for all $\sigma\in\mathrm{Alt}(n+1)$; otherwise $\triangleleft^n_i(\sigma\cdot\vec{\gamma})$ holds if and only if
$$\sum_{j=0}^n(-1)^j\hat{\rho}^n_i(\Delta^n_i(\vec{\gamma}),(\sigma\cdot\vec{\gamma})^j)>0.$$
We claim that this relation is $\mathrm{Alt}(n+1)$-invariant, and thus defines an $(n+1)$-hypertournament on $\varepsilon$.
The claim reduces to the following lemma, which ensures also that wherever $\Delta^n_i(\vec{\gamma})$ is defined, it equals $\Delta^n_i(\sigma\cdot\vec{\gamma})$ under the natural extension of equation \ref{eq:Delta^n_i}.
\begin{lemma}
\label{lem:sigmaout}
For all $(\xi,\vec{\gamma})\in [\varepsilon]^{n+2}$ and $\sigma\in\mathrm{Sym}(n+1)$,
$$\sum_{j=0}^n(-1)^j\hat{\rho}^n_i(\xi,(\sigma\cdot\vec{\gamma})^j)=\mathrm{sgn}(\sigma)\sum_{j=0}^n(-1)^j\rho^n_i(\xi,\vec{\gamma}^j).$$
\end{lemma}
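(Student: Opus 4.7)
The plan is to reduce via the standard trick to the case when $\sigma$ is an adjacent transposition, and then verify that case by a direct simplicial-boundary-type calculation. As a preliminary, I would extend the defining equivariance of $\hat{\rho}^n_i$ from nondecreasing tuples to arbitrary tuples of distinct entries: the identity $\hat{\rho}^n_i(\xi,\sigma'\cdot\vec{\eta})=\mathrm{sgn}(\sigma')\hat{\rho}^n_i(\xi,\vec{\eta})$ holds for every $n$-tuple $\vec{\eta}$ of distinct ordinals and every $\sigma'\in\mathrm{Sym}(n)$, not just for nondecreasing $\vec{\eta}$. This is immediate upon writing $\vec{\eta}$ as a permutation of its monotone rearrangement $\vec{\beta}$ and applying the definition.

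Next, fix the (nondecreasing) $\vec{\gamma}$ and set $F(\sigma):=\sum_{j=0}^n(-1)^j\hat{\rho}^n_i(\xi,(\sigma\cdot\vec{\gamma})^j)$ for $\sigma\in\mathrm{Sym}(n+1)$. Since all deletions of a nondecreasing tuple are themselves nondecreasing, $F(e)$ already equals the right-hand side of the lemma. Hence it suffices, by induction on the length of $\sigma$ as a product of adjacent transpositions $\tau_k$ (and the fact that $\mathrm{sgn}(\tau_k)=-1$), to verify $F(\tau_k\sigma)=-F(\sigma)$ for each $k<n$ and each $\sigma$.

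Writing $\vec{\delta}:=\sigma\cdot\vec{\gamma}$, this is the anticipated boundary-style calculation. The deletions $(\tau_k\cdot\vec{\delta})^k$ and $(\tau_k\cdot\vec{\delta})^{k+1}$ coincide with $\vec{\delta}^{k+1}$ and $\vec{\delta}^k$ respectively, so the combined $j\in\{k,k+1\}$ contribution in $F(\tau_k\sigma)$ is precisely the negation of the corresponding pair in $F(\sigma)$. For each $j\notin\{k,k+1\}$, the tuple $(\tau_k\cdot\vec{\delta})^j$ differs from $\vec{\delta}^j$ by swapping the two positions now occupied by the images of $\delta_k$ and $\delta_{k+1}$ (positions $k-1,k$ if $j<k$, and positions $k,k+1$ if $j>k+1$), and by the preliminary step its $\hat{\rho}^n_i$-value picks up a sign of $-1$. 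Summing over $j$ gives $F(\tau_k\sigma)=-F(\sigma)$.

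The main obstacle is the bookkeeping in the preceding paragraph: in the two cases $j<k$ and $j>k+1$ one must correctly identify which adjacent transposition in $\mathrm{Sym}(n)$ mediates between $(\tau_k\cdot\vec{\delta})^j$ and $\vec{\delta}^j$ after the reindexing induced by deletion, and one must separately handle the boundary cases $j\in\{k,k+1\}$ where the swap collapses into a simple relabeling of which face is deleted. Beyond this, no structural input about $\rho^n_i$ itself is used; the lemma is essentially a formal equivariance of the simplicial alternating sum under the action of the symmetric group.
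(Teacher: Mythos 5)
Your argument is correct, but it takes a different route from the paper's. The paper handles an arbitrary $\sigma\in\mathrm{Sym}(n+1)$ in a single chain of equalities: it identifies $(\sigma\cdot\vec{\gamma})^j$ with $\tau(\sigma,j)\cdot\vec{\gamma}^{\sigma^{-1}(j)}$, where $\tau(\sigma,j)$ is the bijection $\sigma$ induces between the coordinates other than $\sigma^{-1}(j)$ and those other than $j$, applies the defining equivariance of $\hat{\rho}^n_i$ to extract $\mathrm{sgn}(\hat{\tau}(\sigma,j))$, and then closes the computation with the sign identity $\mathrm{sgn}(\hat{\tau}(\sigma,j))=(-1)^{\sigma^{-1}(j)-j}\mathrm{sgn}(\sigma)$ together with the reindexing $j\mapsto\sigma^{-1}(j)$ of the sum. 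You instead factor $\sigma$ into adjacent transpositions and induct on word length, verifying $F(\tau_k\sigma)=-F(\sigma)$ by the usual face-deletion bookkeeping (the $j\in\{k,k+1\}$ faces exchange roles while their signs $(-1)^k,(-1)^{k+1}$ swap, and each remaining face picks up a single adjacent transposition, hence a $-1$, via the equivariance you first extend to arbitrary distinct-entry tuples). What each approach buys: the paper's is shorter on paper but concentrates all the combinatorial content into the one sign identity for $\hat{\tau}(\sigma,j)$, which itself requires checking; yours replaces that identity with the more elementary and more easily verified generator case, at the cost of the induction and the case analysis on $j$ relative to $\{k,k+1\}$. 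Your preliminary observation that the equivariance of $\hat{\rho}^n_i$ extends from nondecreasing tuples to all tuples of distinct entries is also tacitly needed in the paper's argument (since $\tau(\sigma,j)\cdot\vec{\gamma}^{\sigma^{-1}(j)}$ need not be monotone at intermediate stages), so making it explicit is a point in your favor rather than a detour.
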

\begin{proof} For any $\sigma\in\mathrm{Sym}(n+1)$ and $j\leq n$ let $\tau(\sigma,j)$ be the $\sigma$-induced bijection of the $n$ coordinates other than $\sigma^{-1}(j)$ with the $n$ coordinates other than $j$ and let $\hat{\tau}(\sigma,j)$ denote its natural image in $\mathrm{Sym}(n)$; we then have:
\begin{align*}
\sum_{j=0}^n(-1)^j\hat{\rho}^n_i(\xi,(\sigma\cdot\vec{\gamma})^j) & = \sum_{j=0}^n(-1)^j\hat{\rho}^n_i(\xi,(\tau(\sigma,j)\cdot\vec{\gamma}^{\sigma^{-1}(j)}) \\
& = \sum_{j=0}^n(-1)^{j}\mathrm{sgn}(\hat{\tau}(\sigma,j))\cdot\rho^n_i(\xi,\vec{\gamma}^{\sigma^{-1}(j)}) \\
& = \mathrm{sgn}(\sigma)\sum_{j=0}^n(-1)^{\sigma^{-1}(j)}\rho^n_i(\xi,\vec{\gamma}^{\sigma^{-1}(j)}).
\end{align*}
The last equality follows from the fact that $\mathrm{sgn}(\hat{\tau}(\sigma,j))=(-1)^{\sigma^{-1}(j)-j}\mathrm{sgn}(\sigma)$.
\end{proof}
Next, we characterize the $\textbf{H}_{n+2}$-configurations within our setting.
\begin{lemma}
\label{lem:Hnchar}
The restriction of the $(n+1)$-hypertournament $\triangleleft^n_i$ to $\vec{\gamma}\in [\varepsilon]^{n+2}$ is of $\mathrm{\mathbf{H}}_{n+2}$-type if and only if for any $\pi\in \mathrm{Sym}(n+1)\backslash\mathrm{Alt}(n+1)$ there exists an $\ell$ such that $\triangleleft^n_i(\pi^{\ell+k}\cdot\vec{\gamma}^k)$ for all $k\leq n+1$.
\end{lemma}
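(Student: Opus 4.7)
The plan is to translate both sides of the biconditional into the single combinatorial condition that the face-orientation vector $(\epsilon_0,\ldots,\epsilon_{n+1})$ is sign-alternating, where $\epsilon_k\in\{\pm 1\}$ is $+1$ or $-1$ according as $\triangleleft^n_i(\vec{\gamma}^k)$ holds or not.

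First I would unwind the right-hand side. By the $\mathrm{Alt}(n+1)$-invariance of $\triangleleft^n_i$, for any odd $\pi\in\mathrm{Sym}(n+1)$ the relation $\triangleleft^n_i(\pi^{\ell+k}\cdot\vec{\gamma}^k)$ depends only on $\mathrm{sgn}(\pi^{\ell+k})=(-1)^{\ell+k}$, and it holds exactly when $\epsilon_k=(-1)^{\ell+k}$. Hence the existential in $\ell$ becomes: $\epsilon_k=(-1)^{\ell+k}$ for all $k\leq n+1$, for some $\ell\in\{0,1\}$ --- i.e., the $(\epsilon_k)$ alternate. The outer quantifier over $\pi$ is thereby vacuous, since only $\mathrm{sgn}(\pi)$ intervenes.

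Second I would analyze the $\mathbf{H}_{n+2}$-type condition via the natural $\mathrm{Sym}(n+2)$-action on labeled hypertournaments, reading off the automorphism group from its induced action on sign vectors. The key input is the $\mathrm{Sym}(n+2)\to\mathrm{Sym}(n+1)$ analogue of the identity $\mathrm{sgn}(\hat{\tau}(\sigma,j))=(-1)^{\sigma^{-1}(j)-j}\mathrm{sgn}(\sigma)$ from the proof of Lemma \ref{lem:sigmaout}, combined with the evident relation $(\sigma\cdot\vec{\gamma})^j=\hat{\tau}(\sigma,j)\cdot\vec{\gamma}^{\sigma^{-1}(j)}$. Together with the $\mathrm{Alt}(n+1)$-invariance of $\triangleleft^n_i$, these show that $\sigma\in\mathrm{Sym}(n+2)$ sends $(\epsilon_k)$ to $(\epsilon'_j)$ with
\[
\epsilon'_j=(-1)^{\sigma^{-1}(j)-j}\,\mathrm{sgn}(\sigma)\cdot\epsilon_{\sigma^{-1}(j)}.
\]
Setting $\delta_k:=(-1)^k\epsilon_k$ collapses this to $\delta'_j=\mathrm{sgn}(\sigma)\cdot\delta_{\sigma^{-1}(j)}$; hence $\sigma$ is an automorphism iff $\delta_{\sigma(k)}=\mathrm{sgn}(\sigma)\cdot\delta_k$ for all $k$.

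Finally I would read off the biconditional. If $(\epsilon_k)$ alternates then $(\delta_k)$ is constant, the automorphism condition reduces to $\mathrm{sgn}(\sigma)=+1$, and so $\mathrm{Aut}=\mathrm{Alt}(n+2)$; the substructure is therefore isomorphic to $\mathbf{H}_{n+2}$ (the two alternating sign vectors, $(+,-,+,\ldots)$ and $(-,+,-,\ldots)$, lying in a common $\mathrm{Sym}(n+2)$-orbit). Conversely, if $(\delta_k)$ is non-constant then, using the transitivity of $\mathrm{Alt}(n+2)$ on $\{0,\ldots,n+1\}$ (valid for $n\geq 1$), some even $\sigma$ sends a $k$ to a $k'$ with $\delta_k\neq\delta_{k'}$, violating the automorphism condition and so precluding $\mathbf{H}_{n+2}$-type. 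The only real hurdle is the sign bookkeeping in the middle step, which follows exactly the pattern already carried out in the proof of Lemma \ref{lem:sigmaout}.
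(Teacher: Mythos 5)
Your proof is correct and follows essentially the same route as the paper's: both reduce ``$\mathbf{H}_{n+2}$-type'' to the condition $\mathrm{Aut}(\triangleleft^n_i\restriction\vec{\gamma})=\mathrm{Alt}(n+2)$ and compute the $\mathrm{Sym}(n+2)$-action on face orientations via the sign identity $\mathrm{sgn}(\hat{\tau}(\sigma,j))=(-1)^{\sigma^{-1}(j)-j}\mathrm{sgn}(\sigma)$ from the proof of Lemma \ref{lem:sigmaout}. Your $\epsilon_k$/$\delta_k=(-1)^k\epsilon_k$ bookkeeping is simply a cleaner packaging of the paper's concluding sentence about identifying $\vec{\gamma}^k$ with $\vec{\gamma}^{\sigma(k)}$ via powers of an odd $\pi$.
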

\begin{proof}
For any $\vec{\gamma}\in[\varepsilon]^{n+2}$, $\mathrm{Aut}(\triangleleft_i^n\restriction\vec{\gamma})$ naturally identifies with a subgroup of $\mathrm{Sym}(n+2)$; we show that $\vec{\gamma}$ is as in the lemma's conclusion if and only if $\mathrm{Aut}(\triangleleft_i^n\restriction\vec{\gamma})$ is, under this identification, exactly $\mathrm{Alt}(n+2)$.
To this end, fix a $\sigma\in\mathrm{Sym}(n+2)$. Much as in the proof of the previous lemma, for any $\sigma(k)\leq n+1$ there exists a $\tau(\sigma,\sigma(k))$ with $\hat{\tau}(\sigma,\sigma(k))\in\mathrm{Sym}(n+1)$ of sign $\mathrm{sgn}(\sigma)\cdot (-1)^{\sigma(k)-k}$ such that $$(\sigma\cdot\vec{\gamma})^{\sigma(k)}=\tau(\sigma,\sigma(k))\cdot\vec{\gamma}^k\text{;}$$thus $\sigma\in\mathrm{Aut}(\triangleleft_i^n\restriction\vec{\gamma})$ if and only if $$\tau(\sigma,\sigma(k))\cdot\big(\triangleleft^n_i\restriction\vec{\gamma}^k\big)=\triangleleft^n_i\restriction\vec{\gamma}^{\sigma(k)}$$
for all $k\leq n+1$.
This tells us that $\mathrm{sgn}(\sigma)=1$, and hence that $\sigma\in\mathrm{Alt}(n+2)$, if and only if $\triangleleft^n_i(\vec{\gamma}^k)$ converts to $\triangleleft^n_i\restriction\vec{\gamma}^{\sigma(k)}$ under the natural identification of the $n+1$ coordinates of $\vec{\gamma}^k$ and $\vec{\gamma}^{\sigma(k)}$ via the natural action of the $(\sigma(k)-k)^{\mathrm{th}}$ power of any $\pi\in\mathrm{Sym}(n+1)\backslash\mathrm{Alt}(n+1)$, and this concludes the proof.
\end{proof}
The following lemma, which generalizes the condition (\ref{eq:nocycle}) ensuring that $\triangleleft_i$ is a linear order, now ensures that $\triangleleft^n_i$ is $\textbf{H}_{n+2}$-free.
Interested readers may find it edifying to test out the $n=1$ and $n=2$ cases of both this and the preceding lemma.
\begin{lemma}
\label{lem:Hnarith}
For all $\vec{\gamma}\in [\varepsilon]^{n+2}$, if for some integer $\ell$
\begin{align}
\tag{$e_j$}
\sum_{k=0}^n(-1)^{j+k+\ell}\rho_i^n(\Delta_i^n(\vec{\gamma}^j),(\vec{\gamma}^j)^k)>0
\end{align}
for all $j\leq n$ then
\begin{align*}
\sum_{k=0}^n(-1)^{n+k+\ell}\rho_i^n(\Delta_i^n(\vec{\gamma}^{n+1}),(\vec{\gamma}^{n+1})^k)>0.
\end{align*}
In fact, the conclusion follows from the weaker assumption that $\Delta^n_i(\vec{\gamma}^j)$ is defined for at least one $j\leq n$, and that $(e_j)$ holds whenever it is.
\end{lemma}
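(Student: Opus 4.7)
The plan is to reduce the lemma to a single algebraic identity---the cocycle identity $\delta\delta=0$ applied, at each $\xi$, to $\rho_i^n(\xi,\cdot)$ viewed as an $n$-cochain on $[\varepsilon]^n$. Concretely, fix $\vec{\gamma}\in[\varepsilon]^{n+2}$ and, for $\xi<\gamma_0$, set
\[
F_j(\xi):=\sum_{k=0}^n(-1)^k\rho_i^n(\xi,(\vec{\gamma}^j)^k) \qquad (j=0,1,\dots,n+1).
\]
The simplicial identities relating the face maps at levels $n+2$, $n+1$, and $n$ give, by the standard double-counting (the same arithmetic underwriting Lemma \ref{lem:cyclic}), the cancellation
\[
\sum_{j=0}^{n+1}(-1)^j F_j(\xi)\;=\;0,
\]
which we label $(\ast)$. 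Writing $\Delta_j:=\Delta_i^n(\vec{\gamma}^j)$, the very definition of $\Delta_j$ yields $F_j(\xi)=0$ for $\xi<\Delta_j$ and $F_j(\Delta_j)\neq 0$ whenever $\Delta_j$ is defined; these two facts are the only ingredients from which the lemma will flow.

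The first substantive step is to set $\xi^*:=\min\{\Delta_j\mid j\le n+1,\,\Delta_j\text{ defined}\}$, which exists by the weak hypothesis. A brief preliminary argument---using that $\Delta_j<\gamma_0$ for $j\ge 1$ directly, and handling the $j=0$ subcase (in which $\Delta_0$ a priori only satisfies $\Delta_0<\gamma_1$) by applying $(\ast)$ at $\xi<\gamma_0$ to force some $\Delta_j<\gamma_0$ into play---shows $\xi^*<\gamma_0$, so that $(\ast)$ is applicable at $\xi^*$. Setting $J^*:=\{j:\Delta_j=\xi^*\}$, identity $(\ast)$ at $\xi^*$ collapses to
\[
\sum_{j\in J^*}(-1)^j F_j(\xi^*)\;=\;0,
\]
since every other $F_j$ vanishes at $\xi^*$ by minimality.

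The rest is sign-bookkeeping. Each hypothesis $(e_j)$ rewrites as $(-1)^{j+\ell}F_j(\Delta_j)>0$, so for every $j\in J^*\cap\{0,\dots,n\}$ the values $(-1)^j F_j(\xi^*)$ share the common sign $(-1)^{-\ell}$ and are individually nonzero; their partial sum is therefore nonzero, of sign $(-1)^{-\ell}$. The only way for the collapsed $(\ast)$ to close is then for the $j=n+1$ term to contribute---that is, for $\Delta_{n+1}=\xi^*$ (so, in particular, $\Delta_{n+1}$ must be defined)---with $(-1)^{n+1}F_{n+1}(\xi^*)$ carrying the opposite sign $(-1)^{\ell+1}$. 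Equivalently $(-1)^{n+\ell}F_{n+1}(\Delta_{n+1})>0$, which is exactly the target $\sum_{k=0}^n(-1)^{n+k+\ell}\rho_i^n(\Delta_i^n(\vec{\gamma}^{n+1}),(\vec{\gamma}^{n+1})^k)>0$. The main obstacle is the preliminary verification that $\xi^*<\gamma_0$ (i.e., the fine case analysis on which $\Delta_j$ achieves the minimum); once that is in hand, the conclusion is a direct reading of signs off the cocycle identity $(\ast)$.
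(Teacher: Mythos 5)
Your proof is correct and follows essentially the same route as the paper's: both rest on the simplicial identity $\sum_{j=0}^{n+1}\sum_{k=0}^{n}(-1)^{j+k+\ell}\rho_i^n(\xi,(\vec{\gamma}^j)^k)=0$, evaluated at the minimum of the defined $\Delta$-values, followed by the observation that the $j\leq n$ contributions there are all $\geq 0$ with at least one strict, forcing the $j=n+1$ term to be nonzero of the complementary sign. The only (cosmetic) difference is that you take the minimum over all $j\leq n+1$ and extract $\Delta_{n+1}=\xi^*$ from the collapsed identity, whereas the paper takes $\xi_m=\min_{j\leq n}\Delta_i^n(\vec{\gamma}^j)$ and first deduces $\Delta_i^n(\vec{\gamma}^{n+1})\geq\xi_m$.
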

\begin{proof}
Assume first that the terms $\Delta^n_i(\vec{\gamma}^j)$ $(j\leq n)$ are all defined.
By standard simplicial arithmetic computations, for any $\xi<\gamma_0$,
$$\sum_{j=0}^{n+1}\sum_{k=0}^n(-1)^{j+k+\ell}\rho_i^n(\xi,(\vec{\gamma}^j)^k)=0,$$
and this together with our assumptions implies that
$$\Delta^n_i(\vec{\gamma}^{n+1})\geq\xi_m:=\min\{\Delta_i^n(\vec{\gamma}^j)\mid j\leq n\}.$$
Replacing the $\Delta$-terms in each equation $e_j$ with $\xi_m$ yields a sequence of $\geq 0$ inequalities, at least one of them strict, whose sum on the left-hand side is
$$\sum_{k=0}^n(-1)^{n+k+\ell}\rho_i^n(\xi_m,(\vec{\gamma}^{n+1})^k).$$
That $\Delta^n_i(\vec{\gamma}^{n+1})=\xi_m$ immediately follows, implying our conclusion.
Next, observe that under the weaker assumptions of our lemma, $\xi_m$ is still defined, and the above reasoning continues to apply wholesale.
\end{proof}
Let us summarize these recognitions in a theorem.
\begin{theorem}
For any $n>0$ and ordinal $\varepsilon$ and function $\rho^n_i:[\varepsilon]^{n+1}\to\mathbb{Z}$, the associated $(n+1)$-ary relation $\triangleleft_i^n$ on $\varepsilon$ is an $\mathrm{\mathbf{H}}_{n+2}$-free $(n+1)$-tournament or, more succinctly, an $n$-order on $\varepsilon$. In particular, just as the classical fiber map ordering $\triangleleft_2$ associated to $\rho_2$ induces a linear ordering of $\omega_1$, the higher fiber map ordering $\triangleleft^n_2$ associated to $\rho^n_2$ defines an $n$-order on $\omega_n$.
\end{theorem}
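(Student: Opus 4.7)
The plan is to deduce the theorem directly from Lemmas \ref{lem:sigmaout}, \ref{lem:Hnchar}, and \ref{lem:Hnarith}, which collectively do essentially all the work. First, I would note that Lemma \ref{lem:sigmaout} suffices to establish both that $\Delta_i^n(\sigma\cdot\vec{\gamma})=\Delta_i^n(\vec{\gamma})$ whenever either side is defined (since the vanishing of $\sum_j(-1)^j\rho_i^n(\xi,\vec{\gamma}^j)$ is invariant under $\mathrm{Sym}(n+1)$ up to the nonzero scalar $\mathrm{sgn}(\sigma)$), and that the defining inequality for $\triangleleft_i^n(\sigma\cdot\vec{\gamma})$ is invariant under $\sigma\in\mathrm{Alt}(n+1)$ (since $\mathrm{sgn}(\sigma)=1$ there). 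Hence $\triangleleft_i^n$ is a well-defined $(n+1)$-hypertournament on $\varepsilon$, taking care of the ``$(n+1)$-tournament'' half of the conclusion.

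The principal step is $\mathbf{H}_{n+2}$-freeness, which I would argue by contradiction. Suppose some $\vec{\gamma}\in[\varepsilon]^{n+2}$ induces an $\mathbf{H}_{n+2}$-configuration. Fixing a transposition $\pi\in\mathrm{Sym}(n+1)\setminus\mathrm{Alt}(n+1)$, so that $\mathrm{sgn}(\pi^m)=(-1)^m$, Lemma \ref{lem:Hnchar} furnishes an $\ell$ with $\triangleleft_i^n(\pi^{\ell+k}\cdot\vec{\gamma}^k)$ holding for every $k\leq n+1$. Unpacking each such relation via the definition of $\triangleleft_i^n$ together with Lemma \ref{lem:sigmaout} applied to $\sigma=\pi^{\ell+k}$ converts it, after a single substitution using $\mathrm{sgn}(\sigma)=(-1)^{\ell+k}$, into precisely the inequality $(e_k)$ appearing in Lemma \ref{lem:Hnarith}. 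Thus a single $\ell$ witnesses $(e_0),\dots,(e_{n+1})$ simultaneously.

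However, Lemma \ref{lem:Hnarith} applied to $(e_0),\dots,(e_n)$ produces $\sum_{k=0}^n(-1)^{n+k+\ell}\rho_i^n(\Delta_i^n(\vec{\gamma}^{n+1}),(\vec{\gamma}^{n+1})^k)>0$, and this quantity is $-1$ times the left-hand side of $(e_{n+1})$; combined with $(e_{n+1})>0$ established above, this is the desired contradiction, finishing the $\mathbf{H}_{n+2}$-freeness argument. The in-particular clause concerning $\rho_2^n$ on $\omega_n$ then requires no additional work beyond observing that Theorem \ref{thm:n-coherence} certifies that $\rho_2^n$ (and therefore $\triangleleft_2^n$) is well-defined and suitably nontrivial with respect to any order-$n$ ordertype-minimal $C$-sequence on $\omega_n$.

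The chief obstacle, and really the only point requiring care, is the sign bookkeeping in the passage from the $\mathbf{H}_{n+2}$-characterization to the $(e_k)$ system: one must track carefully how $\mathrm{sgn}(\pi^{\ell+k})=(-1)^{\ell+k}$ enters via Lemma \ref{lem:sigmaout} and aligns with the $(-1)^{j+k+\ell}$ coefficient prescribed by the $(e_k)$. Because $\pi$ is a transposition this reduces to one direct computation, after which the three lemmas assemble into the theorem without further incident; the rest of the argument is essentially orchestration.
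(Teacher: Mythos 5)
Your skeleton is exactly the paper's: Lemma \ref{lem:sigmaout} gives the $(n+1)$-tournament structure, Lemma \ref{lem:Hnchar} converts a putative $\mathbf{H}_{n+2}$-configuration into the system $(e_0),\dots,(e_{n+1})$, and Lemma \ref{lem:Hnarith} refutes that system; your sign bookkeeping ($\mathrm{sgn}(\pi^{\ell+k})=(-1)^{\ell+k}$ turning $\triangleleft^n_i(\pi^{\ell+k}\cdot\vec{\gamma}^k)$ into $(e_k)$, and the lemma's conclusion being $-1$ times the left side of $(e_{n+1})$) is correct. Whether one phrases this as a direct argument or a contradiction is immaterial.

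There is, however, one case your pivotal step does not cover. The claim that each relation $\triangleleft^n_i(\pi^{\ell+k}\cdot\vec{\gamma}^k)$ ``converts into precisely the inequality $(e_k)$'' presupposes that $\Delta^n_i(\vec{\gamma}^k)$ is defined; when it is not, the relation is governed by the convention (it holds exactly when $\pi^{\ell+k}\in\mathrm{Alt}(n+1)$, i.e., when $\ell+k$ is even) and yields no inequality at all. If \emph{every} $\Delta^n_i(\vec{\gamma}^j)$ is undefined, Lemma \ref{lem:Hnarith} is inapplicable (even its weakened form needs one defined term), and the contradiction must instead come from parity: $\ell+k$ cannot be even for two consecutive values of $k$. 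This is precisely the case the paper's proof disposes of in its last two sentences. If at least one $\Delta^n_i(\vec{\gamma}^j)$ is defined, the simplicial identity forces one with $j\leq n$ to be defined, and the weakened hypothesis in the final sentence of Lemma \ref{lem:Hnarith} --- which your writeup never invokes --- is exactly what allows the argument to run with only the defined $(e_j)$'s in hand. So the proof is right in structure and in its delicate part, but the degenerate case needs to be stated and handled; as written, the reduction to $(e_0),\dots,(e_{n+1})$ silently assumes it away.
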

\begin{proof}
By Lemma \ref{lem:sigmaout}, $\triangleleft^n_i$ is an $(n+1)$-tournament.
Lemma \ref{lem:Hnarith} conjoined with Lemma \ref{lem:Hnchar} shows that the $\triangleleft^n_i$-orientations of the $n$-faces of any $\vec{\gamma}\in [\varepsilon]^{n+2}$ are not of $\textbf{H}_{n+2}$-type if $\Delta_i^n(\vec{\gamma}^j)$ is defined for at least one $j\leq n+1$.
If no such $\Delta_i^n(\vec{\gamma}^j)$ is defined then those orientations are those induced by the standard ordering of the elements of $\vec{\gamma}$.
In particular, $\triangleleft^n_i(\vec{\gamma}^j)$ holds for all $j\leq n+1$, ensuring again via Lemma \ref{lem:Hnchar} that $\triangleleft^n_i$ restricted to $\vec{\gamma}$ is not of $\textbf{H}_{n+2}$-type, completing the proof.
\end{proof}

The theorem opens onto some tantalizing questions. First among these, perhaps, is what distinctions --- ideally Countryman in flavor --- the ordering $\triangleleft^2_2$ induced by an order-$2$ ordertype-minimal $C$-sequence on $\omega_2$ may carry.
At a longer range, questions of classification and even of bases for classes of higher $n$-orders, generalizing the celebrated $n=1$ case of \cite{Moore_Five}, do loom, but more elementary questions are evidently the more immediate order of the day.
Many of these may be productively approached from within, broadly speaking, the framework of \cite{Cherlin_2021}, wherein, much as in our own context, successes at the $n=1$ level --- that of (2-)tournaments or classical walks --- stand in intriguing contrast with the perplexities of the next one. One such success was the classification of countable homogenous tournaments (\cite{Lachlan}; see also \cite{Cherlin88}): as it happens, outside the finite and random cases, there are exactly two; they are $\mathbb{Q}$ (the $\textbf{H}_3$-free case) and the ``local order'' $Q^*$ (essentially a circular order, which \emph{locally} omits $\textbf{H}_3$), and their affinities for the two $1$-manifolds without boundary, the line and the circle, respectively, are striking. We mention this to record two further points:
\begin{itemize}
\item How the greater range of $n$-manifolds (this other prominent class of homogenous structures) for $n>1$ may bear on the classification of countable homogenous $(n+1)$-tournaments is a fascinating and not entirely idle question. Not unrelatedly, as hinted above, a kind of rudimentary geometry of linear and planar conditions seems operative within the class of $2$-orders.
\item Much as for higher walks, part of the intrigue of higher $n$-tournaments is in how lower-order tournaments manifest within them. This can be subtle: $\mathbb{Q}$ and $Q^*$ each induce $2$-orders, for example, but they turn out to induce the same one. Similarly, classification results for uncountable $1$-orders manifest within classification problems for uncountable $2$-orders, and this is part of the charm of the latter.
\end{itemize}
Of course, the classical study of linear orders is intimately related to that of trees \cite{TodTreesLines} (with the Countryman property, within this relation, closely tied to special-ness and coherence; see \cite[Thm.\ 4.4.1]{todwalks} and \cite{PengThesis}); we note in passing that suitable higher-dimensional variations on the tree concept \cite{TFOA}, on chain conditions \cite{TodZhang}, and even Suslin-ness (particularly by Lambie-Hanson) are areas of active inquiry as well. But our account has accumulated so many loose ends at this point that it is clearly time for us to conclude.

\section{Conclusion}
\label{sect:conclusion}
Let us conclude with a list of things which it would be wonderful to know.
\begin{question}
\label{Q1}
Fix $n>1$. Under what conditions does $\mathrm{H}^n(\omega_n;\mathbb{Z})\neq 0$ or  $\mathrm{H}^n(\omega_n;\mathbb{Z}/2)\neq 0$, and under what conditions do higher walks play the role of witnesses to these facts?
\end{question}
This is, of course, a more measured framing of the following question:
\begin{question}
Fix $n>1$. Is $\mathrm{H}^n(\omega_n;\mathbb{Z})\neq 0$ or  $\mathrm{H}^n(\omega_n;\mathbb{Z}/2)\neq 0$ a $\mathsf{ZFC}$ theorem?
\end{question}
The question dominates this area. A \emph{yes} would attest a richness of $\mathsf{ZFC}$ combinatorics above $\omega_1$ which works like this one have only begun to engage, and the grandeur of that scenario alone can seem good grounds for doubting it.
It is not clear, though, that a \emph{no} (whereupon the question reverts to Question \ref{Q1}) entails any simpler a scenario.
One would expect it, for example, to be significantly easier to establish
\begin{equation*}
\label{eq:convanishoffdiagonal}
\tag{*}
\mathrm{Con}(\mathrm{H}^n(\omega_k;\mathbb{Z})=0)\textnormal{ for some finite }k>n>1
\end{equation*}
than anywhere along the diagonal $k=n$, but until quite lately, even this had proven elusive; moreover, in the Eskew-Hayut model going farthest in this direction (\cite{DenseIdeals}, simultaneously witnessing (*) for all  such $k$ and $n$), $\mathrm{H}^n(\omega_n;\mathbb{Z})$ is nevertheless nonzero for every $n\in\omega$.
We defer a fuller discussion of these and related matters to \cite{BLHZ}, where they form a main focus.

All our questions, like those above, admit both more open and more pointed formulations; for the next ones, we will favor the former.
\begin{question}
Under what conditions does $\rho^2_2$ or $\rho_{2,\mathrm{t}}^2$ exhibit unboundedness properties recapitulating those of $\rho_2$?
Under what conditions are the fiber maps of $\rho^2_2$ nontrivial?
Under what conditions are their restrictions to $C_\beta$ for $\beta\in S^2_1$ nontrivial?
\end{question}
Similarly, of course, for higher $\rho^n_2$.
Many of our most fundamental curiosities, though, are best phrased as a research task or program.
What we hope to have shown in the foregoing is that each of the following is, potentially, a quite rewarding one; the first two could conceivably contribute to the solution of multiple questions above, for example.
\begin{task}
Clarify the conditions of the unboundedness of the depth function $d$.
\end{task}
A refinement of this task interacts with the next one: clarify the conditions and extent of the \emph{uncanceled depth} function, where the latter, within the view of $\mathrm{Tr}_n$ as a tree of walks, measures the longest chain of \emph{uncanceled} $(n-1)$-dimensional walks.
\begin{task}
Clarify the mechanisms of cancellation within the upper trace functions $\mathrm{Tr}_n$.
\end{task}

\begin{task}
Develop the classification theory of $2$-orders on $\omega$, $\omega_1$, or $\omega_2$.
\end{task}

\begin{task}
Develop the higher full lower trace function $\mathrm{F}_n$. Does it exhibit higher-dimensional subadditivity properties? What implications, Countryman-type or otherwise, does it carry for the higher linear orders $\triangleleft^n_i$?
\end{task}

\begin{task}
Develop the theory of higher-dimensional $C$-sequences alongside that of the higher walks they induce: extend the derivation, in Theorem \ref{thm:n-coherence}'s item 3, of higher coherence from $\square$-type assumptions to higher dimensions, and develop a complementary theory of nontriviality conditions alongside them.
\end{task}

\begin{task}
Derive strong colorings from higher walks or related oscillation functions, beginning with a witness to $\omega_2\not\to[\omega_1]^3_\omega$.
\end{task}

A concluding speculation is irresistible. Motifs from algebraic topology have, by set theory's standards, played an outsized role in the material of the preceding pages. If the analogy with manifold theory is strong, we should expect a number of problems to simplify beyond some threshold number of dimensions (frequently 5, as in \cite[Thm.\ 1.1]{Smale}, for example), and we cannot help but wonder if this may be the case with some of our own questions as well. The more idiosyncratic dimensions $2$, $3$, and $4$ of distinctive interest to a low-dimensional topologist correspond within our framework to the cardinals $\omega_1$, $\omega_2$, and $\omega_3$, suggesting one more perspective on the distinguished roles the latter play within set theory (many of us are \emph{low-dimensional infinitary combinatoricists}).
Alternately, if our point of reference is $n$-category theory, we might expect dimensions above $3$ to be of an ``essentially unusable'' complexity \cite[p.\ 5]{lurie}.
To put any of this on firmer mathematical footing would be wonderful as well.

\section*{Acknowledgements}
This paper's profound debt to both the mathematics and conversation of Stevo Todorcevic is hopefully clear. I owe most of my knowledge of that and a world of related mathematics, as well as the impetus to unravel its relation to Mitchell's theorem, to Justin Moore.
As Thoreau in ``Walking'' writes: \emph{I have met with but one or two persons in the course of my life who understood the art of Walking, that is, of taking walks} \cite{Thoreau}; I've had the excellent fortune of meeting more, but the inspiration of those two persons remains, for me, singular. That said, this paper's dedication should not be mistaken for their endorsement of any particular perspective or approach espoused herein.
It is a pleasure also to thank Osvaldo Guzm\'{a}n, Michael Hru\v{s}\'{a}k, Chris Lambie-Hanson, Hossein Lamei Ramandi, Assaf Rinot, Mark Schachner, Iian Smythe, Corey Switzer, and Jing Zhang for discussions bearing on this material, and to look forward to more of them. Thanks to Gregory Cherlin, Jan Hubi\v{c}ka, and Mat\v{e}j Kone\v{c}n\'{y} for discussions of, and updates on, the study of $\mathbf{H}_4$-free $3$-hypertournaments. Thanks finally to Bori\v{s}a Kuzeljevi\'{c} for the article invitation, and for his support and patience throughout its preparation.

\newpage
\section*{Appendix: Charting $\mathrm{Tr}_2$}
\label{sect:appendix}

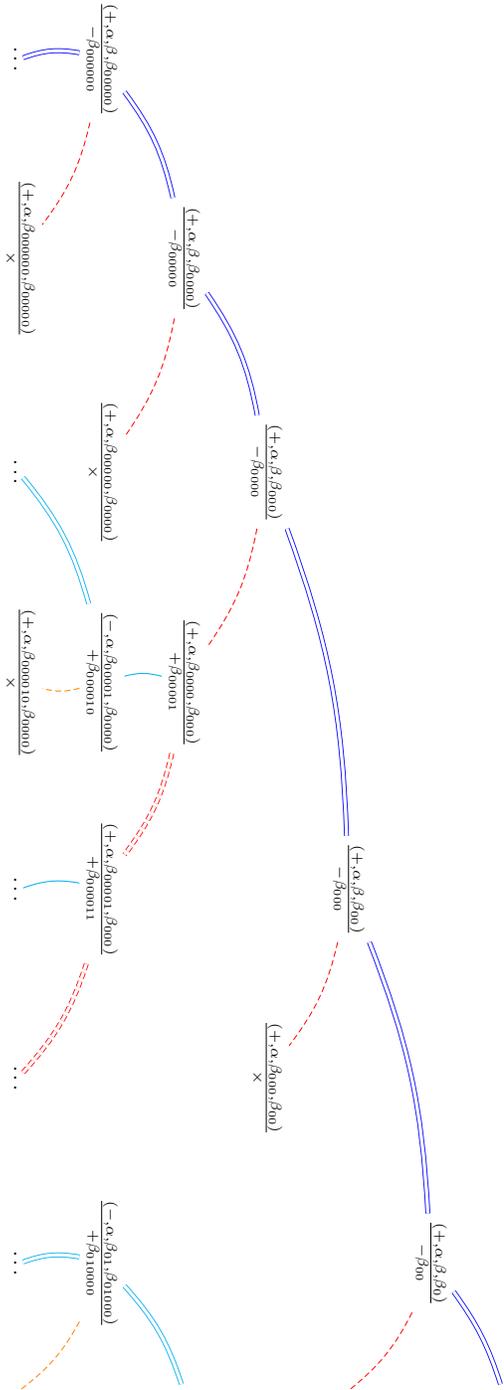
\begin{figure}[!ht]
\begin{adjustbox}{addcode={\begin{minipage}{\width}}{\caption{\Large The expansion of $\mathrm{Tr}_2(+,\alpha,\beta,\gamma)$: inputs and outputs. Depicted is the left wing of an initial portion of this expansion; it is to be viewed in conjunction with the right wing depicted in Figure \ref{fig:app2}, which conceptually precedes it, beginning, as it does, with the input $(+,\alpha,\beta,\gamma)$. These, in turn, should be read in coordination with Figures \ref{fig:app3} and \ref{fig:app4}, which, on the model of classical walks diagrams, more spatially record the ordinals appearing as outputs. The colors above and those of Figure \ref{fig:app4}, for example, deliberately align, with cooler colors (blues and green) for external trajectories and warmer colors (red through yellow) for internal trajectories, a distinction here reinforced by solid and dashed lines. These colors modulate as these trajectories appear at varying depths.
      }\label{fig:app1}\end{minipage}}, scale=.8, rotate=270, center}
      \begin{tikzcd}[ampersand replacement=\&,cramped]
	\&\&\&\&\&\&\& {} \\
	\&\&\&\&\&\& {\frac{(+,\alpha,\beta,\beta_0)}{-\beta_{00}}} \\
	\&\&\&\& {\frac{(+,\alpha,\beta,\beta_{00})}{-\beta_{000}}} \&\&\& {} \\
	\&\& {\frac{(+,\alpha,\beta,\beta_{000})}{-\beta_{0000}}} \&\&\& {\frac{(+,\alpha,\beta_{000},\beta_{00})}{\times}} \\
	\& {\frac{(+,\alpha,\beta,\beta_{0000})}{-\beta_{00000}}} \&\& {\frac{(+,\alpha,\beta_{0000},\beta_{000})}{+\beta_{00001}}} \&\&\&\& {} \\
	{\frac{(+,\alpha,\beta,\beta_{00000})}{-\beta_{000000}}} \&\& {\frac{(+,\alpha,\beta_{00000},\beta_{0000})}{\times}} \& {\frac{(-,\alpha,\beta_{00001},\beta_{0000})}{+\beta_{000010}}} \& {\frac{(+,\alpha,\beta_{00001},\beta_{000})}{+\beta_{000011}}} \&\& {\frac{(-,\alpha,\beta_{01},\beta_{01000})}{+\beta_{010000}}} \\
	\dots \& {\frac{(+,\alpha,\beta_{000000},\beta_{00000})}{\times}} \& \dots \& {\frac{(+,\alpha,\beta_{000010},\beta_{0000})}{\times}} \& \dots \& \dots \& \dots \& {} \\
	\\
	{} \&\& {}
	\arrow[bend right=12, cyan, Rightarrow, no head, from=5-8, to=6-7]
	\arrow[bend left=12, red, dashed, no head, from=2-7, to=3-8]
	\arrow[bend right=9, blue, Rightarrow, no head, from=2-7, to=3-5]
	\arrow[bend left=12, red, dashed, no head, from=3-5, to=4-6]
	\arrow[bend right=9, blue, Rightarrow, no head, from=3-5, to=4-3]
	\arrow[bend left=12, red, dashed, no head, from=4-3, to=5-4]
	\arrow[bend right=15, cyan, no head, from=5-4, to=6-4]
	\arrow[bend left=12, red, Rightarrow, dashed, no head, from=5-4, to=6-5]
	\arrow[bend right=15, cyan, Rightarrow, no head, from=6-7, to=7-7]
	\arrow[bend left=12, orange, dashed, no head, from=6-7, to=7-8]
	\arrow[bend right=15, cyan, no head, from=6-5, to=7-5]
	\arrow[bend left=12, red, Rightarrow, dashed, no head, from=6-5, to=7-6]
	\arrow[bend left=15, orange, dashed, no head, from=6-4, to=7-4]
	\arrow[bend right=12, cyan, Rightarrow, no head, from=6-4, to=7-3]
	\arrow[bend right=12, blue, Rightarrow, no head, from=4-3, to=5-2]
	\arrow[bend left=12, red, dashed, no head, from=5-2, to=6-3]
	\arrow[bend right=12, blue, Rightarrow, no head, from=5-2, to=6-1]
	\arrow[bend left=12, red, dashed, no head, from=6-1, to=7-2]
	\arrow[bend right=15, blue, Rightarrow, no head, from=6-1, to=7-1]
	\arrow[bend left=9, blue, Rightarrow, no head, from=2-7, to=1-8]
\end{tikzcd}
  \end{adjustbox}
\end{figure}

\newpage
\begin{figure}[ht]
  \begin{adjustbox}{addcode={\begin{minipage}{\width}}{\caption{\Large The expansion of $\mathrm{Tr}_2(+,\alpha,\beta,\gamma)$: inputs and outputs (continued). More transitional steps (in the sense of Section \ref{subsect:sample}) are marked with single lines, while the lines of more properly external or internal steps are doubled. This expansion depends, of course, at each input $(\pm,\alpha,\beta',\gamma')$ on assumptions about $C_{\gamma'}$ and, possibly, $C_{\beta'\gamma'}$; as noted in Section \ref{subsect:sample}, though, these assumptions may in their essentials be inferred from the output appearing below the bar. If that output $\beta''$ appears with the opposite sign as the input, for example, then $\beta''=\mathrm{min}\,C_{\gamma'}\backslash\beta'$. If $\times$ appears below the bar, then $\beta'=\mathrm{min}\,C_{\gamma'}\backslash\alpha$ and $(\pm,\alpha,\beta',\gamma')$ is terminal. An ellipsis signals that the input appearing there is not terminal; the outputs of its further expansion are then partially recorded in Figures \ref{fig:app3} and \ref{fig:app4}.
      }\label{fig:app2}\end{minipage}}, scale=.75, rotate=270, center=15cm}
      \begin{tikzcd}[ampersand replacement=\&,cramped]
\&\&\&\&\& {\frac{(+,\alpha,\beta,\gamma)}{-\beta_\varnothing}} \& \\
\&\&\& {\frac{(+,\alpha,\beta,\beta_\varnothing)}{-\beta_0}} \&\&\&\& {\frac{(+,\alpha,\beta_\varnothing,\gamma)}{\times}} \& \\
	\& {\frac{(+,\alpha,\beta,\beta_0)}{-\beta_{00}}} \& {} \&\&\& {} \& \\
\&\&\&\&\& {\frac{(+,\alpha,\beta_0,\beta_\varnothing)}{+\beta_{01}}} \& \\
\&\& {\frac{(+,\alpha,\beta_{00},\beta_0)}{\times}} \&\& {\frac{(-,\alpha,\beta_{01},\beta_0)}{+\beta_{010}}} \&\&\& {\frac{(+,\alpha,\beta_{01},\beta_\varnothing)}{+\beta_{011}}} \& \\
\&\&\& {\frac{(-,\alpha,\beta_{01},\beta_{010})}{+\beta_{0100}}} \&\& {\frac{(-,\alpha,\beta_{010},\beta_0)}{\times}} \& {\frac{(-,\alpha,\beta_{011},\beta_{01})}{+\beta_{0110}}} \& \& \dots \\
\&\& {\frac{(-,\alpha,\beta_{01},\beta_{0100})}{+\beta_{01000}}} \&\& {\frac{(-,\alpha,\beta_{0100},\beta_{010})}{\times}} \& {\frac{(-,\alpha,\beta_{011},\beta_{0110})}{+\beta_{01100}}} \&\& {\frac{(+,\alpha,\beta_{0110},\beta_{01})}{\times}} \& \\
\& {\frac{(-,\alpha,\beta_{01},\beta_{01000})}{+\beta_{010000}}} \&\& {\frac{(-,\alpha,\beta_{01000},\beta_{0100})}{-\beta_{010001}}} \& \dots \&\& {\frac{(+,\alpha,\beta_{01100},\beta_{0110})}{\times}} \& \\
\& \dots \& {\frac{(-,\alpha,\beta_{010000},\beta_{01000})}{\times}} \& \dots \& \dots \& \\
	\\
	{} \&\& {}
	\arrow[bend left=12, orange, dashed, no head, from=6-7, to=7-8]
		\arrow[bend right=15, cyan, Rightarrow, no head, from=8-2, to=9-2]
	\arrow[bend right=12, cyan, no head, from=5-8, to=6-7]
	\arrow[bend left=9, red, Rightarrow, dashed, no head, from=4-6, to=5-8]
	\arrow[bend right=12, cyan, Rightarrow, no head, from=6-7, to=7-6]
	\arrow[bend left=12, orange, dashed, no head, from=7-6, to=8-7]
	\arrow[bend right=12, cyan, Rightarrow, no head, from=7-6, to=8-5]
	\arrow[bend right=12, cyan, no head, from=4-6, to=5-5]
	\arrow[bend left=12, orange, dashed, no head, from=5-5, to=6-6]
	\arrow[bend right=12, cyan, Rightarrow, no head, from=5-5, to=6-4]
	\arrow[bend right=12, cyan, Rightarrow, no head, from=6-4, to=7-3]
		\arrow[bend right=12, cyan, Rightarrow, no head, from=7-3, to=8-2]
	\arrow[bend left=12, orange, dashed, no head, from=6-4, to=7-5]
	\arrow[bend left=12, orange, dashed, no head, from=7-3, to=8-4]
	\arrow[bend left=12, orange, Rightarrow, dashed, no head, from=8-4, to=9-5]
	\arrow[bend right=15, green, no head, from=8-4, to=9-4]
	\arrow[bend left=12, red, Rightarrow, dashed, no head, from=5-8, to=6-9]
		\arrow[bend left=12, red, dashed, no head, from=3-2, to=5-3]
				\arrow[bend left=12, orange, dashed, no head, from=8-2, to=9-3]
	\arrow[bend right=12, blue, Rightarrow, no head, from=2-4, to=3-2]
	\arrow[bend right=9, blue, Rightarrow, no head, from=1-6, to=2-4]
	\arrow[bend left=9, red, dashed, no head, from=2-4, to=4-6]
	\arrow[bend left=9, red, dashed, no head, from=1-6, to=2-8]
\end{tikzcd}
  \end{adjustbox}
\end{figure}

\newpage
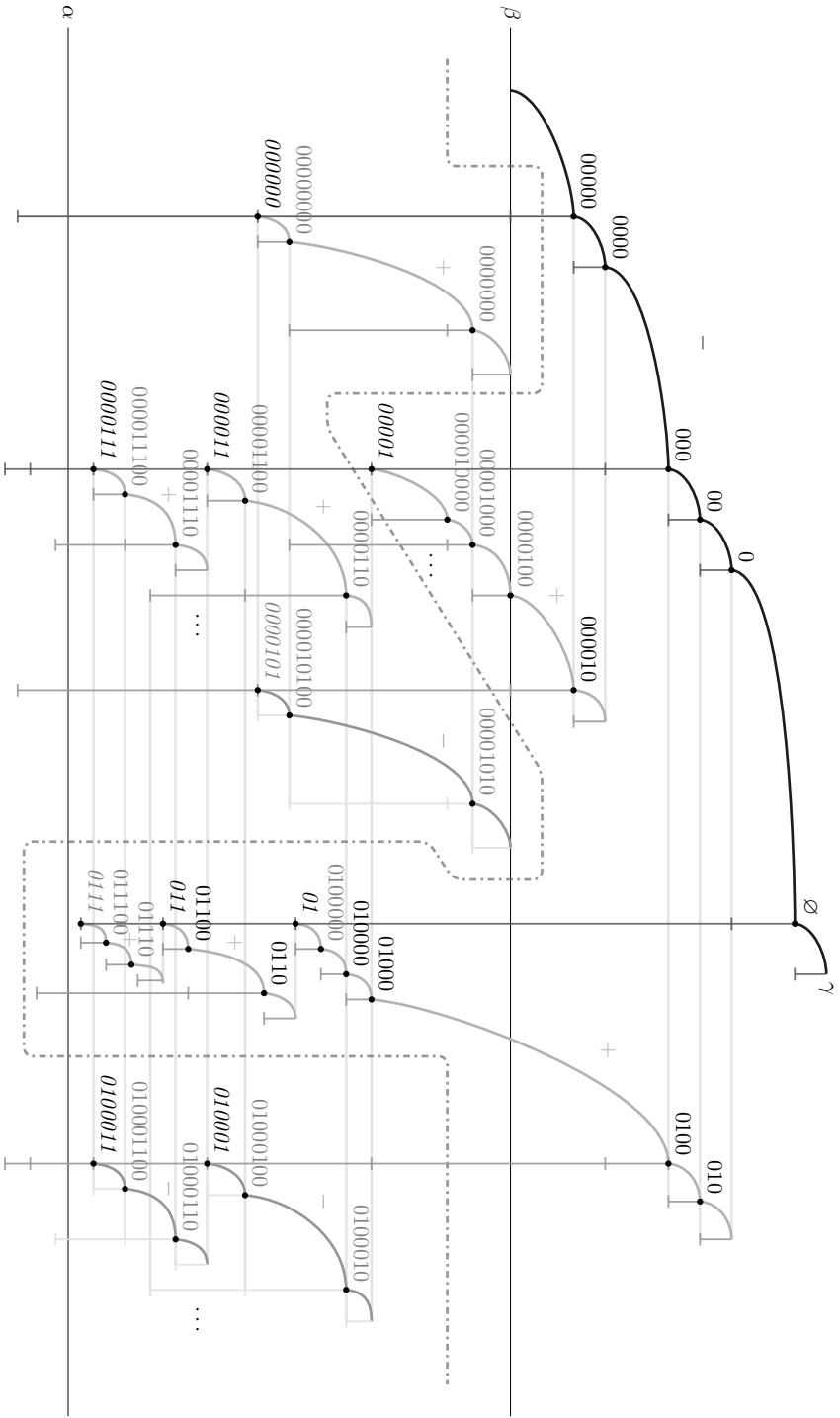
\begin{figure}[!ht]
  \begin{adjustbox}{addcode={\begin{minipage}{\width}}{\caption{\Large The expansion of $\mathrm{Tr}_2(+,\alpha,\beta,\gamma)$: a mapping of outputs (Figure \ref{fig:app4} is its color rendering). Note that a portion of the expansion remains undepicted, marked only by ellipses alongside the lower intervals of $C_{\beta_{00001000}}$,   $C_{\beta_{0000110}}$, and $C_{\beta_{0100010}}$. As the second and third of these intervals are opposite-signed and identical, their associated further expansions will for many purposes cancel; here $\rho^2_2(\alpha,\beta,\gamma)$ will be $11$ plus the contribution of the first ellipsis, for example. This is the meaning of the dashed line traversing the diagram: all of the expansion appearing below it appears in opposite-signed pairs; as noted in Section \ref{subsect:sample}, this line reflects the pattern of $\mathrm{L}(\beta,\gamma)$. To facilitate this analysis, we have marked the sign of each external walk above and to its left, and, the better to trace repeating ordinals, have included horizontal grey guidelines wherever they appear.}\label{fig:app3}\end{minipage}}, scale=.85, rotate=270, center}
     \begin{tikzpicture}
     \selectcolormodel{gray}
	\coordinate (gamma) at (16,14);
	\coordinate (beta) at (1,9);
	\coordinate (alpha) at (1,2);
	\draw (gamma) node[right] {$\gamma$};
	\draw (beta) node[left] {$\beta$};
	\draw (alpha) node[left] {$\alpha$};
	\draw (6,11.8) node[above] {{\color{blue} $-$}};
	\draw (10,9.5) node[above] {{\color{cyan} $+$}};
	\draw (8.4,3.35) node[above] {{\color{cyan} $+$}};
	\draw (17.2,10.3) node[above] {{\color{cyan} $+$}};
	\draw (4.8,7.7) node[above] {{\color{cyan} $+$}};
	\draw (8.6,5.8) node[above] {{\color{cyan} $+$}};
	\draw (15.5,4.4) node[above] {{\color{cyan} $+$}};
	\draw (15.45,2.735) node[above] {{\color{cyan} $+$}};
	\draw (19.4,3.35) node[above] {{\color{green} $-$}};
	\draw (12.3,7.7) node[above] {{\color{green} $-$}};
	\draw (19.6,5.8) node[above] {{\color{green} $-$}};
	\draw (10.5,3.9) node[above] {\dots};
	\draw (21.5,3.9) node[above] {\dots};
	\draw (9.55,7.6) node[above] {\dots};
	\coordinate (b') at (23,9);
	\coordinate (a') at (23,2);
	\draw[ultra thin] (beta)--(b');
	\draw[ultra thin] (alpha)--(a');
	
	\coordinate (01) at (15.2,5.6);
	\coordinate (011) at (15.2,3.5);
	\coordinate (0111) at (15.2,2.2);
	\coordinate (010) at (19.6,12);
	\coordinate (0100) at (19,11.5);
	\coordinate (01000) at (16.4,6.8);
	\coordinate (010000) at (16,6.4);
	\coordinate (0100000) at (15.6,6);
	\coordinate (010001) at (19,4.2);
	\coordinate (0100011) at (19,2.4);
	
	\coordinate (empty) at (15.2,13.5);
	\coordinate (0) at (9.6,12.5);
	\coordinate (00) at (8.8,12);
	\coordinate (000) at (8,11.5);
	\coordinate (0000) at (4.8,10.5);
	\coordinate (00000) at (4,10);
	\coordinate (b) at (2,9);
	
	\coordinate (000000) at (4,5);
	\coordinate (00001) at (8,6.8);
	\coordinate (000011) at (8,4.2);
	\coordinate (0000111) at (8,2.4);	
	
	\coordinate (000010) at (11.5,10);
	\coordinate (0000100) at (10,9);
	\coordinate (00001000) at (9.2,8.4);
	\coordinate (000010000) at (8.8,8);
	
	\coordinate (0000000) at (5.8,8.4);
	\coordinate (00000000) at (4.4,5.5);

	\coordinate (0000101) at (11.5,5);	
	\coordinate (00001010) at (13.3,8.4);
	\coordinate (000010100) at (11.9,5.5);
	\coordinate (0000110) at (10,6.4);
	\coordinate (00001100) at (8.5,4.8);
	\coordinate (0100010) at (21,6.4);
	\coordinate (01000100) at (19.5,4.8);
	
	\coordinate (00001110) at (9.2,3.7);
	\coordinate (000011100) at (8.4,2.9);
	\coordinate (01000110) at (20.2,3.7);
	\coordinate (010001100) at (19.4,2.9);
	\coordinate (0110) at (16.3,5.1);
	\coordinate (01100) at (15.6,3.9);
	\coordinate (01110) at (15.85,3);
	\coordinate (011100) at (15.5,2.6);
	
	\coordinate (1) at (1.5,8);
	\coordinate (2) at (3.2,8);
	\coordinate (3) at (3.2,9.5);
	\coordinate (4) at (6.8,9.5);
	\coordinate (5) at (6.8,6.1);
	\coordinate (6) at (7.35,6.1);
	\coordinate (7) at (12.8,9.5);
	\coordinate (8) at (14.5,9.5);
	\coordinate (8') at (14.5,8.1);
	\coordinate (9') at (13.9,7.7);
	\coordinate (9) at (13.9,1.3);
	\coordinate (10) at (17.3,1.3);
	\coordinate (11) at (17.3,8);
	\coordinate (12) at (22.5,8);

	\draw[very thick, rounded corners, dashdotted, magenta, opacity=.7] (1)--(2)--(3)--(4)--(5)--(6)--(7)--(8)--(8')--(9')--(9)--(10)--(11)--(12);
	
	\draw[very thick, gray, opacity=.2] (01)--(16.7,5.6);
	\draw[very thick, gray, opacity=.2] (011)--(16.1,3.5);
	\draw[very thick, gray, opacity=.2] (10,3.3)--(21,3.3);
	\draw[very thick, gray, opacity=.2] (0000111)--(19.4,2.4);
	\draw[very thick, gray, opacity=.2] (00001110)--(20.6,3.7);
	\draw[very thick, gray, opacity=.2] (000011100)--(20.2,2.9);
	\draw[very thick, gray, opacity=.2] (8,6.8)--(21.5,6.8);
	\draw[very thick, gray, opacity=.2] (0000110)--(21.5,6.4);
	\draw[very thick, gray, opacity=.2] (00001100)--(21,4.8);
	\draw[very thick, gray, opacity=.2] (000011)--(20.6,4.2);	
	\draw[very thick, gray, opacity=.2] (0000000)--(14,8.4);
	\draw[very thick, gray, opacity=.2] (00000000)--(13.3,5.5);
	\draw[very thick, gray, opacity=.2] (000000)--(11.9,5);	
	\draw[very thick, gray, opacity=.2] (0)--(20.2,12.5);
	\draw[very thick, gray, opacity=.2] (00)--(20.2,12);
	\draw[very thick, gray, opacity=.2] (000)--(19.6,11.5);
	\draw[very thick, gray, opacity=.2] (0000)--(12,10.5);
	\draw[very thick, gray, opacity=.2] (00000)--(12,10);

	\draw[thick, orange, opacity=.9] (20.2,12.5)--(20.2,12);
	\draw[thick, orange, opacity=.9] (20.1,12)--(20.3,12);
	\draw[thick, orange, opacity=.9] (19.5,11.5)--(19.7,11.5);
	\draw[thick, orange, opacity=.9] (19.6,12)--(19.6,11.5);
	\draw[thick, orange, opacity=.9] (12,10.5)--(12,10);
	\draw[thick, orange, opacity=.9] (11.9,10)--(12.1,10);
	
	\draw[thick, orange, opacity=.9] (16.1,3.5)--(16.1,3.1);
	\draw[thick, orange, opacity=.9] (01110)--(15.85,2.6);
	\draw[thick, orange, opacity=.9] (011100)--(15.5,2.2);
	\draw[thick, orange, opacity=.9] (16,3.1)--(16.2,3.1);
	\draw[thick, orange, opacity=.9] (15.75,2.6)--(15.95,2.6);
	\draw[thick, orange, opacity=.9] (15.4,2.2)--(15.6,2.2);
	
	\draw[very thick, blue] (gamma) to[out=-180,in=85, distance=.3cm] (empty);
	 \draw[very thick, blue] (empty) to[out=-180,in=85, distance=.9cm] (0);
	 \draw[very thick, blue, distance=.3cm] (0) to[out=-180,in=85] (00);
	  \draw[very thick, blue] (00) to[out=-180,in=85, distance=.3cm] (000);
	  \draw[very thick, blue] (000) to[out=-180,in=85, distance=.7cm] (0000);
	  \draw[very thick, blue] (0000) to[out=-180,in=85, distance=.3cm] (00000);
	  \draw[very thick, blue] (00000) to[out=-180,in=85, distance=.55cm] (b);
	 
		\draw[very thick, cyan] (6.5,9) to[out=-180,in=85, distance=.3cm] (0000000);
	    \draw[very thick, cyan] (0000000) to[out=-180,in=85, distance=.8cm] (00000000);
	     \draw[very thick, cyan] (00000000) to[out=-180,in=85, distance=.25cm] (000000);	 
	    
		\draw[very thick, cyan] (16.7,5.6) to[out=-180,in=85, distance=.25cm] (0110);
	    \draw[very thick, cyan] (0110) to[out=-180,in=85, distance=.5cm] (01100);
	     \draw[very thick, cyan] (01100) to[out=-180,in=85, distance=.25cm] (011);
	     
	     		\draw[very thick, cyan] (16.15,3.5) to[out=-180,in=85, distance=.28cm] (01110);
	    \draw[very thick, cyan] (01110) to[out=-180,in=85, distance=.25cm] (011100);
	     \draw[very thick, cyan] (011100) to[out=-180,in=85, distance=.25cm] (0111);
	     	 	     
	     \draw[very thick, green] (14,9) to[out=-180,in=85, distance=.3cm] (00001010);
	    \draw[very thick, green] (00001010) to[out=-180,in=85, distance=.8cm] (000010100);
	     \draw[very thick, green] (000010100) to[out=-180,in=85, distance=.25cm] (0000101);
	  	
		\draw[very thick, cyan] (20.2,12.5) to[out=-180,in=85, distance=.3cm] (010);
	    \draw[very thick, cyan] (010) to[out=-180,in=85, distance=.3cm] (0100);
	     \draw[very thick, cyan] (0100) to[out=-180,in=85, distance=1.2cm] (01000);
	     \draw[very thick, cyan] (01000) to[out=-180,in=85, distance=.25cm] (010000);
	    \draw[very thick, cyan] (010000) to[out=-180,in=85, distance=.25cm] (0100000);
	     \draw[very thick, cyan] (0100000) to[out=-180,in=85, distance=.25cm] (01);

		\draw[very thick, cyan] (10.5,6.8) to[out=-180,in=85, distance=.3cm] (0000110);
	    \draw[very thick, cyan] (0000110) to[out=-180,in=85, distance=.8cm] (00001100);
	     \draw[very thick, cyan] (00001100) to[out=-180,in=85, distance=.3cm] (000011);
	     
	     \draw[very thick, cyan] (9.6,4.2) to[out=-180,in=85, distance=.27cm] (00001110);
	    \draw[very thick, cyan] (00001110) to[out=-180,in=85, distance=.5cm] (000011100);
	     \draw[very thick, cyan] (000011100) to[out=-180,in=85, distance=.27cm] (0000111);
	     
	     	     \draw[very thick, green] (20.6,4.2) to[out=-180,in=85, distance=.27cm] (01000110);
	    \draw[very thick, green] (01000110) to[out=-180,in=85, distance=.5cm] (010001100);
	     \draw[very thick, green] (010001100) to[out=-180,in=85, distance=.27cm] (0100011);
	     
	     \draw[very thick, green] (21.5,6.8) to[out=-180,in=85, distance=.3cm] (0100010);
	    \draw[very thick, green] (0100010) to[out=-180,in=85, distance=.8cm] (01000100);
	     \draw[very thick, green] (01000100) to[out=-180,in=85, distance=.3cm] (010001);

	     \draw[very thick, cyan] (12,10.5) to[out=-180,in=85, distance=.3cm] (000010);
		\draw[very thick, cyan] (000010) to[out=-180,in=85, distance=.5cm] (0000100);
	     \draw[very thick, cyan] (0000100) to[out=-180,in=85, distance=.4cm] (00001000);
	     \draw[very thick, cyan] (00001000) to[out=-180,in=85, distance=.2cm] (000010000);
	     \draw[very thick, cyan] (000010000) to[out=-180,in=85, distance=.4cm] (00001);
	     
	\draw[thick, red, opacity=.8] (gamma)--(16,13.5);	
	\draw[thick, red, opacity=.8] (15.9,13.5)--(16.1,13.5);	
	
	\draw[thick, red, opacity=.8] (empty)--(15.2,2.2);
	\draw[thick, red, opacity=.8] (15.1,12.5)--(15.3,12.5);
	\draw[thick, red, opacity=.8] (15.1,5.6)--(15.3,5.6);
	\draw[thick, red, opacity=.8] (15.1,3.5)--(15.3,3.5);
	\draw[thick, red, opacity=.8] (15.1,2.2)--(15.3,2.2);

	\draw[thick, red, opacity=.8] (0)--(9.6,12);
	\draw[thick, red, opacity=.8] (9.5,12)--(9.7,12);		  
	  
	\draw[thick, red, opacity=.8] (00)--(8.8,11.5);	 
	\draw[thick, red, opacity=.8] (8.7,11.5)--(8.9,11.5); 
	  
	\draw[thick, red, opacity=.8] (000)--(8,1);
	\draw[thick, red, opacity=.8] (7.9,10.5)--(8.1,10.5);
	\draw[thick, red, opacity=.8] (7.9,6.8)--(8.1,6.8);
	\draw[thick, red, opacity=.8] (7.9,4.2)--(8.1,4.2);
	\draw[thick, red, opacity=.8] (7.9,2.4)--(8.1,2.4);	 
	\draw[thick, red, opacity=.8] (7.9,1.4)--(8.1,1.4);
	\draw[thick, red, opacity=.8] (7.9,1)--(8.1,1);
	
	\draw[thick, orange, opacity=.9] (000010)--(11.5,1.2);
	\draw[thick, orange, opacity=.9] (11.4,9)--(11.6,9);
	\draw[thick, orange, opacity=.9] (11.4,5)--(11.6,5);
	\draw[thick, orange, opacity=.9] (11.4,1.2)--(11.6,1.2);
	
	\draw[thick, orange, opacity=.9] (0000100)--(10,8.4);
	\draw[thick, orange, opacity=.9] (00001000)--(9.2,5.5);
	
	\draw[thick, orange, opacity=.9] (16.7,5.6)--(16.7,5.1);
	\draw[thick, orange, opacity=.9] (0110)--(16.3,1.5);
	\draw[thick, orange, opacity=.9] (01100)--(15.6,3.5);
		\draw[thick, orange, opacity=.9] (16.2,3.9)--(16.4,3.9);
	\draw[thick, orange, opacity=.9] (16.2,1.5)--(16.4,1.5);
	\draw[thick, orange, opacity=.9] (15.5,3.5)--(15.7,3.5);
	\draw[thick, orange, opacity=.9] (16.6,5.1)--(16.8,5.1);
	
	\draw[thick, orange, opacity=.9] (9.6,4.2)--(9.6,3.7);
	\draw[thick, orange, opacity=.9] (00001110)--(9.2,1.8);
	\draw[thick, orange, opacity=.9] (000011100)--(8.4,2.4);
	\draw[thick, orange, opacity=.9] (9.5,3.7)--(9.7,3.7);
	\draw[thick, orange, opacity=.9] (9.1,1.8)--(9.3,1.8);
	\draw[thick, orange, opacity=.9] (9.1,2.9)--(9.3,2.9);
	\draw[thick, orange, opacity=.9] (8.3,2.4)--(8.5,2.4);
	
		\draw[thick, yellow, opacity=1] (20.6,4.2)--(20.6,3.7);
	\draw[thick, yellow, opacity=1] (01000110)--(20.2,1.8);
	\draw[thick, yellow, opacity=1] (010001100)--(19.4,2.4);
	\draw[thick, yellow, opacity=1] (20.5,3.7)--(20.7,3.7);
	\draw[thick, yellow, opacity=1] (20.1,1.8)--(20.3,1.8);
	\draw[thick, yellow, opacity=1] (20.1,2.9)--(20.3,2.9);
	\draw[thick, yellow, opacity=1] (19.3,2.4)--(19.5,2.4);
	
	\draw[thick, orange, opacity=.9] (000010000)--(8.8,6.8);
	\draw[thick, orange, opacity=.9] (9.9,8.4)--(10.1,8.4);
	\draw[thick, orange, opacity=.9] (9.1,5.5)--(9.3,5.5);
	\draw[thick, orange, opacity=.9] (9.1,8)--(9.3,8);
	\draw[thick, orange, opacity=.9] (8.7,6.8)--(8.9,6.8);
	
		\draw[thick, orange, opacity=.9] (10.5,6.8)--(10.5,6.4);
	\draw[thick, orange, opacity=.9] (0000110)--(10,3.3);
	\draw[thick, orange, opacity=.9] (00001100)--(8.5,4.2);
	\draw[thick, orange, opacity=.9] (10.4,6.4)--(10.6,6.4);
	\draw[thick, orange, opacity=.9] (9.9,4.8)--(10.1,4.8);
	\draw[thick, orange, opacity=.9] (9.9,3.3)--(10.1,3.3);
	\draw[thick, orange, opacity=.9] (8.4,4.2)--(8.6,4.2);
	
	\draw[thick, yellow, opacity=1] (21.5,6.8)--(21.5,6.4);
	\draw[thick, yellow, opacity=1] (0100010)--(21,3.3);
	\draw[thick, yellow, opacity=1] (01000100)--(19.5,4.2);
	\draw[thick, yellow, opacity=1] (21.4,6.4)--(21.6,6.4);
	\draw[thick, yellow, opacity=1] (20.9,4.8)--(21.1,4.8);
	\draw[thick, yellow, opacity=1] (20.9,3.3)--(21.1,3.3);
	\draw[thick, yellow, opacity=1] (19.4,4.2)--(19.6,4.2);
	
	\draw[thick, orange, opacity=.9] (01000)--(16.4,6.4);
	\draw[thick, orange, opacity=.9] (010000)--(16,6);
	\draw[thick, orange, opacity=.9] (0100000)--(15.6,5.6);
	\draw[thick, orange, opacity=.9] (16.3,6.4)--(16.5,6.4);
	\draw[thick, orange, opacity=.9] (15.9,6)--(16.1,6);
	\draw[thick, orange, opacity=.9] (15.7,5.6)--(15.5,5.6);
	
	\draw[thick, red, opacity=.8] (0000)--(4.8,10);	
	\draw[thick, red, opacity=.8] (4.7,10)--(4.9,10);	
	
	\draw[thick, orange, opacity=.9] (6.5,9)--(6.5,8.4);
	\draw[thick, orange, opacity=.9] (0000000)--(5.8,5.5);
	\draw[thick, orange, opacity=.9] (00000000)--(4.4,5);	
	\draw[thick, orange, opacity=.9] (6.4,8.4)--(6.6,8.4);
	\draw[thick, orange, opacity=.9] (5.7,5.5)--(5.9,5.5);
	\draw[thick, orange, opacity=.9] (4.3,5)--(4.5,5);
	\draw[thick, orange, opacity=.9] (5.7,8)--(5.9,8);
	
	\draw[thick, yellow, opacity=1] (14,9)--(14,8.4);
	\draw[thick, yellow, opacity=1] (00001010)--(13.3,5.5);
	\draw[thick, yellow, opacity=1] (000010100)--(11.9,5);	
	\draw[thick, yellow, opacity=1] (13.9,8.4)--(14.1,8.4);
	\draw[thick, yellow, opacity=1] (13.2,5.5)--(13.4,5.5);
	\draw[thick, yellow, opacity=1] (11.8,5)--(12,5);
	\draw[thick, yellow, opacity=1] (13.2,8)--(13.4,8);
	
	\draw[thick, orange, opacity=.9] (19,11.5)--(19,1);
	\draw[thick, orange, opacity=.9] (18.9,10.5)--(19.1,10.5);
	\draw[thick, orange, opacity=.9] (18.9,6.8)--(19.1,6.8);
	\draw[thick, orange, opacity=.9] (18.9,4.2)--(19.1,4.2);
	\draw[thick, orange, opacity=.9] (18.9,2.4)--(19.1,2.4);	 
	\draw[thick, orange, opacity=.9] (18.9,1.4)--(19.1,1.4);
	\draw[thick, orange, opacity=.9] (18.9,1)--(19.1,1);
	\draw[thick, orange, opacity=.9] (18.9,11.5)--(19.1,11.5);
	
	\draw[thick, red, opacity=.9] (00000)--(4,1.2);
	\draw[thick, red, opacity=.8] (3.9,9)--(4.1,9);
	\draw[thick, red, opacity=.8] (3.9,5)--(4.1,5);
	\draw[thick, red, opacity=.8] (3.9,1.2)--(4.1,1.2);
	  
	\draw (empty) node[above left] {$\varnothing$};
	\draw (0) node[above left] {$0$};
	\draw (00) node[above left] {$00$};
	\draw (000) node[above left] {$000$};
	\draw (0000) node[above left] {$0000$};
	\draw (00000) node[above left] {$00000$};
	\draw (01) node[above left] {$\mathit{01}$};
	\draw (011) node[above left] {$\mathit{011}$};
	\draw (00001) node[above left] {$\mathit{00001}$};
	\draw (000011) node[above left] {$\mathit{000011}$};
	\draw (0000111) node[above left] {$\mathit{0000111}$};
	\draw (000000) node[above left] {$\mathit{000000}$};
	\draw (010) node[above left] {$010$};
	\draw (0100) node[above left] {$0100$};
	\draw (01000) node[above left] {$01000$};
	\draw (010000) node[above left] {$010000$};	
	\draw (0100000) node[above left] {\color{gray} $0100000$};	
	\draw (000010) node[above left] {$000010$};
	\draw (0000100) node[above left] {\color{gray} $0000100$};
	\draw (00001000) node[above left] {\color{gray} $00001000$};
	\draw (000010000) node[above left] {\color{gray} $000010000$};
	\draw (010001) node[above left] {$\mathit{010001}$};
	\draw (0100011) node[above left] {$\mathit{0100011}$};
	\draw (0000000) node[above left] {\color{gray} $0000000$};
	\draw (00000000) node[above left] {\color{gray} $00000000$};
	\draw (0000101) node[above left] {\color{gray} $\mathit{0000101}$};
	\draw (00001010) node[above left] {\color{gray} $00001010$};
	\draw (000010100) node[above left] {\color{gray} $000010100$};
	\draw (0111) node[above left] {\color{gray} $\mathit{0111}$};
	\draw (0000110) node[above left] {\color{gray} $0000110$};
	\draw (00001100) node[above left] {\color{gray} $00001100$};
	\draw (0100010) node[above left] {\color{gray} $0100010$};
	\draw (01000100) node[above left] {\color{gray} $01000100$};
	
	\draw (00001110) node[above left] {\color{gray} $00001110$};
	\draw (000011100) node[above left] {\color{gray} $000011100$};
	\draw (01000110) node[above left] {\color{gray} $01000110$};
	\draw (0110) node[above left] {$0110$};
	\draw (01100) node[above left] {$01100$};
	\draw (01110) node[above left] {\color{gray} $01110$};
	\draw (011100) node[above left] {\color{gray} $011100$};
	\draw (010001100) node[above left] {\color{gray} $010001100$};
	
	\draw (empty) node[circle,draw,fill=black, scale=.25]{};
	\draw (0) node[circle,draw,fill=black, scale=.25]{};
	\draw (00) node[circle,draw,fill=black, scale=.25]{};
	\draw (000) node[circle,draw,fill=black, scale=.25]{};
	\draw (0000) node[circle,draw,fill=black, scale=.25]{};
	\draw (00000) node[circle,draw,fill=black, scale=.25]{};
	\draw (01) node[circle,draw,fill=black, scale=.25]{};
	\draw (011) node[circle,draw,fill=black, scale=.25]{};
	\draw (00001) node[circle,draw,fill=black, scale=.25]{};
	\draw (000011) node[circle,draw,fill=black, scale=.25]{};
	\draw (0000111) node[circle,draw,fill=black, scale=.25]{};
	\draw (000000) node[circle,draw,fill=black, scale=.25]{};
	\draw (010) node[circle,draw,fill=black, scale=.25]{};
	\draw (0100) node[circle,draw,fill=black, scale=.25]{};
	\draw (01000) node[circle,draw,fill=black, scale=.25]{};
	\draw (010000) node[circle,draw,fill=black, scale=.25]{};
	\draw (0100000) node[circle,draw,fill=black, scale=.25]{};
	\draw (000010) node[circle,draw,fill=black, scale=.25]{};
	\draw (0000100) node[circle,draw,fill=black, scale=.25]{};
	\draw (00001000) node[circle,draw,fill=black, scale=.25]{};
	\draw (000010000) node[circle,draw,fill=black, scale=.25]{};
	\draw (010001) node[circle,draw,fill=black, scale=.25]{};
	\draw (0100011) node[circle,draw,fill=black, scale=.25]{};
	\draw (0000000) node[circle,draw,fill=black, scale=.25]{};
	\draw (00000000) node[circle,draw,fill=black, scale=.25]{};
	\draw (0000101) node[circle,draw,fill=black, scale=.25]{};
	\draw (00001010) node[circle,draw,fill=black, scale=.25]{};
	\draw (000010100) node[circle,draw,fill=black, scale=.25]{};
	\draw (0111) node[circle,draw,fill=black, scale=.25]{};
	\draw (0000110) node[circle,draw,fill=black, scale=.25]{};
	\draw (00001100) node[circle,draw,fill=black, scale=.25]{};
	\draw (0100010) node[circle,draw,fill=black, scale=.25]{};
	\draw (01000100) node[circle,draw,fill=black, scale=.25]{};
	\draw (00001110) node[circle,draw,fill=black, scale=.25]{};
	\draw (000011100) node[circle,draw,fill=black, scale=.25]{};
	\draw (01000110) node[circle,draw,fill=black, scale=.25]{};
	\draw (010001100) node[circle,draw,fill=black, scale=.25]{};
	\draw (0110) node[circle,draw,fill=black, scale=.25]{};
	\draw (01100) node[circle,draw,fill=black, scale=.25]{};
	\draw (01110) node[circle,draw,fill=black, scale=.25]{};
	\draw (011100) node[circle,draw,fill=black, scale=.25]{};
	\end{tikzpicture}
  \end{adjustbox}
\end{figure}

\newpage
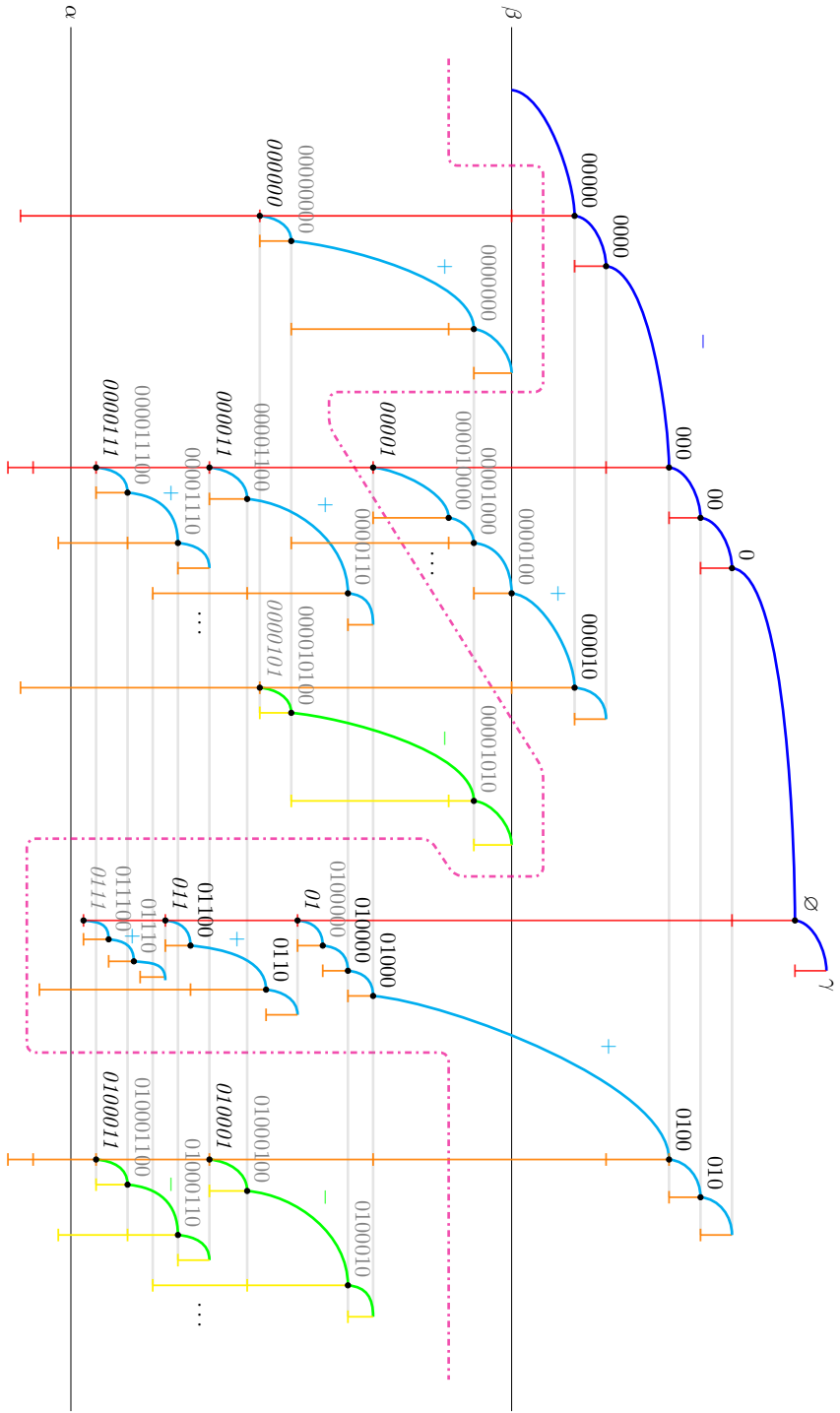
\begin{figure}[!ht]
  \begin{adjustbox}{addcode={\begin{minipage}{\width}}{\caption{\Large The expansion of $\mathrm{Tr}_2(+,\alpha,\beta,\gamma)$: a mapping of outputs (continued). As in Figures \ref{fig:app1} and \ref{fig:app2}, the steps of external walks have been colored to reflect the depth at which they appear; those from $\beta_{0000100}$ down to $\beta_{0000101}$, for example, appear at depth $3$ and are colored green accordingly (as noted, depth is legible also in the numbers of alternations $0$s and $1$s among the associated indices). The depth of internal walks (whose associated indices we have italicized), similarly, is reflected in the descending red-orange-yellow sequence of associated clubs. Omitting inputs, of course, allows us to record more outputs than in Figures \ref{fig:app1} and \ref{fig:app2}; here, only those in black font appeared among the former. Observe lastly that the top-to-bottom order in which they appear differs in some respects from that in which they appear in Figures \ref{fig:app1} and \ref{fig:app2}.} \label{fig:app4}\end{minipage}}, scale=.85, rotate=270, center}
     \begin{tikzpicture}
	\coordinate (gamma) at (16,14);
	\coordinate (beta) at (1,9);
	\coordinate (alpha) at (1,2);
	\draw (gamma) node[right] {$\gamma$};
	\draw (beta) node[left] {$\beta$};
	\draw (alpha) node[left] {$\alpha$};
	\draw (6,11.8) node[above] {{\color{blue} $-$}};
	\draw (10,9.5) node[above] {{\color{cyan} $+$}};
	\draw (8.4,3.35) node[above] {{\color{cyan} $+$}};
	\draw (17.2,10.3) node[above] {{\color{cyan} $+$}};
	\draw (4.8,7.7) node[above] {{\color{cyan} $+$}};
	\draw (8.6,5.8) node[above] {{\color{cyan} $+$}};
	\draw (15.5,4.4) node[above] {{\color{cyan} $+$}};
	\draw (15.45,2.735) node[above] {{\color{cyan} $+$}};
	\draw (19.4,3.35) node[above] {{\color{green} $-$}};
	\draw (12.3,7.7) node[above] {{\color{green} $-$}};
	\draw (19.6,5.8) node[above] {{\color{green} $-$}};
	\draw (10.5,3.9) node[above] {\dots};
	\draw (21.5,3.9) node[above] {\dots};
	\draw (9.55,7.6) node[above] {\dots};
	\coordinate (b') at (23,9);
	\coordinate (a') at (23,2);
	\draw[ultra thin] (beta)--(b');
	\draw[ultra thin] (alpha)--(a');
	
	\coordinate (01) at (15.2,5.6);
	\coordinate (011) at (15.2,3.5);
	\coordinate (0111) at (15.2,2.2);
	\coordinate (010) at (19.6,12);
	\coordinate (0100) at (19,11.5);
	\coordinate (01000) at (16.4,6.8);
	\coordinate (010000) at (16,6.4);
	\coordinate (0100000) at (15.6,6);
	\coordinate (010001) at (19,4.2);
	\coordinate (0100011) at (19,2.4);
	
	\coordinate (empty) at (15.2,13.5);
	\coordinate (0) at (9.6,12.5);
	\coordinate (00) at (8.8,12);
	\coordinate (000) at (8,11.5);
	\coordinate (0000) at (4.8,10.5);
	\coordinate (00000) at (4,10);
	\coordinate (b) at (2,9);
	
	\coordinate (000000) at (4,5);
	\coordinate (00001) at (8,6.8);
	\coordinate (000011) at (8,4.2);
	\coordinate (0000111) at (8,2.4);	
	
	\coordinate (000010) at (11.5,10);
	\coordinate (0000100) at (10,9);
	\coordinate (00001000) at (9.2,8.4);
	\coordinate (000010000) at (8.8,8);
	
	\coordinate (0000000) at (5.8,8.4);
	\coordinate (00000000) at (4.4,5.5);

	\coordinate (0000101) at (11.5,5);	
	\coordinate (00001010) at (13.3,8.4);
	\coordinate (000010100) at (11.9,5.5);
	\coordinate (0000110) at (10,6.4);
	\coordinate (00001100) at (8.5,4.8);
	\coordinate (0100010) at (21,6.4);
	\coordinate (01000100) at (19.5,4.8);
	
	\coordinate (00001110) at (9.2,3.7);
	\coordinate (000011100) at (8.4,2.9);
	\coordinate (01000110) at (20.2,3.7);
	\coordinate (010001100) at (19.4,2.9);
	\coordinate (0110) at (16.3,5.1);
	\coordinate (01100) at (15.6,3.9);
	\coordinate (01110) at (15.85,3);
	\coordinate (011100) at (15.5,2.6);
	
	\coordinate (1) at (1.5,8);
	\coordinate (2) at (3.2,8);
	\coordinate (3) at (3.2,9.5);
	\coordinate (4) at (6.8,9.5);
	\coordinate (5) at (6.8,6.1);
	\coordinate (6) at (7.35,6.1);
	\coordinate (7) at (12.8,9.5);
	\coordinate (8) at (14.5,9.5);
	\coordinate (8') at (14.5,8.1);
	\coordinate (9') at (13.9,7.7);
	\coordinate (9) at (13.9,1.3);
	\coordinate (10) at (17.3,1.3);
	\coordinate (11) at (17.3,8);
	\coordinate (12) at (22.5,8);
	
	\draw[very thick, rounded corners, dashdotted, magenta, opacity=.7] (1)--(2)--(3)--(4)--(5)--(6)--(7)--(8)--(8')--(9')--(9)--(10)--(11)--(12);
	
	\draw[very thick, gray, opacity=.2] (01)--(16.7,5.6);
	\draw[very thick, gray, opacity=.2] (011)--(16.1,3.5);
	\draw[very thick, gray, opacity=.2] (10,3.3)--(21,3.3);
	\draw[very thick, gray, opacity=.2] (0000111)--(19.4,2.4);
	\draw[very thick, gray, opacity=.2] (00001110)--(20.6,3.7);
	\draw[very thick, gray, opacity=.2] (000011100)--(20.2,2.9);
	\draw[very thick, gray, opacity=.2] (8,6.8)--(21.5,6.8);
	\draw[very thick, gray, opacity=.2] (0000110)--(21.5,6.4);
	\draw[very thick, gray, opacity=.2] (00001100)--(21,4.8);
	\draw[very thick, gray, opacity=.2] (000011)--(20.6,4.2);	
	\draw[very thick, gray, opacity=.2] (0000000)--(14,8.4);
	\draw[very thick, gray, opacity=.2] (00000000)--(13.3,5.5);
	\draw[very thick, gray, opacity=.2] (000000)--(11.9,5);	
	\draw[very thick, gray, opacity=.2] (0)--(20.2,12.5);
	\draw[very thick, gray, opacity=.2] (00)--(20.2,12);
	\draw[very thick, gray, opacity=.2] (000)--(19.6,11.5);
	\draw[very thick, gray, opacity=.2] (0000)--(12,10.5);
	\draw[very thick, gray, opacity=.2] (00000)--(12,10);

	\draw[thick, orange, opacity=.9] (20.2,12.5)--(20.2,12);
	\draw[thick, orange, opacity=.9] (20.1,12)--(20.3,12);
	\draw[thick, orange, opacity=.9] (19.5,11.5)--(19.7,11.5);
	\draw[thick, orange, opacity=.9] (19.6,12)--(19.6,11.5);
	\draw[thick, orange, opacity=.9] (12,10.5)--(12,10);
	\draw[thick, orange, opacity=.9] (11.9,10)--(12.1,10);
	
	\draw[thick, orange, opacity=.9] (16.1,3.5)--(16.1,3.1);
	\draw[thick, orange, opacity=.9] (01110)--(15.85,2.6);
	\draw[thick, orange, opacity=.9] (011100)--(15.5,2.2);
	\draw[thick, orange, opacity=.9] (16,3.1)--(16.2,3.1);
	\draw[thick, orange, opacity=.9] (15.75,2.6)--(15.95,2.6);
	\draw[thick, orange, opacity=.9] (15.4,2.2)--(15.6,2.2);
	
	\draw[very thick, blue] (gamma) to[out=-180,in=85, distance=.3cm] (empty);
	 \draw[very thick, blue] (empty) to[out=-180,in=85, distance=.9cm] (0);
	 \draw[very thick, blue, distance=.3cm] (0) to[out=-180,in=85] (00);
	  \draw[very thick, blue] (00) to[out=-180,in=85, distance=.3cm] (000);
	  \draw[very thick, blue] (000) to[out=-180,in=85, distance=.7cm] (0000);
	  \draw[very thick, blue] (0000) to[out=-180,in=85, distance=.3cm] (00000);
	  \draw[very thick, blue] (00000) to[out=-180,in=85, distance=.55cm] (b);
	 
		\draw[very thick, cyan] (6.5,9) to[out=-180,in=85, distance=.3cm] (0000000);
	    \draw[very thick, cyan] (0000000) to[out=-180,in=85, distance=.8cm] (00000000);
	     \draw[very thick, cyan] (00000000) to[out=-180,in=85, distance=.25cm] (000000);	 
	    
		\draw[very thick, cyan] (16.7,5.6) to[out=-180,in=85, distance=.25cm] (0110);
	    \draw[very thick, cyan] (0110) to[out=-180,in=85, distance=.5cm] (01100);
	     \draw[very thick, cyan] (01100) to[out=-180,in=85, distance=.25cm] (011);
	     
	     		\draw[very thick, cyan] (16.15,3.5) to[out=-180,in=85, distance=.28cm] (01110);
	    \draw[very thick, cyan] (01110) to[out=-180,in=85, distance=.25cm] (011100);
	     \draw[very thick, cyan] (011100) to[out=-180,in=85, distance=.25cm] (0111);
	     	 	     
	     \draw[very thick, green] (14,9) to[out=-180,in=85, distance=.3cm] (00001010);
	    \draw[very thick, green] (00001010) to[out=-180,in=85, distance=.8cm] (000010100);
	     \draw[very thick, green] (000010100) to[out=-180,in=85, distance=.25cm] (0000101);
	  	
		\draw[very thick, cyan] (20.2,12.5) to[out=-180,in=85, distance=.3cm] (010);
	    \draw[very thick, cyan] (010) to[out=-180,in=85, distance=.3cm] (0100);
	     \draw[very thick, cyan] (0100) to[out=-180,in=85, distance=1.2cm] (01000);
	     \draw[very thick, cyan] (01000) to[out=-180,in=85, distance=.25cm] (010000);
	    \draw[very thick, cyan] (010000) to[out=-180,in=85, distance=.25cm] (0100000);
	     \draw[very thick, cyan] (0100000) to[out=-180,in=85, distance=.25cm] (01);

		\draw[very thick, cyan] (10.5,6.8) to[out=-180,in=85, distance=.3cm] (0000110);
	    \draw[very thick, cyan] (0000110) to[out=-180,in=85, distance=.8cm] (00001100);
	     \draw[very thick, cyan] (00001100) to[out=-180,in=85, distance=.3cm] (000011);
	     
	     \draw[very thick, cyan] (9.6,4.2) to[out=-180,in=85, distance=.27cm] (00001110);
	    \draw[very thick, cyan] (00001110) to[out=-180,in=85, distance=.5cm] (000011100);
	     \draw[very thick, cyan] (000011100) to[out=-180,in=85, distance=.27cm] (0000111);
	     
	     	     \draw[very thick, green] (20.6,4.2) to[out=-180,in=85, distance=.27cm] (01000110);
	    \draw[very thick, green] (01000110) to[out=-180,in=85, distance=.5cm] (010001100);
	     \draw[very thick, green] (010001100) to[out=-180,in=85, distance=.27cm] (0100011);
	     
	     \draw[very thick, green] (21.5,6.8) to[out=-180,in=85, distance=.3cm] (0100010);
	    \draw[very thick, green] (0100010) to[out=-180,in=85, distance=.8cm] (01000100);
	     \draw[very thick, green] (01000100) to[out=-180,in=85, distance=.3cm] (010001);

	     \draw[very thick, cyan] (12,10.5) to[out=-180,in=85, distance=.3cm] (000010);
		\draw[very thick, cyan] (000010) to[out=-180,in=85, distance=.5cm] (0000100);
	     \draw[very thick, cyan] (0000100) to[out=-180,in=85, distance=.4cm] (00001000);
	     \draw[very thick, cyan] (00001000) to[out=-180,in=85, distance=.2cm] (000010000);
	     \draw[very thick, cyan] (000010000) to[out=-180,in=85, distance=.4cm] (00001);
	     
	\draw[thick, red, opacity=.8] (gamma)--(16,13.5);	
	\draw[thick, red, opacity=.8] (15.9,13.5)--(16.1,13.5);	
	
	\draw[thick, red, opacity=.8] (empty)--(15.2,2.2);
	\draw[thick, red, opacity=.8] (15.1,12.5)--(15.3,12.5);
	\draw[thick, red, opacity=.8] (15.1,5.6)--(15.3,5.6);
	\draw[thick, red, opacity=.8] (15.1,3.5)--(15.3,3.5);
	\draw[thick, red, opacity=.8] (15.1,2.2)--(15.3,2.2);

	\draw[thick, red, opacity=.8] (0)--(9.6,12);
	\draw[thick, red, opacity=.8] (9.5,12)--(9.7,12);		  
	  
	\draw[thick, red, opacity=.8] (00)--(8.8,11.5);	 
	\draw[thick, red, opacity=.8] (8.7,11.5)--(8.9,11.5); 
	  
	\draw[thick, red, opacity=.8] (000)--(8,1);
	\draw[thick, red, opacity=.8] (7.9,10.5)--(8.1,10.5);
	\draw[thick, red, opacity=.8] (7.9,6.8)--(8.1,6.8);
	\draw[thick, red, opacity=.8] (7.9,4.2)--(8.1,4.2);
	\draw[thick, red, opacity=.8] (7.9,2.4)--(8.1,2.4);	 
	\draw[thick, red, opacity=.8] (7.9,1.4)--(8.1,1.4);
	\draw[thick, red, opacity=.8] (7.9,1)--(8.1,1);
	
	\draw[thick, orange, opacity=.9] (000010)--(11.5,1.2);
	\draw[thick, orange, opacity=.9] (11.4,9)--(11.6,9);
	\draw[thick, orange, opacity=.9] (11.4,5)--(11.6,5);
	\draw[thick, orange, opacity=.9] (11.4,1.2)--(11.6,1.2);
	
	\draw[thick, orange, opacity=.9] (0000100)--(10,8.4);
	\draw[thick, orange, opacity=.9] (00001000)--(9.2,5.5);
	
	\draw[thick, orange, opacity=.9] (16.7,5.6)--(16.7,5.1);
	\draw[thick, orange, opacity=.9] (0110)--(16.3,1.5);
	\draw[thick, orange, opacity=.9] (01100)--(15.6,3.5);
		\draw[thick, orange, opacity=.9] (16.2,3.9)--(16.4,3.9);
	\draw[thick, orange, opacity=.9] (16.2,1.5)--(16.4,1.5);
	\draw[thick, orange, opacity=.9] (15.5,3.5)--(15.7,3.5);
	\draw[thick, orange, opacity=.9] (16.6,5.1)--(16.8,5.1);
	
	\draw[thick, orange, opacity=.9] (9.6,4.2)--(9.6,3.7);
	\draw[thick, orange, opacity=.9] (00001110)--(9.2,1.8);
	\draw[thick, orange, opacity=.9] (000011100)--(8.4,2.4);
	\draw[thick, orange, opacity=.9] (9.5,3.7)--(9.7,3.7);
	\draw[thick, orange, opacity=.9] (9.1,1.8)--(9.3,1.8);
	\draw[thick, orange, opacity=.9] (9.1,2.9)--(9.3,2.9);
	\draw[thick, orange, opacity=.9] (8.3,2.4)--(8.5,2.4);
	
		\draw[thick, yellow, opacity=1] (20.6,4.2)--(20.6,3.7);
	\draw[thick, yellow, opacity=1] (01000110)--(20.2,1.8);
	\draw[thick, yellow, opacity=1] (010001100)--(19.4,2.4);
	\draw[thick, yellow, opacity=1] (20.5,3.7)--(20.7,3.7);
	\draw[thick, yellow, opacity=1] (20.1,1.8)--(20.3,1.8);
	\draw[thick, yellow, opacity=1] (20.1,2.9)--(20.3,2.9);
	\draw[thick, yellow, opacity=1] (19.3,2.4)--(19.5,2.4);
	
	\draw[thick, orange, opacity=.9] (000010000)--(8.8,6.8);
	\draw[thick, orange, opacity=.9] (9.9,8.4)--(10.1,8.4);
	\draw[thick, orange, opacity=.9] (9.1,5.5)--(9.3,5.5);
	\draw[thick, orange, opacity=.9] (9.1,8)--(9.3,8);
	\draw[thick, orange, opacity=.9] (8.7,6.8)--(8.9,6.8);
	
		\draw[thick, orange, opacity=.9] (10.5,6.8)--(10.5,6.4);
	\draw[thick, orange, opacity=.9] (0000110)--(10,3.3);
	\draw[thick, orange, opacity=.9] (00001100)--(8.5,4.2);
	\draw[thick, orange, opacity=.9] (10.4,6.4)--(10.6,6.4);
	\draw[thick, orange, opacity=.9] (9.9,4.8)--(10.1,4.8);
	\draw[thick, orange, opacity=.9] (9.9,3.3)--(10.1,3.3);
	\draw[thick, orange, opacity=.9] (8.4,4.2)--(8.6,4.2);
	
	\draw[thick, yellow, opacity=1] (21.5,6.8)--(21.5,6.4);
	\draw[thick, yellow, opacity=1] (0100010)--(21,3.3);
	\draw[thick, yellow, opacity=1] (01000100)--(19.5,4.2);
	\draw[thick, yellow, opacity=1] (21.4,6.4)--(21.6,6.4);
	\draw[thick, yellow, opacity=1] (20.9,4.8)--(21.1,4.8);
	\draw[thick, yellow, opacity=1] (20.9,3.3)--(21.1,3.3);
	\draw[thick, yellow, opacity=1] (19.4,4.2)--(19.6,4.2);
	
	\draw[thick, orange, opacity=.9] (01000)--(16.4,6.4);
	\draw[thick, orange, opacity=.9] (010000)--(16,6);
	\draw[thick, orange, opacity=.9] (0100000)--(15.6,5.6);
	\draw[thick, orange, opacity=.9] (16.3,6.4)--(16.5,6.4);
	\draw[thick, orange, opacity=.9] (15.9,6)--(16.1,6);
	\draw[thick, orange, opacity=.9] (15.7,5.6)--(15.5,5.6);
	
	\draw[thick, red, opacity=.8] (0000)--(4.8,10);	
	\draw[thick, red, opacity=.8] (4.7,10)--(4.9,10);	
	
	\draw[thick, orange, opacity=.9] (6.5,9)--(6.5,8.4);
	\draw[thick, orange, opacity=.9] (0000000)--(5.8,5.5);
	\draw[thick, orange, opacity=.9] (00000000)--(4.4,5);	
	\draw[thick, orange, opacity=.9] (6.4,8.4)--(6.6,8.4);
	\draw[thick, orange, opacity=.9] (5.7,5.5)--(5.9,5.5);
	\draw[thick, orange, opacity=.9] (4.3,5)--(4.5,5);
	\draw[thick, orange, opacity=.9] (5.7,8)--(5.9,8);
	
	\draw[thick, yellow, opacity=1] (14,9)--(14,8.4);
	\draw[thick, yellow, opacity=1] (00001010)--(13.3,5.5);
	\draw[thick, yellow, opacity=1] (000010100)--(11.9,5);	
	\draw[thick, yellow, opacity=1] (13.9,8.4)--(14.1,8.4);
	\draw[thick, yellow, opacity=1] (13.2,5.5)--(13.4,5.5);
	\draw[thick, yellow, opacity=1] (11.8,5)--(12,5);
	\draw[thick, yellow, opacity=1] (13.2,8)--(13.4,8);
	
	\draw[thick, orange, opacity=.9] (19,11.5)--(19,1);
	\draw[thick, orange, opacity=.9] (18.9,10.5)--(19.1,10.5);
	\draw[thick, orange, opacity=.9] (18.9,6.8)--(19.1,6.8);
	\draw[thick, orange, opacity=.9] (18.9,4.2)--(19.1,4.2);
	\draw[thick, orange, opacity=.9] (18.9,2.4)--(19.1,2.4);	 
	\draw[thick, orange, opacity=.9] (18.9,1.4)--(19.1,1.4);
	\draw[thick, orange, opacity=.9] (18.9,1)--(19.1,1);
	\draw[thick, orange, opacity=.9] (18.9,11.5)--(19.1,11.5);
	
	\draw[thick, red, opacity=.9] (00000)--(4,1.2);
	\draw[thick, red, opacity=.8] (3.9,9)--(4.1,9);
	\draw[thick, red, opacity=.8] (3.9,5)--(4.1,5);
	\draw[thick, red, opacity=.8] (3.9,1.2)--(4.1,1.2);
	  
	\draw (empty) node[above left] {$\varnothing$};
	\draw (0) node[above left] {$0$};
	\draw (00) node[above left] {$00$};
	\draw (000) node[above left] {$000$};
	\draw (0000) node[above left] {$0000$};
	\draw (00000) node[above left] {$00000$};
	\draw (01) node[above left] {$\mathit{01}$};
	\draw (011) node[above left] {$\mathit{011}$};
	\draw (00001) node[above left] {$\mathit{00001}$};
	\draw (000011) node[above left] {$\mathit{000011}$};
	\draw (0000111) node[above left] {$\mathit{0000111}$};
	\draw (000000) node[above left] {$\mathit{000000}$};
	\draw (010) node[above left] {$010$};
	\draw (0100) node[above left] {$0100$};
	\draw (01000) node[above left] {$01000$};
	\draw (010000) node[above left] {$010000$};	
	\draw (0100000) node[above left] {\color{gray} $0100000$};	
	\draw (000010) node[above left] {$000010$};
	\draw (0000100) node[above left] {\color{gray} $0000100$};
	\draw (00001000) node[above left] {\color{gray} $00001000$};
	\draw (000010000) node[above left] {\color{gray} $000010000$};
	\draw (010001) node[above left] {$\mathit{010001}$};
	\draw (0100011) node[above left] {$\mathit{0100011}$};
	\draw (0000000) node[above left] {\color{gray} $0000000$};
	\draw (00000000) node[above left] {\color{gray} $00000000$};
	\draw (0000101) node[above left] {\color{gray} $\mathit{0000101}$};
	\draw (00001010) node[above left] {\color{gray} $00001010$};
	\draw (000010100) node[above left] {\color{gray} $000010100$};
	\draw (0111) node[above left] {\color{gray} $\mathit{0111}$};
	\draw (0000110) node[above left] {\color{gray} $0000110$};
	\draw (00001100) node[above left] {\color{gray} $00001100$};
	\draw (0100010) node[above left] {\color{gray} $0100010$};
	\draw (01000100) node[above left] {\color{gray} $01000100$};
	
	\draw (00001110) node[above left] {\color{gray} $00001110$};
	\draw (000011100) node[above left] {\color{gray} $000011100$};
	\draw (01000110) node[above left] {\color{gray} $01000110$};
	\draw (0110) node[above left] {$0110$};
	\draw (01100) node[above left] {$01100$};
	\draw (01110) node[above left] {\color{gray} $01110$};
	\draw (011100) node[above left] {\color{gray} $011100$};
	\draw (010001100) node[above left] {\color{gray} $010001100$};
	
	\draw (empty) node[circle,draw,fill=black, scale=.25]{};
	\draw (0) node[circle,draw,fill=black, scale=.25]{};
	\draw (00) node[circle,draw,fill=black, scale=.25]{};
	\draw (000) node[circle,draw,fill=black, scale=.25]{};
	\draw (0000) node[circle,draw,fill=black, scale=.25]{};
	\draw (00000) node[circle,draw,fill=black, scale=.25]{};
	\draw (01) node[circle,draw,fill=black, scale=.25]{};
	\draw (011) node[circle,draw,fill=black, scale=.25]{};
	\draw (00001) node[circle,draw,fill=black, scale=.25]{};
	\draw (000011) node[circle,draw,fill=black, scale=.25]{};
	\draw (0000111) node[circle,draw,fill=black, scale=.25]{};
	\draw (000000) node[circle,draw,fill=black, scale=.25]{};
	\draw (010) node[circle,draw,fill=black, scale=.25]{};
	\draw (0100) node[circle,draw,fill=black, scale=.25]{};
	\draw (01000) node[circle,draw,fill=black, scale=.25]{};
	\draw (010000) node[circle,draw,fill=black, scale=.25]{};
	\draw (0100000) node[circle,draw,fill=black, scale=.25]{};
	\draw (000010) node[circle,draw,fill=black, scale=.25]{};
	\draw (0000100) node[circle,draw,fill=black, scale=.25]{};
	\draw (00001000) node[circle,draw,fill=black, scale=.25]{};
	\draw (000010000) node[circle,draw,fill=black, scale=.25]{};
	\draw (010001) node[circle,draw,fill=black, scale=.25]{};
	\draw (0100011) node[circle,draw,fill=black, scale=.25]{};
	\draw (0000000) node[circle,draw,fill=black, scale=.25]{};
	\draw (00000000) node[circle,draw,fill=black, scale=.25]{};
	\draw (0000101) node[circle,draw,fill=black, scale=.25]{};
	\draw (00001010) node[circle,draw,fill=black, scale=.25]{};
	\draw (000010100) node[circle,draw,fill=black, scale=.25]{};
	\draw (0111) node[circle,draw,fill=black, scale=.25]{};
	\draw (0000110) node[circle,draw,fill=black, scale=.25]{};
	\draw (00001100) node[circle,draw,fill=black, scale=.25]{};
	\draw (0100010) node[circle,draw,fill=black, scale=.25]{};
	\draw (01000100) node[circle,draw,fill=black, scale=.25]{};
	\draw (00001110) node[circle,draw,fill=black, scale=.25]{};
	\draw (000011100) node[circle,draw,fill=black, scale=.25]{};
	\draw (01000110) node[circle,draw,fill=black, scale=.25]{};
	\draw (010001100) node[circle,draw,fill=black, scale=.25]{};
	\draw (0110) node[circle,draw,fill=black, scale=.25]{};
	\draw (01100) node[circle,draw,fill=black, scale=.25]{};
	\draw (01110) node[circle,draw,fill=black, scale=.25]{};
	\draw (011100) node[circle,draw,fill=black, scale=.25]{};
	\end{tikzpicture}
  \end{adjustbox}
\end{figure}

\newpage

\bibliographystyle{plain}
\bibliography{HighWalks}

\end{document}